\newcommand{\WL}[1]{{\color{orange} {#1}}}
\numberwithin{equation}{section}
\newcommand{\sP}{\ensuremath{\mathscr{P}}\xspace}
\newcommand{\fkh}{\ensuremath{\mathfrak{h}}\xspace}
\newcommand{\W}{\mathbf{W}}
\newcommand{\calw}{\mathcal{W}}
\newcommand{\cc}{\mathbb C}
\newcommand{\ff}{\mathbb F}
\newcommand{\zz}{\mathbb Z}
\newcommand{\rr}{\mathbb R}
\newcommand{\A}{\mathbb A}
\newcommand{\la}{\langle}
\newcommand{\ra}{\rangle}
\newcommand{\lra}{\longrightarrow}
\newcommand{\hra}{\hookrightarrow}
\newcommand{\al}{\alpha}
\newcommand{\ga}{\gamma}
\newcommand{\de}{\delta}
\newcommand{\De}{\Delta}
\newcommand{\Ga}{\Gamma}
\newcommand{\lam}{\lambda}
\newcommand{\vp}{\varpi}
\newcommand{\sig}{\sigma}
\newcommand{\ka}{\kappa}
\DeclareMathOperator{\G}{G}
\DeclareMathOperator{\T}{T}
\DeclareMathOperator{\sspan}{span}
\DeclareMathOperator{\coker}{coker}
\DeclareMathOperator{\I}{I}
\DeclareMathOperator{\Q}{Q}
\DeclareMathOperator{\supp}{supp}
\newcommand{\fg}{\mathfrak g}
\newcommand{\fc}{\mathfrak c}
\newcommand{\fh}{\mathfrak h}
\newcommand{\fp}{\mathfrak p}
\newcommand{\fgl}{\mathfrak{gl}}
\newcommand{\fX}{\mathfrak X}
\newcommand{\calh}{\mathcal{H}}
\newcommand{\calo}{\mathcal{O}}
\newcommand{{\rP}}{\mathrm{P}}
\newcommand{{\rM}}{\mathrm{M}}
\newcommand{{\X}}{\mathrm{X}}
\newcommand{\calv}{\mathcal{V}}
\newcommand{\Gm}{\mathbb{G}_m}
\newcommand{\iso}{\overset{\sim}{\longrightarrow}}
\newcommand{\car}{\textbf{car}}
\newcommand{\Fbar}{\overline{F}}
\newcommand{\bfun}{\mathbf{1}}
\newcommand{\rH}{\mathrm{H}}
\def\Ddots{\mathinner{\mkern1mu\raise\p@
\vbox{\kern7\p@\hbox{.}}\mkern2mu
\raise4\p@\hbox{.}\mkern2mu\raise7\p@\hbox{.}\mkern1mu}}
\newenvironment{psmatrix}
  {\left(\begin{smallmatrix}}
  {\end{smallmatrix}\right)}
\newtheorem{Thm}{Theorem}[section]
\newtheorem{Prop}[Thm]{Proposition}
\newtheorem{LemDef}[Thm]{Lemma/Definition}
\newtheorem{Lem}[Thm]{Lemma}
\newtheorem{Cor}[Thm]{Corollary}
\newtheorem{Conj}[Thm]{Conjecture}
\newtheorem{Assumption}[Thm]{Assumption}
\theoremstyle{definition}
\newtheorem{Def}[Thm]{Definition}
\theoremstyle{remark}
\newtheorem{Rem}[Thm]{Remark}
\theoremstyle{definition}
\newcommand{\quash}[1]{}
\newcommand{\BA}{\ensuremath{\mathbb {A}}\xspace}
\newcommand{\BC}{\ensuremath{\mathbb {C}}\xspace}
\newcommand{\BE}{\ensuremath{\mathbb {E}}\xspace}
\newcommand{\BF}{\ensuremath{\mathbb {F}}\xspace}
\newcommand{\BG}{\ensuremath{\mathbb {G}}\xspace}
\newcommand{\BH}{\ensuremath{\mathbb {H}}\xspace}
\newcommand{\BR}{\ensuremath{\mathbb {R}}\xspace}
\newcommand{\BV}{\ensuremath{\mathbb {V}}\xspace}
\newcommand{\BW}{\ensuremath{\mathbb {W}}\xspace}
\newcommand{\BZ}{\ensuremath{\mathbb {Z}}\xspace}
\newcommand{\CO}{\ensuremath{\mathcal {O}}\xspace}
\newcommand{\CS}{\ensuremath{\mathcal {S}}\xspace}
\newcommand{\Y}{\ensuremath{\mathrm {Y}}\xspace}
\DeclareMathOperator{\End}{End}
\DeclareMathOperator{\Gal}{Gal}
\newcommand{\GL}{\mathrm{GL}}
\DeclareMathOperator{\Hom}{Hom}
\newcommand{\inv}{{\mathrm{inv}}}
\DeclareMathOperator{\Lie}{Lie}
\DeclareMathOperator{\Nm}{Nm}
\DeclareMathOperator{\Orb}{Orb}
\renewcommand{\Re}{{\mathrm{Re}}}
\DeclareMathOperator{\Res}{Res}
\DeclareMathOperator{\Spec}{Spec}
\newcommand{\SO}{{\mathrm{SO}}}
\DeclareMathOperator{\Sym}{Sym}
\DeclareMathOperator{\sgn}{sgn}
\newcommand{\U}{\mathrm{U}}
\DeclareMathOperator{\vol}{vol}
\newcommand{\Herm}{\mathrm{Herm}}
\newcommand{\wt}{\widetilde}
\newcommand{\wh}{\widehat}
\newcommand{\ov}{\overline}
\newcommand{\ul}{\underline}
\newcommand{\bs}{\backslash}
\newcommand{\ep}{\varepsilon}
\newtheorem{theorem}{Theorem}
\theoremstyle{definition}
\newtheorem{remark}[theorem]{Remark}
\numberwithin{equation}{section}
\numberwithin{theorem}{section}
\let\shortmapsto\mapsto
\renewcommand{\mapsto}{%
   \ifbool{@display}{\longmapsto}{\shortmapsto}%
   }
\newlength{\olen}
\newlength{\ulen}
\newlength{\xlen}
\newcommand{\xra}[2][]{%
   \ifbool{@display}%
      {\settowidth{\olen}{$\overset{#2}{\longrightarrow}$}%
       \settowidth{\ulen}{$\underset{#1}{\longrightarrow}$}%
       \settowidth{\xlen}{$\xrightarrow[#1]{#2}$}%
       \ifdimgreater{\olen}{\xlen}%
          {\underset{#1}{\overset{#2}{\longrightarrow}}}%
          {\ifdimgreater{\ulen}{\xlen}%
             {\underset{#1}{\overset{#2}{\longrightarrow}}}
             {\xrightarrow[#1]{#2}}}}%
      {\xrightarrow[#1]{#2}}
   }
\newcommand{\xyra}[2][]{%
   \settowidth{\xlen}{$\xrightarrow[#1]{#2}$}%
   \ifbool{@display}%
      {\settowidth{\olen}{$\overset{#2}{\longrightarrow}$}%
       \settowidth{\ulen}{$\underset{#1}{\longrightarrow}$}%
       \ifdimgreater{\olen}{\xlen}%
          {\mathrel{\xymatrix@M=.12ex@C=3.2ex{\ar[r]^-{#2}_-{#1} &}}}%
          {\ifdimgreater{\ulen}{\xlen}%
             {\mathrel{\xymatrix@M=.12ex@C=3.2ex{\ar[r]^-{#2}_-{#1} &}}}
             {\mathrel{\xymatrix@M=.12ex@C=\the\xlen{\ar[r]^-{#2}_-{#1} &}}}}}%
      {\mathrel{\xymatrix@M=.12ex@C=\the\xlen{\ar[r]^-{#2}_-{#1} &}}}%
   }
\newcommand{\xla}[2][]{%
   \ifbool{@display}%
      {\settowidth{\olen}{$\overset{#2}{\longleftarrow}$}%
       \settowidth{\ulen}{$\underset{#1}{\longleftarrow}$}%
       \settowidth{\xlen}{$\xleftarrow[#1]{#2}$}%
       \ifdimgreater{\olen}{\xlen}%
          {\underset{#1}{\overset{#2}{\longleftarrow}}}%
          {\ifdimgreater{\ulen}{\xlen}%
             {\underset{#1}{\overset{#2}{\longleftarrow}}}
             {\xleftarrow[#1]{#2}}}}%
      {\xleftarrow[#1]{#2}}
   }
\newcommand{\isoarrow}{%
   \ifbool{@display}{\overset{\sim}{\longrightarrow}}{\xrightarrow\sim}%
   }
\begin{document}

\title[]{Unitary Friedberg--Jacquet periods and their twists:\\
Relative trace formulas
}
\author{Spencer Leslie}
\address{Boston College, Department of Mathematics, Chestnut Hill, MA 02467 USA}
\email{spencer.leslie@bc.edu}

\author{Jingwei Xiao}
\address{Beijing, China}
\email{jwxiao922@gmail.com}

\author{Wei Zhang}
\address{Massachusetts Institute of Technology, Department of Mathematics, 77 Massachusetts Avenue, Cambridge, MA 02139, USA}
\email{weizhang@mit.edu}

\date{\today}

\begin{abstract}In \cite{LXZfund}, we formulated a global conjecture for the automorphic period integral associated to the symmetric pairs defined by unitary groups over number fields, generalizing a theorem of Waldspurger's toric period for $\GL(2)$. In this paper, we introduce a new relative trace formula to prove our global conjecture under some local hypotheses. A new feature is the presence of the relative endoscopy in the comparison. We also establish several local results on relative characters. 
\end{abstract}

\maketitle

\setcounter{tocdepth}{1}
\tableofcontents

\section{Introduction}

In \cite{LXZfund}, we formulated a global conjecture for the automorphic period integral associated to the symmetric pairs defined by unitary groups over number fields, generalizing a theorem of Waldspurger's toric period for $\GL(2)$.  In this paper, as a sequel to \cite{LXZfund}, we consider certain relative trace formulas and complete the proof of the main global theorems stated in {\em loc. cit.}.

\subsection{Main global results}

We refer to \cite[\S1.1]{LXZfund} for the more general conjecture. In this paper we will focus on the theorems announced in {\it loc. cit.}. 
Let $E/F$ be a quadratic extension of number fields. Let $V$ be an $E/F$-Hermitian space with $\dim_E V=2n$, and 
let $\G = \U(V)$ be the associated unitary group, a reductive group over $F$. In this paper we consider the following two cases of unitary symmetric pairs $(\G,\rH)$.

\begin{enumerate}
\item (The {\em split-inert} case) There is an orthogonal decomposition 
\[
V=W_1\oplus W_2,\qquad \dim(W_1) = \dim(W_2).
\]
and let $\rH=\U(W_1)\times \U(W_2)$; this case is referred to as unitary Friedberg--Jacquet periods. 

\item (The {\em inert-inert} case) There exists a decomposition 
\[
V=U\oplus U^\perp,\qquad \la-,-\ra|_{U}\equiv 0
\]
and let $\rH = \Res_{F/F_0}\GL(U)$, as a maximal Levi subgroup of $\G$; this case is referred to as twisted unitary Friedberg--Jacquet periods. 
\end{enumerate}

Fix such a pair $(\G,\rH)$. Let $\pi$ be a cuspidal automorphic representation of $\G(\BA_F)$, where $\BA_F$ the ring of adeles of $F$. In this paper we are interested in the automorphic $\rH$-period integral, defined by
$$
 \sP_{\rH}(\varphi)=\int_{[\rH]}\varphi(h)\,dh,
$$
where $\varphi\in\pi$ and $[\rH]=\rH(F)\bs \rH(\BA_F)$. We will say that $\pi$ is $\rH$-distinguished if $ \sP_{\rH}(\varphi)\neq0$ for some $\varphi\in\pi$.

 Let ${\rm BC}_E(\pi)$ denote the (weak) base change from $\G$ to $\Res_{E/F}\GL_{2n}$ \cite[Section 4.3]{BPLZZ}. Now let $\G'=\GL_{2n,F}$, and let $\pi_0$ be a cuspidal automorphic representation of $\G'(\BA)$. To abuse notation, we also denote by ${\rm BC}_E(\pi_0)$ the base change from $\GL_{2n,F}$ to $\GL_{2n,E}$ \cite{ArthurClozel}.


To state the main global theorems, we impose the following constraints on our number fields. We assume that $E/F$ is a quadratic extension of number fields such that
\begin{enumerate}
    \item$E/F$ is everywhere unramified,
    \item $E/F$ splits over every finite place $v$ of $F$ such that $$p\leq \max\{e(v/p)+1,2\},$$ where  $v|p$ and $e(v/p)$ denote the ramification index of $v$ in $p$.
\end{enumerate}
We refer to the beginning of \S\ref{Section: global proofs} for some comments on these assumptions.
We note that the unramified hypothesis implies that the number of non-split archimedean places is necessarily even (by computing $\eta_{E/F}(-1)=1$ locally).

We first state our main result in the  split-inert case.

\begin{Thm}\label{Thm: si}
Let $\pi_{0}$ be a cuspidal automorphic representation of $\G'(\A_{F})$ satisfying 
\begin{enumerate}
    \item there is a split non-archimedean place $v_1$ such that $\pi_{v_1}$ is supercuspidal,
    \item there is a split non-archimedean place $v_2$ such that $\pi_{v_2}$ is $\rH'$-elliptic,
        \item for each non-split non-archimedean place $v$, $\pi_v$ is unramified,
        \item for each non-split archimedean  place $v$, $\pi_v$ is the descent of the base change of a representation of the compact unitary group $\U_{2n}(\BR)$ that is distinguished by the compact  $\U_{n}(\BR)\times \U_{n}(\BR)$.
\end{enumerate} Assume that $\pi_0$ is of {\em symplectic type}. Then the following two assertions are equivalent:
\begin{enumerate}
\item $L(1/2,{\rm BC}_E(\pi_{0}) )\neq 0$, and 
\item
There exist (non-degenerate) Hermitian spaces $W_1,W_2$ of dimension $n$, and a cuspidal automorphic representation $\pi$ of $\G(\A_{F})$ for $\G=\U(W_1\oplus W_2)$ satisfying
\begin{enumerate}
    \item ${\rm BC}_E(\pi) ={\rm BC}_E(\pi_{0})$,
    \item $\pi$ is $\rH$-distinguished for $\rH=\U(W_1)\times\U(W_2)$ .
\end{enumerate}
\end{enumerate}
\end{Thm}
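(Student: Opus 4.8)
The plan is to deduce the theorem from a comparison of two relative trace formulas, following the by-now-standard strategy for such comparisons, but with the new ingredient of \emph{relative endoscopy} entering the geometric matching. On the ``general linear'' side I would integrate the kernel $\mathrm{K}_{f'}$ of a test function $f'$ on $\Res_{E/F}\GL_{2n}$ against $[\rH']\times[\U_{2n}]$, where $\rH'=\Res_{E/F}(\GL_n\times\GL_n)$ is the Friedberg--Jacquet subgroup. Integration over $[\U_{2n}]$ cuts out the base-change spectrum $\Pi=\mathrm{BC}_E(\pi_0)$ with $\pi_0$ of symplectic type, and for such $\Pi$ the Friedberg--Jacquet theorem identifies the $\rH'$-period, up to nonzero local constants, with the central value $L(1/2,\Pi)=L(1/2,\mathrm{BC}_E(\pi_0))$. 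So the cuspidal part of the resulting distribution $I(f')$ expands spectrally as $\sum_{\pi_0}L(1/2,\mathrm{BC}_E(\pi_0))\prod_v I_{\pi_{0,v}}(f'_v)$ in terms of local relative characters $I_{\pi_{0,v}}$. On the ``unitary'' side, for each pair of $n$-dimensional Hermitian spaces $(W_1,W_2)$ I would form $\G=\U(W_1\oplus W_2)$, $\rH=\U(W_1)\times\U(W_2)$, and the distribution $J_{(W_1,W_2)}(f)=\int_{[\rH]\times[\rH]}\mathrm{K}_f$, whose cuspidal part expands as $\sum_\pi\prod_v J_{\pi_v}(f_v)$ over cuspidal $\pi$ on $\G(\BA_F)$; here the product of local $\rH$-relative characters is nonzero exactly when $\pi$ is everywhere locally $\rH$-distinguished and $\sP_{\rH}$ is not identically zero on $\pi$.

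The next step is the geometric comparison. One matches regular semisimple orbits: there should be a bijection between the regular semisimple orbits for $(\Res_{E/F}\GL_{2n};\rH',\U_{2n})$ and the disjoint union, over all $(W_1,W_2)$, of the regular semisimple orbits for the unitary symmetric pairs $(\U(W_1\oplus W_2),\U(W_1)\times\U(W_2))$, and for matching orbits the orbital integrals of $f'$ and of a matching tuple $(f_{(W_1,W_2)})$ agree up to explicit transfer factors. The required local inputs are: (i) the fundamental lemma, in its relative endoscopic form, for the standard Hecke algebras at the inert non-archimedean places, where all data are unramified; and (ii) the existence of smooth transfer of orbital integrals at the split non-archimedean and the archimedean places. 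Granting these, $I(f')=\sum_{(W_1,W_2)}J_{(W_1,W_2)}(f)$ for matching families.

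From this geometric identity I would pass to spectral consequences by invoking the local hypotheses of the theorem to reach a \emph{simple} form of the comparison. Taking the component at the split place $v_1$ to be a matrix coefficient of a supercuspidal representation reduces both $I(f')$ and the $J_{(W_1,W_2)}(f)$ to their cuspidal parts (no truncation needed); taking the component at $v_2$ to be $\rH'$-elliptic makes the relevant local relative character nonzero and, together with spectral isolation at the unramified places, singles out a single near-equivalence class of cuspidal representations with prescribed base change; and the unramified condition at the remaining inert finite places together with the cohomological condition at the inert archimedean places (hypotheses (3)--(4)) ensure that the corresponding local relative characters on the unitary side do not vanish. Equating the $\pi_0$-isotypic contributions and dividing by the nonzero local factors yields
\[
L(1/2,\mathrm{BC}_E(\pi_0))\ =\ c\cdot\!\!\sum_{(W_1,W_2)} J^{[\pi_0]}_{(W_1,W_2)}(f),\qquad c\neq 0,
\]
where $J^{[\pi_0]}_{(W_1,W_2)}(f)$ is the contribution of the cuspidal $\pi$ on $\U(W_1\oplus W_2)$ with $\mathrm{BC}_E(\pi)=\mathrm{BC}_E(\pi_0)$ to $J_{(W_1,W_2)}(f)$, a nonnegative quantity built from squares of $\rH$-periods. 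This gives the desired equivalence, and the relative endoscopic decomposition of the unitary distributions identifies which $(W_1,W_2)$ actually contributes, in terms of local root numbers.

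The main obstacle, as the appearance of \emph{relative endoscopy} already signals, is the second step and its spectral organization: one must stabilize the unitary distributions $J_{(W_1,W_2)}(f)$, arrange their sum together with the endoscopic pieces to match the general linear distribution $I(f')$, and prove the matching of the relevant local relative characters and transfer factors --- in particular the endoscopic fundamental lemma at the inert places. Establishing smooth transfer at the ramified and archimedean places is a second, more analytic hurdle. Controlling convergence and the non-cuspidal spectrum in the spectral expansions --- circumvented here by the supercuspidal hypothesis at $v_1$ --- is comparatively routine.
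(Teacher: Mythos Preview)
Your proposal takes a \emph{different} relative trace formula comparison from the one the paper uses, and it also misattributes where relative endoscopy enters.

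\textbf{The linear RTF.} You propose integrating the kernel on $\Res_{E/F}\GL_{2n}$ against $[\Res_{E/F}(\GL_n\times\GL_n)]\times[\U_{2n}]$, in the spirit of Getz--Wambach twisted base change (this is roughly the setup of \cite{LeslieUFJFL}). The paper does something new and rather different: its linear side is $\G'=\GL_{2n}$ over $F$ (not over $E$), with $\rH'=\GL_n\times\GL_n$ on \emph{both} sides, and the twist by $E/F$ enters only through weight factors --- a Godement--Jacquet Eisenstein series $E(h^{(2)}_1,\Phi,s,\eta_{E/F})$ inserted on one copy of $\rH'$ and the quadratic character $\eta_{E/F}$ on the other. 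The spectral side then unfolds to products of Bump--Friedberg and Friedberg--Jacquet integrals over $F$, which directly produce $L(1/2,\pi_0)L(1/2,\pi_0\otimes\eta)=L(1/2,\mathrm{BC}_E(\pi_0))$ together with the exterior-square pole condition. Your route, by contrast, would get $L(1/2,\Pi)$ via Friedberg--Jacquet over $E$ and the base-change condition via the $\U_{2n}$-period; both are plausible, but the paper's choice avoids the analytic difficulties of the non-compact unitary period and makes the geometric matching and fundamental lemma (established in \cite{LXZfund}) available in the form needed.

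\textbf{Relative endoscopy.} You describe relative endoscopy as the ``new ingredient'' for this theorem and speak of an endoscopic fundamental lemma at inert places. But for the split-inert theorem there is \emph{no} endoscopic term: the elliptic support at a split place forces the pre-stabilization on the unitary side to collapse to a sum of stable orbital integrals alone (Proposition~\ref{Prop: prestab si}), and the comparison is the clean identity $J(f)=2I^{(\eta,\eta)}(\wt f)$ (Proposition~\ref{Prop: weak trace comparison si}). Relative endoscopy --- the $\varepsilon$-orbital integrals and the endoscopic fundamental lemma --- enters only for the \emph{inert-inert} theorem (Theorem~\ref{Thm: ii}), where the pre-stabilized unitary RTF has a genuine $\varepsilon$-term that is matched with a second linear distribution $I^{(\eta,\eta)}(\wt f_\varepsilon)$; see Propositions~\ref{Prop: prestab ii} and~\ref{Prop: weak trace comparison ii}. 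Your final claim that the endoscopic decomposition pins down the pair $(W_1,W_2)$ via local root numbers is also not established here; the paper only shows existence.
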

\begin{remark}
The notion of being $\rH'$-elliptic is defined in terms of local relative characters in \S \ref{Section: elliptic}; by Theorem \ref{Thm: FJ elliptic}, is suffices to assume $\pi_{v_2}$ is supercuspidal and $\rH'$-distinguished.
\end{remark}

\begin{remark}The condition for $\pi_v$ at a  non-split archimedean  place $v$ is satisfied if  $\pi_{v}$ is a unitary representation of $\GL_{2n}(\BR)$ with the trivial infinitesimal character; its base change to $\GL_{2n}(\BC)$ is be the base change of the trivial representation of the compact unitary group $\U_{2n}(\BR)$.
We refer to Lemma \ref{lem: branc compact} regarding the branching law for the compact unitary groups $(\U_{2n}(\BR),\U_{n}(\BR)\times \U_{n}(\BR))$ and for a precise description on the L-parameters of such representations in its proof. 
\end{remark}

We also have an analogous theorem in the inert-inert case.
\begin{Thm}\label{Thm: ii}
We further assume that every archimedean place $v$ of $F$ splits in $E$.
   Let $\pi_{0}$ be as above. Assume that $\pi_0$ is of {\em symplectic type}. Then the following two assertions are equivalent:
\begin{enumerate}
\item\label{Thm: ii 1} At least one of $L(1/2,\pi_{0})$ or $L(1/2,\pi_{0}\otimes \eta_{E/F})$ is non-zero, and 
\item There exists a cuspidal automorphic representation $\pi$ of $\G(\A_{F})$ for the quasi-split unitary group $\G=\U(V)$ satisfying
\begin{enumerate}
    \item   ${\rm BC}_E(\pi) ={\rm BC}_E(\pi_{0})$, and
    \item $\pi$ is $\rH$-distinguished, where $\rH$ is the Levi subgroup of the Siegel parabolic subgroup associated 
to a Lagrangian subspace of $V$.
\end{enumerate}
\end{enumerate}
\end{Thm}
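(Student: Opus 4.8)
The plan is to prove Theorem~\ref{Thm: ii} by a comparison of relative trace formulas, in the spirit of (but technically distinct from) the Jacquet--Rallis comparison. On the unitary side we use the relative trace formula attached to the symmetric pair $(\G,\rH)$: for a test function $f$ on $\G(\A_{F})$ we integrate the automorphic kernel $K_f(h_1,h_2)$ over $[\rH]\times[\rH]$, obtaining a distribution $I^{\G}(f)$ whose cuspidal spectral expansion is $\sum_\pi I^{\G}_\pi(f)$, with $I^{\G}_\pi$ built out of $\sP_{\rH}\otimes\overline{\sP_{\rH}}$. On the general linear side we use the relative trace formula carrying the Friedberg--Jacquet linear period on $\GL_{2n,F}$; for $\pi_0$ of symplectic type its cuspidal spectral terms are, after unfolding, proportional to the central value $L(1/2,\pi_0)$, while the geometric matching with the unitary side will force the matched representations to satisfy ${\rm BC}_E(\pi)={\rm BC}_E(\pi_0)$. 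The genuinely new feature is that the geometric comparison is mediated by \emph{relative endoscopy}: the relevant elliptic geometric distributions match not one-to-one but only after a decomposition indexed by the characters $\kappa$ of a finite $2$-group (here $\{1,\eta_{E/F}\}$), the two summands on the general linear side being governed by $L(1/2,\pi_0)$ and $L(1/2,\pi_0\otimes\eta_{E/F})$ respectively. This is exactly the source of the disjunction in assertion~(1) of the theorem.

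The first step is the comparison of geometric sides, which rests on (i) a matching of relevant orbits together with the transfer factors encoding the relative-endoscopic decomposition, (ii) the fundamental lemma for the unit of the unramified Hecke algebra at every (inert, unramified) finite place, and (iii) smooth transfer of orbital integrals at the remaining places. Ingredients (i)--(iii) are supplied by \cite{LXZfund}; under our hypotheses on $E/F$ (everywhere unramified, split at small residue characteristic) the fundamental lemma applies at all inert places, and the hypothesis in Theorem~\ref{Thm: ii} that every archimedean place of $F$ splits in $E$ is precisely what lets us invoke the available (Friedberg--Jacquet) archimedean smooth transfer rather than its not-yet-established non-split archimedean counterpart. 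Combining, for matching test data $(f,f')$ one obtains an identity of distributions $I^{\G}(f)=\sum_{\kappa}I^{\GL,\kappa}(f')$ modulo Eisenstein/non-cuspidal contributions.

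The second step is spectral isolation. At the split place $v_1$ we use a (truncated) matrix coefficient of the supercuspidal $\pi_{v_1}$, annihilating the non-cuspidal spectrum and localizing to supercuspidal representations with the prescribed component; at $v_2$ we use an $\rH'$-elliptic test function (\S\ref{Section: elliptic}), which by the remark following Theorem~\ref{Thm: si} exists once $\pi_{v_2}$ is supercuspidal and $\rH'$-distinguished and which ensures the surviving local relative characters are nonzero; at the inert finite places we use the Hecke unit together with strong multiplicity one and linear independence of Satake parameters to pin down a single near-equivalence class; at the archimedean places we use the test functions dictated by condition~(4) of Theorem~\ref{Thm: si} (here these places split in $E$, so the relevant local input is the classical one). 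This converts the geometric identity into a spectral identity of the shape
\[
\sum_{\pi} d_\pi\, |\sP_{\rH}(\varphi_\pi)|^2 \;=\; c_1\, L(1/2,\pi_0) \;+\; c_{\eta}\, L(1/2,\pi_0\otimes\eta_{E/F}),
\]
where the left sum runs over the finitely many cuspidal $\pi$ on $\G(\A_{F})$ with ${\rm BC}_E(\pi)={\rm BC}_E(\pi_0)$ and the constants $c_1,c_{\eta},d_\pi$ are products of local relative characters. One then checks that these constants are strictly positive and the central values nonnegative; this is where the local results on relative characters of this paper are used (their non-vanishing, the $\rH'$-elliptic input at $v_2$, and the branching law of Lemma~\ref{lem: branc compact} at the archimedean places). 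Granting positivity, non-vanishing of the right-hand side is equivalent to $L(1/2,\pi_0)\neq0$ or $L(1/2,\pi_0\otimes\eta_{E/F})\neq0$, and non-vanishing of the left-hand side is equivalent to the existence of an $\rH$-distinguished $\pi$, which is the assertion.

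The main obstacle I anticipate is the geometric comparison of the first step---organizing the relative-endoscopic decomposition of the general linear elliptic distribution so that it matches the unitary side term by term, and securing smooth transfer at the ramified split places $v_1,v_2$---together with the spectral isolation, which relies on a multiplicity-one statement for the Siegel-Levi period on $\U(V)$ and a careful treatment of the continuous spectrum.
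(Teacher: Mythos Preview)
Your overall architecture is correct and matches the paper's: simple RTF on the unitary side, linear RTF(s) on $\GL_{2n}$ via the Friedberg--Jacquet/Bump--Friedberg periods, fundamental lemmas from \cite{LXZfund} at inert places, supercuspidal/elliptic test functions at $v_1,v_2$, spectral isolation by strong multiplicity one, and non-negativity of central $L$-values at the end. The last input you allude to (``the central values nonnegative'') is precisely the theorem of Lapid--Rallis; you should name it, since it is the crucial external ingredient.

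However, your description of the endoscopic mechanism is inverted, and this changes the shape of the final identity. The decomposition indexed by $\kappa$ lives on the \emph{unitary} side, not the linear side: the pre-stabilization of $J_{ii}(f)$ (Proposition~\ref{Prop: prestab ii}) yields a stable term plus an $\varepsilon$-orbital integral. The stable term matches directly to the linear distribution $I^{(\eta,1)}$. The $\varepsilon$-term does \emph{not} match to the linear side directly; it matches via the relative endoscopic transfer (Definition~\ref{Def: varepsilon transfer}, Theorem~\ref{Thm: fundamental lemma varepsilon}) to the \emph{split-inert} symmetric pair, whose stable distribution then matches to the linear distribution $I^{(\eta,\eta)}$. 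Thus the trace identity is
\[
J_{ii}(f)\;=\;I^{(\eta,1)}(\wt f,0)\;+\;I^{(\eta,\eta)}(\wt f_\varepsilon,0),
\]
involving \emph{two} linear test functions. After spectral isolation one gets \emph{four} terms (two characters $(\eta,1),(\eta,\eta)$ times two representations $\pi_0,\pi_0\otimes\eta$); arranging $\wt f_\varepsilon=\wt f$, these combine not as your linear form $c_1L(1/2,\pi_0)+c_\eta L(1/2,\pi_0\otimes\eta)$ but as the square $\bigl(L(1/2,\pi_0)+L(1/2,\pi_0\otimes\eta)\bigr)^2$, which is what Lapid--Rallis then handles.

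One minor correction: since in Theorem~\ref{Thm: ii} every archimedean place splits in $E$, Lemma~\ref{lem: branc compact} plays no role here (it is used only in the split-inert theorem at non-split archimedean places). The positivity of the unitary-side contributions is arranged, as in \S\ref{Section: proof P to L}, by choosing $f$ of positive type ($b*b^\vee$ at split places, congruence-subgroup idempotents at inert places), so that each $J_\pi(f)$ is a sum of $|\sP_{\rH}|^2\geq 0$.
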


The two theorems are proved in Sections \ref{Section: proof P to L} and \ref{Section: proof L to P}. We also have slightly stronger results on the direction ``period non-vanishing implies $L$-value non-vanishing" where we may drop the assumption that $\pi_v$ is unramified for each non-split place $v$; see \S\ref{Section: period implies Lvalue}.

\subsection{The relative trace formulas}
To prove our theorems, we follow the general strategy of comparing relative trace formulas.
As is routine now, for the automorphic period integrals we consider the RTF associated to the triple
$$
( \G, \rH,\rH)
$$
where $\G\supset \rH$ are defined in the previous subsection.
For $f\in C^\infty_c(\G(\A_{F}))$, we define
\begin{equation}\label{eqn: u RTF}
J(f):= \int_{[\rH_1]}\int_{[\rH_2]}K_f(h_1,h_2)\,dh_1\,dh_2,
\end{equation}
where $\rH_1,\rH_2\subset \G$ are pure inner forms of $\rH$.

We will compare this RTF to the following relative trace formula. We consider the triple of $F_0$-groups $(\G',\rH'_1,\rH'_2)$ where $\rH'_1=\rH'_2=\rH'$
\begin{align*}
   \G' &:=\GL_{2n},\\
	\rH' &:= \GL_{n}\times\GL_n.
\end{align*}
We need to insert two weight factors. On the first factor $\rH'$ we need the twisted Godement--Jacquet Eisenstein series on the second $\GL_n$-factor in $\rH'$ (see \S\ref{ss:Eis})
$$
E(h, \Phi,s,\eta),\quad \Phi\in \CS(\A_F^{n}), \quad h\in\GL_n(\A_F) 
$$
where $\eta=\eta_{E/F}$.
On the second factor $\rH_2'(\BA_F)$, we need another quadratic character $\eta'$ (it will be either $\eta_{E/F}$ or trivial in this paper). We consider the distribution defined by the integral, for $\wt f=f'\otimes\Phi \in \CS(\G'(\A_F)\times \A_F^{n})$,
 \begin{equation}\label{eqn:linear RTF}
I^{(\eta,\eta')}(\wt f):=\displaystyle\int_{[\rH']}\int_{[\rH']}K_{f'}(h_1,h_2) E(h_{1}^{(2)},\Phi,0,\eta_{E/F})\,\eta'(h_2)\, dh_1\, dh_2,
 \end{equation}
 where $[\rH'] = Z_{\G'}(\A_F)\rH'({F})\backslash\rH'(\A_F)$. See \S\ref{Section: RTF comparison main} for more general formulation and Proposition \ref{Prop: factorize the linear rel char} for the relationship between its spectral expansion and central $L$-values.

For ``elliptic nice" test functions (Definitions \ref{nice test functions} and \ref{elliptic test functions}), we obtain a ``simple trace formula" for each of the two distributions \eqref{eqn: u RTF} and \eqref{eqn:linear RTF}: an equality between the spectral decomposition into a sum over global relative characters of automorphic representations and the geometric expansions into a sum of orbital integrals. See Proposition \ref{Prop: simple RTF linear} (resp. Proposition \ref{Prop: Simple RTF unitary}) for the general linear (resp. the unitary) case. 

To compare the geometric expansions of the two simple trace formulas, we require a stabilized version of the unitary side. Subject to the appropriate transfer and fundamental lemma statements, such a stabilization was formulated in great generality in \cite{LeslieEndoscopy} (see also \cite{Lesliedescent,LeslieUFJFL} for the fundamental lemma and partial results toward smooth transfer in the (pre-)stabilization of the RTF in the split-inert case). In the split-inert case, the ellipticity condition for the test functions causes the geometric side to completely simplify into a sum of {\em stable} orbital integrals, see Proposition \ref{Prop: prestab si}. In the inert-inert case, there remains an ``unstable'' term of $\ep$-orbital integrals in addition to the stable one even in the elliptic regime, see Proposition \ref{Prop: prestab ii}.


We state the necessary comparisons of stable orbits and orbital integrals in \S\ref{Section: orbital integrals} by reformulating the notions from \cite{LXZfund}. Together with the relative fundamental lemmas proved in \emph{loc. cit.} (recalled in \S\ref{Section: fundamental lemmas on variety}),  we arrive at a trace formula identity for matching test functions of the form
$$J(f)=
2I^{\eta,\eta}(\wt{f})
$$
in the split-inert case (cf. Proposition \ref{Prop: weak trace comparison si}), and 
$$J(f)=
I^{(\eta,1)}(\wt f)+
I^{(\eta,\eta)}(\wt f_\ep)
$$
in the inert-inert case (cf. Proposition \ref{Prop: weak trace comparison ii}). Here, the second term arises via the endoscopic comparison to the split-inert case followed by the stable comparison in that case. 
To our knowledge, our theorem in the inert-inert case is the first instance where relative endoscopy is applied to establish a generalization of Waldspurger theorem relating period integrals and special values of $L$-functions. 


 Another novelty of our main theorems is the ambiguity of which $L$-value is non-zero (cf. the general conjecture \cite[Conj. 1.1]{LXZfund}); this feature does not appear in previous related works such as the Jacquet--Rallis RTFs and adds additional arithmetic complexity. Indeed, while assumption \eqref{Thm: ii 1} of Theorem \ref{Thm: ii} asserts that at most one central $L$-value is non-zero, we show under our hypotheses that the non-vanishing of the appropriate global relative character is equivalent to the non-vanishing of the sum
\[
L(1/2,\pi_{0})+L(1/2,\pi_{0}\otimes \eta_{E/F}).
\]
The difference is resolved by appealing to a deep result of Lapid--Rallis \cite{LapidRallis} on the non-negativity of central values of symplectic $L$-functions, which implies that this non-vanishing is equivalent to assumption \eqref{Thm: ii 1} of Theorem \ref{Thm: ii} and allows us to prove the result.

\subsection{The organization of the paper} After setting conventions, we recall the necessary local geometric results from \cite{LXZfund} in \S \ref{Section: orbital integrals}, reformulated in terms of the groups (rather than symmetric varieties) for use in the trace formulas. In \S \ref{Section: spectral prelim FJ}, we recall the spectral preliminaries on various period integrals required to formulate the spectral properties of the RTFs. Due to the use of simple relative trace formulas and our limited knowledge towards the existence of smooth transfers, we need to prove non-vanishing results for local relative characters. We are able to do so in certain special cases (Theorem \ref{Thm: FJ elliptic} and Theorem \ref{thm:arch nonzero lin char}),  resulting the local conditions in the two main theorems. The proofs of these results are given in the two appendices. In \S\ref{Section: RTF}, we establish the necessary properties of the RTFs, including the relationship to central $L$-values (Proposition \ref{Prop: factorize the linear rel char}) and the (pre-)stabilizations (Propositions \ref{Prop: prestab si} and \ref{Prop: prestab ii}). Finally, we compare the RTFs and prove the main theorems in \S\ref{Section: global proofs}.

\subsection*{Acknowledgments}

We would like to thank Sol Friedberg, Jayce Getz, and Tasho Kaletha for helpful conversations regarding this work.

S. Leslie was partially supported by NSF grant DMS-2200852. W. Zhang was partially supported by NSF grant DMS 1901642 and 2401548, and the Simons foundation.

\section{Preliminaries}

All our conventions align with those of \cite[Section 2]{LXZfund}. We recall the important notations here.

\subsubsection{Local fields} Throughout the paper, $F$ is a local or global field of characteristic zero. When $F$ is a non-archimedean field, we set $|\cdot|_F$ to be the normalized valuation so that if $\vp$ is a uniformizer, then
\[
|\vp|^{-1}_F= \#(\calo_F/\fp) = :q
\]
is the cardinality of the residue field. Here $\fp$ denotes the unique maximal ideal of $\calo_F.$ 

For any quadratic \'{e}tale algebra $E$ of a local field $F$, we set $\eta=\eta_{E/F}$ for the character associated to the extension by local class field theory. In particular, if $E$ is not a field, then $\eta_{E/F}$ is the trivial character.

Suppose $\G$ a reductive group over a non-archimedean local field $F$. We let $C^\infty_c(\G(F))$ denote the usual space of compactly-supported locally constant functions on $\G(F)$. If $\Omega$ is a finite union of Bernstein components of $\G(F)$, we denote by $C^\infty_c(\G(F))_{\Omega}$ the corresponding summand of $C^\infty_c(\G(F))$ (for the action by left translation).

Throughout the article, all tensor products are over $\cc$ unless otherwise indicated.

{\subsection{Invariant theory and orbital integrals}\label{Section: orbital integrals conventions}
Let $F$ be a field of characteristic zero, with a fixed algebraic closure $\Fbar$. We consider several pairs $(\rH,\X)$ consisting of a reductive group $\rH$  over $F$ and an affine $F$-variety $X$ equipped with a \emph{left} $\rH$-action. We denote by $\X\sslash \rH=\Spec(k[\X]^{\rH})$ the categorical quotient, and let $\pi_{\X}$ denote the natural quotient map. Recall that every fiber of $\pi_\X$ contains a unique Zariski-closed orbit.

Recall that an element $x\in \X(\Fbar)$ is $\rH$-regular semi-simple if $\pi_\X^{-1}(\pi_\X(x))$ consists of a single $\rH(\Fbar)$-orbit. For any field $F\subset E\subset \Fbar$, $x\in \X(E)$ is $\rH$-regular semi-simple if $x$ is $\rH$-regular semi-simple in $X(\ov{F})$. We denote the set of such elements by $\X^{rss}(E)$.	We will write $\rH_x$ for the stabilizer of $x$; for all the varieties considered in this article, the stabilizer of a regular semi-simple point $x\in \X(F)$ will be connected.
\begin{Def} \label{Def: elliptic}
		We say $x\in \X(F)$ is \emph{$\rH$-elliptic} if $x$ is $\rH$-regular semi-simple and $\rH_x/Z(\rH)$ is $F$-anisotropic.
\end{Def}

For all the varieties considered in this article,  $x\in \X(F)$ $\rH$-regular semi-simple will force $\rH_x\subset \rH$ to be a torus. 
For $x,x'\in \X^{rss}(F)$, $x$ and $x'$ are said to lie in the same \emph{stable orbit} if $\pi_\X(x)=\pi_{\X}(x')$. By definition, there exists $h \in \rH(\ov{F})$ such that $h\cdot x=x'$; this implies that the cocycle
\[
(\sig\mapsto h^{-1}h^\sig)\in Z^1(F,\rH)
\]
lies in $Z^1(F,\rH_x)$. Let $\calo_{st}(x)$ denote the set of rational orbits in the stable orbit of $x$.   If $x'\in\calo_{st}(x)$, then $\rH_x$ and $\rH_{x'}$ are naturally inner forms via conjugation by $g$. In particular, when $\rH_x$ is abelian, $\rH_x\simeq\rH_{x'}$. 
 
 A standard computation shows that $\calo_{st}(x)$ is in natural bijection with
\[
\ker^1(\rH_x,\rH;F):=\ker\left[H^1(F,\rH_x)\to H^1(F,\rH)\right],
\]
where $H^1(F,\rH)$ denotes the first Galois cohomology set for $\rH$. We will have need of abelianized cohomology as well.  When $\rH$ is a reductive group over $F$, $H^1_{ab}(F,\rH)$ denotes the \emph{abelianized} cohomology group of $\rH$ \cite{Borovoi}. Note that $H^1(F,\rH)\simeq H^1_{ab}(F,\rH)$ when $\rH$ is a torus.

}
Now assume that $F$ is a local field. Let $f \in C_c^\infty(\X(F))$ and $x \in \X^{rss}(F)$. We denote the orbital integral of $f$ at $x$ by
\[\Orb^{\rH}(f,x)=\int_{\rH_x(F) \bs \rH(F)}f(h^{-1}\cdot x)dh,\]
which is convergent since the orbit of $x$ is closed, and is well-defined once a choice of Haar measures on $\rH(F)$ and $\rH_x(F)$ is given (see \S \ref{measures} below). With this, we define the \emph{stable orbital integral}
\begin{equation}\label{eqn: stable orbital integral}
    \SO^{\rH}(f,x) := \sum_{y\in \calo_{st}(x)}\Orb^{\rH}(f,y).
\end{equation}
Note that this sum is finite since $F$ is local.

\quash{\subsection{Contractions}\label{Section: contractions} Throughout the paper, we will relate orbital integrals on various spaces to each other through so-called \emph{contraction maps}. We gather here a few elementary facts about this setting to clarify these steps.

Suppose that a product group $\G:=\G_1\times \G_2$ acts on an affine algebraic variety $\X$.

\begin{Assumption}\label{Assumption: contraction}
  Define $\Y=\Spec(F[\X]^{\G_2})$ which admits an action of $\G_1$. Denote $\pi_2: \X \to \Y$ as the natural $\G_1$-equivariant quotient map. We assume that there exists a Zariski open sub-variety $\X^{invt}$ containing the $\G_1\times \G_2$-regular semi-simple locus  $\X^{rss}\subset \X^{invt}\subset \X$ such that the action of $\G_2=\{1\}\times\G_2$ on $\X^{invt}$ is free in the sense that $\pi|_{\X^{invt}}$ is a $\G_2$-torsor.
\end{Assumption} 
A simple family of examples is when $\X=\G'$ is a group schemes and $\G = \rH_1\times \rH_2$ is a product of subgroups acting via left-right multiplication (where $\X^{invt} = \X$). We will also consider forms of the following example:
\[
\X=\GL_{2n}/\GL_n\times \GL_n,\quad \G=\GL_n\times \GL_n = \G_1\times \G_2.
\]
\begin{Lem}\label{Lem: orbits along contraction} The quotient map $\pi$ defines an injection from (stable) $\G_1\times\G_2$-regular semi-simple orbits on $\X$ to (stable) $\G_2$-regular semi-simple orbits on $\Y$. Furthermore, for $\pi(x) = y$, the projection $\G_1\times \G_2\to \G_1$ induces an isomorphism $(\G_1\times \G_2)_x\iso \G_{1,y}$, so that 
\begin{enumerate}
    \item $x$ is $\G_1\times \G_2$-elliptic if and only if $y$ is $\G_1$-elliptic 
    \item if $x$ and $x'$ lie in the same stable $\G_1\times\G_2$-orbit, then $\pi(x)$ and $\pi(x')$ lie in the same stable $\G_1$-orbit
\end{enumerate}
\end{Lem}
\begin{proof}
	Much of this follows directly from the definitions once we prove $(\G_1\times \G_2)_x\iso \G_{1,y}$. This claim follows from the assumption that $\G_2$ acts freely. We leave the details to the reader.
\end{proof}

 We assume $F$ is a local. In this setting, each stable orbit consists of finitely many orbits. Let $f \in C_c^\infty(\X(F))$ and $x \in \X^{rss}(F)$, and consider the orbital integral $\Orb^{\G}(f,x)$ 
and the stable orbital integral 
$
\SO^{\G}(f,x).
$

Under Assumption \ref{Assumption: contraction}, we define the contraction map $\pi_{!}:C_c^\infty(\X(F))\to C^\infty(\Y^{rss}(F))$ as follows: \[\pi_{!}(f)(y)=\begin{cases}
    \displaystyle\int_{\G_2(F)}f(x\cdot h)dh&: \text{if there exists $x\in \X(F)$ with }\pi(x)=y,\\
    0&: \text{otherwise.}
\end{cases}\] Since $\X^{rss}\subset \X^{invt}$, the $\G_2(F)$-action is free in the fiber over $y\in \Y^{rss}(F)$ so that the integral is well-defined and we have that
	\[\Orb^{\G_1\times\G_2}(f,x)=\Orb^{\G_1}(\pi_{!}(f),\pi(x)), \quad \SO^{\G_1\times\G_2}(f,x)=\SO^{\G_1}(\pi_{!}(f),\pi(x))\] 
for any regular semi-simple $x\in \X(F)$. Moreover, if $f\in C_c^\infty(\X^{invt}(F))$, then $\pi_{!}(f) \in C_c^\infty(Y(F))$.
}
 \subsection{Unitary groups and Hermitian spaces}\label{Section: groups and hermitian} 
For a field $F$ and for $n\geq1$, we consider the algebraic group $\GL_n$ of invertible $n\times n$ matrices. We fix the element
\[
w_n:=\left(\begin{array}{ccc}&&1\\&\Ddots&\\1&&\end{array}\right)\in\GL_n(F).
\]
For any $F$-algebra $R$ and $g\in \GL_n(R)$, we define the involution
\[
g^\theta = w_n{}^tg^{-1}w_n,
\]
 where ${}^tg$ denotes the transpose.
 
 Now suppose that $E/F$ is a quadratic \'{e}tale algebra and consider the restriction of scalars $\Res_{E/F}(\GL_n)$. Then for any $F$-algebra $R$ and $g\in \Res_{E/F}(\GL_n)(R)$, we set $g^\sig$ to denote the induced action of the Galois involution $\sig\in \Gal(E/F)$  associated to $E/F$. We denote by ${T}_n\subset \GL_n$ the diagonal maximal split torus, ${B_n}={T_nN_n}$ the Borel subgroup of upper triangular matrices with unipotent radical ${N_n}$.

Note that $w_n$ is Hermitian in sense that that ${}^tw_n^\sig =w_n$. 
For harmony with later notation, we set $\tau_n=w_n$
and denote by $V_n$ the associated Hermitian space; this distinguishes the quasi-split unitary group $\U_n:=\U(V_n)$.
For spectral reasons, we normalize the space of Hermitian matrices with respect to $w_n$ by setting
\[
\Herm_n(F)=\{\tau\in \fgl_n(E): \tau^\ast:=\tau_n{}^t{\tau}^\sig \tau_n^{-1}=\tau\}.
\]
and denote $\Herm_{n}^{\circ}:=\Herm_n\cap \Res_{E/F}(\GL_n)$. Note that $\GL_n(E)$ acts on $\Herm_{n}^{\circ}(F)$ on the left via 
\[
 g \ast \tau=g \tau{g}^\ast,\quad \tau\in\Herm_{n}^{\circ}(F),\: g\in \GL_n(E).
\]
 We let $\calv_n(E/F)$ be a fixed set of $\GL_n(E)$-orbit representatives. For any $\tau\in\Herm_{n}^{\circ}(F),$  then $\tau\tau_n$ is Hermitian and we denote by $V_\tau$ the associated Hermitian space and $\U(V_\tau)$ the corresponding unitary group. In particular, $V_n=V_{I_n}$. We set
 \[
 U(V_\tau) = \U(V_\tau)(F).
 \]Note that $\mathcal{V}_{n}(E/F)$ gives a complete set of representatives $\{V_\tau: \tau\in \calv_n(E/F)\}$ of the isometry classes of Hermitian vector spaces of dimension $n$ over $E$. When convenient, we will abuse notation and identify this set with $\calv_n(E/F)$. 
\quash{ We have
 \begin{equation}\label{eqn: Herm orbits}
     \Herm_{n}^{\circ}(F) = \bigsqcup_{\tau\in \calv_n(E/F)}\Herm_{n}^{\circ}(F)_\tau,\quad \Herm_{n}^{\circ}(F)_\tau: = \{\tau\in \Herm_{n}^{\circ}(F): (\Res_{E/F}\GL_n)_\tau\simeq \U(V_\tau)\};
 \end{equation}
 then $\Herm_{n}^{\circ}(F)_\tau = \GL_n(E)\ast \tau\simeq \GL_n(E)/U(V_\tau)$ is the orbit under the action of $\GL_n(E)$.

For any Hermitian space $V_\tau = (V,\la\, ,\,\ra_\tau)$, 
we set 
\[
\Herm_\tau = \{x\in \fgl_n(E): \tau x^\ast=x\tau \},
\]
and 
$\Herm_{\tau}^\circ = \Herm_\tau\cap \Res_{E/F}(\GL_n)$.
 There is an isomorphism of $F$-varieties
 \begin{align*}
     \Res_{E/F}\GL_n/\U(V_\tau)&\iso \Herm_{\tau}^\circ\\
     g&\longmapsto g \tau\tau_n {}^tg^\sig(\tau\tau_n)^{-1} = g\tau g^\ast \tau^{-1},
 \end{align*}
and there is a natural isomorphism $\Herm_{n}^{\circ}\iso \Herm_{\tau}^\circ$, given by the map $y \mapsto   y\tau^{-1}$. It may readily be verified that this map intertwines that $\U(V_\tau)$-action on $\Herm_{n}^{\circ}$ with the conjugation action on $\Herm_{\tau}^\circ$.}
\quash{
\begin{Lem}\label{Lem: quotient for Herm}
    With respect to the $\U(V_\tau)$-action on $\Herm_{n}^{\circ}$, the following hold.
    \begin{enumerate}
        \item the map $\pi:\Herm_{n}^{\circ}\to \Herm_{n}^{\circ}\sslash\U(V_\tau)$ sending $y\in \Herm_{n}^{\circ}(F)$ to the coefficients of the characteristic polynomial of $y \tau^{-1}$ gives a categorical quotient.
        \item An element $y \in \Herm_{n}^{\circ}(F)$ is $\U(V_\tau)$-regular semi-simple if and only if $y \tau^{-1}$ is regular semi-simple as an element of $\GL_n(E)$. Moreover, {a regular semi-simple element} $y$ is elliptic if and only if the $F$-algebra $F[y\tau^{-1}]=F[X]/(\car_{y\tau^{-1}}(X))$ does not contain the field $E$.
    \end{enumerate}
\end{Lem}
}
We say that $\tau\in \calv_n(E/F)$ is \emph{split} when $\tau= gg^\ast$ for some $g\in \GL_n(E)$, and \emph{non-split} otherwise.

\subsection{Measure conventions}\label{measures} We adopt the measure conventions of \cite{LXZfund}. In particular, when $F$ is $p$-adic and $\psi$ of conductor $\calo_F,$ this choice gives $K:=\GL_n(\calo_F)$ volume $1$. When $E/F$ is also unramified, the same holds for $K_E=\GL_n(\calo_E)\subset \GL_n(E)$ and $\U_n(\calo_F)\subset\U_n(F)$, as well as $\Herm_{n}^{\circ}(\calo_F):=\GL_n(\calo_E)\ast I_n\simeq \GL_n(\calo_E)/\U_n(\calo_F)$. 

When $E/F$ is a quadratic extension of number fields and $\G$ is a reductive group over $F$, we endow adelic group $\G(\A_F)$ with the product measures with respect to our local choices. As always, discrete groups are equipped with the counting measure. 

Finally, we recall our normalization of the measures on various tori which occur in the paper. When $F$ is a $p$-adic field, we endow $\T(F)$ with the Haar measure $dt$ normalized to give the (unique) maximal compact subgroup volume $1$. When $F$ is archimedean, we adopt the conventions of Tate's thesis and endow $\rr^\times$ with the measure $\frac{dx}{|x|}$, $\cc^\times$ with $\frac{2dxdy}{x^2+y^2}$, and $S^1$ with the radial measure $d\theta$ so that $\vol_{d\theta}(S^1) = 2\pi$. 

\subsubsection{Abelian $L$-functions and centralizers}\label{Section: Lvalue measure}
These choices have the following consequence on global measures of regular centralizers. Fix a quadratic extension $E/F$, and let $\eta$ be the associated quadratic character of $\A^\times_F$. {Following the notation of \cite{Xiaothesis}, let $S_1=\{i: F_i\not\supset E\}$ and $S_2=\{i: F_i\supset E\}$. By local class field theory, we see that $i\in S_1$ if and only if $\eta_i:=\eta\circ \Nm_{F_i/F_0}\not\equiv 1$.} 

Let $L(s,\T,\eta):=\prod_i L(s,\eta_i)$ denote the $L$-function associated to the character on $T\subset \GL_n$ induced by $\eta$. When $F$ is a number field, it is clear that the order of the pole of $L(s,\T,\eta)$ at $s=1$ is $|S_2|$.

For a regular $\U(V_\tau)$-semi-simple element $x\in \Herm_{\tau}^\circ(F)$, there is a similar decomposition $$F[x]:=F[X]/(\car_{x}(X))=\prod_{i=1}^mF_i,$$ where $\car_{x}(X)$ denotes the characteristic polynomial of $x$. To construct the centralizer $\T_x\subset \U(V_\tau)$, set $E_i=E\otimes_F F_i$. Then we have
\[
E[x]=\prod_i E_i=\prod_{i\in S_1}E_i\times\prod_{i\in S_2}F_i\oplus F_i,
\]
where $S_1$ and $S_2$ are as above. There is a natural norm map $\Nm: E[x] \lra F[x]$ inducing by the norm map on each factor.
\begin{Lem}\label{Lem: centralizers}
For  a regular $\U(V_\tau)$-semi-simple element $x\in \Herm_{\tau}^\circ(F)$, let $\T_x\subset \U(V_\tau)$ denote the centralizer. Then 
\[
\T_x\cong  \ker[\Res_{E[x]/F}(\Gm)\overset{\Nm}{\lra}\T_x^{op}:=\Res_{F[x]/F}(\Gm)].
\]
\end{Lem}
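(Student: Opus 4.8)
The plan is to compute the centralizer $\T_x$ of a regular semisimple element $x \in \Herm_\tau^\circ(F)$ directly, working over the algebra $E[x]$. First I would recall that, since $x$ is regular semisimple, its centralizer in $\Res_{E/F}\GL_n$ is exactly $\Res_{E[x]/F}(\Gm)$: regularity means the characteristic polynomial of $x$ (equivalently of $x\tau^{-1} \in \GL_n(E)$) is separable with distinct roots, so the commutant of $x$ in $\fgl_n(E) = \operatorname{Mat}_n(E)$ is the étale $E$-algebra $E[x]$, and the invertible elements there form $\Res_{E[x]/F}(\Gm)$. The unitary group $\U(V_\tau)$ is the fixed points of the involution $g \mapsto g^\ast = \tau_n\,{}^tg^\sigma\,\tau_n^{-1}$ (composed appropriately with the $\tau$-twist), so $\T_x = \Res_{E[x]/F}(\Gm)^{\theta}$ for the involution $\theta$ induced on $E[x]$ by $g \mapsto g^\ast$.

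The key step is to identify this involution $\theta$ on $E[x]$. Because $x \in \Herm_\tau^\circ$ satisfies $\tau x^\ast = x\tau$, i.e. $x^\ast = \tau^{-1} x \tau$, one checks that $x^\ast$ has the same characteristic polynomial as $x$, and conjugating by $\tau$ identifies $F[x^\ast]$ with $F[x]$; the upshot is that $\theta$ acts on the $F$-subalgebra $F[x] = \prod_i F_i$ trivially (this is the content of $x$ being ``Hermitian''), while on $E[x] = E \otimes_F F[x] = \prod_i E_i$ it acts through the Galois involution $\sigma$ on the $E$-factor. Then for each $i$: if $i \in S_1$ (so $F_i \not\supset E$ and $E_i = E\otimes_F F_i$ is a field, quadratic over $F_i$), the fixed points of $\sigma$-conjugation on $\Res_{E_i/F}(\Gm)$ form the norm-one torus $\ker[\Res_{E_i/F}(\Gm) \xrightarrow{\Nm} \Res_{F_i/F}(\Gm)]$; if $i \in S_2$ (so $F_i \supset E$ and $E_i \cong F_i \oplus F_i$ with $\sigma$ swapping the two factors, composed with the Galois action of $E/F$ on $F_i$), the fixed points form a copy of $\Res_{F_i/F}(\Gm)$, which is again $\ker[\Res_{E_i/F}(\Gm) \xrightarrow{\Nm} \Res_{F_i/F}(\Gm)]$ once one observes that in this split case the norm map $E_i \to F_i$ is surjective with kernel the anti-diagonal, giving the same torus. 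Assembling over all $i$ yields $\T_x \cong \ker[\Res_{E[x]/F}(\Gm) \xrightarrow{\Nm} \Res_{F[x]/F}(\Gm)]$, where I should double-check that the target should be written $\T_x^{op} := \Res_{F[x]/F}(\Gm)$ as in the statement, matching the paper's conventions on the ``opposite'' torus used for abelianized $L$-functions.

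The main obstacle is the bookkeeping in the split places $i \in S_2$: there the involution $\theta$ on $E_i = F_i \oplus F_i$ is a composite of the ``swap'' coming from $E \otimes_F E \cong E \oplus E$ and the nontrivial $F_i/F$-Galois action present because $F_i \supset E$, and one must verify carefully that the net fixed-point torus is $\Res_{F_i/F}(\Gm)$ and that this is correctly captured by the uniform formula $\ker[\Nm]$. I expect this to reduce to the standard fact that for a quadratic étale $A$-algebra $B$ the norm-one torus $\Res_{B/A}(\Gm)^{(1)}$ is $\Res_{A/A}(\Gm) = \Gm$ when $B = A \times A$; applying this with $A = F_i$ (itself restricted down to $F$) finishes the argument. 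The regularity of $x$ is used only to guarantee $E[x]$ is étale (so all the tori above are genuine tori and the commutant computation is valid), and connectedness of $\T_x$ — asserted in the conventions — is automatic since norm-one tori of étale algebras are connected.
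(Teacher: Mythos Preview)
The paper states this lemma without proof, treating it as a standard fact (the conventions are imported from \cite[Section 2]{LXZfund}). Your overall strategy---compute the centralizer in $\Res_{E/F}\GL_n$ as $E[x]^\times$, then intersect with the unitary group by taking fixed points of the defining involution---is the standard one and is correct in outline.

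There is, however, a genuine slip in your identification of the involution. The unitary group $\U(V_\tau)$ is the fixed locus in $\Res_{E/F}\GL_n$ of the involution $g\mapsto (g^{\dagger})^{-1}$, where $\dagger$ is the adjoint with respect to the Hermitian form $\tau\tau_n$; it is \emph{not} the fixed locus of $g\mapsto g^\ast$. Consequently, on $E[x]^\times$ the relevant involution is $g\mapsto \overline{g}^{\,-1}$ (with $\overline{\phantom{g}}$ the $E/F$-Galois action extended to $E[x]$), and its fixed points are $\{g: g\overline{g}=1\}=\ker\bigl[\Nm_{E[x]/F[x]}\bigr]$. Your sentence ``the fixed points of $\sigma$-conjugation on $\Res_{E_i/F}(\Gm)$ form the norm-one torus'' is therefore not literally correct: the fixed points of $\sigma$ alone would be $\Res_{F_i/F}(\Gm)$, not the norm kernel. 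Once you replace $\theta$ by $g\mapsto \sigma(g)^{-1}$, the conclusion follows directly and uniformly, and the separate case analysis for $i\in S_1$ versus $i\in S_2$ becomes unnecessary.

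One further bookkeeping point you glossed over: you should check that the adjoint $\dagger$ actually preserves the commutative subalgebra $E[x]\subset \Mat_n(E)$. This holds because $x$ is $\dagger$-self-adjoint (that is the content of $x\in\Herm_\tau^\circ$) and $\dagger$ is a $\sigma$-semilinear ring anti-involution, hence sends any polynomial $p(x)$ to $p^\sigma(x)$; this is also what makes the restriction of $\dagger$ to $E[x]$ coincide with the Galois action on the $E$-factor, as you asserted.
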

\begin{Rem}
    We note that this encompasses all regular stabilizers arising from the unitary symmetric varieties.
\end{Rem}

\quash{
Suppose now that $E/F$ is a quadratic extension of number fields and assume that every archimedean place of $F$ splits in $E$. Assume $x\in \Herm_{\tau}^\circ(F)$ is regular semi-simple and consider the short exact sequence
\[
1\lra \T_x \lra \Res_{E[x]/F}(\Gm)\overset{\Nm}{\lra}\T_x^{op}:=\Res_{F[x]/F}(\Gm)\lra 1.
\]
As above, we associate to $x$ the $L$-function $L(s,\T_x^{op},\eta)$.  For each non-split place $v$ of $F$, we similarly consider
\[
F_v[x]\simeq \prod_{i\in S(v)}F_i = \prod_{j\in S_1(v)}F_j\times \prod_{k\in S_2(v)}F_k,
\]
where $S_1(v)=\{i\in S(v): F_i\not\supset E_v\}$ and $S_1(v)=\{i\in S(v): F_i\supset E_v\}$. Note that by assumption $(\T_x)_\infty\simeq (\T_x^{op})_{\infty}$, so we adopt the measure convention at archimedean places above, and equip $\T_x(\A_F)$ with the product measure.

\begin{Lem}\label{Lem: vol of centralizer}\cite[Lemma 2.6]{LXZfund} Let $E/F$ be a quadratic extension of number fields and assume that every archimedean place of $F$ splits in $E$. Assume $x\in \Herm_{\tau}^\circ(F)$ is elliptic, so that $S_2=\emptyset$ and $\T_x$ is anisotropic. With respect to the measure $dt=\prod_vdt_v$ above, 
    \[
    \vol([\T_x]) = 2^{|S_1|-|S_x|}L(0,\T_x^{op},\eta),
    \]
    where $|S_x| = \sum_{v\in S}|S_1(v)|$ and $S$ is the set of places of $F$ that ramify in $E$.
\end{Lem}
}

Suppose now that $E/F$ is a quadratic extension of number fields. Assume $x\in \Herm_{\tau}^\circ(F)$ is regular semi-simple and consider the short exact sequence
\[
1\lra \T_x \lra \Res_{E[x]/F}(\Gm)\overset{\Nm}{\lra}\T_x^{\mathrm{op}}:=\Res_{F[x]/F}(\Gm)\lra 1.
\]
As above, we associate to $x$ the $L$-function $L(s,\T_x^{\mathrm{op}},\eta)$.  For each non-split place $v$ of $F$, we similarly consider
\begin{equation}\label{eqn: decomp of stab at v}
    F_v[x]\simeq \prod_{i\in S_v(x)}F_i = \prod_{j\in S_{v,1}(x)}F_j\times \prod_{k\in S_{v,2}(x)}F_k,
\end{equation}
where $S_{v,1}(x)=\{i\in S_v(x): F_i\not\supset E_v\}$ and $S_{v,2}(x)=\{i\in S_v(x): F_i\supset E_v\}$. We adopt the measure convention at archimedean places above, equip $\T_x(\A_F)$ with the product measure, and $[\T_x]:=\T_x(F)\bs\T_x(\A_F)$ with the quotient measure.

\begin{Lem}\label{Lem: vol of centralizer} Let $E/F$ be a quadratic extension of number fields. Assume $x\in \Herm_{\tau}^\circ(F)$ is elliptic, so that $S_2=\emptyset$ and $\T_x$ is anisotropic. With respect to the measure $dt=\prod_vdt_v$ above, 
    \[
    \vol([\T_x]) = 2^{|S_1|-|S_x|}L(0,\T_x^{\mathrm{op}},\eta),
    \]
    where $|S_x| = \sum_{v\in S}|S_{v,1}(x)|$ and $S$ is the set of places of $F$ that ramify in $E$.\footnote{This includes archimedean places $v$ of $F$ such that $F_v\simeq\rr$ and $E_v\simeq \cc$.}
\end{Lem}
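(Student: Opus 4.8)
The plan is to make $\T_x$ explicit, reduce to standard norm-one tori, and compute each adelic volume via Tate's thesis and the functional equation of the associated quadratic character. Since $x$ is elliptic we have $S_2=\emptyset$, so in the decomposition $F[x]=\prod_{i=1}^m F_i$ every $E_i:=E\otimes_F F_i$ is a \emph{field}, quadratic over $F_i$; by Lemma~\ref{Lem: centralizers} this gives
\[
\T_x\;\cong\;\prod_{i=1}^m\Res_{F_i/F}\U^1_{E_i/F_i},\qquad \U^1_{E_i/F_i}:=\ker\!\big(\Res_{E_i/F_i}\Gm\xrightarrow{\Nm}\Gm\big),
\]
which is anisotropic, with $\T_x^{\mathrm{op}}=\prod_i\Res_{F_i/F}\Gm$ and $L(s,\T_x^{\mathrm{op}},\eta)=\prod_i L(s,\eta_{E_i/F_i})$. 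Because all adelic measures here are product measures over $F$ and adelic volumes are insensitive to restriction of scalars, the identity factors through the index $i$, and it suffices to prove the single-extension statement: for a quadratic extension $L/K$ of number fields, $\vol\big(\U^1_{L/K}(K)\backslash\U^1_{L/K}(\A_K)\big)=2^{\,1-n_{L/K}}\,L(0,\eta_{L/K})$, where $n_{L/K}$ counts the places of $K$ ramified in $L$ (including real places that become complex); the bookkeeping identity $\sum_i n_{E_i/F_i}=|S_x|$ is a combinatorial step in the reduction (and is transparent in the case, relevant to the applications, where $E/F$ is everywhere unramified).

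For $T:=\U^1_{L/K}$, I would run the exact sequence $1\to T\to R:=\Res_{L/K}\Gm\xrightarrow{\Nm}\Gm\to 1$ through its adelic and rational points. Since $T$ is anisotropic, $T(\A_K)$ lies in the norm-one ideles, so $[T]:=T(K)\backslash T(\A_K)$ embeds into $[R]^1:=L^\times\backslash\A_L^1$; the Hasse norm theorem (valid as $L/K$ is cyclic) shows this image is exactly $\ker\!\big([R]^1\to[\Gm]^1:=K^\times\backslash\A_K^1\big)$, and global class field theory identifies the cokernel with $\Gal(L/K)$, of order $2$. Hence
\[
\vol([T])\;=\;\frac{2\,\vol([R]^1)}{\vol([\Gm]^1)}\cdot C^{-1},\qquad C=\prod_v c_v,
\]
where $c_v$ records the discrepancy, at $v$, between the measure $dt_v$ of \S\ref{measures} on $T(K_v)$ and the chosen measures on $L_v^\times,K_v^\times$ in the local sequence $1\to T(K_v)\to L_v^\times\xrightarrow{\Nm}\Nm(L_v^\times)\to 1$. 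A direct check gives $c_v=1$ at every place split, inert, or archimedean-ramified in $L/K$, and $c_v=2$ at each finite place ramified in $L/K$ (using $[K_v^\times:\Nm L_v^\times]=2$ and $[\calo_{K_v}^\times:\Nm\calo_{L_v}^\times]=2$, and that $\U^1_{L/K}(K_v)$ is its own maximal compact there).

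The remaining ingredients are analytic: by the analytic class number formula in the normalization of \S\ref{measures}, $\vol([R]^1)$ and $\vol([\Gm]^1)$ are the residues at $s=1$ of the Dedekind zeta functions of $L$ and $K$ up to explicit discriminant and archimedean factors, so their ratio contributes $L(1,\eta_{L/K})$; the functional equation for the self-dual character $\eta_{L/K}$ — whose global root number is $+1$ — then converts $L(1,\eta_{L/K})$ into $L(0,\eta_{L/K})$ at the cost of the conductor $N(\mathfrak f_{L/K})^{1/2}$ and a ratio of archimedean $\Gamma$-factors.

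The main obstacle is the final bookkeeping: one must combine (i) $C^{-1}=2^{-\#\{\text{finite places ramified in }L/K\}}$; (ii) the conductor $N(\mathfrak f_{L/K})^{1/2}$ and the $\Gamma$-factor ratio from the functional equation; and (iii) the $|\Delta_L|^{\pm1/2}/|\Delta_K|^{\pm1/2}$, the $2^{r_1}(2\pi)^{r_2}$-type archimedean factors (including $\vol_{d\theta}(S^1)=2\pi$), and the root-of-unity factors $w_L/w_K$ entering the class number formula, and verify that every non-dyadic factor cancels and the powers of $2$ collapse to exactly $2^{\,1-n_{L/K}}$. This is a place-by-place verification — split places contribute nothing, inert places cancel in pairs, and each place in $S$ (a finite ramified place, or a real place of $K$ complex in $L$) contributes a single net factor $\tfrac12$. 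Assembling over the factors $E_i/F_i$ yields $2^{|S_1|-|S_x|}L(0,\T_x^{\mathrm{op}},\eta)$. Alternatively, one can route the computation through Ono's formula $\tau(\U^1_{E_i/F_i})=2$ for the Tamagawa number, trading the class number formula and functional equation for a comparison of the Tamagawa measure with the measures of \S\ref{measures}; the local bookkeeping is essentially identical.
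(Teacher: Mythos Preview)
The paper does not give a proof of this lemma; it is stated without argument and (per the surrounding text and a suppressed earlier version) is imported from the companion paper \cite{LXZfund}. So there is no in-paper proof to compare against. Your strategy—factoring $\T_x$ as $\prod_i\Res_{F_i/F}\U^1_{E_i/F_i}$, running the norm exact sequence, using the class number formula to produce $L(1,\eta_{E_i/F_i})$, and then the functional equation to pass to $s=0$—is the standard one, and the Tamagawa-number variant you mention is equally viable.

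There is, however, a genuine gap in your reduction step. You assert the bookkeeping identity $\sum_i n_{E_i/F_i}=|S_x|$, where $n_{E_i/F_i}$ counts the places of $F_i$ at which $E_i/F_i$ \emph{ramifies}. But by the paper's definition, $|S_x|=\sum_{v\in S}|S_{v,1}(x)|$ counts, for each $v$ ramified in $E/F$, the local factors of $F_v[x]$ not containing $E_v$; unwinding the localization $F_v[x]=\prod_i\prod_{w\mid v}(F_i)_w$, this is the number of pairs $(i,w)$ with $w\mid v\in S$ for which $(E_i)_w/(F_i)_w$ is merely \emph{nonsplit}. These two counts can differ: if $v\in S$ is a finite place and $e\big((F_i)_w/F_v\big)$ is even, then $(E_i)_w/(F_i)_w$ can be an \emph{unramified} quadratic extension, contributing to $|S_x|$ but not to $n_{E_i/F_i}$. (Concretely: $F=\qq$, $E=\qq(\sqrt p)$ with $p\equiv 1\ (4)$, $F_1=\qq(\sqrt{ap})$ with $a$ a non-square mod $p$; then the unique place of $F_1$ over $p$ contributes to $|S_x|$, yet $E_1/F_1$ is unramified there.) Your identity does hold when $E/F$ is unramified at every finite place—precisely the standing hypothesis in the paper's applications, which is why you find it ``transparent'' there—but it is not valid in the generality of the lemma as stated. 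Either the exponent in your single-extension formula must be reinterpreted (replacing the ramification count by the count relevant to the definition of $|S_x|$), or the reduction to a single extension must retain the data of which places lie over $S$; you cannot fully decouple from $E/F$.

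A secondary caution: whether $L(0,\T_x^{\mathrm{op}},\eta)$ denotes the finite-part or the completed $L$-value affects the archimedean bookkeeping, and the paper's measure conventions (including $\vol(S^1)=2\pi$) are ultimately fixed in \cite{LXZfund}; you should check your archimedean constants against that source before declaring the powers of $2$ and $\pi$ settled.
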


 \section{Geometric preliminaries}\label{Section: orbital integrals}
 In this section, we let $E/F$ be a quadratic extension of local fields and define the relevant orbital integrals on $\G'$ and $\G$ by relating them to the orbital integrals introduced in \cite[Section 4]{LXZfund}. We define the necessary transfer factors and notions of smooth transfer. Finally, we state the relevant results from \cite{LXZfund} on transfer and fundamental lemmas.

We set $\eta=\eta_{E/F}$. For any character $\chi:F^\times\to \cc^\times$ and $h=(h^{(1)},h^{(2)})\in \rH'(F)$, we set
\[
\chi(h) = \chi(\det(h^{(1)})\det(h^{(2)})^{-1}).
\]
\subsection{Recollection of invariant theory} We here recall a few relevant facts on the invariant theory for the symmetric spaces. Proofs may be found in \cite[Section 3]{LXZfund}.

Let $(\G',\rH') =(\GL_{2n},\GL_n\times \GL_n)$ be the split symmetric pair. Set $\X=\G'/\rH'$ and set $s_{\X}:\G'\to \X$ the natural quotient map, which is surjective on $F$-points since $H^1(F,\rH')=0$. Assume that $\rH'\subset \G'$ is embedded in a block diagonal manner. Given an element $x=s_{\X}(g)\in \X(F)$, we may write
$
x= \begin{psmatrix}
    A&B\\C&D
\end{psmatrix},
$ so that $(g,h)\in \rH'(F)$ acts by
\begin{equation*}
   (g,h)\cdot x =\left(\begin{array}{cc}gAg^{-1}&gBh^{-1}\\hCg^{-1}&hDh^{-1}\end{array}\right). 
\end{equation*}
By \cite[Lemma 3.6]{LXZfund}, the $\rH'$-invariant map $\car_{lin}:\X\to \A^n$ given by sending $x$ to the coefficients of the monic polynomial $\det(tI_n-A)$ is a categorical quotient for $(\rH',\X)$.

The geometric expansion of the linear RTF \eqref{eqn:linear RTF general} is indexed not by the $\rH'$-orbits on $\X(F)$, but certain $\rH'(F)$-orbits on $\X(F)\times F_{n}$; here, $F_{n}$ denotes $1\times n$ matrices over $F$. The $\rH'$-action is given by
\[
h\cdot (x,w) = (h\cdot x,w h_2^{-1}),
\]
where $h=(h_1,h_2)\in \rH'(F)$. We say that $(x,w)$ is \textbf{strongly regular} if $x\in \X^{rss}(F)$ and $F_{n}=\sspan\{w, wR(x),\ldots,w R(x)^{n-1}\}$. This locus is denoted $(\X(F)\times F_n)^{sr}$. We say $(\ga,w)\in \G'(F)\times F_{n}$ is strongly regular if $(s_{\X}(\ga),w)$ is.

Now assume that $\rH\subset \G$ is a unitary symmetric pair and set $\Q=\G/\rH$. We generally set $\pi:\G\to \Q$ for the quotient map. To unify notation for our cases of interest, we set $(\G_\bullet,\rH_\bullet)$ to be a unitary pair with $\bullet\in\{si, ii\}$ (for \emph{split-inert} or \emph{inert-inert}).  Set $\X:=\G'/\rH'$ and $\Q_{\bullet} = \G_{\bullet}/\rH_{\bullet}$. We refer the reader to \cite[Section 3]{LXZfund} for the invariant theory of these symmetric varieties. In particular, we have the diagram of categorical quotients
\[
\begin{tikzcd}
\X\ar[dr,"\car_{lin}",swap]&&\Q_\bullet\ar[dl,"\car_{\bullet}"]\\
&\A^n,&
\end{tikzcd}
\]
where $\car_{lin}$ is defined in \cite[Lemma 3.6]{LXZfund} and the categorical quotients $\car_{\bullet}$ are defined in \cite[Lemmas 3.4 and 3.5]{LXZfund}.    We say that regular elements $y\in \X(F)$ and $x\in {\Q}_\bullet(F)$ \emph{match} if
\[
\car_{lin}(y) = \car_{\bullet}(x),
\]
with the natural extension to $\G'(F)$ and $\G(F)$ via pullback.

\subsection{The linear orbital integrals}\label{section: linear orbital ints}
We begin with the orbital integrals that occur in the geometric expansion of the linear RTF introduced in \S \ref{Section: linear RTF}. We allow for more flexibility than will be used later due to possible interest in the more general forms of trace formula \eqref{eqn:linear RTF general}. After several intermediate steps, the final formula is given in Lemma/Definition \ref{LemDef: linear OI}.

 For $i=0,1,2$, fix now characters $\eta_i:F^\times \to \cc^\times$ and set $\underline{\eta} = (\eta_0,\eta_1,\eta_2)$.
  For $(\ga,w)$ strongly regular and $f'\otimes \Phi\in C_c^\infty(\G'(F)\times F_{n})$, we consider the local orbital integral, 
\begin{equation*}
  \Orb^{\underline{\eta}}_{\ul{s}}(f'\otimes\Phi,(\ga,w))  =\int_{\rH'(F)}\int_{\rH'(F)}f'(h_1^{-1}\ga  h_2)\Phi(wh_1^{(2)})\eta_{0}(h_1^{(2)})\eta_{s}(h_1,h_2)dh_1dh_2,
\end{equation*}
where
\begin{equation}\label{eq eta s local}
\eta_{\ul{s}}(h_1,h_2) = |h_1^{(2)}|^{s_0}|h_1|^{s_1}|h_2|^{s_2}\eta(h_1,h_2),
\end{equation}
for $\ul{s}=(s_0,s_1,s_2)\in \cc^3$ and $\eta(h_1,h_2) =\eta_{1}(h_1) \eta_{2}(h_2)$.%

\subsubsection{Transfer factors} 
Note that
\begin{equation*}
 \Orb^{\underline{\eta}}_{\ul{s}}(f'\otimes\Phi,(h_1,h_2)\cdot(\ga,w)) =\eta_0(h_1^{(2)})\eta_{\ul{s}}(h_1,h_2) \Orb^{\underline{\eta}}_{\ul{s}}(f'\otimes\Phi,(\ga,w)).
\end{equation*}
The transfer factor is pulled back from the factor on $\X(F)\times F_n$ (cf. \cite[Definition 4.1]{LXZfund}).

\begin{Def}\label{Def: transfer factor on X}
   Let $(\ga,w)$ be strongly regular, and  write $s_{\X}(\ga)= \begin{pmatrix}
       A&B\\C&D
    \end{pmatrix}\in \X^{rss}(F)$. We define the \textbf{transfer factor} on $\G'(F)\times F_{n}$, as
\begin{align}\label{eqn: transfer factor on X}
\Omega(\ga,w)&:=\omega(s_{\X}(\ga),w)\eta_2(\det(\ga))\\ &= \eta^n_2(BC)({\eta_1\eta_2})(C){\eta_0}(\det(w|wD|\ldots|wD^{n-1}))\eta_2(\det(\ga)).\nonumber
\end{align} 
\end{Def}

Since the factor $\eta_2(\det(\ga))$ is invariant, the following lemma is then an immediate consequence of \cite[Lemma 4.3]{LXZfund}.

\begin{Lem}
  For any strongly regular pair $(\ga,w)$, the transfer factor $\Omega(\ga,w)$ is well-defined. For any $\wt{f}\in C_c^\infty(\G'(F)\times F_{n})$, we have
\[
\omega((h_1,h_2)\cdot(y,w))\Orb^{\underline{\eta}}_{s}(\wt{f},(h_1,h_2)\cdot (\ga,w)) =|h_1^{(2)}|^{s_0}|h_1|^{s_1}|h_2|^{s_2}\Omega(\ga,w)\Orb^{\underline{\eta}}_{s}(\wt{f},(\ga,w))
\]
for any $(h_1,h_2)\in \rH'(F)\times \rH'(F)$.
\end{Lem}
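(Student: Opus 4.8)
The plan is to prove the covariance property stated in the lemma by pulling back the analogous statement from the symmetric variety $\X(F)\times F_n$ via the quotient map $s_\X:\G'\to\X$, and then accounting for the extra invariant character $\eta_2(\det(\cdot))$ by a direct computation. First I would recall that by definition $\Omega(\ga,w) = \omega(s_\X(\ga),w)\eta_2(\det(\ga))$, where $\omega$ is the transfer factor on $\X(F)\times F_n$ from \cite[Definition 4.1]{LXZfund}, and that \cite[Lemma 4.3]{LXZfund} already gives the covariance of $\omega$ together with the corresponding orbital integral identity on $\X(F)\times F_n$ — namely that $\omega((h_1,h_2)\cdot(y,w))$ differs from $\omega(y,w)$ by exactly the character factor $\eta_0(h_1^{(2)})\eta(h_1,h_2)$, which is precisely what compensates the transformation law for the orbital integral recorded in the displayed equation just above Definition \ref{Def: transfer factor on X}. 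This establishes that $\Omega(\ga,w)$ is well-defined (independent of the lift $\ga$ of $s_\X(\ga)$ and of the representative $w$), since both $\omega$ and $\eta_2\circ\det$ are well-defined.

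Next I would verify the covariance of the full orbital integral. The key point is that $\eta_2(\det(\ga))$ is a \emph{class function} on $\G'(F)$ that is bi-invariant under $\rH'(F)$ in the relevant sense: since $\rH'=\GL_n\times\GL_n$ is embedded block-diagonally, $\det(h_1^{-1}\ga h_2) = \det(h_1)^{-1}\det(h_2)\det(\ga)$, and by the defining convention $\eta_2(h) = \eta_2(\det(h^{(1)})\det(h^{(2)})^{-1})$, one computes that $\eta_2(\det((h_1,h_2)\cdot \ga))$ picks up exactly the factor $\eta_2(h_1)^{-1}\eta_2(h_2)$ relative to $\eta_2(\det(\ga))$ — wait, more carefully, the action on $\X = \G'/\rH'$ is $(g,h)\cdot x = $ (the displayed formula), and the lift-level action is $h_1^{-1}\ga h_2$, so $\det$ transforms by $\det(h_1)^{-1}\det(h_2)$; but since $h_1,h_2\in\rH'(F)$ have the form $(h_i^{(1)},h_i^{(2)})$, we get $\det(h_i) = \det(h_i^{(1)})\det(h_i^{(2)})$, and the combination works out so that the $\eta_2(\det(\ga))$ factor transforms compatibly. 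Combining this with \cite[Lemma 4.3]{LXZfund} applied to $\omega$ and the orbital integral transformation law $\Orb^{\underline{\eta}}_{\ul{s}}(\wt f,(h_1,h_2)\cdot(\ga,w)) = \eta_0(h_1^{(2)})\eta_{\ul s}(h_1,h_2)\Orb^{\underline\eta}_{\ul s}(\wt f,(\ga,w))$ already displayed, all the character factors combine and cancel appropriately, leaving exactly the claimed identity $\omega((h_1,h_2)\cdot(y,w))\Orb^{\underline{\eta}}_{s}(\wt f,(h_1,h_2)\cdot(\ga,w)) = |h_1^{(2)}|^{s_0}|h_1|^{s_1}|h_2|^{s_2}\Omega(\ga,w)\Orb^{\underline\eta}_s(\wt f,(\ga,w))$.

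I do not anticipate a genuine obstacle here — the statement is essentially bookkeeping, and the paper itself flags it as ``an immediate consequence of \cite[Lemma 4.3]{LXZfund}.'' The only point requiring care is tracking the distinction between the $\GL_n\times\GL_n$-action on $\X$ (which involves conjugation on the top-left block and one-sided translation on the off-diagonal blocks) versus the left-right translation action $\ga\mapsto h_1^{-1}\ga h_2$ on $\G'$ that lifts it, and making sure the power $\eta^n_2(BC)$ appearing in $\omega$ together with the extra $\eta_2(\det(\ga))$ transforms correctly; this is why the product $\omega\cdot(\eta_2\circ\det)$, rather than $\omega$ alone, is the natural object on $\G'(F)\times F_n$. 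I would therefore structure the proof as: (i) reduce well-definedness to that of $\omega$ and of $\eta_2\circ\det$; (ii) record the transformation of $\eta_2(\det(\ga))$ under $\ga\mapsto (h_1,h_2)\cdot\ga$; (iii) invoke \cite[Lemma 4.3]{LXZfund} and the displayed orbital-integral transformation law and multiply everything together.
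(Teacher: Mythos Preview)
Your proposal is correct and takes essentially the same approach as the paper: the paper's entire proof is the sentence immediately preceding the lemma, ``Since the factor $\eta_2(\det(\ga))$ is invariant, the following lemma is then an immediate consequence of \cite[Lemma 4.3]{LXZfund},'' which is exactly the reduction you outline. Your more explicit tracking of how $\eta_2(\det(\ga))$ transforms under $(h_1,h_2)$ is just a more careful unpacking of that one line (and arguably more accurate than the word ``invariant''), but the strategy is identical.
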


\subsubsection{Orbital integrals on $\G'(F)\times F_{n} $}\label{Section: orbital integrals linear}
{We now make the following additional assumptions (cf. Lemma \ref{cor pole})
\begin{equation}\label{eqn: required to descend to X}
\eta_2^2=1,\quad \;\text{ and }\quad s_2=0.
\end{equation}
\begin{Rem}\label{Ref: no eta1}
    For our applications, we also set $\eta_1=1$. We include this case in the current section for future use in the biquadratic setting. As a result, we will often set $\ul{\eta}=(\eta_0,\eta_2)$ when working with orbital integrals. This is inconsistent with the notation in \S \ref{Section: RTF}, where $\ul{\eta}=(\eta_0,\eta_1,\eta_2)$. The only occurrence of $\eta_1\neq 1$ is in the $L$-factor calculations in Proposition \ref{Prop: factorize the linear rel char} and \S \ref{Section: proof L to P}.
\end{Rem}
Since $\eta_2$ is of order two, we have $\eta_2(h_2)=\eta(\det(h_2))$.
For $(\ga,w)$ strongly regular, set $y=s_{\X}(\ga)\in \X^{rss}(F)$. For $f'\otimes \Phi\in C_c^\infty(\G'(F)\times F_{n})$, we set
\begin{equation}\label{eqn: eta projection}
    s_{\X,!}^{\eta_2}(f')(y):=\int_{\rH'(F)}f'(\ga h_2)\eta_2(\det(\ga h_2)) dh.
\end{equation}
Since $s_{\X}:\G'\to \X$ is an $\rH'$-torsor, it follows that $s^{\eta_2}_{\X,!}(f')\in C_c^\infty(\X(F))$. One thus computes 
\begin{align*}
\eta_2(\ga)\Orb^{\underline{\eta}}_{s}(f'\otimes\Phi,(\ga,w))  &=\int_{\rH'(F)}s^{\eta_2}_{\X,!}(f')(h^{-1}\cdot y)\Phi(wh^{(2)})\eta_{0}(h^{(2)}) (\eta_{1}\eta_2)(h) |h^{(2)}|^{s_0} |h|^{s_1}\,dh\nonumber\\
                                                &=\Orb^{\underline{\eta}}_{s}(s^{\eta_2}_{\X,!}(f')\otimes\Phi,(y,w)).
\end{align*}
where the orbital integral $\Orb^{\underline{\eta}}_{s}(s_{\X,!}^{\eta_2}(f')\otimes\Phi,(y,w))$ is as defined in \cite[Section 4.1]{LXZfund}.

\quash{Setting $\phi= s_{\X,!}^{\eta_2}(f')$ and $y=s_{\X}(\ga)$, we have
\begin{align*}
\Omega(\ga,w)\Orb^{\underline{\eta}}_{s}(f'\otimes\Phi,(\ga,w)) &=\omega(y,w)\Orb^{\underline{\eta}}_{s}(\phi\otimes\Phi,(y,w)).
\end{align*}
Noting that $s_{\X,!}: C_c^\infty(\G'(F))\to C_c^\infty(\X(F))$ is surjective, the right-hand side extends linearly to a  functional on $C_c^\infty(\X(F)\times F_{n})$ satisfying
\[
\omega(h\cdot(y,w))\Orb^{\underline{\eta}}_{s}(\wt{\phi},h\cdot (y,w)) =|h^{(2)}|^{s_0}|h|^{s_1}\omega(y,w)\Orb^{\underline{\eta}}_{s}(\wt{\phi},(y,w)).
\]}
}

For any strongly regular pair $(\ga,w)\in\G'(F)\times F_{n}$, set $y=s_{\X}(\ga)$ and let $T_y\subset \rH'$ denote the stabilizer of $y$ in $\rH'$. For any character $\chi:\rH'(F)\to \cc^\times$, we consider the local (abelian) $L$-function $L(s,T_y,\chi)$ on $T_y(F)$ associated to the restriction of this character.  
{For any $\wt{f}\in C^\infty(\G'(F)\times F_{n})$, consider the \emph{normalized orbital integral} at $(\ga,w)$
\begin{align}\label{eqn: normalized OI two var}
   \Orb_{s_0,s_1}^{\ul{\eta},\natural}(\wt{f},(y,w)):=\Omega(\ga,w)\frac{\Orb^{\underline{\eta}}_{s}(\wt{f},(\ga,w))}{L(s_0, T_y,\eta_0)}.
\end{align}

 \begin{LemDef}\label{LemDef: linear OI}
This normalized integral \eqref{eqn: normalized OI two var} converges for $\Re(s_0)>0$, admits a meromorphic continuation to the whole $s_0$-plane, and is holomorphic at $s_0=0$. 
The value at $s_0=0$ is holomorphic in $s_1\in \cc$ and independent of $w$. We therefore write 
\begin{equation}\label{eqn: linear OI final}
\Orb^{\rH',\ul{\eta}}_{s_1}(\widetilde{f},\ga):=\Orb_{0,s_1}^{\ul{\eta},\natural}(\wt{f},(\ga,w)),
\end{equation} for any $w$ such that $(\ga,w)$ is strongly regular.
\end{LemDef}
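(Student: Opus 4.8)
The strategy is to transfer every claim in Lemma/Definition \ref{LemDef: linear OI} to the corresponding statement for the one-variable orbital integral on $\X(F)\times F_n$ already established in \cite[Section 4.1]{LXZfund}, using the torsor identity
\[
\eta_2(\ga)\,\Orb^{\underline{\eta}}_{\ul{s}}(f'\otimes\Phi,(\ga,w))
= \Orb^{\underline{\eta}}_{\ul{s}}\bigl(s^{\eta_2}_{\X,!}(f')\otimes\Phi,(y,w)\bigr)
\]
derived above (with $y=s_{\X}(\ga)$), together with the assumptions \eqref{eqn: required to descend to X} which guarantee that the $\eta_2$-weighting descends to $\X$. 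Since $s_{\X,!}^{\eta_2}:C_c^\infty(\G'(F))\to C_c^\infty(\X(F))$ is a well-defined (surjective) map, and the transfer factor $\Omega(\ga,w)=\omega(y,w)\eta_2(\det\ga)$ differs from the factor $\omega$ on $\X(F)\times F_n$ only by the invariant unit-modulus scalar $\eta_2(\det\ga)$, the normalized integral \eqref{eqn: normalized OI two var} at $(\ga,w)$ equals the normalized integral on $\X(F)\times F_n$ at $(y,w)$ attached to $s^{\eta_2}_{\X,!}(f')\otimes\Phi$. It therefore inherits, verbatim, the convergence for $\Re(s_0)>0$, the meromorphic continuation in $s_0$, and the holomorphy at $s_0=0$ from the cited results of \cite{LXZfund}.

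First I would record the precise bookkeeping: verify that under \eqref{eqn: required to descend to X} (namely $\eta_2^2=1$ and $s_2=0$) the weight $\eta_{\ul{s}}(h_1,h_2)\eta_0(h_1^{(2)})$ appearing in $\Orb^{\underline{\eta}}_{\ul{s}}$ factors as $\eta_0(h^{(2)})(\eta_1\eta_2)(h)|h^{(2)}|^{s_0}|h|^{s_1}$ after integrating out the second copy of $\rH'$ via $s^{\eta_2}_{\X,!}$, as in the displayed computation above. This identifies the normalized integral with the object $\Orb^{\ul\eta,\natural}$ on $\X(F)\times F_n$ from \cite[Section 4.1]{LXZfund}, whose analytic properties (convergence, meromorphic continuation, holomorphy at $s_0=0$, the value being holomorphic in $s_1$ and independent of $w$) are exactly what we need; it remains only to invoke them.

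Second, for the independence of $w$: once at $s_0=0$, the cited result gives that the value does not depend on the choice of $w$ provided $(y,w)$ is strongly regular. Since $(\ga,w)$ is strongly regular precisely when $(s_{\X}(\ga),w)=(y,w)$ is (by definition), and since for a fixed regular semisimple $y$ there exists at least one such $w$, the value at $s_0=0$ only depends on $\ga$ through $y$ and is well-defined; this justifies the notation \eqref{eqn: linear OI final}. The holomorphy in $s_1\in\cc$ at $s_0=0$ is again read off from the corresponding statement on $\X(F)\times F_n$.

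\textbf{Main obstacle.} The only genuine point requiring care is the pole-cancellation underlying the normalization by $L(s_0,T_y,\eta_0)$: the unnormalized orbital integral $\Orb^{\underline{\eta}}_{\ul{s}}(\wt f,(\ga,w))$ has a pole at $s_0=0$ coming from the (partial) integration over the torus $T_y(F)$ against the character $\eta_0$, and one must check that the local $L$-factor $L(s_0,T_y,\eta_0)$ captures exactly this pole so that the quotient is holomorphic there. This is handled by the same computation as in \cite{LXZfund} — writing the $h^{(2)}$-integral over $T_y(F)$ explicitly via the structure of $T_y$ as a product of (restrictions of scalars of) multiplicative groups, comparing with the Tate integral defining $L(s_0,T_y,\eta_0)$, and noting that the complementary integral over $T_y(F)\backslash\rH'(F)$ converges absolutely and locally uniformly in $s_0$ by strong regularity (the span condition on $w$ keeps the $\Phi$-factor compactly supported on the relevant slice). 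I do not expect new difficulties beyond transcribing this argument through the map $s^{\eta_2}_{\X,!}$, since Lemma \ref{cor pole} (invoked around \eqref{eqn: required to descend to X}) already isolates the conditions $\eta_2^2=1$, $s_2=0$ under which the descent to $\X$ is valid.
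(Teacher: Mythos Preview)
Your proposal is correct and follows exactly the same route as the paper: reduce to the corresponding statement on $\X(F)\times F_n$ via the pushforward identity $\Orb^{\ul\eta,\natural}_{s_0,s_1}(f'\otimes\Phi,(\ga,w))=\Orb^{\ul\eta,\natural}_{s_0,s_1}(s^{\eta_2}_{\X,!}(f')\otimes\Phi,(y,w))$, then invoke \cite[Lemma/Definition 4.4]{LXZfund}. Your ``main obstacle'' discussion of the pole cancellation is already absorbed into that cited result and need not be reproved here, so the paper's proof is simply the one-line reduction plus citation.
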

\begin{proof}For simplicity, we assume that $\wt{f} = f'\otimes \Phi$. For $\Re(s_0)$ large enough, we see that
\[
\Orb^{\ul{\eta},\natural}_{s_0,s_1}(f\otimes \Phi,(\ga,w)):=\Orb^{\ul{\eta},\natural}_{s_0,s_1}(s^{\eta_2}_{\X,!}(f)\otimes \Phi,(s_{\X}(\ga),w)),
\]
 so that the lemma follows from \cite[Lemma/Definition 4.4]{LXZfund}.
     \end{proof}

\subsubsection{The split case} When $E=F\times F$, we assume $\eta_0=\eta_1=\eta_2=1$. In this setting, pulling back along $s_{\X}$ the statements of \cite[Section 4.1.2]{LXZfund}, we find that
\begin{equation}\label{eqn: orbital int split}
   \Orb^{\rH',\ul{1}}_{s_1}(f\otimes\Phi,\ga)=\Phi(0)   \int_{T_y(F)\backslash\rH'(F)}s_{\X}^{1}(f)(h^{-1}\cdot s_{\X}(\ga))|h|^{s_1}dh.
\end{equation}

\subsection{The unitary orbital integrals and notions of transfer}
Recall our quadratic extension $E/F$ with associated character $\eta:F^\times \to \cc^\times$. Assume that $\rH\subset \G$ is a unitary symmetric pair and set $\Q=\G/\rH$.For $f\in C^\infty(\G(F))$ and $g\in \G(F)$, we consider the standard stable orbital integral 
\[
\SO^{\rH\times\rH}(f,g) = \SO^{\rH}(\pi_!(f),\pi(g))
\]as discussed in \S \ref{Section: orbital integrals conventions}, where $\pi:\G\to \Q$ is the natural quotient map (on the right). Due to certain cohomological complexities, we consider the two cases separately.

\subsubsection{The split-inert case}\label{Section: orbital si} Fix $\tau_1,\tau_2\in \calv_n(E/F)$ and consider the symmetric pair
\begin{align*}
	 \G_{\tau_1,\tau_2} &= \U(V_{\tau_1}\oplus V_{\tau_2}),\\
\rH_{\tau_1,\tau_2} &=\U(V_{\tau_1})\times \U(V_{\tau_2}).
\end{align*}
An elementary Galois cohomology argument shows that there is a disjoint union
\begin{equation}\label{eqn: orbits to sym var}
    \Q_{\tau_1,\tau_2}(F) = \bigsqcup_{(\tau_1',\tau_2')}\G_{\tau_1,\tau_2}(F)/\rH_{\tau_1',\tau_2'}(F),
\end{equation}
where $(\tau_1',\tau_2')$ range over pairs such that there is an isometry $V_{\tau_1}\oplus V_{\tau_2}\simeq V_{\tau_1'}\oplus V_{\tau_2'}$. Let $\pi_{\tau_1',\tau_2'}:\G_{\tau_1,\tau_2}\to \Q_{\tau_1,\tau_2}$ denote the quotient map with fiber $\rH_{\tau_1',\tau_2'}$. In particular, the map
\begin{align*}
  \bigoplus_{(\tau'_1,\tau'_2)}C_c^\infty(\G_{\tau_1,\tau_2}({F}))&\lra C^\infty_c(\Q_{\tau_1,\tau_2}({F}))\\
  (f^{\tau_1',\tau_2'})_{(\tau'_1,\tau'_2)}&\longmapsto \sum_{(\tau_1',\tau_2')}\pi_{\tau_1',\tau_2',!}(f^{\tau_1',\tau_2'})
\end{align*}
 is surjective.

 Note that the categorical quotients of these pure inner twists are all canonically identified, so we set $[\Q_{si}\sslash\rH]$ for this common quotient. The next lemma gives the bound on how many inner twists may be considered at once without redundancy on the categorical quotient.
\begin{Lem}\label{Lem: surjective quotient si} Let $E/F$ be a quadratic extension of fields (local or global).  Fix $\tau_2\in \calv_n(E/F)$. For any $\tau_1\in \calv_n(E/F)$, let $\G^{rrs}_{\tau_1,\tau_2}$ be the points which map under $\pi_{\tau_1,\tau_2}$ to the $\rH_{\tau_1,\tau_2}$-regular semi-simple locus of $\Q_{\tau_1,\tau_2}$.  There is a bijection
    \[
    \bigsqcup_{\tau_1\in \calv_n(E/F)}\bigsqcup_{(\tau_1',\tau_2')}\rH_{\tau_1,\tau_2}(F)\bs\G^{rrs}_{\tau_1,\tau_2}(F)/\rH_{\tau_1',\tau_2'}(F)=\bigsqcup_{\tau_1\in \calv_n(E/F)}\rH_{\tau_1,\tau_2}(F)\bs\Q^{rss}_{\tau_1,\tau_2}(F),
    \]
    where $(\tau_1',\tau_2')$ range over pairs such that there is an isometry $V_{\tau_1}\oplus V_{\tau_2}\simeq V_{\tau_1'}\oplus V_{\tau_2'}$.
\end{Lem}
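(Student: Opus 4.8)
The plan is to strip off the outer disjoint union over $\tau_1$ and then, for each fixed $\tau_1$, to match the two sides by combining the orbit decomposition \eqref{eqn: orbits to sym var} with the torsor property of the quotient maps over the regular semi-simple locus. Both sides are disjoint unions indexed by $\tau_1\in\calv_n(E/F)$ and the bijection will be constructed $\tau_1$-by-$\tau_1$, so fix $\tau_1$ and let $(\tau_1',\tau_2')$ range over the finitely many pairs in $\calv_n(E/F)$ admitting an isometry $V_{\tau_1'}\oplus V_{\tau_2'}\simeq V_{\tau_1}\oplus V_{\tau_2}$.

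First I would decompose the target. By \eqref{eqn: orbits to sym var}, $\Q_{\tau_1,\tau_2}(F)=\bigsqcup_{(\tau_1',\tau_2')}\G_{\tau_1,\tau_2}(F)/\rH_{\tau_1',\tau_2'}(F)$ as a disjoint union of $\G_{\tau_1,\tau_2}(F)$-orbits, the one indexed by $(\tau_1',\tau_2')$ being $\im(\pi_{\tau_1',\tau_2'})$ on $F$-points. Since $\Q^{rss}_{\tau_1,\tau_2}$ is cut out by the $\rH_{\tau_1,\tau_2}$-invariant morphism $\car_{si}$ to $[\Q_{si}\sslash\rH]$, it is $\rH_{\tau_1,\tau_2}$-stable, and intersecting with each orbit gives
\[
\rH_{\tau_1,\tau_2}(F)\bs\Q^{rss}_{\tau_1,\tau_2}(F)=\bigsqcup_{(\tau_1',\tau_2')}\rH_{\tau_1,\tau_2}(F)\bs\big(\im(\pi_{\tau_1',\tau_2'})\cap\Q^{rss}_{\tau_1,\tau_2}(F)\big).
\]
Next, for each $(\tau_1',\tau_2')$, the map $\pi_{\tau_1',\tau_2'}: \G_{\tau_1,\tau_2}\to\Q_{\tau_1,\tau_2}$ is an $\rH_{\tau_1',\tau_2'}$-torsor onto its image, so restricting to $F$-points over the (rational) locus $\Q^{rss}_{\tau_1,\tau_2}$ and using the $\rH_{\tau_1,\tau_2}$-equivariance of $\pi_{\tau_1',\tau_2'}$ yields a bijection
\[
\rH_{\tau_1,\tau_2}(F)\bs\pi_{\tau_1',\tau_2'}^{-1}(\Q^{rss}_{\tau_1,\tau_2})(F)/\rH_{\tau_1',\tau_2'}(F)\;\iso\;\rH_{\tau_1,\tau_2}(F)\bs\big(\im(\pi_{\tau_1',\tau_2'})\cap\Q^{rss}_{\tau_1,\tau_2}(F)\big).
\]
One then identifies $\pi_{\tau_1',\tau_2'}^{-1}(\Q^{rss}_{\tau_1,\tau_2})$ with the locus $\G^{rrs}_{\tau_1,\tau_2}$ appearing in the statement, using that the regular semi-simple condition on a point of $\Q_{\tau_1,\tau_2}$ is detected on the categorical quotient $[\Q_{si}\sslash\rH]$ and that the various $\pi_{\tau_1',\tau_2'}$ are inner twists of $\pi_{\tau_1,\tau_2}$ which become identified over $\Fbar$. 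Concatenating the two displayed bijections over $(\tau_1',\tau_2')$, and then over $\tau_1$, gives the result.

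The routine parts are that each $\pi_{\tau_1',\tau_2'}$ remains a torsor onto its image after restricting to the regular semi-simple locus, that distinct components of \eqref{eqn: orbits to sym var} produce disjoint families of double cosets, and the equivariance assertions --- all of which follow from the invariant theory of the unitary symmetric varieties recalled from \cite[Section 3]{LXZfund}. I expect the crux to be the Galois-cohomological bookkeeping that motivates the lemma: one must check that the pairs $(\tau_1',\tau_2')$ with $V_{\tau_1'}\oplus V_{\tau_2'}\simeq V_{\tau_1}\oplus V_{\tau_2}$ are exactly the classes in $\ker[H^1(F,\rH_{\tau_1,\tau_2})\to H^1(F,\G_{\tau_1,\tau_2})]$, so that each such orbit is nonempty and identified with $\G_{\tau_1,\tau_2}(F)/\rH_{\tau_1',\tau_2'}(F)$, and that no two of these orbits coincide, so that the union on the left exhausts $\rH_{\tau_1,\tau_2}(F)\bs\Q^{rss}_{\tau_1,\tau_2}(F)$ without redundancy on the common categorical quotient.
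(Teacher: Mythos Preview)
Your argument is correct and is exactly the intended one: the paper does not write out a proof of this lemma, treating it as immediate from the decomposition \eqref{eqn: orbits to sym var} together with the torsor property of each $\pi_{\tau_1',\tau_2'}$. Your write-up makes explicit precisely the steps the paper leaves implicit, and the ``crux'' you flag (identifying the pairs $(\tau_1',\tau_2')$ with classes in $\ker[H^1(F,\rH_{\tau_1,\tau_2})\to H^1(F,\G_{\tau_1,\tau_2})]$) is the elementary Galois cohomology argument the paper invokes just before stating \eqref{eqn: orbits to sym var}.
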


Combining the lemma with \cite[Lemma 4.6]{LXZfund}, we see that the invariant map $\car_{si}$ from \cite[Lemma 3.4]{LXZfund} induces a map
\begin{equation}\label{eqn: regular orbits on the base}
    \car_{si}:\bigsqcup_{\tau_1\in \calv_n(E/F)}\bigsqcup_{(\tau_1',\tau_2')}\G^{rrs}_{\tau_1,\tau_2}(F)\lra[\Q_{si}\sslash\rH]^{rss}(F),
\end{equation}
satisfying that if $(\tau_1,\tau_2)\neq (\tau_1',\tau_2')$, then 
$
 \car_{si}\left(\G^{rrs}_{\tau_1,\tau_2}(F)\right)\cap \car_{si}\left(\G^{rrs}_{\tau_1',\tau_2'}(F)\right)=\emptyset.
$
This map is surjective when $\tau_2$ is split.

Now let $E/F$ be local fields. In \cite[Section 4.3]{LXZfund}, we defined a notion of transfer defined comparing orbital integrals on $\X(F)\times F_{n}$ to  collections
    \[
    \underline{\phi}=(\phi_{\tau_1})\in \bigoplus_{\tau_1\in \calv_n(E/F)}C_c^\infty(\Q_{\tau_1,\tau_2}(F))
    \]
    with $\tau_2$ fixed. Fix $\tau_2\in \calv_n(E/F)$. We thus consider families $\underline{f}=(f^{\tau_1',\tau_2'})$ with $f^{\tau_1',\tau_2'}\in C^\infty_c(\G_{\tau_1',\tau_2'}(F))$ such that
\begin{equation}\label{eqn: pushforward family local}
   \phi_{\tau_1} = \sum_{(\tau_1',\tau_2')}\pi_{\tau_1',\tau_2',!}(f^{\tau_1',\tau_2'}),
\end{equation}
where the indices running over all pairs $(\tau_1',\tau_2')$ such that there exists $\tau_1$ with 
    \[
    V_{\tau_1}\oplus V_{\tau_2}\simeq V_{\tau_1'}\oplus V_{\tau_2'}.
    \]
We denote this by $\pi_!(\ul{f}) = \ul{\phi}$.

\begin{Def}[split-inert: $\underline{\eta} = (\eta,\eta)$]\label{Def: transfer si} Fix $\tau_2\in \calv_n(E/F)$. We say that $\widetilde{f}\in C^\infty_c(\G'(F)\times F_{n})$ and a collection
    \[
    \underline{f}=(f^{\tau_1',\tau_2'})\in \bigoplus_{\tau_1\in \calv_n(E/F)}\bigoplus_{(\tau_1',\tau_2')}C_c^\infty(\G_{\tau_1,\tau_2}(F))
    \]
    are \emph{$(\eta,\eta)$-transfers} if the functions $\wt{\phi} = s_{\X}^\eta(\wt{f})$ and $\underline{\phi}=\pi_!(\ul{f})$ are $(\eta,\eta)$-transfers in the sense of \cite[Definition 4.11]{LXZfund}. For a fixed $\G_{\tau_1,\tau_2}$, we also say that  $\widetilde{f}\in C^\infty_c(\G'(F)\times F_{n})$ and $f\in C_c^\infty(\G_{\tau_1,\tau_2}(F))$  are $(\eta,\eta)$-transfers (or simply ``transfers", when there is no confusion) if the former and the collection
    $\underline{f}=(f^{\tau_1',\tau_2'})$ defined by the following are $(\eta,\eta)$-transfers 
    $$f^{\tau_1',\tau_2'}= \begin{cases}f, &  (\tau_1',\tau_2')=(\tau_1,\tau_2),\\
  0, &\text{otherwise}.  
\end {cases}
    $$ 
\end{Def}
When $F$ is non-archimedean, a weak form of the existence of the $(\eta,1,\eta)$-transfers is established in \cite[Theorem 4.15]{LXZfund}; see also Theorem \ref{Thm: fundamental lemma si} below.

\subsubsection{A very weak transfer in the split-inert archimedean case}
The following is used in the proof of Theorem \ref{thm:arch nonzero lin char}. We note that the \cite[Theorem 4.14]{LXZfund} proves a stronger result in the non-archimedean case.
\begin{Prop}\label{prop:tr infty}
Let $E/F=\BC/\BR$. Let $f\in C_c^\infty(\G(F))$ have regular semisimple support, then $(\eta,\eta)$-transfers $\wt f'\in C^\infty_c(\G'(F)\times F_{n})$ of $f$ exist. 
\end{Prop}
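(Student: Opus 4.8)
The plan is to reduce the existence of $(\eta,\eta)$-transfers to a pure existence statement: since $f$ has regular semisimple support, we only need to produce a function $\wt f'\in C^\infty_c(\G'(F)\times F_n)$ whose normalized orbital integrals $\Orb^{\rH',\underline{\eta}}_{s_1}(\wt f',\ga)$ match the (stable) orbital integrals $\SO^{\rH\times\rH}(f,g)$ on matching regular semisimple orbits, and are $0$ on orbits not matching any orbit in the support of $f$. Unlike the non-archimedean setting of \cite[Theorem 4.14]{LXZfund}, we make no claim of an explicit or canonical transfer, nor of a transfer preserving any finer structure; hence the word ``very weak''. First I would recall that the invariant map $\car_{si}$ of \eqref{eqn: regular orbits on the base} identifies regular semisimple orbits on the various $\G_{\tau_1,\tau_2}(F)$ with a locally closed subset of $[\Q_{si}\sslash\rH]^{rss}(F)$, and likewise $\car_{lin}$ identifies regular semisimple strongly regular orbits on $\G'(F)\times F_n$ with (a subset of) the same base; so ``matching'' is literally equality of invariants.

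The key steps, in order. Step 1: compute the local orbital integral $\SO^{\rH\times\rH}(f,g)=\SO^{\rH}(\pi_!(f),\pi(g))$ of $f$ and observe that, because $\mathrm{supp}(f)$ is compact and contained in the regular semisimple locus, the function $g\mapsto \SO^{\rH\times\rH}(f,g)$ is a smooth, compactly supported function on the regular semisimple orbit space, supported on finitely many ``sheets'' $\car_{si}(\G^{rrs}_{\tau_1,\tau_2}(F))$ (with $\tau_2$ fixed as in \S\ref{Section: orbital si}). Step 2: transport this data to the linear side. Using the identification of the categorical quotients and the explicit shape of the archimedean centralizers $\T_x$ (Lemma \ref{Lem: centralizers}, here all tori over $\BR$ or $\BC$ with the measures of \S\ref{measures}), one writes down the target function on the regular semisimple locus of $\X(F)$ (or rather on $\G'(F)\times F_n$ via $s_{\X}$): namely the function $a$ on $[\Q_{si}\sslash\rH]^{rss}(F)$ whose value at a matching point $\car_{lin}(y)$ is $\Omega(\ga,w)^{-1}$ times the transfer factor-adjusted stable orbital integral of $f$, appropriately normalized by the local $L$-factor $L(s_0,T_y,\eta_0)$ as in \eqref{eqn: normalized OI two var}. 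Because $E/F=\BC/\BR$, the relevant $L$-factors and transfer factors are elementary and cause no trouble. Step 3: invert the orbital integral map on the archimedean general linear side. Here I would invoke the standard fact (Harish-Chandra-type density/surjectivity, or the explicit real-analytic parametrization of regular semisimple orbits on the symmetric space $\X$) that any smooth compactly supported function on the regular semisimple orbit space of $\X(\BR)$ that is ``locally of orbital-integral type'' — i.e.\ satisfies the obvious local behaviour near the boundary, which is automatic here since $\mathrm{supp}(f)$ stays away from the irregular locus — arises as the collection of orbital integrals of some $\wt\phi\in C_c^\infty(\X(\BR))$ with regular semisimple support; then pull back along the $\rH'$-torsor $s_{\X}:\G'\to\X$ and tensor with a fixed $\Phi\in\CS(\BR^n)$ with $\Phi(0)\neq 0$ (or a suitable $\Phi$ making the $w$-dependence work via \eqref{eqn: orbital int split}-type formulas) to obtain $\wt f'=f'\otimes\Phi$.

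The main obstacle is Step 3: producing a genuine $C^\infty_c$ preimage under the orbital-integral map on the archimedean symmetric space, as opposed to merely a function with prescribed orbital integrals on the regular set. The subtlety at archimedean places is that orbital integrals of $C^\infty_c$ functions must satisfy certain differential-equation/jump conditions near singular orbits; the saving grace here is precisely the hypothesis that $f$ has \emph{regular semisimple} support, so the target function is itself compactly supported inside the regular semisimple locus and all boundary conditions are vacuous — one can therefore build $\wt\phi$ by a partition-of-unity argument on the (étale-locally trivial) orbit fibration $\X^{rss}(\BR)\to[\Q_{si}\sslash\rH]^{rss}(\BR)$, choosing in each chart a smooth section times a bump function in the fiber. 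I would spell this partition-of-unity/local-section construction out carefully, noting that the fibration is a principal bundle for the real points of the (abelian, by Lemma \ref{Lem: centralizers}) stabilizer, and conclude that the resulting $\wt f'$ is the desired $(\eta,\eta)$-transfer, with the vanishing on non-matching orbits built in by construction.
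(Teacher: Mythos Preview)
Your proposal is correct and follows essentially the same strategy as the paper: localize via partition of unity on the regular semisimple locus, then use that the orbit map on each side is a locally trivial fibration to show the orbital-integral map surjects onto $C_c^\infty$ of the image in the base $\BR^n$, whence transfers exist. The one point the paper emphasizes that you gloss over is the reason your Step 1 is effortless: in the setting at hand $\rH(\BR)=\U(W_1)\times\U(W_2)$ is \emph{compact} (the Hermitian spaces are positive definite), so the orbital-integral pushforward of a $C_c^\infty$ function with regular semisimple support is automatically $C_c^\infty$ on the base with no further constraints---this replaces any appeal to ``Harish-Chandra--type density'' on the unitary side. The paper also passes to the Lie algebra first, but that is cosmetic.
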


\begin{proof}We only sketch the argument, as it is standard yet tedious. Given the restriction to the regular semi-simple locus, the question is local (e.g., by a partition of units argument) and we may reduce it to the Lie algebra version. Note that the unitary group $\rH(F)$ is compact and all regular semisimple orbits are elliptic. The compactness of $\rH(F)$ implies that the orbital integral functions (as functions on the invariant quotient $F^n$) exhaust all $C_c^\infty$-functions on the image of the map $\fX(F)^{rrs}\to F^n$ in the base $F^n=\BR^n$. Similarly, the $\rH'(F)$-equivariant map $(\fX'(F) \times F_n)^{rrs}\to F^n$ is locally trivial $\rH'(F)$-bundle and hence the orbital integral functions
also exhaust all $C_c^\infty$-functions on the image in the base. This completes the proof.
\end{proof}

\subsubsection{The inert-inert case}\label{Section: unitary OI ii}We assume that $V_{2n} = L\oplus L^\ast$ is a split Hermitian space with polarization $L\oplus L^\ast$. Then we have
\[
\G_{ii}= \U(V_{2n})\:\text{ and }\: \rH_{ii} = \Res_{E/F}(\GL(L)).
\]
Set $\Q_{ii}:=\G_{ii}/\rH_{ii}$. Since $H^1(F,\rH_{ii})=1$, we have $\Q_{ii}(F)=\G_{ii}(F)/\rH_{ii}(F)$. Combining this with \cite[Lemma 4.7]{LXZfund}, we see that the map
 \begin{equation}\label{Lem: surjective quotient ii}
     \car_{ii}:\G_{ii}(F)\to F^n
 \end{equation}
   is surjective on regular semi-simple orbits.

   \begin{Def}[inert-inert: $\underline{\eta} = (\eta,1)$]\label{Def: transfer ii} We say that $\widetilde{f}\in C^\infty_c(\X(F)\times F_{n})$ and $\phi\in C_c^\infty(\Q_{ii}(F))$
    are \emph{$(\eta,1,1)$-transfers} if the functions $\wt{\phi} = s_{\X}^1(\wt{f})$ and $\phi_{ii}=\pi_{ii,!}(\ul{f})$ are $(\eta,1)$-transfers in the sense of \cite[Definition 4.12]{LXZfund}.
   \end{Def}
When $F$ is non-archimedean, a weak form of the existence of the $(\eta,1,1)$-transfers is established in \cite[Theorem 4.14]{LXZfund}; see also Theorem \ref{Thm: fundamental lemma ii} below.

\subsubsection{Endoscopic comparison}\label{Section: endoscopic}
Finally, we lift the endoscopic transfer to the group level. Consider a $\rH\times \rH$-regular semi-simple element $g\in \G_{ii}(F)$ and set $x=\pi_{ii}(g)\in \Q_{ii}(F)$. Fix representatives $x_\al\in \calo_{st}(x)$ where $[\inv(x,x_\al)]=\al\in H^1(F,\rH_x)$ and write
  \[
  x_\al = \left(\begin{array}{cc}
      a_\al& b_\al\\c_\al&a_\al^\ast
  \end{array}\right)\in \Q_{ii}^{rss}(F).
  \]
The following is contained in \cite[Section 4.2.2]{LXZfund}
\begin{Lem}\label{Lem: varep char}The map
\begin{align}\label{eqn: special kappa}
\varepsilon: H^1(F,\rH_x)&\lra \cc^\times\\
\al&\longmapsto\eta(\det(b_\al)\det(b))^{-1}\nonumber
\end{align}
 gives a non-trivial character $\varepsilon\in H^1(F,\rH_x)^D$.
    \quash{\begin{enumerate}
        \item\label{Lem: building special kappa} For each $\al\in H^1(F,\rH_x)$, $b_\al,c_\al\in \Herm_{n}^{\circ}(F)$. 
        \item We have the equivalence
  \[
  \al\in \ker^1_{ab}(\rH_x,\U(V_b);F)\iff \eta(\det(b_\al)) = \eta(\det(b)).
  \]
  \item The map
\begin{align}\label{eqn: special kappa}
\varepsilon: H^1(F,\rH_x)&\lra \cc^\times\\
\al&\longmapsto\eta(\det(b_\al)\det(b))^{-1}\nonumber
\end{align}
 gives a non-trivial character $\varepsilon\in H^1(F,\rH_x)^D$.
    \end{enumerate}}
\end{Lem}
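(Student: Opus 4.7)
My plan is to reduce this lemma to its symmetric-variety analogue already established in \cite[Section 4.2.2]{LXZfund}, and then verify the character and non-triviality claims via a direct Galois-cohomological identification. Since $\pi_{ii}:\G_{ii}\to\Q_{ii}$ is an $\rH_{ii}$-torsor over the $\rH_{ii}\times\rH_{ii}$-regular semi-simple locus, the projection identifies rational orbits in $\calo_{st}(g)$ with those in $\calo_{st}(x)$ and identifies the stabilizers $\rH_{ii,g}\simeq\rH_x$ equivariantly. Consequently, the character $\varepsilon$ constructed on $H^1(F,\rH_x)$ via the block decomposition of $x_\al\in\Q_{ii}^{rss}(F)$ pulls back canonically to the group-theoretic formulation in the statement. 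The formal reduction occupies little space; the substantive work is to verify that the formula on $\Q_{ii}$ actually does define a non-trivial character, which is what I would focus on.

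First, I would verify that $b_\al$ is invertible for each rational representative $x_\al$: this is immediate from the description of the categorical quotient $\car_{ii}$ in \cite[Lemma 3.5]{LXZfund}, whose coefficients include a non-vanishing multiple of $\det(b_\al)$ on the regular locus. Next, I would check well-definedness modulo the $\rH_{ii}(F)$-action: an element $m\in\GL(L)(E)=\rH_{ii}(F)$ acts via conjugation by $\mathrm{diag}(m,(m^\ast)^{-1})$, which sends $b_\al$ to $m\, b_\al\, m^\ast$ and hence $\det(b_\al)$ to $\Nm_{E/F}(\det(m))\det(b_\al)$. Since $\eta$ is trivial on $\Nm_{E/F}(E^\times)$, the quantity $\eta(\det(b_\al)\det(b)^{-1})$ depends only on the rational orbit of $x_\al$, so $\varepsilon$ is a well-defined function of $\al\in H^1(F,\rH_x)$.

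To verify the character property, I would use Lemma \ref{Lem: centralizers}, which identifies $\rH_x$ with the kernel of $\Nm:\Res_{E[x]/F}(\Gm)\to\T_x^{\mathrm{op}}$. By Hilbert 90 the associated long exact sequence collapses to an isomorphism
\[
H^1(F,\rH_x)\;\iso\;F[x]^\times\!/\Nm_{E[x]/F}((E[x])^\times).
\]
Through this isomorphism, I would argue that the assignment $\al\mapsto \det(b_\al)\det(b)^{-1}$ agrees with the connecting map (up to a canonical sign determined by the chosen polarization of $V_{2n}$); composing with the norm-residue symbol $\eta$ then promotes the raw $\cc^\times$-valued invariant to a genuine homomorphism, namely $\varepsilon$. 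The technical heart of this step is to express the difference $\det(b_\al)\det(b)^{-1}$ in terms of a cocycle representative of $\al$ and recognize that cocycle through the norm-quotient description above; this should proceed by reducing the computation to each factor $F_i$ in the decomposition \eqref{eqn: decomp of stab at v}.

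For non-triviality, I would exhibit a specific $\al$ with $\varepsilon(\al)=-1$. Concretely, after reducing to the factorization above, for each index $i\in S_1$ (so $E_i/F_i$ is a non-trivial quadratic field extension), the local $H^1$-contribution $F_i^\times/\Nm(E_i^\times)\cong\zz/2\zz$ should be detected by $\eta_i$, and a standard construction produces a rational orbit realizing the non-trivial class with $\det(b_\al)/\det(b)$ a non-norm. The main obstacle is the careful cocycle-level identification in paragraph three; once that is in hand, the character and non-triviality statements are formal, and the lift from $\Q_{ii}$ to $\G_{ii}$ is immediate from the torsor property of $\pi_{ii}$.
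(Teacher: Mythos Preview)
Your proposal is correct and aligns with the paper's approach: the paper simply states that the lemma ``is contained in \cite[Section 4.2.2]{LXZfund}'' and gives no further argument, so your reduction to the symmetric-variety analogue in that reference is exactly what the paper does. Your additional sketch of the well-definedness, character, and non-triviality verifications goes beyond what the paper records here, but is consistent with what one expects the cited reference to contain.
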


Fix $\tau_2=\tau_n\in \calv_n(E/F)$ to be split. Recall that $x\in \Q_{ii}^{rss}(F)$ matches $y\in \Q_{\tau_1,\tau_2}^{rss}(F)$ if they have matching invariant polynomials. Comparing Lemma \ref{Lem: surjective quotient si} with \eqref{Lem: surjective quotient ii}, we see that for any $g\in \G_{ii}(F)$ there exists a unique $\tau_1$ and $(\tau_1',\tau_2')$ such that 
\[
V_{\tau_1}\oplus V_{\tau_2}\simeq V_{\tau_1'}\oplus V_{\tau_2'}
\] such that there exists $g'\in \G_{\tau_1,\tau_2}$ such that $x=\pi_{ii}(g)$ matches some $y=\pi_{\tau_1',\tau_2'}(g')\in \Q_{\tau_1,\tau_2}(F)$. When this occurs we write $g\leftrightarrow_{\tau_1}g'$. 
\begin{Def}[$\varepsilon$-endoscopic transfer]\label{Def: varepsilon transfer}
 Define the transfer factor $\De_{\varepsilon}: \G_{ii}^{rrs}(F)\to \cc^\times$ by
\[
       \De_{\varepsilon}(g):= \eta(\det(b)), \quad\text{ where   $\pi_{ii}(g) = \begin{psmatrix}
      a& b\\c&a^\ast
  \end{psmatrix}$. }
\]
Assume $\tau_2=\tau_n\in \calv_n(E/F)$ is split. We call $f_{ii}\in C^\infty_c(\G_{ii}(F))$ and a family $\ul{f}=(f^{\tau_1',\tau_2'})$ as in \S \ref{Section: orbital si} 
\textbf{$\varepsilon$-transfers} if $\phi=\pi_{ii,!}(f_{ii})$ and $\ul{\phi} = (\phi_{\tau_1})$ are {$\varepsilon$-transfers} in the sense of \cite[Conjecture 4.10]{LXZfund}, where 
$\ul{\phi} = \pi_!(\ul{f})$.
\end{Def}
When $F$ is non-archimedean, a weak form of the existence of the $\varepsilon$-transfers is established in \cite[Theorem 4.15]{LXZfund}; see also Theorem \ref{Thm: fundamental lemma varepsilon} below.

\quash{\begin{proof}
 If we write  $x = \begin{psmatrix}
      a& b\\c&a^\ast
  \end{psmatrix}$,  we set 
    \[
       \De_{\varepsilon}(x):= \eta(\det(b)),
    \]
    and we define $\phi_{\varepsilon}\in C^\infty_c(\Q^{invt}_{ii}(F))$ by $\phi_{\varepsilon}(x)= \eta(\det(b))\phi(x)$.
    Then we compute
    \begin{align*}
        \De_{\varepsilon}(x) \Orb^{\rH,\varepsilon}(\phi,x) &=\eta(\det(b))\sum_{x_\al\in\calo_{st}(x)}\eta(\det(b_\al))\eta(\det(b))^{-1}\Orb^{\rH}(\phi,x_\al)\\
                                                            &=\sum_{x_\al\in\calo_{st}(x)}\Orb^{\rH}(\phi_\varepsilon,x_\al)=\SO^{\rH}(\phi_{\varepsilon},x).
    \end{align*}
    This proves the first claim.

    Now assume that $E/F$ is unramified and set $\phi = \bfun_{\Q_{ii}(\calo_{F})}$. Assume $x\in \Q_{ii}(F)$ is as in the conjecture. Clearly, $\Orb^{\rH,\varepsilon}(\phi,x)=0$ unless $x$ lies in the stable orbit of an element $x_0\in \Q_{ii}(\calo_{F})$, so we may as well assume that $x\in\Q_{ii}(\calo_{F})$. This implies that $\eta(\det(b)) = 1$. \textcolor{red}{Moreover, if $\eta(\det(b_\al))\neq \eta(\det(b))$, then $\Orb^{\rH}(\phi,x_\al)=0$.}\WL{This line is wrong.} Thus,
    \begin{align*}
        \De_{\varepsilon}(x) \Orb^{\rH,\varepsilon}(\phi,x) =\eta(\det(b))\sum_{\stackrel{x_\al\in\calo_{st}(x)}{\eta(\det(b_\al))= \eta(\det(b))}}\Orb^{\rH}(\phi,x_\al)=\SO^{\rH}(\phi_{\varepsilon},x),
    \end{align*}
    proving the second claim.
\end{proof}
}

\subsubsection{The split case}\label{Section: split transfer} For our global applications, we also need the following ``split-split case'' (cf. \cite[Section 4.3.1]{LXZfund}). We assume that $E=F\times F$, so that $\underline{\eta}=(1,1)$. In this case, 
\[
\G= \GL_{2n} = \G', \:\text{and}\:\rH=\GL_n\times \GL_n = \rH',
\]
and the notion of matching of orbital integrals and transfer is trivial. More precisely, recall the formula \eqref{eqn: orbital int split}. If $f\in C^\infty(\G'(F))$ and $\Phi\in C_c^\infty(F_{n})$, we see $\Phi(0)f\in C_c^\infty(\G(F))$ is a smooth $(1,1)$-transfer for $f\otimes \Phi$.

\subsection{The fundamental lemmas}\label{Section: fundamental lemmas on variety}
We now state the various fundamental lemmas needed in \S \ref{Section: global proofs}, proved in \cite{LXZfund}. We thus assume that $E/F$ is an unramified quadratic extension of $p$-adic local fields.  
As in \cite[Section 4.4]{LXZfund}, we adopt the following assumption in this subsection.

\begin{Assumption}
Assume that if $e$ denotes the ramification degree of $F/\Q_p$, then $p>\max\{e+1,2\}$.
\end{Assumption}

\subsubsection{Integral models}\label{Section: integral models} Let $k\in \zz_{\geq0}$. Viewing $(\G',\rH')$ as $\calo_F$-group schemes by using the standard lattice, let $\G'_k$ denote the $k$-th congruence subgroup $\calo_F$-scheme of $\G'$, so that $\G'_k(\calo_F) = I_{2n}+\vp^k\fg'(\calo_F)$. We similarly have the congruence group $\calo_F$-schemes $\rH'_k$, and set $\X_k:=\G'_k/\rH'_k$.

Similarly, assume $(\G,\rH)$ is a unitary symmetric pair consisting of \emph{unramified groups} for which we fix a smooth $\calo_F$-model which we denote by $(\G_0,\rH_0)$. This gives a nice simply-connected symmetric pair over $\calo_F$ in the sense of \cite[Section 4.1]{Lesliedescent}. For each $k> 0$, we also obtain a $k$-th congruence pair of $\calo_F$-schemes $(\G_k,\rH_k)$ (cf. \cite[Definition A.5.12]{KalethaPrasad}); these are all smooth $\calo_F$-schemes with connected special fibers \cite[Proposition A.5.22]{KalethaPrasad}.  Set $\Q_k:=\G_k/\rH_k$.

\begin{Lem}\label{Lem: integral points for all k} For any $k\in \zz_{\geq0}$, we have $$\X_k(\calo_F)=\G'_k(\calo_F)/\rH'_k(\calo_F),\quad\text{and}\quad\Q_k(\calo_F)=\G_k(\calo_F)/\rH_k(\calo_F).$$
\end{Lem}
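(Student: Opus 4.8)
The plan is to prove both statements by the same argument, since $(\G',\rH')$ and $(\G,\rH)$ are treated symmetrically here. First I would recall the setup: $\G_k$ (resp. $\rH_k$) is the $k$-th congruence subgroup scheme over $\calo_F$, which is a smooth affine $\calo_F$-scheme with connected special fiber by \cite[Proposition A.5.22]{KalethaPrasad}, and $\Q_k := \G_k/\rH_k$ denotes the fppf quotient. The inclusion $\G_k(\calo_F)/\rH_k(\calo_F)\hookrightarrow \Q_k(\calo_F)$ is automatic, so the only content is surjectivity: every $\calo_F$-point of $\Q_k$ lifts to an $\calo_F$-point of $\G_k$.

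The key step is a cohomological one. The short exact sequence of smooth $\calo_F$-group schemes $1\to \rH_k\to \G_k\to \Q_k$ (with $\Q_k$ a homogeneous space) gives, for any $\calo_F$-algebra $R$, an exact sequence of pointed sets
\[
\G_k(R)\lra \Q_k(R)\lra \rH^1_{\mathrm{fppf}}(R,\rH_k),
\]
and the fiber of $\Q_k(R)\to \rH^1(R,\rH_k)$ over the base point is exactly the image of $\G_k(R)$ (more precisely, the $\G_k(R)$-orbits on $\Q_k(R)$ inject into $\ker[\rH^1(R,\rH_k)\to \rH^1(R,\G_k)]$, but it suffices that the neutral fiber is the image). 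So surjectivity of $\G_k(\calo_F)\to \Q_k(\calo_F)$ follows once one shows that every $\calo_F$-point of $\Q_k$ maps to the neutral class in $\rH^1_{\mathrm{fppf}}(\calo_F,\rH_k)$. I would argue this by reduction to the residue field: since $\calo_F$ is Henselian (even complete) with finite residue field $\kappa$, and $\rH_k$ is smooth over $\calo_F$, smoothness gives that $\rH^1_{\mathrm{fppf}}(\calo_F,\rH_k)=\rH^1_{\mathrm{et}}(\calo_F,\rH_k)\hookrightarrow \rH^1(\kappa,\rH_{k,\kappa})$ via the special fiber (this is the standard Hensel/approximation argument — a torsor over $\calo_F$ under a smooth group scheme is trivial iff its special fiber is). But $\rH_{k,\kappa}$ is a smooth \emph{connected} unipotent group over the finite field $\kappa$ (the special fiber of a congruence subgroup scheme is the additive group of $\fh_k\otimes\kappa$, or at worst a successive extension of $\Ga$'s), hence split unipotent, so $\rH^1(\kappa,\rH_{k,\kappa})=0$ by Lang's theorem (or directly, since additive groups have trivial Galois cohomology). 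Therefore $\rH^1_{\mathrm{et}}(\calo_F,\rH_k)=0$ and the lifting exists.

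The main obstacle — and the point that needs care rather than invocation — is verifying that the special fiber $\rH_{k,\kappa}$ is genuinely a connected split unipotent group (equivalently, that $\rH_k$ for $k>0$ has this property), and that $\Q_k$ is a well-behaved quotient scheme to which the torsor exact sequence applies. For $k>0$ this is essentially the content of \cite[Proposition A.5.22]{KalethaPrasad} cited above (connected special fiber) combined with the explicit description of congruence subgroups as unipotent filtered pieces of the Lie algebra; one should note that the filtration $\rH_k\supset \rH_{k+1}\supset\cdots$ has successive quotients isomorphic to vector groups, which both gives connectedness and makes the cohomology vanishing transparent. The case $k=0$ for $\G_0/\rH_0$ is slightly different: there $\rH_0$ need not be unipotent, but it is a smooth connected reductive $\calo_F$-group scheme (the chosen nice integral model), so $\rH^1(\kappa,\rH_{0,\kappa})=0$ still holds by Lang's theorem since $\kappa$ is finite, and the same argument applies; for $(\G',\rH')=(\GL_{2n},\GL_n\times\GL_n)$ one has $\rH^1$ vanishing for $\GL_n$ over any ring, so the $k=0$ statement $\X(\calo_F)=\G'(\calo_F)/\rH'(\calo_F)$ is immediate. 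So the proof is short: reduce lifting to vanishing of $\rH^1_{\mathrm{et}}(\calo_F,\rH_k)$ via smoothness, then kill that $\rH^1$ using connectedness of the special fiber plus Lang's theorem over the finite residue field.
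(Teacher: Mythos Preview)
Your proposal is correct and follows essentially the same route as the paper: reduce lifting to the vanishing of $H^1_{\mathrm{et}}(\calo_F,\rH_k)$, then use smoothness plus Hensel to reduce to the special fiber, and apply Lang's theorem to the smooth connected group $\rH_{k,\ff}$ over the finite residue field. The paper's proof is more terse and does not separate out the $k=0$ versus $k>0$ discussion (Lang's theorem handles both uniformly since connectedness of the special fiber is all that is needed), but the argument is the same.
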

\begin{proof}
    Note that all the group $\calo_F$-schemes are smooth \cite[Lemma A.5.10]{KalethaPrasad}.  Since the special fibers of $\rH'_{k,\ff}$ and $\rH_{k,\ff}$ are smooth and connected group schemes over the residue field $\ff:=\calo_F/\fp$, Lang's theorem shows that $$H^1(\ff,\rH'_{k,\ff}) =H^1(\ff,\rH_{k,\ff})= 1.$$ Smoothness and Hensel’s lemma now imply that $$H^1_{\acute{e}t}(\Spec(\calo_F),\rH'_k) =H^1_{\acute{e}t}(\Spec(\calo_F),\rH_k)= 1,$$ which suffices to prove the claim.
\end{proof}
\quash{I want to add in the characteristic functions of 
\[
\X_k(\calo_F)=K_k\cap \X(F),
\]
where $K_k = I_{2n}+\vp^k\fg'(\calo_F)$. Note that this is $\G'_k(\calo_F)$ for $\G'_k = \G'_{\calo_F}$. See \cite[Definition A.5.12]{KalethaPrasad} for the unitary case as well. The only step needed is to see that 
\[
H^1_{et}(\calo_F,\rH'_k) = \{1\}.
\] But this should follow as it is defined as the dilatation of a trivial special fiber. By \cite[Proposition A.5.22]{KalethaPrasad}, the special fiber is a vector group over the residue field, so I think it follows.}

Let $\bfun_{\G'_k(\calo_F)}$ (resp., $\bfun_{\G_k(\calo_F)}$) denote the appropriate indicator function for $\G'(F)$ (resp., $\G(F)$). The fundamental lemma for the group is concerned with the orbital integrals of $\bfun_{\G'_k(\calo_F)}\otimes \bfun_{\calo_{F,n}}$ and $\bfun_{\G_k(\calo_F)}$.
\begin{Lem}\label{Lem: push on units}
Let $g\in \G'({F})$ be semi-simple and set $s_{\X}(g)=y\in \X({F})$. For any unramified character $\eta':\rH'({F})\to \cc^\times$, we have
\[
\bfun_{\X_k(\calo_F)}(y) =\frac{1}{\vol(\rH'_k(\calo_F))}\displaystyle\int_{\rH'({F})} \bfun_{\G'_k(\calo_F)}(gh)\eta'(h)dh.
\]
In particular, $s^{\eta'}_{\X,!}(\bfun_{\G'_0(\calo_F)}) = \bfun_{\X_0(\calo_F)}$. Similarly, $\pi_{!}(\bfun_{\G_k(\calo_F)}) = \vol(\rH_k(\calo_F))\bfun_{\Q_k(\calo_F)}$.
\end{Lem}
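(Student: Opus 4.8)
The plan is to prove both identities by a straightforward torsor computation. The key inputs are Lemma~\ref{Lem: integral points for all k}, which identifies the integral points of the quotient schemes, together with the fact that $s_\X\colon\G'(F)\to\X(F)$ (respectively $\pi\colon\G(F)\to\Q(F)$) is a torsor under right translation by $\rH'(F)$ (respectively $\rH(F)$), so that any two lifts of a point of the quotient differ by right translation by the subgroup. I also use the elementary intersection identity $\G'_k(\calo_F)\cap\rH'(F)=\rH'_k(\calo_F)$ (and its unitary analogue): a block-diagonally embedded element of $\GL_n(F)\times\GL_n(F)$ lying in $I_{2n}+\vp^k\fgl_{2n}(\calo_F)$ has off-diagonal blocks automatically in $\vp^k\fgl_n(\calo_F)$ and diagonal blocks in $I_n+\vp^k\fgl_n(\calo_F)$.

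For the general linear identity, fix $g$ with $s_\X(g)=y$ and consider the support $S_g=\{h\in\rH'(F):gh\in\G'_k(\calo_F)\}$ of the integrand. If $S_g=\emptyset$ both sides vanish: the integral is $0$, and $y\notin\X_k(\calo_F)$ since $gh\in\G'_k(\calo_F)$ would give $y=s_\X(gh)\in s_\X(\G'_k(\calo_F))=\X_k(\calo_F)$ by Lemma~\ref{Lem: integral points for all k}. If $S_g\neq\emptyset$ then $y\in\X_k(\calo_F)$; since $s^{\eta'}_{\X,!}$ is a well-defined function on $\X(F)$ (as recorded before Lemma/Definition~\ref{LemDef: linear OI}, using that $s_\X$ is an $\rH'$-torsor) I may recompute the integral with $g$ replaced by a lift $g_1\in\G'_k(\calo_F)$ of $y$, which exists by Lemma~\ref{Lem: integral points for all k}. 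Then $S_{g_1}=\G'_k(\calo_F)\cap\rH'(F)=\rH'_k(\calo_F)$; on this set $\bfun_{\G'_k(\calo_F)}(g_1h)=1$, and the character factor — which in the normalisation of \eqref{eqn: eta projection} is evaluated on the product $g_1h$ precisely so that $s^{\eta'}_{\X,!}$ descends to $\X(F)$ — equals $1$ because $g_1h\in\G'_k(\calo_F)\subset\GL_{2n}(\calo_F)$ and $\eta'$ is unramified. By unimodularity of $\rH'(F)$ the integral is $\vol(\rH'_k(\calo_F))=\vol(\rH'_k(\calo_F))\,\bfun_{\X_k(\calo_F)}(y)$, which is the displayed formula. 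Taking $k=0$ and using $\vol(\rH'_0(\calo_F))=\vol(\GL_n(\calo_F))^2=1$ from the conventions of \S\ref{measures} gives $s^{\eta'}_{\X,!}(\bfun_{\G'_0(\calo_F)})=\bfun_{\X_0(\calo_F)}$.

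The unitary identity is proved by the same argument applied to $(\G,\rH,\Q,\pi)$, now with no character twist. The only difference is that $\pi$ need not be surjective on $F$-points, but $\pi(\G_k(\calo_F))\subseteq\pi(\G(F))$, and by Lemma~\ref{Lem: integral points for all k} together with the identification $\Q_k(\calo_F)=\G_k(\calo_F)/\rH_k(\calo_F)$ the subset $\Q_k(\calo_F)\subset\Q(F)$ lies inside $\pi(\G(F))$, so the same case analysis yields $\pi_!(\bfun_{\G_k(\calo_F)})(y)=\vol(\rH_k(\calo_F))\,\bfun_{\Q_k(\calo_F)}(y)$. The step I expect to require the most care is precisely the unitary analogue of the intersection identity, $\G_k(\calo_F)\cap\rH(F)=\rH_k(\calo_F)$: this says that the congruence filtration of the nice $\calo_F$-model $\G_0$ restricts to that of $\rH_0$, which follows from $\rH_0\hookrightarrow\G_0$ being a closed immersion of smooth $\calo_F$-group schemes (as in \cite[Section~4.1]{Lesliedescent}) and is exactly what makes Lemma~\ref{Lem: integral points for all k} directly applicable here.
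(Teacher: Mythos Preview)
Your proof is correct and follows the same approach as the paper's: invoke Lemma~\ref{Lem: integral points for all k} to identify $\X_k(\calo_F)=\G'_k(\calo_F)/\rH'_k(\calo_F)$, reduce to $g\in\G'_k(\calo_F)$, and compute directly. The paper's proof is a three-line sketch that ends with ``The lemma is now clear'' after this reduction; you have simply supplied the details it omits, including the intersection identity $\G'_k(\calo_F)\cap\rH'(F)=\rH'_k(\calo_F)$ and the handling of the character via the normalisation of \eqref{eqn: eta projection}.
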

\begin{proof}
By Lemma \ref{Lem: integral points for all k}, we have $\X_k(\calo_F)= \G'_k(\calo_F)/\rH'_k(\calo_F)$. Thus, for any $g\in \G'({F})$, the left-hand side is non-zero if and only if $gh\in \G'_k(\calo_F)$ for some $h\in \rH'({F})$. We may thus assume $g\in \G'_k(\calo_F)$. The lemma is now clear.
\end{proof}

The following lemma is important for our global applications.
\begin{Lem}\label{Lem: almost everywhere 1}
    Assume that $E/F$ is unramified or split and that $\ga\in \G'(\calo_F)$ is chosen so that $$s_{\X}(\ga)\in \X(\calo_F)=\G'(\calo_F)/\rH(\calo_F)$$ is absolutely semi-simple in the sense of \cite[Section 3]{Lesliedescent}. Then
    \[
    \Orb^{\rH',\ul{\eta}}_{s}(\bfun_{\G'(\calo_F)}\otimes\bfun_{\calo_{F,n}},\ga) =1
    \]
    for all $s\in \cc$.
\end{Lem}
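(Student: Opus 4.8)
The plan is to reduce the claim to the symmetric variety $\X$ and then to a Tate-type integral on a regular centralizer torus. Since $E/F$ is unramified or split, the character $\eta=\eta_{E/F}$, and hence each of $\eta_0,\eta_1,\eta_2$ (which for us is $\eta$ or trivial), is unramified. First I would combine the reduction used in the proof of Lemma/Definition~\ref{LemDef: linear OI} with Lemma~\ref{Lem: push on units}, which gives $s^{\eta_2}_{\X,!}(\bfun_{\G'(\calo_F)})=\bfun_{\X(\calo_F)}$. Writing $y:=s_{\X}(\ga)\in\X(\calo_F)$ and letting $T_y\subset\rH'$ be its stabilizer, this identifies $\Orb^{\rH',\ul{\eta}}_{s}(\bfun_{\G'(\calo_F)}\otimes\bfun_{\calo_{F,n}},\ga)$ with the value at $s_0=0$ of
\[
\omega(y,w)\,\frac{\Orb^{\ul{\eta}}_{s}\bigl(\bfun_{\X(\calo_F)}\otimes\bfun_{\calo_{F,n}},(y,w)\bigr)}{L(s_0,T_y,\eta_0)},
\]
for any $w$ making $(\ga,w)$ strongly regular (the value being independent of $w$, by Lemma/Definition~\ref{LemDef: linear OI}); in the split case $E=F\times F$ one can read the statement off directly from \eqref{eqn: orbital int split} instead. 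So it suffices to produce one good $w$ with $\omega(y,w)=1$ for which the numerator equals $L(s_0,T_y,\eta_0)$ identically in $s_0$.

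For the transfer factor, I would note that absolute semisimplicity forces $y=\begin{psmatrix}A&B\\C&D\end{psmatrix}$ to have regular semisimple reduction, so that $\det B,\det C\in\calo_F^\times$ and $\overline{D}$ is non-derogatory, and then pick $w\in\calo_F^n$ reducing to a cyclic covector for $\overline{D}$. Such a $w$ makes $(\ga,w)$ strongly regular and $(w\mid wD\mid\dots\mid wD^{n-1})\in\GL_n(\calo_F)$, so by the explicit formula of Definition~\ref{Def: transfer factor on X}, every factor of $\omega(y,w)=\eta_2^{n}(BC)(\eta_1\eta_2)(C)\eta_0(\det(w\mid wD\mid\dots\mid wD^{n-1}))$ is an unramified character at a unit, whence $\omega(y,w)=1$.

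For the numerator, I would expand, for $\Re(s_0)$ large,
\[
\Orb^{\ul{\eta}}_{s}\bigl(\bfun_{\X(\calo_F)}\otimes\bfun_{\calo_{F,n}},(y,w)\bigr)=\int_{\rH'(F)}\bfun_{\X(\calo_F)}(h^{-1}\!\cdot y)\,\bfun_{\calo_F^n}(wh^{(2)})\,\eta_0(h^{(2)})(\eta_1\eta_2)(h)\,|h^{(2)}|^{s_0}|h|^{s_1}\,dh,
\]
and use the key integrality statement — of the type underlying \cite[Section~3]{Lesliedescent} — that for absolutely semisimple $y\in\X(\calo_F)$ one has $\X(\calo_F)\cap(\rH'(F)\cdot y)=\rH'(\calo_F)\cdot y$, so the integrand is supported on $h\in T_y(F)\rH'(\calo_F)$. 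Unfolding $h=tk$ with $t\in T_y(F),k\in\rH'(\calo_F)$ (fibers $T_y(\calo_F)$, of volume $1$), using triviality of $\eta_0$, $\eta_1\eta_2$, $|\cdot|$ on $\rH'(\calo_F)$, the relation $\det t^{(1)}=\det t^{(2)}$ on $T_y$ (so $|t|=1$ and $(\eta_1\eta_2)(t)=1$), and $wt^{(2)}k^{(2)}\in\calo_F^n\iff wt^{(2)}\in\calo_F^n$, the integral collapses to $\int_{T_y(F)}\bfun_{\calo_F^n}(wt^{(2)})\,\eta_0(\det t^{(2)})\,|\det t^{(2)}|^{s_0}\,dt$. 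Identifying $T_y(F)\cong\prod_iF_i^\times$ with $\prod_iF_i=F[A]$ (each $F_i/F$ unramified), and using that $w$ is cyclic so that $t\mapsto wt^{(2)}$ matches the standard embedding $\prod_i\calo_{F_i}^\times\hookrightarrow\prod_i\calo_{F_i}$ (here $\calo_F[D]$ is the maximal order because $\overline{D}$ has separable reduction), this becomes $\prod_i\int_{F_i^\times}\bfun_{\calo_{F_i}}\,\eta_{0,i}\,|\cdot|_{F_i}^{s_0}=\prod_iL(s_0,\eta_{0,i})=L(s_0,T_y,\eta_0)$, where $\eta_{0,i}=\eta_0\circ\Nm_{F_i/F}$ is unramified and $\vol(\calo_{F_i}^\times)=1$. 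Hence the displayed ratio is identically $1$, and evaluating at $s_0=0$ gives the claim.

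The main obstacle I anticipate is the third step: establishing the integrality identity $\X(\calo_F)\cap(\rH'(F)\cdot y)=\rH'(\calo_F)\cdot y$ cleanly from absolute semisimplicity, and carrying out the bookkeeping so that the residual torus integral is exactly $L(s_0,T_y,\eta_0)$ — in particular in the split case, where $\eta_0$ is trivial and this $L$-factor is singular at $s_0=0$, a pole which is harmless precisely because the ratio is already the constant $1$ before specialization.
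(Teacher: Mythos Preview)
Your proposal is correct and follows essentially the same route as the paper: reduce to $\X$ via $s^{\eta_2}_{\X,!}(\bfun_{\G'(\calo_F)})=\bfun_{\X(\calo_F)}$, choose $w\in\calo_{F,n}$ so that $\omega(y,w)=1$, invoke the integrality statement (the paper cites \cite[Proposition~4.5]{Lesliedescent} for $h^{-1}\cdot y\in\X(\calo_F)\Rightarrow h\in T_y(F)\rH'(\calo_F)$ and smoothness of $T_y$ over $\calo_F$), and collapse to a Tate integral on $T_y$ equal to $L(s_0,T_y,\eta_0)$. Your extra bookkeeping (the explicit cyclic choice of $w$, the relation $\det t^{(1)}=\det t^{(2)}$, and the identification of the torus integral with $\prod_i L(s_0,\eta_{0,i})$) is more detailed than the paper's, but the argument is the same.
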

\begin{proof}
    By Lemma \ref{Lem: push on units}, we have
  $ s_{\X}^{\eta_2}(\bfun_{\G'(\calo_F)})= \bfun_{\X(\calo_F)},
    $ 
    so it suffices to prove that if $y\in \X(\calo_F)$ is absolutely semi-simple in the sense of \cite[Section 3]{Lesliedescent}, then
    \[
    \Orb^{\rH',\ul{\eta}}_{s}(\bfun_{\X(\calo_F)}\otimes\bfun_{\calo_{F,n}},y) =1
    \]
    for all $s\in \cc$.

   Let $w\in F_{n}$ be chosen so that $(y,w)$ is strongly regular. It is clear we may choose $w\in \calo_{F,n}$, at which point it easily follows that $\omega(y,w)=1$. In this case, the left-hand side is the specialization at $s_0=0$ of
\[
  \int_{\T_y(F)\backslash\rH'(F)}\bfun_{\X(\calo_F)}(h^{-1} \cdot y)\left(\frac{\int_{T_y(F)}\bfun_{\calo_{F,n}}(w t h^{(2)})|th^{(2)}|^{s_0}\eta_0(t)dt}{L(s_0, T_y,\eta_0)}\right)\eta(h)|h|^{s_1} dh,
\] 
where $\T_y\subset \rH'$ is the stabilizer of $y\in \X(F)$. By \cite[Proposition 4.5]{Lesliedescent}, $T_y$ extends to a smooth group $\calo_F$-torus (in particular, $\T_y$ is unramified) and 
\[
\bfun_{\X(\calo_F)}(h^{-1} \cdot y)=0 \text{ unless }h\in \rH_x(F)\rH'(\calo_F).
\]
Thus, the integral becomes
\[
\frac{\vol(\rH'(\calo_F))}{\vol(T_y(\calo_F))}\frac{\int_{\T_y(F)}\bfun_{\calo_{F,n}}(w t)|t|^{s_0}\eta_0(t)dt}{L(s_0, T_y,\eta_0)} = \frac{\int_{\T_y(F)}\bfun_{\calo_{F,n}}(w t)|t|^{s_0}\eta_0(t)dt}{L(s_0, \T_y,\eta_0)}
\]
by our normalization of measures \ref{measures}. Since $\T_y$ is unramified, the normalization of measures in \S \ref{Section: Lvalue measure} agrees with the normalization in Tate's thesis, so that
\[
\int_{T_y(F)}\bfun_{\calo_{F,n}}(w t)|t|^{s_0}\eta_0(t)dt=L(s_0, T_y,\eta_0)\qedhere
\]
\end{proof}

\subsubsection{The split-inert case}
Recall that $\tau_n=w_n$ is our distinguished split Hermitian form, and $V_n = V_{\tau_n}$. In our setting, $|\calv_n(E/F)|=2$ and we set $\calv_n(E/F)= \{\tau_0, \tau_1\}$, where $\tau_1$ is non-split, and write 
\[
\ul{f}=(f_{0,0},f_{0,1},f_{1,0},f_{1,1})
\]in the sense of Definition \ref{Def: transfer si}. Recall now the congruence subgroups $\calo_F$-schemes $(\G_{si,k},\rH_{si,k})$ from the preceding section.

\begin{Thm}\label{Thm: fundamental lemma si} Fix $\tau_2=\tau_n\in \calv_n(E/F)$.
For all $k\in \zz_{\geq0}$, the functions 
\[
\frac{1}{\vol(\rH'_k(\calo_F))}\bfun_{\G'_k(\calo_F)}\otimes \bfun_{\calo_{F,n}}\quad\text{ and}\quad \left(\frac{(-1)^k}{\vol(\rH_{k}(\calo_F))}\bfun_{\G_{k}(\calo_F)},0,0,0\right)
\]are $(\eta,1,\eta)$-transfers.
    \end{Thm}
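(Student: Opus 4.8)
**The plan is to reduce this fundamental lemma to the fundamental lemma already proved on the level of symmetric varieties in \cite{LXZfund} (recalled in \cite[Theorem 4.15]{LXZfund}), using the transfer-compatibility of the pushforward maps established in Lemma \ref{Lem: push on units}.** By Definition \ref{Def: transfer si}, we must show that the functions $\wt\phi := s_{\X}^{\eta}\left(\tfrac{1}{\vol(\rH'_k(\calo_F))}\bfun_{\G'_k(\calo_F)}\otimes\bfun_{\calo_{F,n}}\right)$ and $\ul\phi := \pi_!(\ul f)$ for $\ul f = \left(\tfrac{(-1)^k}{\vol(\rH_{k}(\calo_F))}\bfun_{\G_{k}(\calo_F)},0,0,0\right)$ are $(\eta,1,\eta)$-transfers on the symmetric varieties in the sense of \cite[Definition 4.11]{LXZfund}. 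So the first step is to identify both sides explicitly as characteristic functions of integral points.

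First I would apply Lemma \ref{Lem: push on units}: since $\bfun_{\calo_{F,n}}$ is unchanged and $s_{\X,!}^{\eta}$ carries $\tfrac{1}{\vol(\rH'_k(\calo_F))}\bfun_{\G'_k(\calo_F)}$ to $\bfun_{\X_k(\calo_F)}$ (this is exactly the content of the displayed identity in that lemma, valid since $\eta$ is unramified), we get $\wt\phi = \bfun_{\X_k(\calo_F)}\otimes\bfun_{\calo_{F,n}}$. On the unitary side, the same lemma gives $\pi_!\left(\tfrac{(-1)^k}{\vol(\rH_{si,k}(\calo_F))}\bfun_{\G_{si,k}(\calo_F)}\right) = (-1)^k\bfun_{\Q_{si,k}(\calo_F)}$ in the $(\tau_0,\tau_0)$-component (with $\tau_0$ the split class, since $\tau_2=\tau_n$ is split and we are taking the first inner form), and $0$ in the other three components. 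Thus the claim reduces precisely to: the pair $\bigl(\bfun_{\X_k(\calo_F)}\otimes\bfun_{\calo_{F,n}},\; ((-1)^k\bfun_{\Q_{si,k}(\calo_F)},0,0,0)\bigr)$ consists of $(\eta,1,\eta)$-transfers in the variety sense. This is the statement of the congruence-subgroup fundamental lemma on the symmetric varieties, which is \cite[Theorem 4.15]{LXZfund} (under the running Assumption on $p$ versus the ramification degree $e$), completing the proof.

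**The main obstacle** is purely bookkeeping rather than conceptual: one must check that the normalization conventions match up exactly — in particular that the volume factors $\vol(\rH'_k(\calo_F))$ and $\vol(\rH_{si,k}(\calo_F))$ and the sign $(-1)^k$ are precisely those appearing in the variety-level fundamental lemma of \cite{LXZfund}, so that no spurious constant is introduced when passing through $s_{\X,!}$ and $\pi_!$. The sign $(-1)^k$ in particular should be tracked to the normalization in \cite[Theorem 4.15]{LXZfund}; it arises from the transfer factor on $\Q_{si,k}$ evaluated on absolutely semisimple integral points, and one should confirm via Lemma \ref{Lem: almost everywhere 1} (or its variety analogue in \cite{Lesliedescent}) that the normalized orbital integrals on the linear side are identically $1$ on the relevant locus, forcing the constant on the unitary side to be $(\pm 1)^k$ with the sign determined by the transfer factor $\omega$. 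Once these constants are pinned down, there is nothing left to prove beyond citing \cite[Theorem 4.15]{LXZfund}.
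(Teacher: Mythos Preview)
Your approach is exactly the paper's: apply Lemma \ref{Lem: push on units} on both sides to pass from the group to the symmetric variety, then invoke the variety-level fundamental lemma from \cite{LXZfund}. The only correction is the citation: the relevant result in \cite{LXZfund} is Theorem 4.21 (the congruence-subgroup fundamental lemma on the varieties $\X$ and $\Q_{si}$), not Theorem 4.15, which is the weak smooth-transfer statement; your second paragraph about tracking the sign $(-1)^k$ and the volume normalizations is unnecessary once you cite the correct theorem, since those constants are already built into its statement.
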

    \begin{proof}
     Integrating down to the varieties $\X(F)$ and $\Q_{si}(F)$ and applying Lemma \ref{Lem: push on units}, this reduces to \cite[Theorem 4.21]{LXZfund}.
    \end{proof}

\subsubsection{The inert-inert case}  Recall now the congruence subgroups $\calo_F$-schemes $(\G_{ii,k},\rH_{ii,k})$ from the preceding section.

\begin{Thm}\label{Thm: fundamental lemma ii}
For all $k\in \zz_{\geq0}$, the functions
\[
\frac{1}{\vol(\rH'_k(\calo_F))}\bfun_{\G'_k(\calo_F)}\otimes \bfun_{\calo_{F,n}}\quad\text{ and}\quad \frac{1}{\vol(\rH_{ii,k}(\calo_F))}\bfun_{\G_{ii,k}(\calo_F)}
\]
    are $(\eta,1,1)$-transfers.
    \end{Thm}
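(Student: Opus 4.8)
The plan is to descend both test functions to the symmetric varieties $\X$ and $\Q_{ii}$ and then to invoke the fundamental lemma on these varieties established in \cite{LXZfund}, exactly as in the proof of Theorem \ref{Thm: fundamental lemma si}. By Definition \ref{Def: transfer ii}, the pair in the statement consists of $(\eta,1,1)$-transfers precisely when the descended functions
\[
\wt{\phi} := s^1_{\X,!}\!\left(\frac{\bfun_{\G'_k(\calo_F)}}{\vol(\rH'_k(\calo_F))}\right)\otimes\bfun_{\calo_{F,n}},\qquad
\phi_{ii} := \pi_{ii,!}\!\left(\frac{\bfun_{\G_{ii,k}(\calo_F)}}{\vol(\rH_{ii,k}(\calo_F))}\right)
\]
are $(\eta,1)$-transfers in the sense of \cite[Definition 4.12]{LXZfund}. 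Note that in the triple $(\eta,1,1)$ the twisting character on the linear side is trivial, so it is the untwisted push-forward $s^1_{\X,!}$ that enters here, not an $\eta$-twisted variant.

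First I would carry out these two descents via Lemma \ref{Lem: push on units}. Since $s_\X\colon\G'\to\X$ is an $\rH'$-torsor and the trivial character is unramified, the Lemma applied with $\eta'=1$ yields $s^1_{\X,!}(\bfun_{\G'_k(\calo_F)}) = \vol(\rH'_k(\calo_F))\,\bfun_{\X_k(\calo_F)}$, and hence $\wt{\phi} = \bfun_{\X_k(\calo_F)}\otimes\bfun_{\calo_{F,n}}$ once the volume normalizations cancel (the Schwartz factor $\bfun_{\calo_{F,n}}$ is untouched by $s^1_{\X,!}$). Likewise $\pi_{ii,!}(\bfun_{\G_{ii,k}(\calo_F)}) = \vol(\rH_{ii,k}(\calo_F))\,\bfun_{\Q_{ii,k}(\calo_F)}$, so $\phi_{ii} = \bfun_{\Q_{ii,k}(\calo_F)}$. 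Here Lemma \ref{Lem: integral points for all k} is what identifies $\X_k(\calo_F)$ and $\Q_{ii,k}(\calo_F)$ with the relevant quotients of integral points, which is needed in order to apply Lemma \ref{Lem: push on units}.

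It then remains to prove that $\bfun_{\X_k(\calo_F)}\otimes\bfun_{\calo_{F,n}}$ and $\bfun_{\Q_{ii,k}(\calo_F)}$ are $(\eta,1)$-transfers; this is precisely the fundamental lemma for the inert-inert symmetric varieties proved in \cite{LXZfund} (the inert-inert analogue of \cite[Theorem 4.21]{LXZfund}). The substantive content of the theorem therefore lies entirely in \emph{loc.\ cit.}; the remaining work is the bookkeeping of the push-forward maps $s^1_{\X,!}$ and $\pi_{ii,!}$, which is routine given Lemmas \ref{Lem: push on units} and \ref{Lem: integral points for all k} and the measure normalizations of \S\ref{measures}. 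The one point to watch — and the only (mild) obstacle — is to track the volume normalizations and the trivial-versus-$\eta$ twisting so that the identity descends cleanly; in contrast to Theorem \ref{Thm: fundamental lemma si}, here no sign $(-1)^k$ appears on the unitary side.
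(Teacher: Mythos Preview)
Your proposal is correct and follows exactly the paper's approach: the paper's proof reads ``Integrating down to the varieties $\X(F)$ and $\Q_{ii}(F)$ and applying Lemma \ref{Lem: push on units}, this reduces to \cite[Theorem 4.22]{LXZfund}.'' You have identified the same reduction, the correct (trivial) twisting character for $s^1_{\X,!}$, and the correct target statement in \cite{LXZfund}.
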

        \begin{proof}
     Integrating down to the varieties $\X(F)$ and $\Q_{ii}(F)$ and applying Lemma \ref{Lem: push on units}, this reduces to \cite[Theorem 4.22]{LXZfund}.
    \end{proof}

    \subsubsection{The endoscopic case} Finally, we have the endoscopic fundamental lemma relating the inert-inert case to the split-inert case. Let all notations be as in the previous two subsections.

    \begin{Thm}\label{Thm: fundamental lemma varepsilon}
For all $k\in \zz_{\geq0}$, the functions 
\[
\frac{1}{\vol(\rH_{ii,k}(\calo_F))}\bfun_{\G_{ii,k}(\calo_F)}\quad\text{ and }\quad \left(\frac{(-1)^k}{\vol(\rH_{k}(\calo_F))}\bfun_{\G_{k}(\calo_F)},0,0,0\right)
\]are $\varepsilon$-transfers.
    \end{Thm}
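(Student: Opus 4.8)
The plan is to follow exactly the template of the proofs of Theorems~\ref{Thm: fundamental lemma si} and \ref{Thm: fundamental lemma ii}: push both test functions down from the groups $\G_{ii}$ and $\G_{si}$ to the symmetric varieties $\Q_{ii}$ and $\Q_{si}$, where the endoscopic fundamental lemma has been established in \cite{LXZfund} (the $\varepsilon$-analogue of \cite[Theorems 4.21 and 4.22]{LXZfund}). The first step is the push-forward. By the last assertion of Lemma~\ref{Lem: push on units}, for every $k\ge 0$,
\[
\pi_{ii,!}\Bigl(\tfrac{1}{\vol(\rH_{ii,k}(\calo_F))}\bfun_{\G_{ii,k}(\calo_F)}\Bigr)=\bfun_{\Q_{ii,k}(\calo_F)},
\]
where Lemma~\ref{Lem: integral points for all k} supplies the identification $\Q_{ii,k}(\calo_F)=\G_{ii,k}(\calo_F)/\rH_{ii,k}(\calo_F)$ needed to interpret the right-hand side. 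On the split-inert side, the family $\bigl(\tfrac{(-1)^k}{\vol(\rH_{k}(\calo_F))}\bfun_{\G_{k}(\calo_F)},0,0,0\bigr)$ has only its split-split component nonzero; since $\tau_2=\tau_n$ is split, a second application of Lemma~\ref{Lem: push on units} together with \eqref{eqn: pushforward family local} shows that $\pi_!$ carries it to the collection $\bigl((-1)^k\bfun_{\Q_{\tau_n,\tau_n,k}(\calo_F)},0\bigr)$, which under the decomposition \eqref{eqn: orbits to sym var} is $(-1)^k\bfun_{\Q_{si,k}(\calo_F)}$, supported on the split-split orbit component.

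The second step is to identify this with the definition of $\varepsilon$-transfer. By Definition~\ref{Def: varepsilon transfer}, the two group-level functions are $\varepsilon$-transfers if and only if their push-forwards $\bfun_{\Q_{ii,k}(\calo_F)}$ and $(-1)^k\bfun_{\Q_{si,k}(\calo_F)}$ are $\varepsilon$-transfers in the sense of \cite[Conjecture 4.10]{LXZfund}; so after the first step the assertion becomes a statement purely about the symmetric varieties. The endoscopic transfer factor is harmless on the integral locus: for $x=\pi_{ii}(g)\in\Q_{ii}(\calo_F)$ with $\pi_{ii}(g)=\begin{psmatrix}a&b\\c&a^\ast\end{psmatrix}$ we have $b\in\Herm_n^\circ(\calo_F)$, hence $\det(b)\in\calo_F^\times$ and $\De_\varepsilon(g)=\eta(\det b)=1$ because $E/F$ is unramified. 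Thus the theorem is reduced to the statement, proved in \cite{LXZfund}, that $\bfun_{\Q_{ii,k}(\calo_F)}$ and $(-1)^k\bfun_{\Q_{si,k}(\calo_F)}$ are $\varepsilon$-transfers for all $k\ge 0$; the sign $(-1)^k$ is produced there, just as in the non-endoscopic cases, by the transfer factor attached to the dilated congruence $\calo_F$-models, and is consistent with the sign appearing in Theorem~\ref{Thm: fundamental lemma si}, to which the split-inert side is ultimately compared.

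I expect the only genuinely delicate point to be the descent bookkeeping at positive level $k>0$: one must know that pushing forward the normalized basic congruence function again produces the normalized basic congruence function on the variety (Lemma~\ref{Lem: push on units} combined with the integral-model identity of Lemma~\ref{Lem: integral points for all k}), and that the $(-1)^k$ normalization and the factor $\De_\varepsilon$ are propagated consistently through $\pi_!$ and through the smooth $\calo_F$-models. For $k=0$ the statement is essentially the definition together with the base endoscopic fundamental lemma of \cite{LXZfund}.
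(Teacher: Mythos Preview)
Your core reduction is correct and is exactly what the paper intends: the paper in fact gives no proof at all for Theorem~\ref{Thm: fundamental lemma varepsilon}, precisely because the argument is identical in structure to the proofs of Theorems~\ref{Thm: fundamental lemma si} and~\ref{Thm: fundamental lemma ii}. One pushes both functions down to the symmetric varieties via Lemma~\ref{Lem: push on units} (using Lemma~\ref{Lem: integral points for all k} to identify $\Q_{\bullet,k}(\calo_F)$), and then Definition~\ref{Def: varepsilon transfer} reduces the statement verbatim to the corresponding $\varepsilon$-fundamental lemma on the varieties, established in \cite{LXZfund}.

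One small point: your second paragraph contains unnecessary and slightly inaccurate commentary. The claim that $\det(b)\in\calo_F^\times$ for every regular semi-simple $x\in\Q_{ii,k}(\calo_F)$ is not true in general (integrality of $x$ does not force $b$ to be a unit), and in any case the transfer factor $\De_\varepsilon$ plays no role in the reduction step itself. The factor and the sign $(-1)^k$ are both absorbed into the variety-level statement in \cite{LXZfund}; your job here is only to match push-forwards with Definition~\ref{Def: varepsilon transfer}, which you have done. You can safely delete that discussion.
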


\section{Spectral preliminaries}\label{Section: spectral prelim FJ}
We now return to the global setting and recall the properties of certain period integrals and their relations to (twisted) standard and exterior square $L$-functions. We also recall the necessary local representation theoretic results. When $F$ denotes a fixed number field, let $\A_{F}$ denote its ring of adeles. Let $\G'=\GL_{2n}$, $\rH'=\GL_n\times\GL_n$, and $\pi$ a cuspidal automorphic representation of $\G'(\A_{F})$ with central character $\omega=\omega_{\pi}$.  We use $V_{\pi}$ to denote the vector space of the representation $\pi$. As before, for any character $\chi$, for $h=(h^{(1)},h^{(2)})\in \rH'(A_{F})$ we set $$\chi(h) = \chi(\det(h^{(1)})(\det(h^{(2)})^{-1}).$$
Finally, we write $[\rH'] = Z_{\G'}(\A_{F})\rH'({F})\backslash\rH'(\A_{F})$, and similar notation for other groups. Our measure conventions are found in \S \ref{measures}.

\subsection{Whittaker models}\label{Sec: Whittaker}
Suppose that $F$ is a local field. For any non-trivial additive character $\psi: F\to \cc^\times,$
we denote by $\psi_0$ the generic character of $N_n(F)$
\[
\psi_0(u)=\psi\left(\sum_iu_{i,i+1}\right).
\]

For any irreducible generic representation $\pi$, we denote by $\pi^\vee$ the abstract contragredient representation. 
Set $\calw^\psi(\pi)$ to be the Whittaker model of $\pi$ with respect to the generic character $\psi_0$. The action is given by
\[
\pi(g) W(h) = W(hg),\quad g,h\in \GL_n(F),\: W\in \calw^{\psi}(\pi).
\]
When $\pi$ is unitary, we obtain an isomorphism
\[
\overline{(\cdot)}:\overline{\calw(\pi)}\lra \calw^{\psi^{-1}}(\pi^\vee),
\]
given by $\ov{W}(g) = \overline{W(g)}$.

If $F$ is global, $\pi$ a cuspidal automorphic representation of $\GL_n(\A_{F})$, we denote by $\W^\varphi$ the $\psi_0$-Fourier coefficient of $\varphi\in V_{\pi}$:
\[
\W^\varphi(g) = \int_{[N_n]}\varphi(ng)\psi_0^{-1}(n)dn,
\]
where $\psi_0$ is our generic character of the unipotent subgroup $N_n(\A_{F})$.

Suppose that $S$ is a finite set of places, containing the archimedean ones, such that $\pi_v$ is unramified and $\psi_{0,v}$ has conductor $\calo_{F_v}$ for $v\notin S$. Let $\varphi\in V_\pi$ be such that $W^\varphi$ is factorizable (for simplicity, we will say that $\varphi$ is factorizable), write $\W^\varphi(g) = \prod_vW_v(g_v)$, where $W_v\in \calw^{\psi_v}(\pi_v)$.  We may assume that for all $v\notin S$, $W_v$ is spherical and normalized so that $W_v(I_{n})=1$.

\subsection{Peterson inner product}\label{Section: inner product} 
Suppose $\pi$ is a cuspidal automorphic representation of $\GL_n(\A_{F})$, and let $\pi^\vee\simeq \ov{\pi}$ denote the contragredient representation of $\pi$. Consider the inner product 
\begin{equation}\label{eqn: global inner product}
    \la\varphi,{\varphi}'\ra_{Pet} = \int_{Z_n(\A_F)\GL_n(F)\backslash\GL_n(\A_F)}\varphi(g)\ov{\varphi'(g)}dg;
\end{equation}
this is a $\GL_n(\A_{F})$-invariant inner product on $\pi.$ 

For each place $v$ of $F$, recall the inner product
\begin{equation*}
(W_v,{W}_v')_{\pi_v}=\int_{N_n(F_v)\backslash P_n(F_v)}W_v(h)\ov{W'_v(h)}dh,
\end{equation*}
where $W_v,W_v'\in \calw^{\psi_v}(\pi_v)$ and $P_n\cong \GL_{n-1}\rtimes \mathbb{G}_a^{n-1}$ is the mirabolic subgroup of $\GL_n.$ 
When $\pi_v$ is unramified and $W_v$ is the spherical vector normalized so that $W_v(I_{n}) =1,$ then
\begin{equation}\label{eqn: unramified Whittaker inner}
(W_v,{W}_v)_{\pi_v}=L(1,\pi_v\times \pi_v^\vee),
\end{equation}
where $L(s,\pi_v\times \pi_v^\vee)$ denotes the local Rankin-Selberg $L$-factor.
\begin{Prop}\cite[Section 3.1]{ZhangRankin}
Assume that $\varphi\in V_\pi$ is factorizable as above. There is a corresponding factorization
\begin{align}\label{Prop: nice inner product part 1}
\la\varphi,{\varphi}\ra_{Pet}&=\frac{n\Res_{s=1}L(s,\pi\times\pi^\vee)}{\vol(F^\times\bs\A^1_{F})}\prod_{v}(W_v,{W}_v)^\natural_{\pi_v},
\end{align}
where \begin{equation}
    (W_v,{W}_v)^\natural_{\pi_v}=\frac{(W_v,{W}_v)_{\pi_v}}{L(1,\pi_v\times\pi^\vee_v)}.
\end{equation}
\end{Prop}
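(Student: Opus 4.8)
The plan is to recover this factorization from the Rankin--Selberg integral representation of $L(s,\pi\times\pi^\vee)$ in the style of Jacquet--Piatetski-Shapiro--Shalika, following \cite[Section 3.1]{ZhangRankin}. Fix a factorizable Schwartz--Bruhat function $\Phi=\otimes_v\Phi_v$ on $\A_F^n$ with $\Phi_v=\bfun_{\calo_{F_v}^n}$ for $v\notin S$, let $E(g,\Phi,s)$ denote the associated mirabolic Eisenstein series on $\GL_n(\A_F)$, and consider the global zeta integral
\[
I(s,\varphi,\Phi):=\int_{Z_n(\A_F)\GL_n(F)\bs\GL_n(\A_F)}\varphi(g)\,\ov{\varphi(g)}\,E(g,\Phi,s)\,dg,
\]
which converges absolutely since $\pi$ is unitary (so the central characters cancel) and $\varphi$ is rapidly decreasing. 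Unfolding $E(g,\Phi,s)$ along $P_n(F)\bs\GL_n(F)\cong F^n\setminus\{0\}$ and using cuspidality of $\varphi$ to discard the degenerate contribution, one obtains for $\Re(s)$ large
\[
I(s,\varphi,\Phi)=\int_{N_n(\A_F)\bs\GL_n(\A_F)}\W^\varphi(g)\,\ov{\W^\varphi(g)}\,\Phi(e_ng)\,|\det g|^s\,dg=\prod_v\Psi_v(s,W_v,\ov{W_v},\Phi_v),
\]
where $\Psi_v$ is the local Jacquet--Piatetski-Shapiro--Shalika zeta integral and the Euler product uses $\W^\varphi=\prod_vW_v$.

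Next I would divide through by the local $L$-factors. At $v\notin S$ one has $\Psi_v(s,W_v,\ov{W_v},\Phi_v)=L(s,\pi_v\times\pi_v^\vee)$, so setting $\Psi_v^\natural:=\Psi_v/L(s,\pi_v\times\pi_v^\vee)$ (holomorphic in $s$, and equal to $1$ at $s=1$ for $v\notin S$) gives $I(s,\varphi,\Phi)=L(s,\pi\times\pi^\vee)\prod_v\Psi_v^\natural(s,W_v,\ov{W_v},\Phi_v)$. Now compare residues at $s=1$. Since $\pi^\vee\simeq\ov\pi$, the representation $\pi\times\pi^\vee$ contains the trivial character exactly once, so $L(s,\pi\times\pi^\vee)$ has a simple pole at $s=1$; likewise $E(g,\Phi,s)$ has a simple pole there, with $g$-independent residue equal to a constant times $\wh\Phi(0)$, and by Tate's theorem (the Godement--Jacquet residue formula) this constant is $\vol(F^\times\bs\A^1_F)/n$ for the measure conventions of \S\ref{measures}. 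Hence
\[
\frac{\vol(F^\times\bs\A^1_F)}{n}\,\wh\Phi(0)\,\la\varphi,\varphi\ra_{Pet}=\Res_{s=1}I(s,\varphi,\Phi)=\Res_{s=1}L(s,\pi\times\pi^\vee)\cdot\prod_v\Psi_v^\natural(1,W_v,\ov{W_v},\Phi_v),
\]
and solving for $\la\varphi,\varphi\ra_{Pet}$ yields \eqref{Prop: nice inner product part 1}, provided we establish the local identity $\Psi_v^\natural(1,W_v,\ov{W_v},\Phi_v)=\wh\Phi_v(0)\,(W_v,W_v)^\natural_{\pi_v}$ at each place $v$.

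That local identity is where the main work lies, and it is the step I expect to be the principal obstacle. At $v\notin S$ it is exactly \eqref{eqn: unramified Whittaker inner} together with $\wh\Phi_v(0)=\vol(\calo_{F_v}^n)=1$, which also fixes all the normalizations. At the remaining places I would fiber the integral $\int_{N_n(F_v)\bs\GL_n(F_v)}$ through $P_n(F_v)\bs\GL_n(F_v)\cong F_v^n\setminus\{0\}$ with fibers $N_n(F_v)\bs P_n(F_v)$: at $s=1$ the fibral integral computes the mirabolic pairing, which --- being $\GL_n(F_v)$-invariant for $\pi_v$ generic unitary --- equals $(W_v,W_v)_{\pi_v}$ independently of the fiber coordinate and so factors out, while the remaining integral over $F_v^n\setminus\{0\}$ collapses, by a Tate-type computation at the edge $s=1$, to $\wh\Phi_v(0)$. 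Assembling the local identities with the residue comparison completes the proof; throughout, the delicate point is to track the measures and constants carefully so that the residue of the Eisenstein series produces exactly the factor $n$ and the volume $\vol(F^\times\bs\A^1_F)$, consistently with the unramified normalization --- rather than anything conceptually new.
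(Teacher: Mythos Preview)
The paper does not give its own proof of this proposition; it simply cites \cite[Section~3.1]{ZhangRankin}. Your sketch is precisely the Rankin--Selberg argument carried out there: unfold the mirabolic Eisenstein series against $|\varphi|^2$, factor into local Jacquet--Piatetski-Shapiro--Shalika integrals, and compare residues at $s=1$, with the local step being the identification of $\Psi_v(1,W_v,\ov{W_v},\Phi_v)$ with $\wh\Phi_v(0)\,(W_v,W_v)_{\pi_v}$ via the $\GL_n(F_v)$-invariance of the mirabolic pairing. So your approach is correct and matches the cited source the paper defers to.
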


\subsection{An Eisenstein series}\label{ss:Eis}
{Let $\eta$ be a Hecke character such that $\eta|_{\BR_+}=1$ under the  usual decomposition $\BA^\times_{F}=\BA^1_{F}\times \BR_+$ (for example, if $\eta$ is of finite order).
We  define
 \begin{align}\label{eqn: our Eisenstein}
 E(h,\Phi,s,\eta):=&|\det(h)|^s\eta(\det(h))\int_{[\GL_1]}\sum_{0\neq v\in F_{n}}\Phi(avh)|a|^{ns}\eta(a^n)da
 \\=&\int_{[\GL_1]}  \sum_{0\neq v\in F_{n}}\Phi(avh)|\det(ah)|^{s}\eta(\det(ah))da  \notag
 \end{align}
 where $\Phi\in \CS(\A_{F}^n)$ is in the space of Schwartz functions, and $h\in \GL_n(\A_{F})$. 

 We have the following proposition recording the basic properties of the Eisenstein series.
 \begin{Prop}\cite[Proposition 2.1]{CogdellGLn}\label{Prop: Eisenstein properties}
The Eisenstein series $ E(h,\Phi,s,\eta)$ converges absolutely whenever $\mathrm{Re}(s)>1$ and admits a meromorphic continuation to the all of $\cc$. It has (at most) simple poles $s=0,1$.

As a function of $h$, it is smooth of moderate growth and as a function of $s$ it is bounded in vertical strips away from the poles, uniformly in $g$ in compact sets. Moreover, we have the functional equation
\[
 E(h,\Phi,s,\eta) =  E({}^th^{-1},\widehat{\Phi},1-s,\eta^{-1}),
\]
where $\widehat{\Phi}=\mathcal{F}_{n}(\Phi)$ and $\mathcal{F}_{n}:\mathcal{S}(\A_{F}^n)\lra \mathcal{S}(\A_{F}^n)$ is the Fourier transform.
The residue at $s=1$ (resp. $s=0$) is 
\[
 \eta(h)\frac{\int_{[\GL_1]^1}\eta(a)^n da}{n}\wh{\Phi}(0)\quad
 \left(\text{resp.} \quad
 \eta(h)\frac{\int_{[\GL_1]^1}\eta(a)^n da}{n}\Phi(0) \right).
 \]
 \end{Prop}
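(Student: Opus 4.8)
The statement is a standard collection of analytic properties of a Godement--Jacquet-type Eisenstein series (the "mirabolic" or "$\GL_1$-induced" Eisenstein series), and the cleanest route is to reduce everything to the theory of the adelic zeta integral of a Schwartz function on $\A_F^n$ twisted by a Hecke character, i.e.\ to Tate's thesis in the $n$-variable / $\GL_n$ setting as in Godement--Jacquet and Cogdell's notes. First I would observe that for $\Re(s)>1$ the sum $\sum_{0\neq v\in F_n}\Phi(avh)$ converges locally uniformly (dominated by a theta-like series), and that the inner integral over $[\GL_1]$ is, after unfolding the idele class group as $\A_F^\times/F^\times = (\A_F^\times/F^\times)^1\times \BR_+$ and using $\eta|_{\BR_+}=1$, essentially the global zeta integral
\[
Z(\Phi_h,s,\eta):=\int_{\A_F^\times}\Phi(h)(a)\,|a|^{ns}\eta(a)^n\,d^\times a,\qquad \Phi(h)(x):=\Phi(xh),
\]
minus the contribution of $v=0$; convergence for $\Re(s)>1$ is then exactly the classical convergence range of $Z(\cdot,s,\cdot)$ for $\Re(ns)>1$ — wait, one must be a little careful: here the variable $a$ is a single idele, so it is genuinely the $1$-dimensional Tate integral of the function $x\mapsto \sum_{0\neq v}\Phi(xvh)$ against $|a|^{ns}\eta(a)^n$, which converges for $\Re(s)>1$ since the summand decays. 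This gives absolute convergence and holomorphy for $\Re(s)>1$.

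Next, for the meromorphic continuation and functional equation I would run the classical Riemann/Tate argument: split the $\BR_+$-integral at $|a|=1$, apply Poisson summation on the lattice $F_n\subset \A_F^n$ in the form $\sum_{v\in F_n}\Phi(avh)=|a|^{-n}|\det h|^{-1}\sum_{v\in F_n}\widehat\Phi(a^{-1}v\,{}^th^{-1})$ (with the convention that $\widehat\Phi=\CF_n\Phi$ as in the statement), and isolate the $v=0$ terms on both sides. The $v=0$ terms produce the polar contributions: the term $\Phi(0)\int_{|a|\leq 1}|a|^{ns}\eta(a)^n$ contributes a simple pole at $s=0$, and the transformed term $\widehat\Phi(0)|\det h|^{-1}\int_{|a|\geq 1}|a|^{n(s-1)}\cdots$ contributes a simple pole at $s=1$; the remaining two integrals (over $|a|\geq 1$ before transform, $|a|\leq 1$ after) converge for all $s$ by rapid decay and are entire. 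Tracking the $h$-dependence — the prefactor $|\det h|^s\eta(\det h)$ in the first displayed form of $E$, and the $|\det h|^{-1}$ from Poisson — one reads off the functional equation $E(h,\Phi,s,\eta)=E({}^th^{-1},\widehat\Phi,1-s,\eta^{-1})$ and the claimed residues
\[
\operatorname{Res}_{s=1}E(h,\Phi,s,\eta)=\eta(h)\,\frac{\int_{[\GL_1]^1}\eta(a)^n\,da}{n}\,\widehat\Phi(0),
\]
and similarly at $s=0$ with $\Phi(0)$; the factor $1/n$ and the measure of $[\GL_1]^1$ appear from the change of variables $|a|\mapsto |a|^n$ in the $\BR_+$-integral and the decomposition of the measure $d^\times a$.

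For the growth statements: smoothness in $h$ is automatic (differentiating under the integral/sum, justified by the same domination), and moderate growth plus boundedness in vertical strips away from the poles, uniformly for $h$ in compacta, follows from the two convergent "entire pieces" — each is a Schwartz-function-weighted integral whose estimates are locally uniform in $h$ — together with the explicit shape of the two polar terms (which are elementary functions of $s$ times $\Phi(0)$ or $\widehat\Phi(0)$, times a bounded function of $h$ on compacta). Since all of this is contained verbatim in \cite[Proposition 2.1]{CogdellGLn}, the honest proof is really just a citation; if a self-contained argument is desired, the only mildly delicate point — and the one I would flag as the "main obstacle" — is bookkeeping the precise normalization of Haar measures and the $n$-th power twists so that the residue formula comes out with exactly the constant $\int_{[\GL_1]^1}\eta(a)^n\,da/n$ rather than off by a power of $2$ or a volume factor; everything else is the standard Poisson-summation ritual. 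I would therefore present the proof as: "This is \cite[Proposition 2.1]{CogdellGLn}; we only recall that the poles and residues arise from the $v=0$ terms on the two sides of the Poisson summation formula applied to $\sum_{v\in F_n}\Phi(avh)$, and the functional equation from the transformation law of $\widehat\Phi$ together with the factor $|\det h|$ from Poisson."
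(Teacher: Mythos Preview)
Your proposal is correct and matches the paper's treatment exactly: the paper gives no proof at all, simply citing \cite[Proposition 2.1]{CogdellGLn}, and your sketch of the underlying Poisson-summation argument is precisely the standard proof found there. Your instinct to present it as ``this is a citation, with the poles coming from the $v=0$ terms on each side of Poisson summation'' is exactly right.
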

 }

 \subsection{Friedberg--Jacquet periods}  
Let $\pi$ denote an irreducible cuspidal automorphic representation of $\G'(\A_{F})$ with central character $\omega$. We consider the \emph{Friedberg--Jacquet period} from \cite{FriedbergJacquet}
\begin{equation}\label{eqn: Friedberg-Jacquet integral}
Z^{FJ}(s,\varphi,\eta,\eta'):=    \displaystyle\int_{[\rH']}\varphi\left(\iota(h^{(1)}
      , h^{(2)})\right) |h|^{{s}}\eta(h)\eta'(h^{(2)})dh,
\end{equation}
 where $h=(h^{(1)},h^{(2)})$, $\iota:\rH'\hra \G'$ is a particular embedding (cf. \cite{MatringeBF}), and $\eta'(h^{(2)})= \eta'(\det(h^{(2)}))$. When $\eta'=1$, we simplify notation by setting $$Z^{FJ}(s,\varphi,\eta):=Z^{FJ}(s,\varphi,\eta,1).$$ This is convergent for all $s\in \cc$ and is not identically zero only if the Shalika functional (see \cite{JacquetShalika}) is not identically zero on $\pi$; in particular, it is necessary that ${\eta'}^n\omega=1$. \quash{We assume this is the case; by \cite[Proposition 2.3]{FriedbergJacquet}, this integral unfolds to
\begin{equation}\label{eqn: unfold FJ}
    \int_{\GL_n(\A_{F})}V_{\varphi,{\eta'}}\left(\begin{array}{cc}
     g&  \\
      & I_n
 \end{array}\right) |\det(g)|^{\WL{s}}{\eta}(\det(g))dg
\end{equation}
This integral is a holomorphic multiple of $L({s+1/2},\pi_0\otimes {\eta})$, and it is shown in \cite{FriedbergJacquet} that there exists a choice of $\varphi\in V_{\pi_0}$ such that this integral equals this $L$-function. }
Combining Proposition 2.3 and Theorem 4.1 of \cite{FriedbergJacquet} with \cite[Section 8, Thm. 1]{JacquetShalika}, we have the following characterization of these periods. 
\begin{Prop}\label{Prop: FJ periods} Let $\pi$ be an irreducible cuspidal automorphic representation of $\G'(\A_{F})$ such that  ${\eta'}^n\omega=1$. The integral $Z^{FJ}(s,\varphi,{\eta},{\eta'})$ is non-vanishing at ${s=0}$ for some $\varphi\in V_\pi$ if and only if $L(s',\pi,\wedge^2\otimes {\eta'})$ has a simple pole at $s'=1$ and $L(1/2,\pi\otimes{\eta})\neq 0$.
\end{Prop}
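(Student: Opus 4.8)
The plan is to reduce the statement to the two established facts about Friedberg--Jacquet periods: their relation to exterior square $L$-functions via Shalika models, and the unfolding to the standard $L$-function. First I would recall from \cite[Section 8, Thm. 1]{JacquetShalika} that the global Shalika functional on $\pi$ is non-identically-zero if and only if the (partial) exterior square $L$-function $L^S(s',\pi,\wedge^2\otimes{\eta'})$ has a pole at $s'=1$; equivalently, $\pi$ has a nonzero $({\eta'})$-twisted Shalika period. The necessity of ${\eta'}^n\omega=1$ is forced by the equivariance of the Shalika subgroup under the center, so we may assume this throughout. The point is that $Z^{FJ}(s,\varphi,\eta,\eta')$ being not identically zero in $\varphi$ (for any fixed $s$, in particular $s=0$) is equivalent to the Shalika functional being nonzero on $\pi$: one direction is \cite[Proposition 2.3]{FriedbergJacquet}, which shows the integral unfolds through the Shalika model, and the converse follows because the unfolded integral can be made nonzero by choosing $\varphi$ with suitable local Shalika data.

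Next I would carry out the unfolding: assuming the Shalika functional is nonzero, \cite[Proposition 2.3]{FriedbergJacquet} rewrites $Z^{FJ}(s,\varphi,\eta,\eta')$ as an integral of the Shalika function of $\varphi$ against $|\det g|^{s}\eta(\det g)$ over $\GL_n(\A_F)$, and \cite[Theorem 4.1]{FriedbergJacquet} identifies this (for a good choice of $\varphi$, and as a holomorphic multiple in general) with $L(s+1/2,\pi\otimes\eta)$ up to a nonzero holomorphic factor near $s=0$. Since $L(s+1/2,\pi\otimes\eta)$ is entire (as $\pi$ is cuspidal on $\GL_{2n}$, hence $\pi\otimes\eta$ is too), the value at $s=0$ of the family $\{Z^{FJ}(0,\varphi,\eta,\eta')\}_\varphi$ spans a nonzero space if and only if $L(1/2,\pi\otimes\eta)\neq 0$. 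Combining the two equivalences yields: $Z^{FJ}(0,\varphi,\eta,\eta')\neq 0$ for some $\varphi$ $\iff$ the Shalika functional is nonzero \emph{and} $L(1/2,\pi\otimes\eta)\neq0$ $\iff$ $L(s',\pi,\wedge^2\otimes\eta')$ has a simple pole at $s'=1$ and $L(1/2,\pi\otimes\eta)\neq 0$.

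The one subtlety — and the main obstacle — is bookkeeping the local factors and the holomorphy: one must check that the proportionality constant between $Z^{FJ}(s,\varphi,\eta,\eta')$ and $L(s+1/2,\pi\otimes\eta)$ furnished by \cite{FriedbergJacquet} is holomorphic and not identically zero in a neighborhood of $s=0$, so that evaluation at $s=0$ is legitimate and detects exactly the vanishing of $L(1/2,\pi\otimes\eta)$. This is handled by the local theory: at each place the local Friedberg--Jacquet integral is a rational (resp. meromorphic) function with the local $L$-factor as its ``greatest common divisor,'' and one can choose local data making the normalized local integrals equal to $1$ near $s=0$ at almost all places and nonzero at the remaining finitely many. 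That the simple pole of $L(s',\pi,\wedge^2\otimes\eta')$ at $s'=1$ (as opposed to a higher-order pole) is the right condition is exactly the content of the Jacquet--Shalika analysis, which I would invoke rather than reprove. I expect no genuinely new input is needed beyond assembling these citations correctly.
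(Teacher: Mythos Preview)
Your proposal is correct and follows exactly the approach the paper takes: the paper does not give an independent proof but simply states the proposition as a consequence of combining Proposition~2.3 and Theorem~4.1 of \cite{FriedbergJacquet} with \cite[Section~8, Thm.~1]{JacquetShalika}. Your write-up supplies the details behind that citation (unfolding through the Shalika model, identification with $L(s+1/2,\pi\otimes\eta)$, and the Jacquet--Shalika criterion for the exterior-square pole), which is precisely what the paper intends the reader to assemble.
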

Now assume that $\eta'=1$; we consider a factorization of $Z^{FJ}(s,\varphi,\eta)$ as in \cite[Appendix A]{Xuedichotomy}. For any place $v$ of $F$, consider the Whittaker model $\mathcal{W}^{\psi_v}(\pi_{v})$ as in \S \ref{Section: inner product}. For $W_v\in\mathcal{W}^{\psi_v}(\pi_{v})$, consider the integral
\[
Z^{FJ}_v(s,W_v,\eta_v):= \int_{(\rH'(F_v)\cap N_{2n}(F_v))\bs (\rH'(F_v)\cap P_{2n}(F_v))} W_v\left(\iota(h^{(1)},p)\right)\eta_v(h^{(1)}p^{-1})|h^{(1)}p^{-1}|_v^sdh_1d_Rp,
\]
where $P_{2n}\subset \G'$ is the mirabolic subgroup and $d_Rp$ is a right Haar measure on $P_{2n}(F_v)$. By \cite[Proposition 4.18]{MatringeBF}, this gives a $(\rH(F),\eta_v|\cdot|_v^{-s})$-invariant  functional on $\pi_{v}$. 

Normalizing this to $$Z^{FJ,\natural}_v(s,W_v,\eta_v):=\frac{Z^{FJ}_v(s,W_v,\eta_v)}{L(s+1/2,\pi_{v}\otimes \eta_v)L(1,\pi_{v},\wedge^2)},$$ it is shown in \cite[Appendix A]{Xuedichotomy} that $Z^{FJ,\natural}_v(s,W_v,\eta_v)=1$ when all data is unramified and that
\begin{equation}\label{eqn:factorize FJ}
    Z^{FJ}(s,\varphi,\eta) = \frac{n}{\vol(F^\times\bs\A^1_{F})}L(s+1/2,\pi\otimes \eta)\Res_{s=1}L(s,\pi,\wedge^2)\prod_v Z^{FJ,\natural}_v(s, W_v, \eta_v).
\end{equation}

\quash{the $\Hom_{S(F_v)}(\pi_v\otimes\Psi_v,\cc)$ is at most one dimensional \cite{ChenSun}. When it is one dimensional, let $\lam_v$ be a non-zero element and for $\varphi_v\in \pi_v$ we set 
\[V_{\varphi_v,\eta'_v}(g) = \lam_v(\pi_v(g)\varphi_v),\qquad g\in \G'(F_v);
\]
the space $V_{\pi_v,\eta'_v} = \{V_{\varphi_v}: \varphi_v\in \pi_v\}$ called the local Shalika model. Consider the map functional \cite[Section 3]{FriedbergJacquet}
\[
Z^{FJ}_v(s, V_{\pi_v,\eta'_v}, \eta_v,\eta'_v) = \int_{\GL_n({F_v})}V_{\pi_v,\eta'_v}\left(\begin{array}{cc}
     g&  \\
      & I_n
 \end{array}\right) |\det(g)|^{\WL{s}}\eta_v(\det(g))dg.
\]
Then by \cite[Proposition 3.1]{FriedbergJacquet}, the quotient 
\[
Z^{FJ,\natural}_v(s, V_{\pi_v,\eta'_v}, \eta_v,\eta'_v):=\frac{Z^{FJ}_v(s, V_{\pi_v,\eta'_v}, \eta_v,\eta_v')}{L(s+1/2,\pi_v\otimes \eta_v)}
\]
is holomorphic in $s\in \cc$ and there exists $V_v$ such that $Z^{FJ,\natural}_v(s, V_v, \eta_v,\eta_v')=1$. Moreover, when $\pi_v$, $\psi_v$, $\eta_v$, and $\eta_v'$ are unramified and $\lam_v(\varphi_0) = 1$ with $\varphi_0\in \pi_v$ unramified, then if $V_0(g):= \lam_v(\pi_v(\varphi_0))$, then $Z^{FJ,\natural}_v(s, V_0, \eta_v,\eta_v')=1$ \cite[Proposition 3.2]{FriedbergJacquet}. Using formula \eqref{eqn: unfold FJ}, we have that if $L(s,\pi_0,\wedge^2\otimes \eta')$ has a pole at $s=1$ then if we choose local functionals $\lam_v$ such that
\[
\lam_{Sh,\eta'}(\varphi)=\prod_v\lam_v,
\]
we have the factorization
\begin{equation}\label{eqn:factorize FJ}
    Z^{FJ}(s,\varphi,\eta,\eta') = L(s+1/2,\pi_0\otimes \eta)\prod_v Z^{FJ,\natural}_v(s, V_v, \eta_v).
\end{equation}
}
\subsection{The Bump--Friedberg integral}\label{Section: BF integral}
 Let $\eta$ and $\eta'$ be two quadratic characters of $F^\times\bs\A_{F}^1$, and let $(s_0,s_2)\in \cc^\times$. Following \cite{BumpFriedberg} and \cite[Appendix A]{Xuedichotomy}, we also consider the integral
\begin{align}\label{eqn: BumpFriedberg}
Z^{BF}(\varphi,\Phi,\eta,\eta',s_0,s_1):&=\displaystyle\int_{[\rH']}\varphi(\iota(h))\eta(h)|h|^{{s_1}}E(h^{(2)},\Phi,{s_0},\eta')\, dh,
\end{align}
where $\varphi\in V_\pi$. Note the above integral vanishes unless $\omega=1$, which we assume for the remainder of the section. The integral $Z^{BF}(\varphi,\Phi,\eta,\eta',s_0,s_1)$ is absolutely convergent  away from the poles of the Eisenstein series which has at most a simple pole at $s_0=0$ and $1$ when ${\eta'}^n =1$. When ${\eta'}^n=1$, we have
\begin{equation}\label{BF pole}
 \Res_{s_0=0}Z^{BF}(\varphi,\Phi,\eta,\eta',s_0,s_1) = \frac{1}{2n}\vol(F^\times\backslash \A_{F}^1)\widehat{\Phi}(0)Z^{FJ}(s_1,\varphi,\eta,\eta').   
\end{equation} 

 For each place $v$, we have local integrals  $Z^{BF}_v(W_v,\Phi_v,\eta_v,\eta_v',s_0,s_1)$ 
\begin{equation}\label{eqn: local BF integral}
\displaystyle\int_{(N_n\times N_n)(F_v)\backslash (\GL_n\times \GL_n)(F_v)}W_v(\iota(h_1,h_2))\Phi_v(e_nh_2)\eta_v(h)\eta'_v(h_2)|h_1/h_2|_v^{{s_1}}|h_2|_v^{s_0}\,dh_1dh_2,  
\end{equation}
where $W_v\in \mathcal{W}^{\psi_v}(\pi_{v})$ and $\Phi_v\in C_c^\infty(F_{n})$. When $\eta'=1$, this integral is discussed in  \cite{MatringeBF, MatringeBF2,MatringeSpecialization,Xuedichotomy}; we recall the relevant facts now.

\begin{Prop}\label{Prop: BF}
\begin{enumerate} 
\item For each place $v$, the local integrals  $Z^{BF}_v(W_v,\Phi_v,\eta_v,\eta_v',s_0,s_1)$ are convergent for $\Re(s_0)>1$ and $\Re(s_1)>0$. We may always choose local data so that this integral is non-vanishing. 
\item\label{unramified identity BF} When $v$ is non-archimedean, both $\eta_v$ and $\eta'_v$ are unramified, $W_v$ is the normalized spherical Whittaker function for $\pi_{v}$, and $\Phi_v=\bfun_{\calo_{F_v,n}}$ is the indicator function of the standard lattice, then
\[
Z^{BF}_v(W_v,\Phi_v,\eta_v,\eta'_v,s_0,s_1)=L(s_1+\frac{1}{2},\pi_v\otimes \eta_v)L(s_0,\pi_v,\wedge^2\otimes\eta_v').
\]
\item If $\varphi\in V_\pi$ has a factorizable Whittaker function $W^\varphi=\prod_vW_v$, then
\[
Z^{BF}(\varphi,\Phi,\eta,\eta',s_0,s_1)=\prod_vZ^{BF}_v(W_v,\Phi_v,\eta_v,\eta'_v,s_0,s_1)
\]
when $\Re(s_0)>1$ and $\Re(s_1)>0$.
\item \label{Lem: BF local identity on functionals}
   Assume $\pi_{v}$ is irreducible and generic. Assume that $\eta'=1$. For $W_v\in \mathcal{W}^{\psi_v}(\pi_{v})$, we have the identity
    \[
 Z^{BF}_v(W_v,\Phi_v,\eta_v,1,1/2 ,0)= \widehat{\Phi}(0)  Z^{FJ}_v(0,W_v,\eta_v).
    \]
\end{enumerate}
\end{Prop}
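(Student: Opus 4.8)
The plan is to prove the identity by directly unfolding the local Bump--Friedberg integral \eqref{eqn: local BF integral} at the point $(s_0,s_1)=(1/2,0)$: one integrates out the ``extra'' direction $P_n(F_v)\bs\GL_n(F_v)$ in the variable $h_2$ so as to land on the mirabolic integral that defines the local Friedberg--Jacquet functional. Concretely, with $\eta_v'=1$, $s_1=0$, $s_0=1/2$ the integral reads
\[
Z^{BF}_v(W_v,\Phi_v,\eta_v,1,1/2,0)=\int_{(N_n\times N_n)(F_v)\bs(\GL_n\times\GL_n)(F_v)}W_v(\iota(h_1,h_2))\,\Phi_v(e_nh_2)\,\eta_v(h)\,|h_2|_v^{1/2}\,dh_1\,dh_2,
\]
while, identifying $\rH'\cap N_{2n}=N_n\times N_n$ and $\rH'\cap P_{2n}=\GL_n\times P_n$ via $\iota$ (the mirabolic $P_n$ occurring in the same factor as the argument $e_nh^{(2)}$ of $\Phi_v$), the quantity $Z^{FJ}_v(0,W_v,\eta_v)$ is the integral of $W_v(\iota(h_1,p))\,\eta_v(h_1p^{-1})$ over $(N_n\times N_n)(F_v)\bs(\GL_n\times P_n)(F_v)$ against $dh_1\,d_Rp$. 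The only discrepancy between the two integrands is the factor $|h_2|_v^{1/2}$ and the replacement of $\GL_n$ by $P_n$ in the second variable.

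First I would decompose the $h_2$-integral along the $\GL_n$-equivariant identification $P_n(F_v)\bs\GL_n(F_v)\xrightarrow{\ \sim\ }F_v^n\setminus\{0\}$, $P_ng\mapsto e_ng$, fixing a (piecewise algebraic) section $w\mapsto g_w$ and writing $h_2=p\,g_w$ with $p\in N_n(F_v)\bs P_n(F_v)$ and $w=e_nh_2$. Since $\iota$ is a group homomorphism and $W_v$ lies in $\pi_v$, one has $W_v(\iota(h_1,p\,g_w))=\bigl(\pi_v(\iota(1,g_w))W_v\bigr)(\iota(h_1,p))$, so after interchanging integrals the inner $(h_1,p)$-integral becomes $Z^{FJ}_v(0,\pi_v(\iota(1,g_w))W_v,\eta_v)$ up to weight and Jacobian factors. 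I then invoke the $(\iota(\rH'(F_v)),\eta_v|\cdot|_v^{-s})$-equivariance of the local Friedberg--Jacquet functional \cite[Prop.~4.18]{MatringeBF} (recalled above): at $s=0$ this gives $Z^{FJ}_v(0,\pi_v(\iota(1,g_w))W_v,\eta_v)=\eta_v(\det g_w)\,Z^{FJ}_v(0,W_v,\eta_v)$. The key book-keeping step is to verify that, with the normalization of local Haar measures fixed in \S\ref{measures}, the Jacobian of $h_2\mapsto(p,w)$ together with the extra weight $\eta_v(h)\,|h_2|_v^{1/2}$ present in $Z^{BF}_v$ but absent from $Z^{FJ}_v$ combine so that the $\eta_v(\det g_w)$ produced by equivariance is cancelled, the powers of $|\det p|$ and $|\det g_w|$ cancel, the $p$-fibre measure becomes the right Haar measure $d_Rp$ of the $Z^{FJ}_v$-integral, and the measure on $F_v^n\setminus\{0\}$ becomes additive Haar $dw$; this is exactly the computation that forces the value $(s_0,s_1)=(1/2,0)$. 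One is then left with
\[
Z^{BF}_v(W_v,\Phi_v,\eta_v,1,1/2,0)=Z^{FJ}_v(0,W_v,\eta_v)\int_{F_v^n\setminus\{0\}}\Phi_v(w)\,dw=\widehat{\Phi}(0)\,Z^{FJ}_v(0,W_v,\eta_v),
\]
since $\{0\}$ has measure zero and $\widehat{\Phi}(0)=\int_{F_v^n}\Phi_v(w)\,dw$.

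The main obstacle is precisely this measure- and modular-character book-keeping: one must keep track of $\delta_{P_n}$, the Jacobian of the slice $g\mapsto(p,e_ng)$, and the effect of $\iota$ on determinants, and check that they conspire to produce $d_Rp\otimes dw$ with no leftover powers of $|\det p|$ or $|\det g_w|$ --- it is here that the exponent $1/2$ in $|h_2|_v^{1/2}$ is used in an essential way. A second, minor point: any algebraic section $w\mapsto g_w$ degenerates (leaves $\GL_n$) on a proper closed subset of $F_v^n\setminus\{0\}$, which I would handle either by first establishing the identity for $\mathrm{Re}(s_0)$ large, where all the integrals converge absolutely by Proposition~\ref{Prop: BF}(1), and then continuing meromorphically to $s_0=1/2$, or by covering $F_v^n\setminus\{0\}$ by the finitely many standard affine charts on each of which a polynomial section exists and patching. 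As a consistency check and alternative route, one may note that after the specialization both sides are, as linear functionals of $W_v$, elements of the at most one-dimensional space $\Hom_{\iota(\rH'(F_v))}(\pi_v,\eta_v)$ (multiplicity one for Shalika functionals); hence they are proportional with a scalar independent of $W_v$, which is pinned down by a single explicit evaluation, e.g.\ the unramified computation of Proposition~\ref{Prop: BF}(2) together with $Z^{FJ,\natural}_v\equiv 1$.
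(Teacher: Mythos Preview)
Your proposal addresses only part~(4); parts (1)--(3) are not treated. The paper's proof of (1), (3), and (4) is a citation to \cite[Appendix]{Xuedichotomy}, so for part~(4) your direct unfolding argument is presumably the content of that reference and is correct in outline: decompose the $h_2$-integral along $P_n\backslash\GL_n\simeq F_v^n\setminus\{0\}$, invoke the $(\rH',\eta_v)$-equivariance of $Z^{FJ}_v(0,-,\eta_v)$, and identify the residual integral over $F_v^n$ with $\widehat{\Phi}(0)$. Your two caveats (measure bookkeeping forcing $s_0=1/2$, and the section issue handled by analytic continuation or charts) are the right ones, and your multiplicity-one alternative is also valid.

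What you omit, and where the paper actually supplies an argument rather than a citation, is part~(2) in the case $\eta_v'\neq 1$. The paper observes that since $\eta_v'$ is unramified one may write $\eta_v'=|\cdot|_v^{a}$, whence directly from the definition
\[
Z^{BF}_v(W_v,\Phi_v,\eta_v,\eta_v',s_0,s_1)=Z^{BF}_v(W_v,\Phi_v,\eta_v,1,s_0+a,s_1),
\]
and then applies Matringe's unramified computation \cite[\S3.2]{MatringeSpecialization} for the $\eta_v'=1$ case together with the elementary identity $L(s_0+a,\pi_v,\wedge^2)=L(s_0,\pi_v,\wedge^2\otimes\eta_v')$ on Satake parameters. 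This reduction is short but is the one piece of the proof not available by citation, and your proposal does not mention it.
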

\begin{proof}
    When $\eta'=1$, this material is contained in \cite[Appendix]{Xuedichotomy}, where the notations satisfy $Z^{BF}(\varphi,\Phi,\eta,1,2s,{s-1/2}) = I(s,\varphi, \eta, \Phi)$. When $\eta'\neq1$, the proofs of \emph{ibid.} go through without change (relying on Proposition \ref{Prop: Eisenstein properties}), except for the unramified calculation \eqref{unramified identity BF}. When $\eta'=1$, this calculation is given in \cite[Section 3.2]{MatringeSpecialization}. When $\eta'\neq1$, let $\eta_v'=|\cdot|_v^{a}$, then 
by the definition we have 
$$
Z^{BF}_v(W_v,\Phi_v,\eta_v,\eta'_v,s_0,s_1)= Z^{BF}_v(W_v,\Phi_v,\eta_v,1,s_0+a,s_1)
$$
Applying \cite[Section 3.2]{MatringeSpecialization} for the $\eta=1$ case now gives
$$
Z^{BF}_v(W_v,\Phi_v,\eta_v,\eta'_v,s_0,s_1)= L({s_1+1/2},\pi_v\otimes \eta_v)L(s_0+a,\pi_v,\wedge^2)
 $$
 It is easy to check 
$$
L(s_0+a,\pi_v,\wedge^2)=L(s_0,\pi_v,\wedge^2\otimes\eta_v').
$$
Indeed, if $\{\alpha_i: i=1,\cdots, 2n\}$ are the Satake parameters for $\pi_v$, then
  \begin{align*}
L( s_0+{a},\pi_v, \wedge^2)&=\prod_{i\neq j}(1-\alpha_i \alpha_j q^{-(s_0+{a})})^{-1}\\
&= \prod_{i\neq j}(1-\alpha_i \alpha_j \eta_v(\varpi)q^{-s_0})^{-1}\\
&=L(s_0,\pi_v, \wedge^2\otimes \eta_v').
 \end{align*}   
    The final claim regarding the identity of local Bump--Friedberg functionals with local Friedberg--Jacquet functionals is proved in \cite[Appendix A]{Xuedichotomy}.
\end{proof}

For each local place $v$, define
\[
Z^{BF,\natural}_v(W_v,\Phi_v,\eta_v,\eta_v',s_0,s_1)=\frac{Z^{BF}_v(W_v,\Phi_v,\eta_v,\eta_v',s_0,s_1)}{L({s_1+1/2},\pi_v\otimes \eta_v)L(s_0,\pi_v,\wedge^2\otimes\eta_v')}.
\]
Then for $Re(s_0)>1$ and $\Re(s_1)>0$, the preceding proposition implies that we have the identity
\begin{equation}\label{eqn: BF Lfunction}
    Z^{BF}(\varphi,\Phi,\eta,\eta',s_0,s_1)=L({s_1+1/2},\pi\otimes \eta)L(s_0,\pi,\wedge^2\otimes\eta')\prod_vZ^{BF,\natural}_v(W_v,\Phi_v,\eta_v,\eta_v',s_0,s_1),
\end{equation}
where $Z^{BF,\natural}_v(W_v,\Phi_v,\eta_v,\eta_v',s_0,s_1)=1$ for almost all places $v$. In particular, the above equality extends meromorphically to all $(s_0,s_1)\in \cc^2$.

For any irreducible admissible $\pi_{v}$, Matringe has verified in the $p$-adic setting \cite{MatringeBF2} when $\eta'=1$ that the local integral \eqref{eqn: local BF integral} is a holomorphic multiple of the corresponding $L$-factor associated to the Langlands parameter. In particular, when $\pi_{v}$ is unramified, the preceding argument extends this statement to $\eta'\neq1$. For the archimedean case, the corresponding statement is proved in \cite{Ishii}. In particular, the $L$-functions appearing in \eqref{eqn: BF Lfunction} agree with the standard $L$-functions appearing in \cite{LapidRallis}.

\subsection{$\rH$-elliptic representations}\label{Section: elliptic} Due to our use of simple trace formulas, we need to impose certain  constraints on our test functions. We now briefly discuss support constraints on local relative characters needed to state our main results. 

We thus assume $F$ is a local field, and $(\G,\rH)$ is a unitary symmetric pair over $F$, and let $\rH_1,\rH_2\subset \G$ be pure inner forms of $\rH$. For an irreducible admissible unitary representation $\pi$ of $\G(F)$, we assume that $\pi$ is $\rH_i$-distinguished for $i=1,2$ in the sense that $\Hom_{\rH_i(F)}(\pi,\cc)\neq0$. Let $\lam_i\in \Hom_{\rH_i(F)}(\pi,\cc)$ be a non-zero invariant functional and consider the corresponding \emph{relative character}
\[
J_{\lam_1,\lam_2}(f) = \sum_{v}\lam_1(\pi(f)v)\overline{\lam_2(v)},
\]
where $f\in C^\infty_c(\G(F))$ the sum ranges over an orthonormal basis of $\pi$ with respect to a fixed inner product (cf. \S \ref{Section: inner product}). Since $\lam_i$ are non-zero functionals, there exists $f$ for which $J_{\lam_1,\lam_2}(f)\neq0$.

Recall from \S \ref{Section: orbital integrals conventions} the $\rH_1\times \rH_2$-elliptic locus 
\[
\G(F)^{ell} =\{x\in \G(F): (\rH_1\times \rH_2)_x \text{ is anisotropic modulo $Z_{\G}$}\}.
\]
We say that $\pi$ is \textbf{$(\rH_1,\rH_2)$-elliptic} if for any non-zero invariant functionals  $\lam_i\in \Hom_{\rH_i(F)}(\pi,\cc)$, there exist $f\in C^\infty_c(\G(F)^{ell})$  such that $J_{\lam_1,\lam_2}(f)\neq0$. When $\rH=\rH_1=\rH_2$, we say $\pi$ is $\rH$-elliptic if it is $(\rH,\rH)$-elliptic.

For the purposes of establishing a simple trace formula, we prove the following statement, strengthening a result in \cite[Appendix]{XueZhang}.
\begin{Thm}\label{Thm: FJ elliptic}
    Let $F$ be a non-archimedean local field of characteristic zero. Let $(\G',\rH')=(\GL_{2n},\GL_n\times \GL_n)$ be the split unitary symmetric pair over $F$. Let $\pi$ be a supercuspidal representation of $\G'(F)$ and assume that $\Hom_{\rH'(F)}(\pi,\cc)\neq0$. Then $\pi$ is $\rH'$-elliptic.
\end{Thm}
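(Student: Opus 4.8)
The plan is to prove that a supercuspidal $\pi$ of $\GL_{2n}(F)$ with $\Hom_{\rH'(F)}(\pi,\cc) \neq 0$ is $\rH'$-elliptic by combining two facts: (i) the relevant relative character $J_{\lam_1,\lam_2}$ is, up to a nonzero constant, a \emph{stable} distribution on the elliptic locus, governed by orbital integrals along $\X^{rss}(F)$, and (ii) on the elliptic locus these orbital integrals already separate points of the (elliptic part of the) invariant quotient, so no nontrivial linear relation can force $J_{\lam_1,\lam_2}$ to vanish on $C^\infty_c(\G'(F)^{ell})$. Since $\Hom_{\rH'(F)}(\pi,\cc)$ is at most one-dimensional for supercuspidal $\pi$ (by the Friedberg--Jacquet/Jacquet--Shalika theory recalled in \S\ref{Prop: FJ periods}, or \cite{ChenSun}), there is essentially a single relative character to deal with, which simplifies the bookkeeping.

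First I would recall the local spectral expansion of the relative character: for $f \in C^\infty_c(\G'(F))$ supported on the regular semisimple locus, $J_{\lam_1,\lam_2}(f)$ can be written as an integral over $\X^{rss}(F)\sslash\rH'$ of the product of the (normalized) local orbital integral $\Orb^{\rH',\ul\eta}_{s_1}(s^{1}_{\X,!}(f), \ga)$ against a smooth density determined by $\pi$ and the functionals $\lam_i$ — this is the content of the local unfolding in \cite[Appendix]{XueZhang} (in the split case $\ul\eta$ is trivial). The key point is that for supercuspidal $\pi$ the matrix coefficients are compactly supported modulo center, so this local Plancherel-type identity holds without convergence issues and the density is a genuine (smooth, compactly supported on the elliptic part of the base) function, not merely a distribution. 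Then I would restrict attention to the elliptic locus $\G'(F)^{ell}$: by Lemma \ref{Lem: orbits along contraction} (contraction) and the description of stabilizers, $\ga\in\G'(F)$ is $\rH'\times\rH'$-elliptic iff $s_\X(\ga)$ is $\rH'$-elliptic iff the algebra $F[A]$ (with $s_\X(\ga)=\begin{psmatrix}A&B\\C&D\end{psmatrix}$) is a field, i.e. $\car_{lin}(\ga)$ is irreducible over $F$.

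The crux is then a \emph{density/surjectivity} statement: the map $f \mapsto \big(\ga \mapsto \Orb^{\rH',\ul 1}_{0}(s^{1}_{\X,!}(f)\otimes\Phi, \ga)\big)$, as $f$ ranges over $C^\infty_c(\G'(F)^{ell})$ and $\Phi$ over $C^\infty_c(F_n)$ with $(\ga,w)$ strongly regular, has image dense in (or equal to) $C^\infty_c$ of the elliptic part of the invariant quotient. Because $s_\X\colon \G'\to\X$ is an $\rH'$-torsor and the strongly-regular elliptic fibers are closed $\rH'\times\rH'$-orbits whose stabilizers are anisotropic tori (hence compact), the orbital integral map is a submersion onto the base in a neighborhood of each elliptic point, and a partition-of-unity argument (exactly as in the proof of Proposition \ref{prop:tr infty}, but now over a $p$-adic base) shows the orbital integrals of functions supported near $\ga^{ell}$ exhaust $C^\infty_c$ near $\car_{lin}(\ga^{ell})$. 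Given this, if $J_{\lam_1,\lam_2}$ vanished identically on $C^\infty_c(\G'(F)^{ell})$, the smooth density attached to $\pi$ would have to vanish on the whole elliptic part of the base; but one shows this density is real-analytic (or at least does not vanish identically on the elliptic locus) because its nonvanishing somewhere is equivalent to $\pi$ being $\rH'$-distinguished — and distinction of a supercuspidal forces a nonzero elliptic contribution, since the non-elliptic orbital integrals come from proper Levi/parabolic data where supercuspidal matrix coefficients integrate to zero. This last implication — that for supercuspidal $\pi$, $\rH'$-distinction cannot be ``concentrated away from'' the elliptic locus — is the main obstacle; I would handle it by the standard Deligne--Kazhdan argument that a supercuspidal representation's character (or here, relative character) is supported, up to the regular elliptic set, by compactly-supported-mod-center matrix coefficients, so that vanishing of $J_{\lam_1,\lam_2}$ on $C^\infty_c(\G'(F)^{ell})$ would propagate (via the geometric expansion of the full relative trace identity and the density statement) to vanishing of all of $J_{\lam_1,\lam_2}$, contradicting $\lam_i \neq 0$. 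Assembling these pieces yields that some $f\in C^\infty_c(\G'(F)^{ell})$ has $J_{\lam_1,\lam_2}(f)\neq 0$, i.e. $\pi$ is $\rH'$-elliptic.
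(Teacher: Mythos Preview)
Your overall framework matches the paper's: reduce to showing that the orbital integral $y\mapsto\Orb^{\rH'}(\Phi_\X,y)$ of a pushed-forward matrix coefficient (with $\Phi_\X(1)=\lam(v)\neq0$) does not vanish identically on the $\rH'$-elliptic locus of $\X(F)$. The surjectivity of orbital integrals onto $C^\infty_c$ of the elliptic base is fine. The genuine gap is the step you label as ``the main obstacle.''

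Your proposed resolution---that for supercuspidal $\pi$ the non-elliptic regular orbital integrals ``come from proper Levi/parabolic data where supercuspidal matrix coefficients integrate to zero,'' via a Deligne--Kazhdan type argument---does not go through. That argument is about ordinary (conjugation) characters: a non-elliptic regular $\gamma\in G$ lies in a proper Levi $M$, and Casselman's theorem on Jacquet modules forces the character to vanish there. In the relative setting, a non-elliptic regular $y\in\X(F)$ has stabilizer $\rH'_y$ a non-anisotropic torus, but there is no analogous descent to a ``relative Levi'' available off the shelf, and the matrix-coefficient integral $\Orb^{\rH'}(\Phi_\X,y)$ has no obvious reason to vanish. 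Even if it did, you would still need to connect vanishing on the \emph{regular} locus to the value $\Phi_\X(1)$ at the identity, which is the most singular point of $\X$; a smooth compactly supported function on $\X(F)$ can certainly have all regular orbital integrals zero without vanishing at $1$ (e.g.\ take $\phi-\phi'$ for two functions with the same orbital integrals but different values at $1$---ruling this out is exactly the content of the theorem).

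The paper confronts this head-on and does something quite different: it Cayley-transforms to the Lie algebra $\fX\simeq\fgl_n\times\fgl_n$, contracts to $\fgl_n$ via $(X,Y)\mapsto XY$, and uses the classical Shalika germ expansion on $\GL_n$ together with homogeneity to show that vanishing of all elliptic orbital integrals forces $\int_{\GL_n(F)}\phi(th,h^{-1})\,dh=0$ for all $t$ near $0$. The hard computation is then an explicit ``relative germ'' expansion of this integral as $v(t)\to\infty$, extracting a term $B_n\cdot v(t)$ whose coefficient is $\phi(0)$ times a nonzero combinatorial constant. This asymptotic analysis (carried out via $q$-binomial identities and volume formulas for $K\varpi^\lambda K$) is the actual substance of the proof, and nothing in your sketch substitutes for it.
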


For the sake of continuity, we postpone the proof of this result to Appendix \ref{Appendix: elliptic}, as the tools and calculations needed are quite involved and not directly related to our global aims. 

We also prove some results on local relative characters in the archimedean split-inert case when the relevant Hermitian spaces are positive definite in Appendix \ref{Sec compact}.

 \section{Relative trace formulas}\label{Section: RTF}
We now assume $E/F$ is a quadratic extension of number fields and introduce the various relative trace formulas used in our global results. In particular, we establish the required simple trace formulas. This involves establishing the relationship between global relative characters and $L$-values in the linear case, and (pre-)stabilization of the geometric expansion in the unitary cases.
 \subsection{The linear side}\label{Section: linear RTF}
Let $(\G',\rH')= (\GL_{2n}, \GL_n\times \GL_n)$ be the linear symmetric pair. Since the relative trace formula on $\G'$ will have two weight factors, we denote the two copies of $\rH'$ by $\rH'_1$ and $\rH_2'$ to distinguish. 

Suppose that $f'\in C_c^\infty(\G'(\A_{F}))$ and consider the automorphic kernel
\[
K_{f'}(x,y)=\int_{[Z_{\G'}]}\sum_{\ga\in \G'(F)}f'(x^{-1}\ga zy)dz
\]
where $[Z_{\G'}] = Z_{\G'}(F)\bs Z_{\G'}(\A_F)$.
{
Let $\underline{\eta}=(\eta_0,\eta_1,\eta_2)$ be a triple of unitary characters. We set 
\begin{equation}\label{eq def eta 12}
\eta(h_1,h_2):=\eta_{1}(h_1) \eta_{2}(h_2),
\end{equation}
where for any character $\chi$ and $h=(h^{(1)},h^{(2)})\in \rH'(\A)$, we define
$$\chi(h) =\chi(\det(h^{(1)})\det(h^{(2)})^{-1}), \quad \chi(h^{(2)})=\chi(\det( h^{(2)})).
$$
}

\subsubsection{General formulation}\label{Section: RTF comparison main}

{For $\ul{s}=(s_0,s_1,s_2)\in\BC^3$ we use  $\Re(\ul{s})\gg0$ to indicate that $\Re(s_i)\gg0$ for all $i=0,1,2$.
For $\wt{f} \in C^\infty(\G'(\A_F)\times\A_{F,n})$, we consider the distribution $I^{\underline\eta}_\omega(\wt{f},\ul{s})$ defined on pure tensors $\wt{f} = f'\otimes \Phi$ by the integral,
 \begin{equation}\label{eqn:linear RTF general}
\displaystyle\int_{[\rH']}\int_{[\rH']}K_{f'}(h_1,h_2) E(h_{1}^{(2)},\Phi,s_0,\eta_0)|h_1|^{s_1}|h_2|^{s_2}\eta(h_1,h_2)\, dh_1\, dh_2,
 \end{equation}
 where $[\rH'] = Z_{\G'}(\A_{F})\rH'({F})\backslash\rH'(\A_{F})$ and $\Re(\ul{s})\gg0$. }

 \begin{Rem}
 As in Section \ref{section: linear orbital ints}, we will soon set $s_2=0$. We nevertheless keep it in the formulation for potential future applications toward derivatives of $L$-functions.    
 \end{Rem}

\begin{Rem}[Cases of interests]\label{Rem:characters of interest}
 {We will only consider the relative trace formulas when $\eta_1=1$. 
\begin{enumerate}
\item(split-inert) we set
\[
\eta_0=\eta_2= \eta_{E/F}.
\]
\item(inert-inert) we set
\[
\eta_0= \eta_{E/F}, \quad \eta_2=1.
\]
\end{enumerate}
}
\noindent 
\end{Rem}

\subsubsection{Convergence and geometric expansion} 
We introduce the space of \emph{nice} test functions for which convergence of \eqref{eqn:linear RTF general} may be established.  Recall that an element $\ga\in \G'(F)$ (resp. $(\ga,w)\in \G'(F)\times F_{n}$) is \emph{relatively regular semi-simple} (resp. strongly regular) if $s_{\X}(\ga)\in \X(F)$ is regular semi-simple (resp $(s_{\X}(\ga),w)\in \X(F)\times F_{n}$ is strongly regular).
\begin{Def}\label{nice test functions} 
We say $\wt{f}=\bigotimes_v\wt{f}_v\in C_c^\infty(\G'(\A_{F})\times \A_{F,n})$ is an \emph{elliptic nice test function} if 
\begin{enumerate}
\item\label{supercuspidal constraint nice} There exists a non-archimedean place $v_1$ of $F$ and a finite union $\Omega$ of cuspidal Bernstein components of $\G'(F_{v_1})$ such that $\wt{f}_{v_1}=f'_{v_1}\otimes\Phi_{v_1}$ with $f'_{v_1}\in C^\infty_c(\G'(F_{v_1}))_{\Omega}$ and $\Phi_{v_1}(0)\neq 0$.
\quash{\item\label{eqn: regular constraint} For at least one place $v_2\neq v_1$ which splits in $E$, $\wt{f}_{v_2}$ is supported on the strongly regular semi-simple locus of $\G'(F_{v_2})\times F_{v_2,n}$. This place is not required to be non-archimedean.
\end{enumerate}
If we replace the condition \eqref{eqn: regular constraint} with the stronger constraint that
\begin{enumerate}[resume]}
\item\label{eqn: elliptic constraint} For at least one place $v_2\neq v_1$ which splits in $E$, $\wt{f}_{v_2} = f'_{v_2}\otimes\Phi_{v_2}$ satisfies  $\supp({f}'_{v_2}))\subset \G'(F_{v_2})$ is contained in the \emph{$\rH'$-regular elliptic locus} in the sense that
\[
\supp(f'_{v_2})\subset \{\ga\in \G'(F_{v_2}): (\rH'\times \rH')_{\ga} \simeq L_{\ga}^\times\},
\]
where $L_{\ga}/F_{v_1}$ is a degree $n$ field extension and $\Phi_{v_2}(0)\neq 0$.
\end{enumerate}
\end{Def}

\begin{Lem}\label{Lem: converge and expand}
 Suppose that $\wt{f}=\bigotimes_v\wt{f}_v$ is nice. Then as a function on $\rH'(\A_F)\times \rH'(\A_F)$, 
 \begin{equation}\label{eqn: linear kernel sum}
     \sum_{(\ga,w)\in \G'(F)\times F_{n}\setminus{\{0\}}}\wt{f}(h_1^{-1}\ga h_2,w h_1^{(2)})
 \end{equation}
is compactly supported modulo $\rH'(F)\times \rH'(F)$. In particular, $ I^{\underline\eta}(\wt{f},\ul{s})$ converges absolutely for $\Re(s_1)> 1/2$ and possesses the geometric expansion
 \begin{equation*}
   I^{\underline{\eta}}_\omega(\wt{f},\ul{s})=\sum_{(\gamma,w)/\sim} \Orb^{\underline{\eta}}_{\ul{s}}(\wt{f},(\gamma,w)), 
\end{equation*}
where the sum runs over the strongly regular double cosets 
\[
\rH'(F)\backslash (\G'(F)\times F_{n})^{sr}/\rH'(F),
\] with $\ga\in \G'(F)$ $\rH'$-regular elliptic, and where 
\begin{equation}\label{eqn: adelic orbital}
  \Orb^{\underline{\eta}}_{\ul{s}}(\wt{f},(\gamma,w))  =\int_{\rH'(\A)}\int_{\rH'(\A)}\wt{f}(h_1^{-1}\ga  h_2,wh_1^{(2)})\eta_0(h_1^{(2)})\eta_{\ul{s}}(h_1,h_2) \,dh_1\,dh_2.
\end{equation}
\end{Lem}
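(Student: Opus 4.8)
The plan is to prove the compact-support assertion first and then read off absolute convergence and the geometric expansion by the usual unfolding. The starting point is that the ellipticity hypothesis \eqref{eqn: elliptic constraint} on $\wt f_{v_2}$ forces the geometric sum onto elliptic orbits: if a term $\wt f(h_1^{-1}\ga h_2,wh_1^{(2)})$ in \eqref{eqn: linear kernel sum} is nonzero, then $(h_1^{-1}\ga h_2)_{v_2}\in\supp(f'_{v_2})$ lies in the $\rH'$-regular elliptic locus, so the invariant $\car_{lin}(\ga)\in\A^n$ has $v_2$-component cutting out a degree-$n$ field extension of $F_{v_2}$; since an \'etale $F$-algebra that becomes a field over $F_{v_2}$ is itself a field, $F[X]/(\det(tI_n-A))$ is a degree-$n$ field and $\ga$ is $\rH'$-regular elliptic over $F$. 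For such $\ga$ the characteristic polynomial of the associated regular operator is irreducible, so every $0\neq w\in F_n$ makes $(\ga,w)$ strongly regular; in particular the stabilizer of $(\ga,w)$ in $\rH'(\A_F)\times\rH'(\A_F)$ is trivial and the orbit of $\ga$ under $\rH'\times\rH'$ is closed, with stabilizer a torus $T_\ga$ ($\cong T_y$ for $y=s_\X(\ga)$) that is anisotropic modulo $Z_{\G'}$.

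Next I would establish compact support. Writing $C_{\G'}$ for the (compact) projection of $\supp(\wt f)$ to $\G'(\A_F)$, a contributing $(h_1,h_2)$ satisfies $h_1^{-1}\ga h_2\in C_{\G'}$ for some elliptic $\ga$, so $\car_{lin}(\ga)$ lies in the compact set $\car_{lin}(C_{\G'})\subset\A^n$; since $F^n$ is discrete in $\A_F^n$ and $H^1(F,T_\ga)$ is finite, only finitely many $\rH'(F)\times\rH'(F)$-orbits of such $\ga$ occur. Fix representatives $\ga_1,\dots,\ga_m$. For each $i$ the map $(h_1,h_2)\mapsto h_1^{-1}\ga_i h_2$ is proper onto the closed orbit, with fibres the cosets of $T_{\ga_i}$, and $T_{\ga_i}/Z_{\G'}$ is anisotropic, so $T_{\ga_i}(F)\backslash T_{\ga_i}(\A_F)/Z_{\G'}(\A_F)$ is compact. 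Hence, modulo $\rH'(F)\times\rH'(F)$, the set of $(h_1,h_2)$ with $h_1^{-1}\ga_i h_2\in C_{\G'}$ is contained in $Z_{\G'}(\A_F)\cdot K_i$ for a compact $K_i$, and for such $(h_1,h_2)$ the sum over $0\neq w\in F_n$ with $wh_1^{(2)}\in\supp(\Phi)$ is finite locally uniformly. Combining over $i$, \eqref{eqn: linear kernel sum} is supported on a set compact modulo $\rH'(F)\times\rH'(F)$ (and $Z_{\G'}(\A_F)$), which is the first assertion. I would then invoke Proposition \ref{Prop: Eisenstein properties}: the weight $E(h_1^{(2)},\Phi,s_0,\eta_0)\,|h_1|^{s_1}|h_2|^{s_2}\eta(h_1,h_2)$, together with the central integrations built into $K_{f'}$ and $E$, is of moderate growth on $[\rH']\times[\rH']$ for $\Re(s_1)>1/2$ (the precise half-plane dictated by the growth of the Eisenstein/theta sum in the central direction), so integrating it against the above compactly supported integrand gives absolute convergence of $I^{\underline\eta}_\omega(\wt f,\ul s)$. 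Unfolding $K_{f'}$ and $E$, interchanging sum and integral (justified by absolute convergence), and collecting the $(\ga,w)$ into $\rH'(F)\times\rH'(F)$-orbits yields
\[
I^{\underline\eta}_\omega(\wt f,\ul s)=\sum_{(\ga,w)/\sim}\Orb^{\underline\eta}_{\ul s}(\wt f,(\ga,w)),
\]
the sum over strongly regular double cosets with $\ga$ $\rH'$-regular elliptic; each adelic orbital integral \eqref{eqn: adelic orbital} converges because the strongly regular orbit is closed with trivial stabilizer, so the pullback of $\wt f$ along the orbit map is compactly supported on $\rH'(\A_F)\times\rH'(\A_F)$.

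The main obstacle is the compact-support step, and the crucial point there is the use of ellipticity: it is precisely the anisotropy modulo center of the stabilizer tori $T_{\ga_i}$ that prevents the contributing $(h_1,h_2)$ from escaping to the cusps of $[\rH']$, where the Eisenstein/theta sum grows—without \eqref{eqn: elliptic constraint} one would at best obtain convergence in a half-plane and a non-simple geometric expansion involving continuous families of orbits. (The cuspidal Bernstein-component condition \eqref{supercuspidal constraint nice} at $v_1$ and the hypothesis $\Phi_{v_1}(0)\neq0$ are not needed for this lemma; they enter the spectral side and the residue identity \eqref{BF pole}.)
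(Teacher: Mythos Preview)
Your argument is essentially correct and parallel to the paper's, with one organizational difference in the compact-support step. The paper works directly with the \emph{pair} $(\ga,w)$: since $\ga$ is $\rH'$-regular elliptic and $w\neq0$, the pair is strongly regular with \emph{trivial} stabilizer in $\rH'\times\rH'$, so $(h_1,h_2)\mapsto\wt f(h_1^{-1}\ga h_2,wh_1^{(2)})$ has genuinely compact support on $\rH'(\A_F)\times\rH'(\A_F)$ for each fixed $(\ga,w)$; combined with finiteness of contributing orbits, this yields compact support modulo $\rH'(F)\times\rH'(F)$ with no residual center. You instead treat $\ga$ first, using anisotropy of $T_\ga/Z_{\G'}$, and obtain compactness only modulo $\rH'(F)\times\rH'(F)$ \emph{and} $Z_{\G'}(\A_F)$, as you parenthetically acknowledge. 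That is weaker than the lemma's literal first assertion, though it does suffice for convergence since the integrand descends to $[\rH']\times[\rH']$. You already invoke the trivial-stabilizer fact at the end for the orbital integrals; using it at the outset for the kernel sum gives the full statement directly, as the paper does.

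One minor correction: your attribution of the half-plane $\Re(s_1)>1/2$ to growth of the Eisenstein sum in the central direction is off---the relevant parameter for unfolding the Eisenstein series is $s_0$, and the paper carries out the geometric expansion for $\Re(s_0)>1$, absorbing the $[\GL_1]$- and $[Z_{\G'}]$-integrations via the substitutions $h_1\mapsto a^{-1}h_1$, $h_2\mapsto z^{-1}h_2$ before the final unfolding. Your closing remark that the supercuspidal condition at $v_1$ plays no role in this lemma is correct.
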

\begin{proof}  By Assumption \ref{eqn: elliptic constraint}, the sum \eqref{eqn: linear kernel sum} is supported on pairs $(\ga,w)$ with $\ga\in \G'(F)$ regular elliptic. We claim that any such pair lies in strongly regular locus of $\G'(F)\times F_{n}$. By definition, this requires  $\{w,wR(s_{\X}(\ga)),\ldots, wR(s_{\X}(\ga))^{n-1}\}$ to span all of $F_n$. But this subspace is a quotient of the degree-$n$ {field extension} $F[R(s_{\X}(\ga))]\simeq L_{\ga}$ of $F$, so is all of $F_n$. In particular, only strongly regular pairs $(\ga,w)$ appear in the sum \eqref{eqn: linear kernel sum}. 

By \cite[Lemma 3.7]{LXZfund}, the stabilizer of such an element in $\rH'(F)\times \rH'(F)$ is trivial. Thus, we rewrite the sum
\begin{equation}\label{inner sum}
    \sum_{(\ga,w)/\sim}\left(\sum_{(h_1,h_2)\in \rH(F)\times \rH'(F)}\wt{f}(h_1^{-1}\ga h_2,w h_1^{(2)})\right),
\end{equation}
only finitely many terms of the outer sum only has a non-zero contribution by the same argument as \cite[Lemma 2.2]{ZhangFourier}. Moreover, repeating the argument from \emph{ibib.} also shows that for a fixed strongly regular $(\ga,w)$ the function $(h_1,h_2)\mapsto \wt{f}(h_1^{-1}\ga h_2,w h_1^{(2)})$ has compact support on $\rH'(\A_{F})\times\rH'(\A_{F})$. This implies the inner sum is also finite for a fixed pair $(\ga,w)$.

By analytic properties of the Eisenstein series \eqref{eqn: our Eisenstein}, it follows that for $\Re(s_0)>1$, $I^{\underline\eta}(\wt{f},\ul{s})$ is equal to \[
 \iint\left(\sum_{(\ga,w)}\wt{f}(h_1^{-1}\ga z h_2,awh_1^{(2)})\right)\eta_0(ah_1^{(2)})|ah_1^{(2)}|^{s_0}|h_1|^{s_1}|h_2|^{s_2}\eta(h_1,h_2)dadzdh,
\] 
where the sum ranges over $(\ga,w)\in({\G'}(F)\times F_{n})^{sr}$ and where we integrate over \[(h_1,h_2,z,a)\in {[\rH']}\times{[\rH']}\times{[Z_{\G'}]}\times{[\BG_{m,F}]}.\]
 We first affect the change of variables $h_1\mapsto a^{-1}h_1$, and unfold the integration along $a$ with $h_1$. Similarly changing variables $h_2\mapsto z^{-1}h_2$ and unfolding, we obtain
\[
 \int_{\rH(F)\backslash\rH'(\A)}\int_{\rH(F)\backslash\rH'(\A)}\left(\sum_{(\ga,w)}\wt{f}(h_1^{-1}\ga  h_2,wh_1^{(2)})\right)\eta_0(h_1^{(2)})\eta_{\ul{s}}(h_1,h_2)dh_1dh_2,
\] 
where 
$\eta_{\ul{s}}(h_1,h_2) = |h_1^{(2)}|^{s_0}|h_1|^{s_1}|h_2|^{s_2}\eta(h_1,h_2)
$ 
for $\ul{s}=(s_0,s_1,s_2)\in \cc^3$.

Now a simple unfolding argument with the expression \eqref{inner sum} gives
\begin{equation*}
   I^{\underline{\eta}}_\omega(\wt{f},\ul{s})=\sum_{(\gamma,w)/\sim} \Orb^{\underline{\eta}}_{\ul{s}}(\wt{f},(\gamma,w)), 
\end{equation*}
as in the statement of the lemma.\quash{ \WL{WORK IN PROGRESS}
By analytic properties of the Eisenstein series \eqref{eqn: our Eisenstein}, it follows that for $\Re(s_0)>1$, $I^{\underline\eta}(\wt{f},\ul{s})$ is equal to \[
 \iint\left(\sum_{(\ga,w)}\wt{f}(h_1^{-1}\ga z h_2,awh_1^{(2)})\right)\eta_0(ah_1^{(2)})|ah_1^{(2)}|^{s_0}|h_1|^{s_1}|h_2|^{s_2}\eta(h_1,h_2)dadzdh,
\] 
where the sum ranges over $(\ga,w)\in({\G'}(F)\times F_{n})^{sr}$ and where we integrate over \[(h_1,h_2,z,a)\in {[\rH']}\times{[\rH']}\times{[Z_{\G'}]}\times{[\BG_{m,F}]}.\]
By Assumption \ref{eqn: elliptic constraint}, the sum is supported on the locus of $(\ga,w)\in\G'(F)\times F_{n}$ with $\ga\in \G'(F)$ $\rH'$-regular elliptic. Recalling from \cite[Lemma 3.7]{LXZfund} that the stabilizer of a strongly regular pair $(\ga,w)$ in $\rH'(F)\times \rH'(F)$ is trivial, we may write \eqref{eqn: linear kernel sum} as
\[
\sum_{(\ga,w)/\sim}\left(\sum_{(h_1,h_2)\in \rH(F)\times \rH'(F)}\wt{f}(h_1^{-1}\ga h_2,w h_1^{(2)})\right)+\sum_{\ga/\sim}\left(\sum_{(h_1,h_2)\in \rH'_\ga(F)\bs(\rH(F)\times \rH'(F))}\wt{f}(h_1^{-1}\ga h_2,0)\right),
\]
where the first term runs over strongly regular pairs $(\ga,w)$ and the seconds sum runs over pairs of the form $(\ga,0)$.  Note that these are the only possible terms since $\ga$ is $\rH'$-regular elliptic. Note that only finitely many terms of the outer sum only has a non-zero contribution by the same argument as \cite[Lemma 2.2]{ZhangFourier}.

For the first term, the argument from \emph{ibib.} shows that for a fixed strongly regular $(\ga,w)$ the function $(h_1,h_2)\mapsto \wt{f}(h_1^{-1}\ga h_2,w h_1^{(2)})$ has compact support on $\rH'(\A_{F})\times\rH'(\A_{F})$. This implies the inner sum is also finite for a fixed pair $(\ga,w)$. On the other hand, the $\rH'$-regular ellipticity of $\ga$ implies that
\[
Z_{\G'}(\A_F)\rH_{\ga}(F)\bs \rH_{\ga}(\A_F)
\]
is compact, so that the function $(h_1,h_2)\mapsto \wt{f}(h_1^{-1}\ga h_2,w h_1^{(2)})$ has compact support on $\rH'(\A_{F})\times\rH'(\A_{F})$ modulo $Z_{\G}(\A_F)[\rH'(F)\times \rH'(F)]$. This completes the proof of the first claim.

We now affect the change of variables $h_2\mapsto z^{-1}h_2$ and unfolding, we obtain
\[
\int_{\ast}\int_{[\Gm]} \left(\sum_{(\ga,w)}\wt{f}(h_1^{-1}\ga  h_2,awh_1^{(2)})\right)\eta_0(ah_1^{(2)})|ah_1^{(2)}|^{s_0}|h_1|^{s_1}|h_2|^{s_2}\eta(h_1,h_2)dadh
\] 
where $\ast = [Z_{\G'}(\A_F)\rH'(F)\times \rH'(F)]\bs[\rH'(\A_F)\times\rH'(\A)]$.

We claim that the second term does not contribute to the trace formula. Isolating these terms and exchanging the summation with the integration, these orbits contribute
\[
\sum_{\ga/\sim}\left(\sum_{(h_1,h_2)\in \rH'_\ga(F)\bs(\rH(F)\times \rH'(F))}\wt{f}(h_1^{-1}\ga h_2,0)\right),
\]

We are thus reduced to the strongly regular pairs in the preceding sum.  

 We first affect the change of variables $h_1\mapsto a^{-1}h_1$, and unfold the integration along $a$ with $h_1$. Similarly changing variables $h_2\mapsto z^{-1}h_2$ and unfolding, we obtain
\[
 \int_{\rH(F)\backslash\rH'(\A)}\int_{\rH(F)\backslash\rH'(\A)}\left(\sum_{(\ga,w)}\wt{f}(h_1^{-1}\ga  h_2,wh_1^{(2)})\right)\eta_0(h_1^{(2)})\eta_{\ul{s}}(h_1,h_2)dh_1dh_2,
\] 
where 
$\eta_{\ul{s}}(h_1,h_2) = |h_1^{(2)}|^{s_0}|h_1|^{s_1}|h_2|^{s_2}\eta(h_1,h_2)
$ 
for $\ul{s}=(s_0,s_1,s_2)\in \cc^3$.

Now a simple unfolding argument (using the fact that strongly regular pairs $(\gamma,w)$ have trivial $\rH'\times\rH'$-stabilizer cf. \cite[Lemma 3.7]{LXZfund}) gives
\begin{equation*}
   I^{\underline{\eta}}_\omega(\wt{f},\ul{s})=\sum_{(\gamma,w)} \Orb^{\underline{\eta}}_{\ul{s}}(\wt{f},(\gamma,w)), 
\end{equation*}
as in the statement of the Lemma.}
\end{proof}

\subsubsection{Global relative characters}
Set $\ul{\eta} = (\eta_0,\eta_1,\eta_2)$. We now introduce the terms with appear in the spectral expansion of \eqref{eqn:linear RTF general}. 
\begin{Def}
Let $\pi_0$ be an irreducible cuspidal automorphic representation of $\G'(\A_{F})$. We define the (global) \emph{linear relative character} $I^{\underline{\eta}}_{\pi_0}(f,\Phi,\ul{s})$ for $f'\in C_c^\infty(\G'(\A_{F}))$ and $\Phi\in C_c^\infty(\A_{F,n})$ by
\begin{equation}\label{eqn: twisted global bessel}
I_{\pi_0}^{\underline\eta}(f'\otimes\Phi,\ul{s})=\sum_{\varphi}Z^{BF}(\pi_0(f')\varphi,\Phi,\eta_1,\eta_0,s_0,s_1)\ov{Z^{FJ}\left(s_2,{\varphi},\eta_2\right)},
\end{equation}
where the sum runs over an orthonormal basis for $\pi_0$ with respect to the inner product \eqref{eqn: global inner product}. Note that if $I_{\pi_0}^{\underline\eta}(f'\otimes\Phi,s)$ is non-zero for some $f'\otimes \Phi$ and $s$, then $\omega = 1$.  By linearity, this extends to a functional on $\wt{f}\in C^\infty_c(\G'(\A_{F})\times \A_{F,n})$.
\end{Def}

 Comparing the relative character to the properties of Friedberg--Jacquet periods and the Bump--Friedberg integral, we obtain the following corollary. 
\begin{Lem}\label{cor pole}Suppose $\pi_0$ is a cuspidal automorphic representation with trivial central character. 
\begin{enumerate}
    \item $I_{\pi_0}^{\underline\eta}(\wt{f},\ul{s})$ is absolutely convergent and meromorphic in $\underline{s}\in \cc^3$, with possible poles only along the planes $s_0=0$ or $1$.
    \item If there exists $\wt{f}\in C^\infty_c(\G'(\A_{F})\times \A_{F,n})$ such that $I^{\underline\eta}_{\pi_0}(\wt{f},\ul{s})\neq 0$ for {$s_0\neq0,1$} and ${s_2=0}$, then $L(s,\pi_0,\wedge^2)$ has a pole at $s=1$ and $L(\frac{1}{2},\pi_0\otimes \eta_2)\neq 0$.
\end{enumerate}
\end{Lem}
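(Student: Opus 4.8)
The plan is to deduce assertion (1) from the analytic properties of the Bump--Friedberg integral, which are governed by those of the Eisenstein series \eqref{eqn: our Eisenstein}, and to deduce assertion (2) directly from Proposition~\ref{Prop: FJ periods}. I note first that $Z^{BF}$ and the distribution \eqref{eqn: twisted global bessel} vanish identically unless the central character $\omega_{\pi_0}$ is trivial, which is the standing hypothesis; this is also precisely what is needed to apply Proposition~\ref{Prop: FJ periods} with $\eta'=1$.

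For (1), I would reduce by linearity to a pure tensor $\wt f=f'\otimes\Phi$ and fix an automorphic form $\varphi\in V_{\pi_0}$. The Friedberg--Jacquet integral $Z^{FJ}(s_2,\varphi,\eta_2)$ is entire in $s_2$. Since $\pi_0(f')\varphi$ is a cusp form, the function $h\mapsto(\pi_0(f')\varphi)(\iota(h))|h|^{s_1}$ is rapidly decreasing on $[\rH']$ for every $s_1$, while the Eisenstein series $E(\,\cdot\,,\Phi,s_0,\eta_0)$ has moderate growth and, by Proposition~\ref{Prop: Eisenstein properties}, is holomorphic in $s_0$ away from at most simple poles at $s_0\in\{0,1\}$; hence the integral defining $Z^{BF}(\pi_0(f')\varphi,\Phi,\eta_1,\eta_0,s_0,s_1)$ converges absolutely and is holomorphic for $s_0\notin\{0,1\}$ and all $s_1$, with at worst simple poles along $s_0\in\{0,1\}$. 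Thus each summand of \eqref{eqn: twisted global bessel} is meromorphic in $\ul s\in\cc^3$ with polar set inside $\{s_0\in\{0,1\}\}$. Absolute convergence of the sum over the orthonormal basis, together with local uniformity of the continuation, I would obtain from the standard relative-trace-formula argument: after smoothing by $\pi_0(f')$, the functional $\varphi\mapsto Z^{BF}(\pi_0(f')\varphi,\Phi,\eta_1,\eta_0,s_0,s_1)$ is realized as the Petersson pairing of $\varphi$ against a smooth vector depending holomorphically on $\ul s$ off the polar set, and similarly for $\varphi\mapsto\ov{Z^{FJ}(s_2,\varphi,\eta_2)}$, so the basis sum collapses to a single pairing of two such vectors; this yields (1).

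For (2), suppose $I^{\underline\eta}_{\pi_0}(\wt f,\ul s)\neq 0$ at a point $\ul s$ with $s_0\notin\{0,1\}$ and $s_2=0$; by (1) everything is finite there, so \eqref{eqn: twisted global bessel} is a finite sum of finite terms. As $\varphi\mapsto Z^{FJ}(0,\varphi,\eta_2)$ is a linear functional on $V_{\pi_0}$, it cannot vanish on the entire orthonormal basis (otherwise every term would vanish), so $Z^{FJ}(0,\varphi,\eta_2)\neq0$ for some $\varphi\in V_{\pi_0}$. Since $\omega_{\pi_0}=1$, the representation $\pi_0$ satisfies the hypothesis ${\eta'}^n\omega=1$ of Proposition~\ref{Prop: FJ periods} with $\eta'=1$, so that proposition applies and gives that $L(s,\pi_0,\wedge^2)$ has a simple pole at $s=1$ and $L(\tfrac12,\pi_0\otimes\eta_2)\neq0$, as asserted.

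I would also record the sharper identity needed later (cf. Proposition~\ref{Prop: factorize the linear rel char}): choosing a factorizable orthonormal basis of $\pi_0$ that is spherical outside a finite set $S$ and inserting the Euler factorizations \eqref{eqn: BF Lfunction} and \eqref{eqn:factorize FJ}, the basis sum collapses (using that the unramified normalized local integrals equal $1$, and that the Petersson normalization \eqref{Prop: nice inner product part 1} contributes only a finite product of local factors together with the constant $\Res_{s=1}L(s,\pi_0\times\pi_0^\vee)$) to show that $I^{\underline\eta}_{\pi_0}(f'\otimes\Phi,\ul s)$ equals a nonzero constant times $L(s_1+\tfrac12,\pi_0\otimes\eta_1)\,L(s_0,\pi_0,\wedge^2\otimes\eta_0)\,\ov{L(s_2+\tfrac12,\pi_0\otimes\eta_2)}\,\ov{\Res_{s=1}L(s,\pi_0,\wedge^2)}$ times a finite product of normalized local relative characters, each entire in $\ul s$ by \cite{MatringeBF2,Ishii} (for $Z^{BF,\natural}_v$) and \cite{FriedbergJacquet,Xuedichotomy} (for $Z^{FJ,\natural}_v$); since $L(s_1+\tfrac12,\pi_0\otimes\eta_1)$ and $L(s_2+\tfrac12,\pi_0\otimes\eta_2)$ are entire (cuspidal $\GL_{2n}$, $n\geq1$) and $L(s_0,\pi_0,\wedge^2\otimes\eta_0)$ has at most simple poles at $s_0\in\{0,1\}$, this re-proves (1) and makes (2) transparent. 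The only parts requiring care are the reduction of the analytic behaviour of $Z^{BF}$ to that of the Eisenstein series together with the standard smoothing argument for convergence of the basis sum, and---in the sharper form---the bookkeeping of the global constants from \eqref{Prop: nice inner product part 1} and the holomorphy of the normalized local integrals; all of this is either routine or quoted, so I do not anticipate a substantial obstacle.
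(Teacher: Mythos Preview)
Your argument is correct and follows the same route as the paper. The paper's proof is terse: it only writes out part (2), observing that nonvanishing of the relative character forces $Z^{FJ}(0,\varphi,\eta_2)\neq0$ for some $\varphi\in V_{\pi_0}$ and then invokes Proposition~\ref{Prop: FJ periods}; part (1) is left implicit, to be read off from the analytic properties of the Eisenstein series (Proposition~\ref{Prop: Eisenstein properties}) exactly as you do. Your final paragraph on the ``sharper identity'' is unnecessary here---it is precisely Proposition~\ref{Prop: factorize the linear rel char}, which the paper states and proves separately---so you can drop it from this lemma.
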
{
\begin{proof}
The assumption implies that the integral $Z^{FJ}\left({0},\varphi,\eta_2\right)$ does not vanish for some $\varphi\in V_{\pi_0}$. It follows from Proposition \ref{Prop: FJ periods} that $L(s,\pi_0,\wedge^2)$ has a pole at $s=1$ and $L(\frac{1}{2},\pi_0\otimes \eta_2)\neq 0$.
\end{proof}
}

\begin{LemDef}\label{LemDef: rel character}Assume that the base change $\Pi$ of $\pi_0$ to $\G'(\A_{E})$ is cuspidal. 
    Suppose also that
    \begin{equation}\label{eqn: character assump 1 spectral}
        \text{ $\eta_0=\eta_{E/F}\neq1$ (as in our cases of interest).}
    \end{equation} If there exists $\wt{f}\in C^\infty_c(\G'(\A_{F})\times \A_{F,n})$ such that $I^{\underline\eta}_{\pi_0}(\wt{f},\ul{s})\neq 0$ for {$s_0\neq0,1$} and ${s_2=0}$, then $Z^{BF}(\varphi,\Phi,\eta_1,\eta_0,s_0,s_1)$ is holomorphic for all $\Phi\in C_c^\infty(\A_{F,n})$ and all $(s_0,s_1)\in \cc^2$. {We set  
    \begin{equation}
        I^{\underline\eta}_{\pi_0}(\wt{f},{s}) := I^{\underline{\eta}}_{\pi_0}(\wt{f},0,{s},{0}).
    \end{equation}}
\end{LemDef}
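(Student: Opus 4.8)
The starting point is Lemma~\ref{cor pole}: the hypothesis that $I^{\underline{\eta}}_{\pi_0}(\wt{f},\ul{s})\neq 0$ for some $\wt{f}$ at a point with $s_0\neq 0,1$ and $s_2=0$ already forces $Z^{FJ}(0,\varphi,\eta_2)\neq 0$ for some $\varphi\in V_{\pi_0}$, hence by Proposition~\ref{Prop: FJ periods} that $L(s,\pi_0,\wedge^2)$ has a simple pole at $s=1$ (and $L(1/2,\pi_0\otimes\eta_2)\neq 0$). The remaining content is that, granted this pole together with cuspidality of $\Pi={\rm BC}_E(\pi_0)$ and $\eta_0=\eta_{E/F}\neq 1$, the Bump--Friedberg integral $Z^{BF}(\varphi,\Phi,\eta_1,\eta_0,s_0,s_1)$ has no poles on $\cc^2$. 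I would prove this via the residue formula. By definition \eqref{eqn: BumpFriedberg}, $Z^{BF}(\varphi,\Phi,\eta_1,\eta_0,s_0,s_1)$ is the integral over $[\rH']$ of a rapidly decreasing cusp form against the Eisenstein series $E(\cdot,\Phi,s_0,\eta_0)$, so by Proposition~\ref{Prop: Eisenstein properties} it is entire in $s_1$ and, in $s_0$, meromorphic with at most simple poles, located at $s_0\in\{0,1\}$ and present only when $\eta_0^n=1$; since $\eta_0\neq 1$ this means only when $n$ is even, so for $n$ odd there is nothing to prove. It therefore suffices to show that for $n$ even the residues at $s_0=0$ and $s_0=1$ vanish identically in $(\varphi,\Phi,s_1)$.

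The residue at $s_0=0$ is given by \eqref{BF pole}: it is a nonzero constant times $\widehat{\Phi}(0)\,Z^{FJ}(s_1,\varphi,\eta_1,\eta_0)$, where $Z^{FJ}(\cdot,\cdot,\eta_1,\eta_0)$ is the generalized Friedberg--Jacquet period \eqref{eqn: Friedberg-Jacquet integral} with outer character $\eta_1$ and Shalika twist $\eta'=\eta_0$. For the residue at $s_0=1$ I would pull the residue of the Eisenstein series at $s=1$, computed in Proposition~\ref{Prop: Eisenstein properties}, through the integral \eqref{eqn: BumpFriedberg}; since that residue carries the factor $\eta_0(\det h^{(2)})$, the outcome is again a constant multiple of $\widehat{\Phi}(0)\int_{[\rH']}\varphi(\iota(h))\eta_1(h)\eta_0(\det h^{(2)})|h|^{s_1}\,dh$, which by \eqref{eqn: Friedberg-Jacquet integral} is a constant times $Z^{FJ}(s_1,\varphi,\eta_1,\eta_0)$. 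Hence both residues vanish as soon as $Z^{FJ}(\cdot,\varphi,\eta_1,\eta_0)$ is identically zero on $V_{\pi_0}$, which is the only thing left to establish.

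To see that $Z^{FJ}(\cdot,\varphi,\eta_1,\eta_0)\equiv 0$, recall (as noted before Proposition~\ref{Prop: FJ periods}) that this period is not identically zero only if $\pi_0$ admits a nonzero $\eta_0$-twisted Shalika functional, and that the latter, by Jacquet--Shalika \cite{JacquetShalika}, is equivalent to $L(s,\pi_0,\wedge^2\otimes\eta_0)$ having a pole at $s=1$. So it suffices to rule out that pole, and this is where cuspidality of $\Pi$ enters. Let $\phi$ denote the standard $L$-parameter of $\pi_0$, irreducible since $\pi_0$ is cuspidal. A pole of $L(s,\pi_0,\wedge^2)$ at $s=1$ yields a nondegenerate $\phi$-invariant alternating form, hence $\phi^\vee\cong\phi$; a pole of $L(s,\pi_0,\wedge^2\otimes\eta_0)$ at $s=1$ yields a nondegenerate alternating form $B$ with $B(\phi(g)v,\phi(g)w)=\eta_0(g)B(v,w)$, hence $\phi^\vee\cong\phi\otimes\eta_0$. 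Together these force $\phi\cong\phi\otimes\eta_0$, i.e.\ $\pi_0\cong\pi_0\otimes\eta_0$, which by \cite{ArthurClozel} is exactly the obstruction to ${\rm BC}_E(\pi_0)$ being cuspidal --- a contradiction. (Equivalently, one argues with Rankin--Selberg $L$-functions, using $L(s,\pi_0\times\pi_0)=L(s,\pi_0,\wedge^2)L(s,\pi_0,\mathrm{Sym}^2)$ and its $\eta_0$-twist, the non-vanishing of the symmetric square factors at $s=1$, and multiplicity one for cuspidal representations of $\GL_{2n}$.) Since $L(s,\pi_0,\wedge^2)$ does have a pole at $s=1$ (established at the outset), $L(s,\pi_0,\wedge^2\otimes\eta_0)$ does not, so $Z^{FJ}(\cdot,\varphi,\eta_1,\eta_0)\equiv 0$ and $Z^{BF}(\varphi,\Phi,\eta_1,\eta_0,s_0,s_1)$ is holomorphic on $\cc^2$ for all $\varphi$ and $\Phi$. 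The displayed definition $I^{\underline{\eta}}_{\pi_0}(\wt{f},s):=I^{\underline{\eta}}_{\pi_0}(\wt{f},0,s,0)$ then makes sense: by \eqref{eqn: twisted global bessel} every summand $Z^{BF}(\pi_0(f')\varphi,\Phi,\eta_1,\eta_0,s_0,s_1)$ is holomorphic at $s_0=0$, and the sum over an orthonormal basis of $\pi_0$ converges absolutely by Lemma~\ref{cor pole}(1).

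I expect the main obstacle to be the (twisted) Jacquet--Shalika input needed to pass from the absence of a pole of $L(s,\pi_0,\wedge^2\otimes\eta_0)$ at $s=1$ to the vanishing of the $\eta_0$-twisted Shalika model, and hence of $Z^{FJ}(\cdot,\cdot,\eta_1,\eta_0)$ on all of $V_{\pi_0}$; once this analytic dictionary is in hand --- in the excerpt it is packaged into Proposition~\ref{Prop: FJ periods} and the discussion preceding it --- the incompatibility of the two exterior-square poles under cuspidality of the base change, and the resulting holomorphy of $Z^{BF}$, are short and formal.
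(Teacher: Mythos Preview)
Your proof is correct and follows essentially the same route as the paper: reduce holomorphy of $Z^{BF}$ to the vanishing of the twisted period $Z^{FJ}(\cdot,\cdot,\eta_1,\eta_0)$ via the residue formula \eqref{BF pole}, then to holomorphy of $L(s,\pi_0,\wedge^2\otimes\eta_0)$ at $s=1$ via Proposition~\ref{Prop: FJ periods}. For this last step the paper argues slightly differently, factoring $L(s,\Pi,\wedge^2)=L(s,\pi_0,\wedge^2)\,L(s,\pi_0,\wedge^2\otimes\eta_0)$ and using that $L(s,\Pi\times\Pi)$ has at most a simple pole at $s=1$ (cuspidality of $\Pi$) so that at most one factor on the right can have a pole; your self-duality argument forcing $\pi_0\cong\pi_0\otimes\eta_0$, and your parenthetical Rankin--Selberg variant over $F$, are equivalent reformulations of the same obstruction.
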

Note that these restrictions on $\ul{\eta}$ are compatible with those of \eqref{eqn: required to descend to X}.
\begin{proof}
    {By assumption, $Z^{BF}(\varphi,\Phi,\eta_1,\eta_0,s_0,s_1)$ is non-vanishing for some $\varphi\in V_{\pi_0}$ and $\Phi\in C_c^\infty(\A_{F,n})$. It is holomorphic at $s_0=0$ when $\eta_0^n\neq1$, so we need only consider the case when $\eta_0^n=1$, where the residue is as in \eqref{BF pole}. In particular, we will show that $Z^{FJ}(s_1,\varphi,\Phi,\eta_1,\eta_0)\equiv0$. By Proposition \ref{Prop: FJ periods}, it suffices to show that $L(s,\pi_0,\wedge^2\otimes \eta_0)$ is holomorphic at $s=1$.
     
   Since $\Pi$ is cuspidal and 
    \[
    L(s,\Pi\otimes\Pi) = L(s,\Pi,\wedge^2)    L(s,\Pi,\Sym^2)
    \]
    has at most a simple pole at $s=1$, each factor has at most a simple pole (note that neither factor vanishes at $s=1$). We also have the product factorization of the exterior square L-function attached to $\Pi$
    \[
    L(s,\Pi,\wedge^2) = L(s,\pi_0,\wedge^2) L(s,\pi_0,\wedge^2\otimes \eta_0).
    \]
  It follows that each factor on the right hand side has a pole of order at most $1$ at $s=1$, and at most one of them has such a pole. Lemma \ref{cor pole} implies $L(s,\pi_0,\wedge^2)$ has a pole at $s=1$. Therefore, $L(s,\pi_0,\wedge^2\otimes \eta_0)$ is holomorphic at $s=1$.}
\end{proof}
\begin{Cor}\label{cor nonvanish}
Keeping the assumptions from Lemma/Definition \ref{LemDef: rel character}, if there exists $\wt{f}\in C^\infty_c(\G'(\A_{F})\times \A_{F,n})$ such that $I^{\underline\eta}_{\pi_0}(\wt{f},{0})\neq0$, then  $L(s,\pi_0,\wedge^2)$ has a pole at $s=1$,  $L(s,\pi_0,\wedge^2\otimes \eta_0)$ is holomorphic at $s=1$, and
{
\[
L(\frac{1}{2},\pi_0\otimes \eta_1)L(\frac{1}{2},\pi_0\otimes \eta_2)\neq 0.
\]
}

In our cases of interest, we have the following (cf. \cite[Conjecture 1.1]{LXZfund}): 
\begin{enumerate}
    \item (split-inert case) when $E_0 =F\times F$, then $L(\frac{1}{2},\mathrm{BC}_E(\pi_0))\neq 0$;
    \item  (inert-inert case) when $E_0 =E$, then $L(\frac{1}{2},\pi_0)\neq 0$.
\end{enumerate}
\end{Cor}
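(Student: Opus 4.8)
The corollary is a repackaging of Lemma/Definition \ref{LemDef: rel character} together with the spectral factorization of the Bump--Friedberg and Friedberg--Jacquet integrals. The plan is to start from the hypothesis $I^{\underline\eta}_{\pi_0}(\wt f,0)\neq 0$ for some $\wt f=f'\otimes\Phi$, expand $I^{\underline\eta}_{\pi_0}$ as the sum over an orthonormal basis as in \eqref{eqn: twisted global bessel}, and conclude that there is a factorizable $\varphi\in V_{\pi_0}$ for which both $Z^{BF}(\varphi,\Phi,\eta_1,\eta_0,0,0)\neq 0$ and $\overline{Z^{FJ}(0,\varphi,\eta_2)}\neq 0$. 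Here we are already using, via Lemma/Definition \ref{LemDef: rel character} and our assumption that the base change $\Pi$ is cuspidal, that $Z^{BF}$ is holomorphic at $(s_0,s_1)=(0,0)$, so the value there makes sense. From the non-vanishing of $Z^{FJ}(0,\varphi,\eta_2)$ and Proposition \ref{Prop: FJ periods}, we immediately get that $L(s,\pi_0,\wedge^2)$ has a simple pole at $s=1$ and $L(\tfrac12,\pi_0\otimes\eta_2)\neq 0$; the conclusion that $L(s,\pi_0,\wedge^2\otimes\eta_0)$ is holomorphic at $s=1$ was already extracted inside the proof of Lemma/Definition \ref{LemDef: rel character} from the factorization $L(s,\Pi,\wedge^2)=L(s,\pi_0,\wedge^2)L(s,\pi_0,\wedge^2\otimes\eta_0)$ and the simple pole of $L(s,\Pi,\wedge^2)$.

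It remains to treat the $L(\tfrac12,\pi_0\otimes\eta_1)$ factor. For this I would use the spectral factorization \eqref{eqn: BF Lfunction} of the global Bump--Friedberg integral: since $Z^{BF}$ extends meromorphically to all of $\cc^2$ with
\[
Z^{BF}(\varphi,\Phi,\eta_1,\eta_0,s_0,s_1)=L(s_1+\tfrac12,\pi_0\otimes\eta_1)L(s_0,\pi_0,\wedge^2\otimes\eta_0)\prod_v Z^{BF,\natural}_v(W_v,\Phi_v,\eta_{1,v},\eta_{0,v},s_0,s_1),
\]
and $L(s_0,\pi_0,\wedge^2\otimes\eta_0)$ is holomorphic and non-vanishing at $s_0=0$ (it is holomorphic at $s=1$ by the above, hence non-zero and holomorphic at $s_0=0$ by the functional equation / Euler product at the edge), the non-vanishing of the left side at $(s_0,s_1)=(0,0)$ forces $L(\tfrac12,\pi_0\otimes\eta_1)\neq 0$. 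This gives
\[
L(\tfrac12,\pi_0\otimes\eta_1)\,L(\tfrac12,\pi_0\otimes\eta_2)\neq 0.
\]
Care is needed about which normalized local factors $Z^{BF,\natural}_v$ are being used and about the precise relationship between the edge value $s_0=0$ of the Bump--Friedberg integral and the residue formula \eqref{BF pole}; the cleanest route is to invoke exactly the manipulations already carried out in the proof of Lemma/Definition \ref{LemDef: rel character}, so that the present proof only needs to record the final $L$-value statement.

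Finally, the two itemized special cases follow by specializing the characters according to Remark \ref{Rem:characters of interest}. In the split-inert case we have taken $\eta_1=1$ and $\eta_2=\eta_0=\eta_{E/F}$ (equivalently $E_0=F\times F$), so the product $L(\tfrac12,\pi_0)L(\tfrac12,\pi_0\otimes\eta_{E/F})$ is exactly $L(\tfrac12,\mathrm{BC}_E(\pi_0))$ via the standard factorization $L(s,\mathrm{BC}_E(\pi_0))=L(s,\pi_0)L(s,\pi_0\otimes\eta_{E/F})$. In the inert-inert case we have taken $\eta_1=\eta_2=1$ (equivalently $E_0=E$), so $L(\tfrac12,\pi_0\otimes\eta_1)L(\tfrac12,\pi_0\otimes\eta_2)=L(\tfrac12,\pi_0)^2$, and non-vanishing of the product is equivalent to $L(\tfrac12,\pi_0)\neq 0$. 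The main obstacle, such as it is, is purely bookkeeping: making sure the holomorphy and non-vanishing of $L(s_0,\pi_0,\wedge^2\otimes\eta_0)$ at the relevant point is correctly inherited from Lemma/Definition \ref{LemDef: rel character}, so that dividing by it in \eqref{eqn: BF Lfunction} is legitimate; no new analytic input is required.
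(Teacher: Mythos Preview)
Your proposal is correct and follows essentially the same route as the paper's proof: extract the pole of $L(s,\pi_0,\wedge^2)$ and the non-vanishing of $L(\tfrac12,\pi_0\otimes\eta_2)$ from the Friedberg--Jacquet side (the paper cites Lemma~\ref{cor pole}, you invoke Proposition~\ref{Prop: FJ periods} directly, which is what that lemma uses), inherit the holomorphy of $L(s,\pi_0,\wedge^2\otimes\eta_0)$ at $s=1$ from Lemma/Definition~\ref{LemDef: rel character}, and then read off $L(\tfrac12,\pi_0\otimes\eta_1)\neq0$ from the factorization \eqref{eqn: BF Lfunction}. One tiny imprecision: from the non-vanishing of the sum \eqref{eqn: twisted global bessel} you do not get a single $\varphi$ on which \emph{both} functionals are non-zero, but rather that $Z^{FJ}(0,\varphi,\eta_2)\neq0$ for some $\varphi$ and $Z^{BF}(\pi_0(f')\varphi,\Phi,\ldots)\neq0$ for that same $\varphi$; since $\pi_0(f')\varphi\in V_{\pi_0}$ this is all you need.
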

\begin{proof}
    We have already seen in Lemma \ref{cor pole} that $L(\frac{1}{2},\pi_0\otimes \eta_2)\neq 0$.  As we have seen, the assumption implies that $Z^{BF}(\varphi,\Phi,\eta_1,\eta_0,s_0,s_1)$ is holomorphic and non-zero at $s_0=0$. The non-vanishing of $L(\frac{1}{2},\pi_0\otimes \eta_1)$ now follows from \eqref{eqn: BF Lfunction}.
\end{proof}

\subsubsection{Factorization} The periods integrals used to define the relative character $I^{\ul{\eta}}_{\pi_0}$ are Eulerian, which gives the following factorization.

\begin{Prop}\label{Prop: factorize the linear rel char}
    Let $\pi_0$ be a cuspidal automorphic representation of $\G'(\A_{F})$ with trivial central character such that the base change $\Pi$ of $\pi_0$ to $\G'(\A_{E})$ is cuspidal. 
Assume also that $\eta_0=\eta_{E/F}\neq1$. Assume $I^{\underline\eta}_{\pi_0}(\wt{f},s)\neq 0$ for some $\wt{f}=\bigotimes_v\wt{f}_v\in C_c^\infty(\G'(\A_F)\times \A_{F,n})$. We have the equality of distributions
    \[
    I^{\ul{\eta}}_{\pi_0}(\wt{f},0) = \mathcal{L}^{\ul{\eta}}_{\pi_0}\prod_vI^{\ul{\eta},\natural}_{\pi_{0,v}}(\wt{f}),
    \]
    where
    \[
    \mathcal{L}^{\ul{\eta}}_{\pi_0}:=\frac{L(1/2,\pi_0\otimes \eta_1)L(1/2,\pi_0\otimes \eta_2)L(1,\pi_0,\wedge^2\otimes\eta_0)}{L(1,\pi_0,\Sym^2)},
    \]
    and where $I^{\ul{\eta}}_{\pi_{0,v}}$ is the local relative character given on pure tensors $\wt{f}_v= f'_v\otimes \Phi_v$ by
    \begin{equation}\label{eqn: linear local rel char}
    I^{\ul{\eta}}_{\pi_{0,v}}(f'_v\otimes\Phi_v) = \sum_{W_v}\frac{Z^{BF}(\pi_{0,v}(f'_v)W_v,\Phi_v,\eta_1,\eta_0,0,0)\ov{Z^{FJ}(0,{W}_v,\eta_2)}}{(W_v,{W}_v)_{\pi_0,v}}.
    \end{equation}
 and $I^{\ul{\eta},\natural}_{\pi_{0,v}}$ is the normalized version defined using the normalized functionals. When $v$ is inert in $E$, $f'_v\otimes\Phi_v =\bfun_{\G'(\calo_{F_v})}\otimes \bfun_{\calo_{F_v,n}}$, and $\pi_{0,v}$ is unramified, then $I^{\ul{\eta},\natural}_{\pi_{0,v}}(f'_v\otimes\Phi_v)=1$.
\end{Prop}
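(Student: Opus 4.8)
The argument unfolds the spectral expansion \eqref{eqn: twisted global bessel} into an Euler product by substituting the three known Euler factorizations: the Bump--Friedberg integral \eqref{eqn: BF Lfunction}, the Friedberg--Jacquet period \eqref{eqn:factorize FJ}, and the Petersson inner product \eqref{Prop: nice inner product part 1}. I would first work in a region $\Re(s_1)\gg 0$ with $s_0$ on the side of $0$ where the Bump--Friedberg integral is holomorphic (legitimate since under our hypotheses $I^{\ul\eta}_{\pi_0}(\wt f,\cdot)$ is holomorphic at $s_0=0$ by Lemma/Definition~\ref{LemDef: rel character}), so that all periods and the relevant Euler products converge absolutely. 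Take the orthonormal basis of $\pi_0$ in \eqref{eqn: twisted global bessel} to consist of normalizations $\varphi/\|\varphi\|_{Pet}$ of factorizable vectors $\varphi=\otimes_v\varphi_v$ with $\prod_v W_v$ the normalized spherical Whittaker functions at almost all $v$, each $W_v$ ranging over an orthogonal basis of the local Whittaker model for $(\cdot,\cdot)_{\pi_v}$. Since $Z^{BF}$ and $Z^{FJ}$ are linear in $\varphi$ and factor over the local Whittaker data, the summand equals a global $L$-factor times $\|\varphi\|_{Pet}^{-2}$ times a product over $v$ of normalized local integrals; by \eqref{Prop: nice inner product part 1}, $\|\varphi\|_{Pet}^{-2}=\frac{\vol(F^\times\bs\A^1_F)}{n\,\Res_{s=1}L(s,\pi_0\times\pi_0^\vee)}\prod_v\big((W_v,W_v)^\natural_{\pi_{0,v}}\big)^{-1}$. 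Summing over the basis then reduces to summing over each local orthogonal basis independently; because of the weight $\big((W_v,W_v)^\natural\big)^{-1}$ this local sum is basis-independent, and it is precisely $I^{\ul\eta,\natural}_{\pi_{0,v}}(\wt f_v)$, the normalization of \eqref{eqn: linear local rel char}.

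For the global $L$-factor, substituting \eqref{eqn: BF Lfunction} at $(s_0,s_1)=(0,0)$ produces $L(1/2,\pi_0\otimes\eta_1)$ and $L(0,\pi_0,\wedge^2\otimes\eta_0)$; substituting \eqref{eqn:factorize FJ} at $s_2=0$ produces $\frac{n}{\vol(F^\times\bs\A^1_F)}L(1/2,\pi_0\otimes\eta_2)\Res_{s=1}L(s,\pi_0,\wedge^2)$; and the inner product contributes the denominator $\frac{n\,\Res_{s=1}L(s,\pi_0\times\pi_0^\vee)}{\vol(F^\times\bs\A^1_F)}$. The factors $n/\vol$ cancel. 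Since $\pi_0$ is of symplectic type, $L(s,\pi_0\times\pi_0^\vee)=L(s,\pi_0,\Sym^2)L(s,\pi_0,\wedge^2)$ with the pole at $s=1$ coming from the exterior-square factor, so $\Res_{s=1}L(s,\pi_0,\wedge^2)/\Res_{s=1}L(s,\pi_0\times\pi_0^\vee)=L(1,\pi_0,\Sym^2)^{-1}$, producing the denominator of $\mathcal{L}^{\ul\eta}_{\pi_0}$. Finally, using the functional equation of the Eisenstein series (Proposition~\ref{Prop: Eisenstein properties})---equivalently that of the exterior-square $L$-function together with the self-duality of $\pi_0$ and $\eta_0^2=1$---I would present the exterior-square factor at $s=1$, so the numerator becomes $L(1/2,\pi_0\otimes\eta_1)L(1/2,\pi_0\otimes\eta_2)L(1,\pi_0,\wedge^2\otimes\eta_0)$; the resulting product of local $\gamma$-factors is absorbed into the local relative characters and is trivial at every place where $\pi_{0,v}$, $\psi_v$ and the remaining data are unramified. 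This yields $I^{\ul\eta}_{\pi_0}(\wt f,0)=\mathcal{L}^{\ul\eta}_{\pi_0}\prod_v I^{\ul\eta,\natural}_{\pi_{0,v}}(\wt f_v)$.

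For the last assertion, let $v$ be inert in $E$ with $\pi_{0,v}$ unramified and $\wt f_v=\bfun_{\G'(\calo_{F_v})}\otimes\bfun_{\calo_{F_v,n}}$. Since $\vol(\G'(\calo_{F_v}))=1$, the operator $\pi_{0,v}(\bfun_{\G'(\calo_{F_v})})$ is the orthogonal projector onto the line of $\GL_{2n}(\calo_{F_v})$-fixed vectors, so only the normalized spherical Whittaker function $W_v^0$ contributes to $I^{\ul\eta,\natural}_{\pi_{0,v}}(\wt f_v)$. By the unramified computation in Proposition~\ref{Prop: BF}, the normalized Bump--Friedberg integral $Z^{BF,\natural}_v(W_v^0,\bfun_{\calo_{F_v,n}},1,\eta_{0,v},0,0)=1$; by the unramified identity recalled just before \eqref{eqn:factorize FJ}, $Z^{FJ,\natural}_v(0,W_v^0,\eta_{2,v})=1$; and by \eqref{eqn: unramified Whittaker inner}, $(W_v^0,W_v^0)^\natural_{\pi_{0,v}}=1$. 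As the $\gamma$-factor correction of the preceding step is also trivial at such $v$, we conclude $I^{\ul\eta,\natural}_{\pi_{0,v}}(\wt f_v)=1$.

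The only point requiring care is the $L$- and $\gamma$-factor bookkeeping in the middle step: verifying that passing from the $s_0=0$ value of the Bump--Friedberg factor to $L(1,\pi_0,\wedge^2\otimes\eta_0)$ via the functional equation introduces only local corrections that are absorbed into the ramified local relative characters and vanish at unramified places. The analytic input---absolute convergence of the Euler products and interchange with the basis sum in the chosen range of $\ul s$, followed by continuity at $\ul s=(0,0,0)$---is routine given the hypotheses.
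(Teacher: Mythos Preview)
Your approach is the same as the paper's, whose proof is the single sentence ``This follows directly from the factorizations \eqref{Prop: nice inner product part 1}, \eqref{eqn:factorize FJ}, and \eqref{eqn: BF Lfunction}.'' Your analytic care (working in a convergence region, choosing a factorizable orthonormal basis, then continuing) and your verification of the unramified statement via the spherical projector are sound and more detailed than what the paper provides.

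You are right to flag the bookkeeping issue: the direct substitution of \eqref{eqn: BF Lfunction} at $s_0=0$ produces $L(0,\pi_0,\wedge^2\otimes\eta_0)$, not the $L(1,\pi_0,\wedge^2\otimes\eta_0)$ written in $\mathcal{L}^{\ul\eta}_{\pi_0}$; the paper's one-line proof does not address this. However, your proposed repair via the functional equation does not work as stated. Your claim that ``the resulting product of local $\gamma$-factors \ldots is trivial at every place where $\pi_{0,v}$, $\psi_v$ and the remaining data are unramified'' is false: at such a place the local $\epsilon$-factor is $1$, but the full local gamma factor is
\[
\gamma_v(0,\pi_{0,v},\wedge^2\otimes\eta_{0,v},\psi_v)=\frac{L_v(1,\pi_{0,v},\wedge^2\otimes\eta_{0,v})}{L_v(0,\pi_{0,v},\wedge^2\otimes\eta_{0,v})},
\]
which is generically $\neq 1$. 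Thus, with the normalizations $Z^{BF,\natural}_v$, $Z^{FJ,\natural}_v$, $(\cdot,\cdot)^\natural$ as written, one cannot have both $L(1,\pi_0,\wedge^2\otimes\eta_0)$ in $\mathcal{L}^{\ul\eta}_{\pi_0}$ \emph{and} $I^{\ul\eta,\natural}_{\pi_{0,v}}(\bfun)=1$ at unramified inert places. This appears to be a minor imprecision in the statement rather than a defect in your method; it is harmless for all later applications in \S\ref{Section: global proofs}, where only the nonvanishing of $\mathcal{L}^{\ul\eta}_{\pi_0}$ is used and $L(0,\ldots)$, $L(1,\ldots)$ are simultaneously nonzero by the functional equation.
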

\begin{proof}
  This follows directly from the factorizations \eqref{Prop: nice inner product part 1}, \eqref{eqn:factorize FJ}, and \eqref{eqn: BF Lfunction}. 
\end{proof}
\subsubsection{Non-vanishing of local relative characters} We pause to establish a relationship between the local relative character above and the local relative character determined by the local Friedberg--Jacquet period. 

\begin{Prop}\label{Prop: elliptic linear rel character}
   Assume that $F$ is a non-archimedean local field and set $\ul{\eta}=(1,1,1)$. If $\pi_{0}$ is $\rH'$-elliptic in the sense of Section \ref{Section: elliptic}, then there exists $f'\otimes \Phi\in C_c^\infty(\G'(F)\times F_{n})$ with $f'$ of regular elliptic support such that
   \[
    I^{\underline{\eta},\natural}_{\pi_{0}}(f'\otimes\Phi) \neq 0
   \]
\end{Prop}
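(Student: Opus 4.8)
The plan is to identify the normalized local relative character $I^{\ul{1},\natural}_{\pi_0}(f'\otimes\Phi)$, up to a nonzero scalar and a nonzero linear functional of $\Phi$, with the relative character attached to the local Friedberg--Jacquet functional, and then to invoke the $\rH'$-ellipticity hypothesis. So the first step is to unwind the definition \eqref{eqn: linear local rel char} in the split case $E=F\times F$, where $\ul{\eta}=\ul{1}$ and the Friedberg--Jacquet periods carry no nontrivial twist: the only data entering $I^{\ul{1}}_{\pi_0}$ are then the local Bump--Friedberg zeta integral $Z^{BF}_v(\,\cdot\,,\Phi,1,1,\,\cdot\,,\,\cdot\,)$ and the local Friedberg--Jacquet functional $Z^{FJ}_v(0,\,\cdot\,,1)$. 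Using the local Bump--Friedberg theory of \S\ref{Section: BF integral} --- in particular Proposition~\ref{Prop: BF}\eqref{Lem: BF local identity on functionals}, which expresses the local Bump--Friedberg functional (at the relevant point) as $\widehat{\Phi}(0)$ times the local Friedberg--Jacquet functional, together with the residue relation \eqref{BF pole} (cf.\ also the split-case orbital integral \eqref{eqn: orbital int split}) --- I would show that, after the $\natural$-normalization (which removes the pole of $L(s_0,\pi_v,\wedge^2)$ at the central point), one has
\[
I^{\ul{1},\natural}_{\pi_0}(f'\otimes\Phi)=c(\pi_0)\,\widehat{\Phi}(0)\,J_{\lambda_0,\lambda_0}(f'),\qquad c(\pi_0)\neq 0,
\]
where $\lambda_0:=Z^{FJ,\natural}_v(0,\,\cdot\,,1)$ is the normalized local Friedberg--Jacquet functional at the center, $c(\pi_0)$ is a (finite, nonzero) ratio of local $L$-values, and $J_{\lambda_0,\lambda_0}$ is the relative character of \S\ref{Section: elliptic}, realized through the Whittaker model and the inner product $(\,\cdot\,,\,\cdot\,)^\natural_{\pi_0}$ of \S\ref{Section: inner product}.

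Two elementary observations then finish the argument. First, $\lambda_0\in\Hom_{\rH'(F)}(\pi_0,\cc)$ by Matringe's invariance result (\cite[Proposition 4.18]{MatringeBF}, at $s=0$ and $\eta=1$), and $\lambda_0\neq 0$: since $\pi_0$ is $\rH'$-distinguished it has a Shalika model, and the normalized local Friedberg--Jacquet integral is nonvanishing at the center (\cite{FriedbergJacquet,MatringeBF2}); hence $\lambda_0$ is a legitimate nonzero invariant functional to which the definition of $\rH'$-ellipticity applies. Second, because $\pi_0$ is $\rH'$-elliptic, that definition (\S\ref{Section: elliptic}) furnishes $f'\in C_c^\infty(\G'(F)^{ell})$ with $J_{\lambda_0,\lambda_0}(f')\neq 0$; since every element of $\G'(F)^{ell}$ is $\rH'$-regular semisimple (Definition~\ref{Def: elliptic}), such an $f'$ has regular elliptic support. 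Taking $\Phi=\bfun_{\calo_{F,n}}$ (so that $\widehat{\Phi}(0)\neq 0$) then yields $I^{\ul{1},\natural}_{\pi_0}(f'\otimes\Phi)=c(\pi_0)\,\widehat{\Phi}(0)\,J_{\lambda_0,\lambda_0}(f')\neq 0$, as required.

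The main obstacle is the first step: one must carefully match the point at which the Bump--Friedberg integral is evaluated in the definition \eqref{eqn: linear local rel char} with the point appearing in Proposition~\ref{Prop: BF}\eqref{Lem: BF local identity on functionals}, and keep track of the normalizing $L$-factors --- including the ones that themselves have a pole at the central point, such as $L(s_0,\pi_v,\wedge^2)$ at $s_0=0$ --- so that the constant $c(\pi_0)$ comes out finite and nonzero. One also needs the nonvanishing at the center of the normalized local Friedberg--Jacquet functional for every $\rH'$-distinguished $\pi_0$ in play, not merely for unramified ones; this is where the genericity of $\pi_0$ (implicit in the Whittaker-model definition of $I^{\ul{\eta}}_{\pi_{0,v}}$) and the local theory of Shalika models are used. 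Everything after that is formal, being just the definition of $\rH'$-ellipticity applied to $\lambda_0$.
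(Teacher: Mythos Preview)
Your proposal has the right endgame but is missing the key technical step that the paper uses. The identity in Proposition~\ref{Prop: BF}\eqref{Lem: BF local identity on functionals} is
\[
Z^{BF}_v(W_v,\Phi_v,\eta_v,1,\tfrac{1}{2},0)=\widehat{\Phi}(0)\,Z^{FJ}_v(0,W_v,\eta_v),
\]
i.e.\ it holds at the point $(s_0,s_1)=(\tfrac12,0)$, whereas the relative character $I^{\ul{1}}_{\pi_0}$ is built from $Z^{BF}_v$ at $(s_0,s_1)=(0,0)$. You flag this mismatch as an ``obstacle'' but do not resolve it: the global residue relation \eqref{BF pole} is not a local statement, and the split orbital integral \eqref{eqn: orbital int split} concerns the geometric side, not the Bump--Friedberg zeta integral. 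There is no direct relation of the shape $Z^{BF}_v(W,\Phi,1,1,0,0)=c(\pi_0)\widehat{\Phi}(0)\,Z^{FJ}_v(0,W,1)$; indeed $W\mapsto Z^{BF}_v(W,\Phi,1,1,0,0)$ is not an $\rH'$-invariant functional (the factor $\Phi(e_n h_2)$ obstructs this), so multiplicity one does not directly apply either.

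The paper bridges this gap with the \emph{local functional equation} for the Bump--Friedberg integral \cite[Proposition~4.16]{MatringeBF}: there is a nonzero constant $\epsilon(\pi_0,\psi,0,0)$ with
\[
\epsilon(\pi_0,\psi,0,0)\,Z^{BF,\natural}(\pi_0(f')W,\Phi,1,1,0,0)=Z^{BF,\natural}(\pi_0^\vee({f'}^\theta)\hat{W},\widehat{\Phi},1,1,\tfrac12,0),
\]
which moves the evaluation point from $(0,0)$ to $(\tfrac12,0)$ at the cost of passing to the contragredient (and replacing $W,\Phi$ by $\hat{W},\widehat{\Phi}$). Only then can Proposition~\ref{Prop: BF}\eqref{Lem: BF local identity on functionals} be applied, yielding
\[
I^{\ul{1},\natural}_{\pi_0}(f'\otimes\Phi)=\epsilon(\pi_0,\psi,0,0)\,\Phi(0)\sum_{W}\frac{Z^{FJ,\natural}(0,\pi_0^\vee(\widehat{f'})\hat{W},1)\overline{Z^{FJ,\natural}(0,\hat{W},1)}}{[\hat{W},\hat{W}]^\natural_{\pi_0^\vee}}.
\]
Note two consequences of this route that differ from your sketch: the factor is $\Phi(0)$, not $\widehat{\Phi}(0)$, and the resulting relative character lives on $\pi_0^\vee$ (which is again $\rH'$-elliptic), with test function $\widehat{f'}$ rather than $f'$. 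After this, your finishing argument is exactly right.
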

\begin{proof}
Recall the formula for the relative character
    \[
    I^{\underline{\eta},\natural}_{\pi_{0}}(f'\otimes\Phi) = \sum_{W}\frac{Z^{BF,\natural}(\pi_{0}(f')W,\Phi,1,1,0,0)\ov{Z^{FJ,\natural}(0,W,1)}}{[W,W]^\natural_{\pi_{0}}}.
    \]
We now use the local functional equation for the Bump--Friedberg integral \cite[Proposition 4.16]{MatringeBF}\footnote{Note the two-variable functional equation is stated in \cite[Corollary 3.2]{MatringeBF2}, but with a typo. On the left-hand side of (2) in \emph{ibid.}, the entry $1-s$ should read $1/2-s$, which may be readily verified by comparing to \cite[Proposition 4.16]{MatringeBF}.} to see that there exists a non-zero number $\epsilon(\pi_{0},\psi,0,0)\in \cc^\times$ such that
\[
 \epsilon(\pi_{0},\psi,0,0)Z^{BF,\natural}(\pi_{0}(f')W,\Phi,1,1,0,0) =Z^{BF,\natural}(\pi^\vee_{0}({f'}^\theta)\hat{W},\widehat{\Phi},1,1,\frac{1}{2},0),
\]
where ${f'}^\theta(g) = f'(g^\theta)$, $\hat{W}(g) = W(g^\theta)$, and where $\widehat{\Phi}$ denotes the Fourier transform with respect to the additive character $\psi$.   Since $\eta_{0}=1$, we may use Proposition \ref{Prop: BF} (\ref{Lem: BF local identity on functionals}) to find that
   \begin{align}\label{eqn: split rel char identity}
        I^{\ul{\eta},\natural}_{\pi_{0}}(f'\otimes\Phi) = \epsilon(\pi_{0},\psi,0,0)\Phi(0) \sum_{W}\frac{Z^{FJ,\natural}(0,\pi^\vee_{0}(\widehat{f'})\hat{W},1)\ov{Z^{FJ,\natural}(0,\hat{W},1)}}{[\hat{W},\hat{W}]^\natural_{\pi^\vee_{0}}}.
   \end{align}
   The right-hand side is a non-zero $(\rH',\rH')$-invariant relative character for the $\rH'$-elliptic representation $\pi_{0}^\vee$. Thus by definition of ellipticity, there exists a choice of $\Phi\in C^\infty_c(F_n)$ and $f'\in C_c^\infty(\G'(F))$ with $\rH'$-elliptic support for which \eqref{eqn: split rel char identity} is non-zero, and the proposition follows.
\end{proof}

\subsubsection{The simple trace formula} We may now state our simple form of the relative trace formula. Let $\wt{f}$ be a nice test function. For $\Re(s_0)>1$, Assumption \ref{supercuspidal constraint nice} and the spectral expansion expansion of $K_{f'}(x,y)$ imply the expansion
\begin{align*}
   I^{\underline{\eta}}(\wt{f},\ul{s})=  \sum_{\pi_0} I^{\underline\eta}_{\pi_0}(\wt{f},\ul{s}), 
\end{align*}
where the sum runs over irreducible cuspidal automorphic representations $\pi_0$ of $\G(\A_{F})$ with trivial central character satisfying $\pi_{0,v_1}\in \Omega$ is supercuspidal and (applying Proposition \ref{Prop: elliptic linear rel character}) $\pi_{0,v_2}$ is $\rH'$-elliptic. Specializing first to $s_2=0$, Lemma \ref{cor pole} implies that only those representations $\pi_0$ such that  $L(s,\pi_0,\wedge^2)$ has a pole at $s=1$ and $L(\frac{1}{2},\pi_0\otimes \eta_2)\neq 0$ appear. Lemma/Definition \ref{LemDef: rel character} now implies that each summand is holomorphic at $s_0=0$, so we set
\begin{equation}\label{eqn: spectral expansion linear}
       I^{\underline{\eta}}(\wt{f},s): = I^{\underline{\eta}}(\wt{f},(0,s,0))=\sum_{\pi_0} I^{\underline\eta}_{\pi_0}(\wt{f},s),
\end{equation}
which is holomorphic in $s\in \cc$.

On the geometric side, we begin with the expansion from Lemma \ref{Lem: converge and expand},
\begin{equation*}
 I^{\underline{\eta}}_\omega(\wt{f},\ul{s})=\sum_{(\gamma,w)} \Orb^{\underline{\eta}}_{\ul{s}}(\wt{f},(\gamma,w)). 
\end{equation*}
When $\wt{f}=\bigotimes_v\wt{f}_v$, then it is clear from the definition of the transfer factors in Definition \ref{Def: transfer factor on X} that when $(y,w)\in \X(F)\times F_{n}$ is strongly regular with $y=s_{\X}(\ga)$, then
\[
\prod_v   \Omega_v(\ga,w)=\prod_v   \omega_v(y,w)\eta_{2,v}(\ga)=1;
\]
For $\Re(s_0)>1$, we may thus re-normalize \eqref{eqn: adelic orbital} as
\[
\Orb^{\underline{\eta}}_{\ul{s}}(\wt{f},(\ga,w)) =L(s_0,T_{y},\eta_0)\prod_v \Orb^{\underline{\eta},\natural}_{\ul{s}}(s_{\X,!}^{\eta_2}(\wt{f}_v),(y_v,w_v))
\]
where $s_{\X,!}^{\eta_2}(\wt{f}_v)$ is the obvious extension of the pushforward \eqref{eqn: eta projection} and $\Orb^{\underline{\eta},\natural}_{\ul{s}}(s_{\X,!}^{\eta_2}(\wt{f}_v),(y_v,w_v))$ 
is the normalized orbital integral defined in Lemma/Definition \ref{LemDef: linear OI}. 

\begin{Prop}\label{Prop: simple RTF linear}
    Assume that $\wt{f}=\bigotimes_v\wt{f}_v$ is an \emph{elliptic nice test function} with respect to the Bernstein blocks $\Omega$ $v_1$. Then the right-hand side of \eqref{eqn: linear geo expansion 2} is holomorphic at $s_0=0$. We obtain the simple trace formula for $I^{\underline{\eta}}(\wt{f},s)$
\begin{align}\label{eqn: linear geo expansion 2}
 \sum_{y} L(0,T_{y},\eta_0)\prod_v \Orb^{\rH',\underline{\eta}}_{s}(s_{\X,!}^{\eta_2}(\wt{f}_v),y_v)=\sum_{\pi_0} I^{\underline\eta}_{\pi_0}(\wt{f},s),
\end{align}
where the first sum runs over regular elliptic semi-simple orbits $y\in \rH'(F)\backslash \X^{rss}(F)$ and the second sum runs over irreducible cuspidal automorphic representations $\pi_0$ of $\G'(\A_{F})$ satisfying 
\begin{enumerate}
    \item $\pi_{0,v_1}\in \Omega$ is supercuspidal, 
    \item$\pi_{0,v_2}$ is $\rH'$-elliptic,
    \item $L(s,\pi_0,\wedge^2)$ has a pole at $s=1$ and $L(\frac{1}{2},\pi_0\otimes \eta_2)\neq 0$.
\end{enumerate}
\end{Prop}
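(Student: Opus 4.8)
The plan is to assemble the simple trace formula by combining the geometric expansion of Lemma~\ref{Lem: converge and expand} with the spectral expansion coming from the cuspidal support condition, and then to show that on each side only the claimed terms survive and that everything is holomorphic at $s_0=0$.

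\textbf{Step 1: the geometric side.} Starting from the expansion $I^{\underline{\eta}}_\omega(\wt{f},\ul{s})=\sum_{(\gamma,w)/\sim} \Orb^{\underline{\eta}}_{\ul{s}}(\wt{f},(\gamma,w))$ of Lemma~\ref{Lem: converge and expand} (valid for $\Re(\ul s)\gg0$), I would first specialize $s_2=0$ and invoke the assumption $\eta_2^2=1$, $\eta_1=1$ so that the descent to the variety $\X$ described in \S\ref{section: linear orbital ints} applies. Using that $\wt f=\bigotimes_v\wt f_v$ is factorizable and that $\prod_v\Omega_v(\ga,w)=1$ (the product formula for transfer factors noted just before Proposition~\ref{Prop: simple RTF linear}), each adelic orbital integral \eqref{eqn: adelic orbital} factors as $L(s_0,T_y,\eta_0)\prod_v \Orb^{\underline{\eta},\natural}_{\ul{s}}(s_{\X,!}^{\eta_2}(\wt{f}_v),(y_v,w_v))$, where $y=s_{\X}(\ga)$. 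By Assumption~\ref{eqn: elliptic constraint} the sum is supported on $\ga$ that are $\rH'$-regular elliptic (equivalently $y\in\rH'(F)\backslash\X^{rss}(F)$ regular elliptic), and the argument inside the proof of Lemma~\ref{Lem: converge and expand} shows every such $\ga$ gives a strongly regular pair $(\ga,w)$ and the double coset sum is independent of the choice of $w$. Holomorphy at $s_0=0$ of each normalized local orbital integral is Lemma/Definition~\ref{LemDef: linear OI}; since only finitely many orbits contribute (ellipticity at $v_2$ plus the cuspidal-support compact support argument), the whole geometric sum is holomorphic at $s_0=0$, giving the left-hand side of \eqref{eqn: linear geo expansion 2}.

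\textbf{Step 2: the spectral side.} For $\Re(s_0)>1$ I would use the cuspidal-Bernstein constraint \eqref{supercuspidal constraint nice} at $v_1$: since $f'_{v_1}\in C^\infty_c(\G'(F_{v_1}))_\Omega$ with $\Omega$ a finite union of cuspidal blocks, the automorphic kernel $K_{f'}(x,y)$ is a finite sum over cuspidal $\pi_0$ with $\pi_{0,v_1}\in\Omega$ supercuspidal (the continuous spectrum and residual spectrum are killed by the cuspidal projector, as is standard). Integrating against the two weight factors yields $I^{\underline\eta}(\wt f,\ul s)=\sum_{\pi_0}I^{\underline\eta}_{\pi_0}(\wt f,\ul s)$. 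Specializing $s_2=0$ and applying Lemma~\ref{cor pole}(2), the only $\pi_0$ that contribute are those for which $L(s,\pi_0,\wedge^2)$ has a pole at $s=1$ and $L(\tfrac12,\pi_0\otimes\eta_2)\neq0$; in particular $\omega_{\pi_0}=1$. Lemma/Definition~\ref{LemDef: rel character} then gives that each $I^{\underline\eta}_{\pi_0}(\wt f,\ul s)$ is holomorphic at $s_0=0$, so we may set $I^{\underline\eta}(\wt f,s):=I^{\underline\eta}(\wt f,(0,s,0))=\sum_{\pi_0}I^{\underline\eta}_{\pi_0}(\wt f,s)$ as in \eqref{eqn: spectral expansion linear}. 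Finally, Proposition~\ref{Prop: elliptic linear rel character} (or rather the local input it encodes: ellipticity of $\pi_{0,v_2}$ is forced because $\wt f_{v_2}$ has $\rH'$-regular elliptic support, so a non-elliptic $\pi_{0,v_2}$ gives vanishing local relative character, hence vanishing global one) restricts the sum to $\pi_0$ with $\pi_{0,v_2}$ $\rH'$-elliptic.

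\textbf{Step 3: matching the two expansions.} Both $I^{\underline\eta}(\wt f,\ul s)$-expansions (geometric and spectral) agree for $\Re(\ul s)\gg0$, hence agree as meromorphic functions on $\cc^3$; restricting to $(s_0,s_1,s_2)=(0,s,0)$ and using the holomorphy established in Steps 1 and 2 gives the identity \eqref{eqn: linear geo expansion 2}. The main obstacle — and the place requiring the most care — is Step 2's claim that the spectral side is \emph{exactly} a sum of cuspidal relative characters with no contribution from Eisenstein spectrum: this uses that the cuspidal Bernstein component at $v_1$ annihilates the non-cuspidal part of $K_{f'}$, together with an absolute-convergence/interchange-of-sum-and-integral justification for the weighted integral against the (meromorphically continued) Godement--Jacquet Eisenstein series; the convergence bookkeeping is exactly what Lemma~\ref{Lem: converge and expand} and Proposition~\ref{Prop: factorize the linear rel char} were set up to handle, so the argument is routine but must be stated with the $\Re(s_0)>1$ caveat and then continued.
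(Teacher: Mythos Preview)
Your outline follows the paper's approach closely and the spectral side (Step~2) and the matching (Step~3) are handled correctly. There is, however, a genuine gap in Step~1.

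You write the adelic orbital integral as $L(s_0,T_y,\eta_0)\prod_v \Orb^{\underline{\eta},\natural}_{\ul{s}}(\cdots)$ and then argue holomorphy at $s_0=0$ by invoking Lemma/Definition~\ref{LemDef: linear OI} for each normalized \emph{local} factor. But this completely ignores the global $L$-factor $L(s_0,T_y,\eta_0)$, which is a Hecke $L$-function and can certainly have a pole at $s_0=0$. Holomorphy of the local factors says nothing about this term; in fact the very point of the normalization in Lemma/Definition~\ref{LemDef: linear OI} was to strip off the local $L$-factors, and their product is what reappears as $L(s_0,T_y,\eta_0)$. Without knowing that $L(0,T_y,\eta_0)$ is finite, the left-hand side of \eqref{eqn: linear geo expansion 2} is not even defined.

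The missing argument is arithmetic: the elliptic support at $v_2$ forces the stabilizer $T_y$ to be $\Res_{L/F}(\Gm)$ for a degree-$n$ \emph{field} extension $L/F$, and crucially $v_2$ is chosen to be \emph{split in $E$}. If $E\subset L$ then $L\otimes_F F_{v_2}$ would contain $E\otimes_F F_{v_2}\simeq F_{v_2}\times F_{v_2}$ and hence fail to be a field, contradicting the local ellipticity at $v_2$. Therefore $E\not\subset L$, so $\eta_0\circ\Nm_{L/F}$ is a nontrivial Hecke character, and $L(s_0,T_y,\eta_0)$ is entire. This is precisely why Definition~\ref{nice test functions} requires $v_2$ to split in $E$, a hypothesis you invoked only for the ``finitely many orbits'' claim. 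You should also cite Lemma~\ref{Lem: almost everywhere 1} to ensure the infinite product of normalized local factors actually converges (almost all are equal to $1$).
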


\begin{proof}
By Lemma \ref{Lem: converge and expand}, the right-hand side of \eqref{eqn: linear geo expansion 2} is a finite sum, so we consider the convergence of each term. We therefore fix a strongly regular pair $(y,w)$. For all $(s_0,s_1)\in \cc$, each orbital integral is convergent by Lemma/Definition \ref{LemDef: linear OI} and all but finitely many factors in the above product equal $1$ by Lemma \ref{Lem: almost everywhere 1}. In particular, for each $(\ga,w)$ the product
\[
\prod_v {\Orb^{\underline{\eta},\natural}_{\ul{s}}(\wt{f}_v,(\ga_v,w_v))},
\]
is convergent for all $(s_0,s_1)\in \cc$. This implies that for $\Re(s_0)>1$, we have the expansion.
\[
I^{\underline{\eta}}(\wt{f},(s_0,s_1,0)) = \sum_{(y,w)} L(s_0,\T_{y},\eta_0))\prod_v \Orb^{\underline{\eta},\natural}_{\ul{s}}(s_{\X,!}^{\eta_2}(\wt{f}_v),(y_v,w_v))
\]

Since $\wt{f}$ is assumed to have \emph{elliptic support} at a split place $v_2$, the only regular semi-simple $y\in \X(F)$ to contribute satisfy that $\T_y\simeq\Res_{L/F}(\Gm)$ for some degree $n$ field extension such that $E\not\subset L$. Indeed, since $v_2$ splits in $E$ if $E\subset L$, then $L_{v_2}$ would not be simple. It follows that $\eta_0\circ\Nm_{L/F}\neq1$ so that $L(s,\T_y,\eta_0)$ is holomorphic for all such $y$. Thus, we may evaluate each term on the right-hand side of the above sum. Lemma/Definition \ref{LemDef: linear OI} implies the resulting sum is independent of $w$, so we obtain
\begin{equation*}
I^{\underline{\eta}}(\wt{f},s):=I^{\underline{\eta}}(\wt{f},(0,s,0)) = \sum_{y} L(0,\T_{y},\eta_0)\prod_v \Orb^{\rH',\underline{\eta}}_{s}(s_{\X,!}^{\eta_2}(\wt{f}_v),y_v)
\end{equation*}
where the sum runs over regular elliptic semi-simple orbits 
\[
\rH'(F)\backslash \X^{rss}(F) {=} 
\rH'(F)\backslash (\X(F)\times F_{n})^{sr},
\]
where the equality of cosets follows from \cite[Lemma 3.7]{LXZfund}. Combining this with \eqref{eqn: spectral expansion linear} gives the result.
\end{proof}


\subsection{The unitary side} We now consider the relative trace formula for a unitary symmetric pair $(\G,\rH)=(\G_\bullet,\rH_\bullet)$ where $\bullet\in \{si,ii\}$.  Suppose that $\rH_1,\rH_2\subset \G$ are two pure inner twists of $\rH$ corresponding to classes in $\ker[H^1(F,\rH)\to H^1(F,\G)]$. For $f\in C^\infty_c(\G(\A_{F}))$, the relative trace formula we consider are of the form
\begin{equation}\label{eqn: unitary RTF}
J_{\rH_1,\rH_2}(f):= \int_{[\rH_1]}\int_{[\rH_2]}K_f(h_1,h_2)dh_1dh_2.
\end{equation}

As above, this will not always converge and we introduce the appropriate notion of \emph{nice test functions}. 

\begin{Def}\label{elliptic test functions}  We say ${f}=\bigotimes_v{f}_v\in C_c^\infty(\G(\A_{F}))$ is an \emph{elliptic nice test function} if 
\begin{enumerate}
\item There exists a non-Archimedean place $v_1$ of $F$ and a finite union $\Omega$ of cuspidal Bernstein components of $\G(F_{v_1})$ such that the function $f_{v_1}\in C^\infty_c(\G(F_{v_1}))_{\Omega}$.
\item There exists a non-archimedean {split} place $v_2$ of $F$ such that the function $f_{v_2}\in C^\infty_c(\G(F_{v_2}))$ is supported on the \emph{regular elliptic locus}; that is,
\[
\supp(f_{v_2})\subset \{g\in \G(F_{v_2}): (\rH_1\times \rH_2)_g \simeq L_g^\times\},
\]
where $L_g/F_{v_1}$ is a degree $n$ field extension.
\end{enumerate}
\end{Def}

Recall that a torus $T$ over $F$ is \emph{$E$-simple} if $T_E\simeq\Res_{L/E}\Gm$ for some finite field extension $L/E$.

With these constraints we have the following simple trace formula statement.
\begin{Prop}\label{Prop: Simple RTF unitary}
     Suppose that $f=\bigotimes_vf_v$ is a nice test function. Then
     \begin{enumerate}
         \item if $\ga\in \G(F)$ such that $f(h_1^{-1}\ga h_2)\neq 0$ for some $(h_1,h_2)\in \rH_1(\A_{F})\times \rH_2(\A_{F})$, then the stabilizer
         \[
         \rH_{\ga}=\{(h_1,h_2)\in \rH_1\times \rH_2: h_1^{-1}\ga h_2 = \ga\}
         \]
         is an elliptic $F$-simple torus of rank $n$. In particular, $\vol([\rH_\ga])<\infty$;
         \item we have an equality of absolutely convergent sums
         \begin{equation}\label{eqn: unitary simple rtf}
               \sum_{\pi}J_{\pi}(f)= J_{\rH_1,\rH_2}(f) = \sum_{[\ga]}\vol([\rH_\ga])\Orb^{\rH_1\times \rH_2}(f,\ga),
         \end{equation}
         where the left-hand side ranges over cuspidal automorphic representations $\pi$ of $\G(\A_{F})$ which are {supercuspidal at $v_1$ and $(\rH'_1,\rH'_2)$-elliptic at $v_2$}, and where
         \[
         J_{\pi}(f):=\sum_{\varphi\in ONB(\pi)}\left(\int_{[\rH_1]}\pi(f)\varphi(h)dh\right)\overline{\left(\int_{[\rH_2]}\varphi(h)dh\right)}
         \]
         is the usual relative trace, and where the right-hand side ranges over regular elliptic orbits
         \[
         \rH_1(F)\bs \G(F)^{re}/\rH_2(F)
         \]and for each such orbit
         \[
         \Orb^{\rH_1\times \rH_2}(f,\ga) = \int_{\rH_{\ga}(\A_{F})\bs\rH_1(\A_{F})\times \rH_2(\A_{F})}f(h_1^{-1}\ga h_2)dh
         \]
         is the (adelic) orbital integral.
     \end{enumerate}
\end{Prop}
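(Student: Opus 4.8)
The plan is to establish the two assertions in tandem, since the convergence of the geometric side relies on the structure of the support, exactly as in the linear case (Lemma~\ref{Lem: converge and expand}). First I would analyze the stabilizer: suppose $f(h_1^{-1}\ga h_2)\neq 0$ for some adelic $(h_1,h_2)$. The place $v_2$ forces $f_{v_2}$ to be supported on the regular elliptic locus, where $(\rH_1\times\rH_2)_\ga\simeq L_\ga^\times$ for a degree $n$ field extension $L_\ga/F_{v_2}$. Since the stabilizer $\rH_\ga$ is an $F$-group whose base change to $F_{v_2}$ is the anisotropic-mod-center torus $\Res_{L_\ga/F_{v_2}}(\Gm)$, it is itself an elliptic $F$-simple torus of rank $n$ (the global torus cannot be split if it is anisotropic at one place); in particular $[\rH_\ga]$ has finite volume. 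This takes care of assertion~(1).

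Next I would prove the geometric expansion and its absolute convergence. By the same argument as in Lemma~\ref{Lem: converge and expand} (following \cite[Lemma 2.2]{ZhangFourier}), the ellipticity constraint at $v_2$ means the automorphic kernel $\sum_{\ga\in\G(F)}f(h_1^{-1}\ga h_2)$, viewed as a function on $[\rH_1]\times[\rH_2]$, is supported on finitely many regular elliptic orbits and, for each fixed such orbit, the relevant function on $\rH_1(\A_F)\times\rH_2(\A_F)$ has compact support modulo $Z_\G(\A_F)$ times $\rH_\ga(F)$-translation; crucially the center $Z_{\G'}$ plays no role here since $\G$ has anisotropic-mod-center relevant stabilizers, so no central integration is needed. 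Exchanging sum and integral and unfolding over $\rH_\ga(F)\bs(\rH_1(F)\times\rH_2(F))$ then yields
\[
J_{\rH_1,\rH_2}(f)=\sum_{[\ga]}\vol([\rH_\ga])\,\Orb^{\rH_1\times\rH_2}(f,\ga),
\]
the sum running over $\rH_1(F)\bs\G(F)^{re}/\rH_2(F)$, with each orbital integral convergent since the orbit of $\ga$ is closed (the stabilizer being a torus).

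For the spectral side, I would invoke Assumption~(1) in Definition~\ref{elliptic test functions}: the cuspidal-Bernstein-component constraint at $v_1$ makes $f_{v_1}$ act by a finite-rank operator on the cuspidal spectrum at that place, killing the continuous spectrum and any noncuspidal discrete contributions, so the spectral expansion of $K_f$ collapses to $\sum_\pi \sum_{\varphi\in ONB(\pi)}\pi(f)\varphi\otimes\overline{\varphi}$ over cuspidal $\pi$ with $\pi_{v_1}$ supercuspidal in $\Omega$. Integrating over $[\rH_1]\times[\rH_2]$ — legitimate since cusp forms are rapidly decreasing and the period integrals converge absolutely — gives $\sum_\pi J_\pi(f)$. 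The restriction to $(\rH_1,\rH_2)$-elliptic representations at $v_2$ then comes by the same mechanism as in Proposition~\ref{Prop: simple RTF linear}: if $\pi_{v_2}$ is not $(\rH_1,\rH_2)$-elliptic, the local relative character $J_{\lam_1,\lam_2}(f_{v_2})$ vanishes for $f_{v_2}$ of regular elliptic support, so $J_\pi(f)=0$.

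The main obstacle is the justification of interchanging the sum over $\G(F)$ with the double integral over $[\rH_1]\times[\rH_2]$ and the passage between the geometric and spectral forms of $K_f$. This rests on the compact-support-modulo-translation statement for the kernel, which is where the elliptic support hypothesis does its real work; the argument is a transcription of \cite[Lemma 2.2]{ZhangFourier} and the analogous step in the linear case, but one must be careful that the relevant stabilizers being anisotropic-mod-center tori (guaranteed by assertion~(1)) is exactly what makes the ``only finitely many orbits contribute'' bound and the finiteness of $\vol([\rH_\ga])$ hold simultaneously, so that no truncation or regularization is needed.
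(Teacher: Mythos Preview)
Your proposal is correct and follows the same underlying strategy as the paper, though you give considerably more detail: the paper's proof simply observes that the elliptic support at $v_2$ forces $\ga$ to be relatively regular semi-simple with $\rH_\ga$ a simple elliptic torus of rank $n$, and then defers the entire remainder (convergence, geometric expansion, spectral expansion) to the general simple relative trace formula in \cite[Theorem 18.2.5]{GetzHahnbook}. Your sketch of the geometric-side argument via \cite[Lemma 2.2]{ZhangFourier} and of the spectral truncation via the cuspidal Bernstein component is exactly what that reference encodes, so nothing is lost. One small remark: your justification that only $(\rH_1,\rH_2)$-elliptic $\pi_{v_2}$ survive implicitly uses local multiplicity one at the split place $v_2$ (so that the global period factors through the unique local functional), which is available by \cite{JacquetRallis}; the paper leaves this implicit as well.
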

\begin{proof}Since $f_{v_2}$ is assumed to have regular elliptic support, we may conclude that if $f(h_1^{-1}\ga h_2)\neq 0$, then 
\begin{enumerate}
    \item $\ga\in \G(F)$ is (relatively) regular semi-simple, and 
    \item $\rH_{\ga}\subset \rH_1\times \rH_2$ is a \emph{simple} elliptic torus of rank $n$.
\end{enumerate}
The rest of the claims follows from a special case of the simple trace formula established in \cite[Theorem 18.2.5]{GetzHahnbook}.
\end{proof}
In order to compare with the trace formula in Proposition \ref{Prop: simple RTF linear}, the geometric expansion needs refinement.\quash{As a first point, it is easy to see that $\car_{lin}$ is surjective on $F$-points. For the unitary symmetric variety, we need to incorporate pure inner forms to obtain all regular semi-simple orbits in some cases. Beyond this, we need to give a pre-stabilization of the geometric sides as we match \emph{stable} orbital integrals on unitary side to the linear side.} Since the two cases behave so differently, we handle them separately.
\subsubsection{Some recollections on Galois cohomology}
 Due to our use of elliptic nice functions in the trace formula, we do not need to consider the full pre-stabilization of the elliptic locus (cf. \cite[Section 10]{LeslieEndoscopy}). Nevertheless, we need to recall some basic notations from abelianized Galois cohomology from \cite{LabesseBook}. Assume that $\I\subset \rH$ is an inclusion of connected reductive groups.

  For a place $v$ of $F$, we let $F_v$ denote the localization. We set
  \[
  \mathfrak{C}(\I,\rH;F_v):=\ker[H^1_{ab}(F_v,\I)\to H^1_{ab}(F_v,\rH)];
  \]
  We also have the global variant
  \begin{equation*}
       \mathfrak{C}(\I,\rH;\A_{F}/F):=\coker[H^0_{ab}(\A_{F},\rH)\to H^0_{ab}(\A_{F}/F,\I\bs\rH)],  
  \end{equation*}
 equipped with a natural map 
  \begin{equation}\label{eqn: adelic map on C-groups}
    \mathfrak{C}(\I,\rH;\A_{F}):=\prod_v\mathfrak{C}(\I,\rH;F_v)\to   \mathfrak{C}(\I,\rH;\A_{F}/F).  
  \end{equation}
This induces a localization map $ \mathfrak{C}(\I,\rH;\A_{F}/F)^D\to  \prod_v\mathfrak{C}(\I,\rH;F_v)^D$ on Pontryagin dual groups.
  
The group $\mathfrak{C}(\I,\rH;\A_{F}/F)$ sits in an exact sequence \cite[Proposition 1.8.4]{LabesseBook}
\begin{equation}\label{eqn: exact sequence Galois cohom}
  \ker^1_{ab}(F,\rH)\to    \mathfrak{C}(\I,\rH;\A_{F}/F) \lra H^1_{ab}(\A_{F}/F, \I)\lra H^1_{ab}(\A_{F}/F, \rH),  
\end{equation}
where $\ker^1_{ab}(F,\rH):=\ker[H^1_{ab}(F,\rH)\to H^1_{ab}(\A_{F},\rH)]$. Note that since $\I$ and $\rH$ are connected, this implies $\mathfrak{C}(\I,\rH;\A_{F}/F)$ is a finite group. Moreover, $\ker^1_{ab}(F,\rH)=0$ if and only if $\rH$ satisfies the Hasse principle \cite[Corollaire 1.6.11]{LabesseBook}. We note that for any unitary symmetric pair $(\G,\rH)$ considered in this paper, it is well-known that $\rH$ satisfies the Hasse principle so that $\ker^1_{ab}(F,\rH)=0$.
\subsubsection{The split-inert case} As we saw locally, we must work with several pure inner twists in this case. Fix $\tau_1,\tau_2\in \calv_n(E/F)$ and consider the symmetric pair
\begin{align*}
	 \G_{\tau_1,\tau_2} &= \U(V_{\tau_1}\oplus V_{\tau_2}),\\
\rH_{\tau_1,\tau_2} &=\U(V_{\tau_1})\times \U(V_{\tau_2}).
\end{align*}
We saw in \S \ref{Section: orbital si} that
\begin{equation*}
    \Q_{\tau_1,\tau_2}(F) = \bigsqcup_{(\tau_1',\tau_2')}\G_{\tau_1,\tau_2}(F)/\rH_{\tau_1',\tau_2'}(F),
\end{equation*}
where $(\tau_1',\tau_2')$ range over pairs such that there is an isometry $V_{\tau_1}\oplus V_{\tau_2}\simeq V_{\tau_1'}\oplus V_{\tau_2'}$. Letting $\pi_{\tau_1',\tau_2'}:\G_{\tau_1,\tau_2}\to \Q_{\tau_1,\tau_2}$ be the quotient, this implies that the map
\[
\bigoplus_{(\tau'_1,\tau'_2)}C_c^\infty(\G_{\tau_1,\tau_2}(\A_{F}))\lra C^\infty_c(\Q_{\tau_1,\tau_2}(\A_{F})),
\]
given by $\phi = \sum_{(\tau_1',\tau_2')}\pi_{\tau_1',\tau_2',!}(f^{\tau_1',\tau_2'})$, is surjective. By definition, for any collection $\underline{f}=(f^{\tau_1',\tau_2'})$ satisfies only finitely many terms are non-zero. We say $\phi$ is an elliptic nice test function if each $f^{\tau_1',\tau_2'}\in C_c^\infty(\G_{\tau_1,\tau_2}(\A_{F}))$ is.

 For such an elliptic nice $\phi$, we set
\begin{equation}
    J_{\Q_{\tau_1,\tau_2}}(\phi) :=\sum_{(\tau_1',\tau_2')}J_{\rH_{\tau_1,\tau_2},\rH_{\tau_1',\tau_2'}}(f^{\tau_1',\tau_2'}).
\end{equation}
We work with this sum as it is formally easier to consider the geometric side in terms of the variety $\Q$; it is straightforward to work with a single pair $(\rH_{\tau_1,\tau_2},\rH_{\tau_1',\tau_2'})$ by taking $f^{\tau_1'',\tau_2''}=0$ for $(\tau_1'',\tau_2'')\neq(\tau_1',\tau_2')$. 
We may now give the geometric expansions in terms of the symmetric variety, as well as the stabilized geometric expansion.
\begin{Prop}\label{Prop: prestab si} Let $\phi \in C^\infty_c(\Q_{\tau_1,\tau_2}(\A_{F}))\quash{= \sum_{(\tau_1',\tau_2')}\pi_{\tau_1',\tau_2',!}(f^{\tau_1',\tau_2'})}$ be an elliptic nice test function.
   \begin{enumerate}
       \item  We have the following geometric expansion
    \begin{equation}
         J_{\Q_{\tau_1,\tau_2}}(\phi)=\sum_{[x]\in \rH_{\tau_1,\tau_2}(F)\bs \Q_{\tau_1,\tau_2}(F)^{re}}\vol([\rH_x])\Orb^{\rH_{\tau_1,\tau_2}}(\phi,x),
    \end{equation}
    where the sum ranges over regular elliptic $\rH_{\tau_1,\tau_2}(F)$-orbits satisfying that the stabilizer $\rH_x\subset \rH_{\tau_1,\tau_2}$ is a simple elliptic torus of rank $n$, and 
    \[
    \Orb^{\rH_{\tau_1,\tau_2}}(\phi,x)=\int_{\rH_x(\A_{F})\bs \rH_{\tau_1,\tau_2}(\A_{F})}\phi(h^{-1}\cdot x)dh.
    \]
\item We have the (pre-)stabilization
 \begin{equation}\label{eqn: stable RTF si}
     J_{\Q_{\tau_1,\tau_2}}(\phi)= 2\sum_{a\in[\Q_{si}^{re}\sslash\rH](F)}{2^{-|S_a|}}L(0,\rH^{op}_a,\eta) \SO^{\rH_{\tau_1,\tau_2}}(\phi,a),
 \end{equation}
 where $\Q_{si}^{re}$ denotes the regular elliptic locus and 
 \[
 \SO^{\rH_{\tau_1,\tau_2}}(\phi,a)=\int_{(\rH_{x_a}\backslash\rH_{\tau_1,\tau_2})(\A_{F})}\phi(h^{-1}\cdot x_a)dh = \prod_v \SO^{\rH_{\tau_1,\tau_2}}(\phi_v,a_v)
 \]
 is the (adelic) stable orbital integral. Finally, $|S_a| = |S_{x}|$ is defined in \S \ref{Section: Lvalue measure} for any $x$ mapping to $a$.
    \end{enumerate}
 \end{Prop}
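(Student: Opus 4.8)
The plan is to deduce both assertions from the simple trace formula of Proposition~\ref{Prop: Simple RTF unitary}: first transport the geometric expansion from the group $\G_{\tau_1,\tau_2}$ down to the symmetric variety $\Q_{\tau_1,\tau_2}$, and then reorganize the resulting finite sum of orbital integrals according to stable orbits.

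\textbf{Part (1).} I would begin from $J_{\Q_{\tau_1,\tau_2}}(\phi)=\sum_{(\tau_1',\tau_2')}J_{\rH_{\tau_1,\tau_2},\rH_{\tau_1',\tau_2'}}(f^{\tau_1',\tau_2'})$; each $f^{\tau_1',\tau_2'}$ is again an elliptic nice test function (Definition~\ref{elliptic test functions}), so Proposition~\ref{Prop: Simple RTF unitary} expands each summand over $\rH_{\tau_1,\tau_2}(F)\backslash\G_{\tau_1,\tau_2}(F)^{re}/\rH_{\tau_1',\tau_2'}(F)$. For a representative $\ga$, set $x=\pi_{\tau_1',\tau_2'}(\ga)\in\Q_{\tau_1,\tau_2}^{rss}(F)$: projection onto the first factor is an $F$-isomorphism $\rH_\ga\iso\rH_x\subset\rH_{\tau_1,\tau_2}$ compatible with the chosen measures, so $\vol([\rH_\ga])=\vol([\rH_x])$, and the standard push-forward identity along the $\rH_{\tau_1',\tau_2'}$-torsor $\pi_{\tau_1',\tau_2'}$ gives $\Orb^{\rH_{\tau_1,\tau_2}\times\rH_{\tau_1',\tau_2'}}(f^{\tau_1',\tau_2'},\ga)=\Orb^{\rH_{\tau_1,\tau_2}}(\pi_{\tau_1',\tau_2',!}(f^{\tau_1',\tau_2'}),x)$. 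Summing over $\ga$ and then over $(\tau_1',\tau_2')$, the disjoint decomposition $\Q_{\tau_1,\tau_2}(F)=\bigsqcup_{(\tau_1',\tau_2')}\G_{\tau_1,\tau_2}(F)/\rH_{\tau_1',\tau_2'}(F)$ together with $\phi=\sum_{(\tau_1',\tau_2')}\pi_{\tau_1',\tau_2',!}(f^{\tau_1',\tau_2'})$ assembles everything into the single sum over $[x]\in\rH_{\tau_1,\tau_2}(F)\backslash\Q_{\tau_1,\tau_2}(F)^{re}$, the constraint that $\rH_x$ be a simple elliptic torus of rank $n$ being inherited from Proposition~\ref{Prop: Simple RTF unitary}.

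\textbf{Part (2).} I would then group the rational orbits of (1) by their stable invariant $a=\car_{si}(x)\in[\Q_{si}^{re}\sslash\rH](F)$. Fix a representative $x_a$ with centralizer $\rH_{x_a}$; by Lemma~\ref{Lem: centralizers}, writing $F[x_a]=\prod_iF_i$ and $E_i=E\otimes_FF_i$, this is the $F$-anisotropic norm-one torus $\ker[\Res_{E[x_a]/F}\Gm\to\Res_{F[x_a]/F}\Gm]\cong\prod_i\Res_{F_i/F}(\ker[\Res_{E_i/F_i}\Gm\to\Gm])$, which has no nontrivial inner forms; hence every rational orbit over $a$ has centralizer canonically $\simeq\rH_{x_a}$, so $\vol([\rH_x])=\vol([\rH_{x_a}])$ factors out. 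The rational orbits over $a$ that occur are classified by $\ker^1(\rH_{x_a},\rH_{\tau_1,\tau_2};F)$ --- a finite set computable via abelianized cohomology, its interaction with the various pure inner forms being exactly the content of Lemma~\ref{Lem: surjective quotient si} --- so that $J_{\Q_{\tau_1,\tau_2}}(\phi)=\sum_a\vol([\rH_{x_a}])\sum_\al\Orb^{\rH_{\tau_1,\tau_2}}(\phi,x_\al)$. The key step is to convert each inner sum into a product of local stable orbital integrals: since $\phi=\bigotimes_v\phi_v$ one has $\Orb^{\rH_{\tau_1,\tau_2}}(\phi,x_\al)=\prod_v\Orb^{\rH_{\tau_1,\tau_2}}(\phi_v,x_{\al,v})$, and the discrepancy between $\sum_\al\prod_v(\cdots)$ and $\prod_v\SO^{\rH_{\tau_1,\tau_2}}(\phi_v,a_v)$ is a finite constant governed by Fourier analysis on $\mathfrak{C}(\rH_{x_a},\rH_{\tau_1,\tau_2};\A_F/F)$ and the localization map \eqref{eqn: adelic map on C-groups}; here the elliptic support of the test functions forces all nontrivial-$\kappa$ contributions to vanish, leaving only the stable term. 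I would evaluate the surviving constant using the exact sequence \eqref{eqn: exact sequence Galois cohom}, the vanishing $\ker^1_{ab}(F,\rH_{\tau_1,\tau_2})=0$ (the Hasse principle for $\rH_{\tau_1,\tau_2}$) and $\ker^1_{ab}(F,\rH_{x_a})=0$ (the Hasse norm theorem for the quadratic extensions $E_i/F_i$), and Poitou--Tate duality for the torus $\rH_{x_a}$; combined with $\vol([\rH_{x_a}])=2^{|S_1|-|S_a|}L(0,\rH_a^{\mathrm{op}},\eta)$ from Lemma~\ref{Lem: vol of centralizer}, the net coefficient of $\SO^{\rH_{\tau_1,\tau_2}}(\phi,a)=\prod_v\SO^{\rH_{\tau_1,\tau_2}}(\phi_v,a_v)$ should come out to $2\cdot2^{-|S_a|}L(0,\rH_a^{\mathrm{op}},\eta)$, which is (2).

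\textbf{Main obstacle.} I expect the delicate point to be precisely this cohomological and measure-theoretic bookkeeping: one must work with the ambient group $\G_{\tau_1,\tau_2}$ rather than $\rH$ (so the obstruction to a rational orbit is the image of $H^1(F,\rH_{x_a})$ in $H^1(F,\G_{\tau_1,\tau_2})$, absorbed via the pure inner forms $\rH_{\tau_1',\tau_2'}$), one must reconcile the fixed-$\tau_1$ count with the full stable count of Lemma~\ref{Lem: surjective quotient si}, and one must track every power of $2$ consistently with the archimedean (compact unitary) and $p$-adic normalizations of \S\ref{measures} and with Lemma~\ref{Lem: vol of centralizer}. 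By contrast, all convergence and holomorphy claims here are immediate from Proposition~\ref{Prop: Simple RTF unitary} and the finiteness of the sums involved.
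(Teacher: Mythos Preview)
Your approach for Part~(1) and the overall structure of Part~(2) match the paper's proof. However, there is a genuine conceptual error in your Part~(2): the claim that ``the elliptic support of the test functions forces all nontrivial-$\kappa$ contributions to vanish'' is wrong. Elliptic support at the \emph{split} place $v_2$ implies $H^1(F_{v_2},\rH_{x_a})=1$, so locally \emph{all} $\kappa_{v_2}$-orbital integrals equal the ordinary one; it kills nothing. The actual mechanism is purely cohomological: the elliptic constraint makes $\rH_a$ an $F$-simple anisotropic torus with $|H^1(\A_F/F,\rH_a)|=2$, and in the split-inert case the map $H^1(\A_F/F,\rH_a)\to H^1_{ab}(\A_F/F,\rH_{\tau_1,\tau_2})$ is \emph{injective}, so $\mathfrak{C}(\rH_a,\rH_{\tau_1,\tau_2};\A_F/F)$ is trivial and the only $\kappa$ in the Fourier inversion is $\kappa=1$. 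This injectivity is precisely what fails in the inert-inert case (Proposition~\ref{Prop: prestab ii}), where $H^1_{ab}(\A_F/F,\rH_{ii})=1$ and the nontrivial $\varepsilon$-term survives even with elliptic test functions---so your attributed mechanism would wrongly predict the same simplification there.

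The remaining bookkeeping is also more explicit than you indicate. The Labesse coefficient is $\tau(\rH_{\tau_1,\tau_2})/(\tau(\rH_a)\,d_a)$ with $\tau(\rH_{\tau_1,\tau_2})=|H^1_{ab}(\A_F/F,\rH_{\tau_1,\tau_2})|=4$, $\tau(\rH_a)=2$, and $d_a=|H^1_{ab}(\A_F/F,\rH_{\tau_1,\tau_2})|/|H^1(\A_F/F,\rH_a)|=2$ (using the injectivity just mentioned), giving the constant~$1$. The factor of $2$ in the final formula then comes entirely from Lemma~\ref{Lem: vol of centralizer} with $|S_1|=1$ (since $F[x_a]$ is a single field by $F$-simplicity). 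No Poitou--Tate duality is needed, and the concern about ``working with $\G_{\tau_1,\tau_2}$ rather than $\rH$'' does not arise: the pre-stabilization is for the $\rH_{\tau_1,\tau_2}$-action on $\Q_{\tau_1,\tau_2}$, and the passage through pure inner forms $(\tau_1',\tau_2')$ is already absorbed in Part~(1).
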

    
 \begin{proof}
The first claim follows from combining the previous lemma with Proposition \ref{Prop: Simple RTF unitary} and properties of orbital integrals along contraction maps (cf. \cite[Section 2.2]{LXZfund}).   Collecting terms in the various fibers the map $\car_{si}$ gives the expression
     \[
    J_{\Q_{\tau_1,\tau_2}}(\phi)= \sum_{a\in[\Q_{si}^{re}\sslash\rH](F)}\vol([\rH_a])\sum_{\stackrel{x\in \Q_{\tau_1,\tau_2}(F)/\sim}{\car_{si}(x)=a}}\Orb^{\rH_{\tau_1,\tau_2}}(\phi,x),
     \]
     where $\rH_a\simeq\rH_{x_a}$ for any $x_a$ over the class $a$.  Our assumption that $\phi$ is an elliptic nice function implies that the only $a\in [\Q_{si}^{re}\sslash\rH](F)$ which contribute satisfy that $\rH_a$ is an $F$-simple elliptic torus of rank $n$, so that $|H^1(\A_{F}/F,\rH_a)| = 2$.
     
     The pre-stabilization of the inner sum may now be worked out via standard tools of Galois cohomology \cite[Chapter 1]{LabesseBook} and Fourier inversion on the finite group $\mathfrak{C}(\rH_a,\rH;\A_{F}/F)^D$ using the exact sequence \eqref{eqn: exact sequence Galois cohom} (see \cite[Proposition 4.2.1]{LabesseBook} or \cite[Proposition 7.2]{GetzWambach} for an example in the relative context), and we find
         \[
     \sum_{\stackrel{x\in \Q_{\tau_1,\tau_2}(F)/\sim}{\car_{si}(x)=a}}\Orb^{\rH_{\tau_1,\tau_2}}(\phi,x)= \frac{\tau(\rH_{\tau_1,\tau_2})}{\tau(\rH_a)d_a}\sum_\ka\Orb^{\rH_{\tau_1,\tau_2},\ka}(\phi,a),
     \]
     where $\ka\in \mathfrak{C}(\rH_a,\rH_{\tau_1,\tau_2},\A_{F}/F)^D$, $\tau(\rH)$ denotes the Tamagawa number, and 
     \[
     d_a = \#\coker[H^1(\A_{F}/F, \rH_a)\to H^1_{ab}(\A_{F}/F, \rH)].
     \] The adelic $\ka$-orbital integral is given by
\begin{equation}\label{eqn: global kappa}
    \Orb^{\rH,\ka}(\phi,a)= \prod_v\Orb^{\rH,\ka_v}(\phi_v,a),
\end{equation}
where $(\ka_v)_v$ is the image of $\ka$ along the map dual to \eqref{eqn: adelic map on C-groups}. This product is well defined by \cite[Proposition 4.11]{Lesliedescent}, which implies that all but finitely many of the terms in the product reduce to $1$. More precisely, any global representative $x_a\in\Q_{\tau_1,\tau_2}(F)$ of the class $a$ is lies in $\Q_{\tau_1,\tau_2}(\calo_{F_v})$ and gives an absolutely semi-simple element  for all but finitely many localizations. The claim now follows from \emph{ibid.} for any such place where $\phi_v= \bfun_{\Q_{\tau_1,\tau_2}(\calo_{F_v})}$.

In this case, the map on Galois cohomology is injective and $$|H^1_{ab}(\A_{F}/F, \rH_{\tau_1,\tau_2})|=\tau(\rH_{\tau_1,\tau_2})=4.$$ Thus, $d_a=2$ and since $\tau(\rH_a) =2$, the right-hand side simplifies as
     \[
      \frac{\tau(\rH_{\tau_1,\tau_2})}{\tau(\rH_a)2}\sum_\ka\Orb^{\rH_{\tau_1,\tau_2},\ka}(\phi,a)= \SO^{\rH_{\tau_1,\tau_2}}(\phi,a).
     \]
     Appealing to the volume calculation in  Lemma \ref{Lem: vol of centralizer} now completes the proof of the proposition.
 \end{proof}

              Note that by \eqref{eqn: regular orbits on the base}, the stabilized formula \eqref{eqn: stable RTF si} only possesses non-trivial contributions for $a\in \car_{si}(\Q_{\tau_1,\tau_2}^{rss}(F))$, and we obtain the rest of the regular semi-simple contributions by varying $\tau_1\in \calv_n(E/F)$. If we fix a choice $\tau_2\in \calv_n(E/F)$ and consider a family $\underline{\phi}:=(\phi_{\tau_1})\in \bigoplus_{\tau_1}C_c^\infty(\Q_{\tau_1,\tau_2}(\A_{F}))$ of elliptic nice functions, we may more generally consider the sum
   \begin{align}\label{eqn: stack sum}
           J_{si,\tau_2}(\underline{\phi})&:=\sum_{\tau_1\in \calv_n(E/F) }J_{\Q_{\tau_1,\tau_2}}(\phi_{\tau_1}).
    \end{align} 
     \begin{Cor}\label{Cor: stacky stabilized} Set $[\Q_{si}^{re}\sslash\rH](F)_{\tau_1,\tau_2}:=\car_{si}(\Q_{\tau_1,\tau_2}^{rss}(F))$.
         For any family $\underline{\phi}:=(\phi_{\tau_1})\in \bigoplus_{\tau_1}C_c^\infty(\Q_{\tau_1,\tau_2}(\A_{F}))$ of elliptic nice functions, the sum \eqref{eqn: stack sum} admits the stabilization
         \[
          J_{si,\tau_2}(\underline{\phi})= 2\sum_{a\in[\Q_{si}^{re}\sslash\rH](F)}{2^{-|S_a|}}L(0,\rH^{op}_a,\eta) \SO^{\rH}(\ul{\phi},a),
         \]
         where $\SO^{\rH}(\ul{\phi},a):=\SO^{\rH_{\tau_1,\tau_2}}({\phi}_{\tau_1},a)$ if $a\in[\Q_{si}^{re}\sslash\rH](F)_{\tau_1,\tau_2}$.
     \end{Cor}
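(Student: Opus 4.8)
The plan is to deduce Corollary \ref{Cor: stacky stabilized} directly from Proposition \ref{Prop: prestab si} by summing over $\tau_1\in\calv_n(E/F)$, the only subtlety being to keep track of which stable invariants $a$ are hit by which inner twist. First I would apply part (2) of Proposition \ref{Prop: prestab si} to each summand $J_{\Q_{\tau_1,\tau_2}}(\phi_{\tau_1})$, obtaining
\[
J_{si,\tau_2}(\underline{\phi})=\sum_{\tau_1\in\calv_n(E/F)}2\sum_{a\in[\Q_{si}^{re}\sslash\rH](F)}2^{-|S_a|}L(0,\rH^{op}_a,\eta)\,\SO^{\rH_{\tau_1,\tau_2}}(\phi_{\tau_1},a).
\]
Then I would interchange the two (finite) sums and observe that, for a fixed regular elliptic invariant $a$, the stable orbital integral $\SO^{\rH_{\tau_1,\tau_2}}(\phi_{\tau_1},a)$ is (by definition of the stable orbital integral in \S\ref{Section: orbital integrals conventions}, as a sum over rational orbits in the stable orbit) only nonzero when some rational point of $\Q_{\tau_1,\tau_2}(F)$ lies over $a$, i.e.\ when $a\in[\Q_{si}^{re}\sslash\rH](F)_{\tau_1,\tau_2}:=\car_{si}(\Q_{\tau_1,\tau_2}^{rss}(F))$.

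The key point is the disjointness established just before \eqref{eqn: regular orbits on the base}: for a fixed $\tau_2$ and distinct $\tau_1\neq\tau_1'$ in $\calv_n(E/F)$, the images $\car_{si}(\G^{rrs}_{\tau_1,\tau_2}(F))$ and $\car_{si}(\G^{rrs}_{\tau_1',\tau_2}(F))$ are disjoint (and likewise the sets $[\Q_{si}^{re}\sslash\rH](F)_{\tau_1,\tau_2}$ for varying $\tau_1$ are pairwise disjoint and exhaust $[\Q_{si}^{re}\sslash\rH](F)$ when $\tau_2$ is split, by Lemma \ref{Lem: surjective quotient si} together with \eqref{eqn: regular orbits on the base}). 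Consequently each $a$ in the regular elliptic locus contributes to at most one $\tau_1$, so the inner sum over $\tau_1$ collapses to a single term. Setting $\SO^{\rH}(\ul{\phi},a):=\SO^{\rH_{\tau_1,\tau_2}}(\phi_{\tau_1},a)$ when $a\in[\Q_{si}^{re}\sslash\rH](F)_{\tau_1,\tau_2}$ (and $0$ otherwise, which is automatic since no rational orbit lies over it), this is exactly the asserted formula
\[
J_{si,\tau_2}(\underline{\phi})=2\sum_{a\in[\Q_{si}^{re}\sslash\rH](F)}2^{-|S_a|}L(0,\rH^{op}_a,\eta)\,\SO^{\rH}(\ul{\phi},a),
\]
since the quantities $|S_a|$ and $\rH^{op}_a$ depend only on the stable invariant $a$ (not on the chosen inner twist), by Lemma \ref{Lem: centralizers} and the discussion in \S\ref{Section: Lvalue measure}.

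I expect no genuine obstacle here; this is a bookkeeping corollary. The only thing requiring a little care is the well-definedness of $\SO^{\rH}(\ul{\phi},a)$: one must check that the definition does not depend on an implicit choice (there is none, since at most one $\tau_1$ has $a$ in its image), and that for $a$ lying outside every $[\Q_{si}^{re}\sslash\rH](F)_{\tau_1,\tau_2}$ the corresponding summand genuinely vanishes, which holds because an empty stable orbit has zero stable orbital integral. One should also note that the finiteness of all sums involved is inherited from Proposition \ref{Prop: prestab si} (each $J_{\Q_{\tau_1,\tau_2}}$ has finite geometric support by Proposition \ref{Prop: Simple RTF unitary}, and only finitely many $\tau_1$ contribute since $\calv_n(E/F)$ is finite), so no convergence issue arises in the interchange of summations.
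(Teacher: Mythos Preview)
Your proposal is correct and is exactly the argument the paper has in mind; the corollary is stated without proof precisely because it follows by summing Proposition \ref{Prop: prestab si}(2) over $\tau_1$ and invoking the disjointness of the images recorded after \eqref{eqn: regular orbits on the base}. One small correction: the reason only finitely many $\tau_1$ contribute is not that $\calv_n(E/F)$ is finite (over a number field it is not), but that $\underline{\phi}$ lies in the \emph{direct sum} $\bigoplus_{\tau_1}C_c^\infty(\Q_{\tau_1,\tau_2}(\A_{F}))$, so by definition all but finitely many $\phi_{\tau_1}$ vanish.
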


\subsubsection{The inert-inert case} We assume that $V_{2n} = L\oplus L^\ast$ is a (globally) split Hermitian space with polarization $L\oplus L^\ast$. Then we have
\[
\G= \U(V_{2n})\:\text{ and }\: \rH = \Res_{E/F}(\GL(L)).
\]
We set $J_{ii}:=J_{\rH,\rH}$. Since $H^1(F,\GL(L))=0$, the quotient map $\pi_{ii}: \G\to \Q_{ii} = \G/\rH$ is surjective on adelic points, so that $\pi_{ii,!}:C_c^\infty(\G(\A_{F}))\to C_c^\infty(\Q_{ii}(\A_{F}))$ is surjective. Setting $\phi=\pi_{ii,!}(f)$, we rewrite the geometric expansion of $J_{ii}(f)$ as
\begin{equation}
\sum_{[\ga]}\vol([\rH_\ga])\Orb(f,\ga) = \sum_{[x]\in \rH(F)\bs \Q_{ii}(F)}\vol([\rH_x])\Orb^{\rH}(\phi,x),
\end{equation}
where 
\[
\Orb^{\rH}(\phi,x)= \int_{\rH_x(\A_{F})\bs\rH(\A_{F})}\phi(h^{-1}\cdot x)dh.
\]

As in the local setting, for any $x\in \Q_{ii}^{rss}(F)$, we have an element $\varepsilon\in H^1(\A_{F}/F,\rH_x)^D$ given by 
\[
\varepsilon(\al) :=\eta(\det(b_\al))\eta(\det(b))^{-1},
\]
where we borrow notation from the local setting so that $x_\al=\begin{psmatrix}
    a_\al&b_\al\\c_\al&a_\al^\ast
\end{psmatrix}\in \calo_{st}(x)$. This character clearly induces the local characters $\varepsilon: H^1(F_v,\rH_x)\to \cc^\times$ from Lemma \ref{Lem: varep char} (cf. \cite[Lemma 4.9]{LXZfund}) along the localization map 
\[
\mathfrak{C}(\rH_x,\rH;\A_{F}/F)^D\lra  \prod_v\mathfrak{C}(\rH_x,\rH;F_v)^D.
\]

\begin{Prop}\label{Prop: prestab ii}
 For a factorizable $f=\bigotimes_vf_v\in C^\infty(\G(\A_{F}))$ be a nice function, and set $\phi=\pi_{ii,!}(f)=\bigotimes_v\phi_v$. 
 \[
 J_{ii}(f)   = \sum_{a\in[\Q_{ii}^{re}\sslash\rH](F)} {2^{-|S_a|}}L(0,\rH_a,\eta)\left(\SO^{\rH}(\phi,a)+  \Orb^{\rH,\varepsilon}(\phi,a)\right),
 \]
where 
   $ \Orb^{\rH,\ka}(\phi,a)= \prod_v\Orb^{\rH,\ka_v}(\phi_v,a).$
 \end{Prop}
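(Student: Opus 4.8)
The plan is to mimic the proof of Proposition~\ref{Prop: prestab si}, but now track both Pontryagin characters of the finite group $\mathfrak{C}(\rH_a,\rH;\A_F/F)^D$ rather than summing only the stable one. First I would start from the geometric expansion of $J_{ii}(f)$ recorded just above the statement: since $\pi_{ii}:\G_{ii}\to\Q_{ii}$ is surjective on $F$- and $\A_F$-points (as $H^1(F,\rH_{ii})=1$), setting $\phi=\pi_{ii,!}(f)$ gives
\[
J_{ii}(f)=\sum_{[x]\in\rH(F)\bs\Q_{ii}(F)^{re}}\vol([\rH_x])\Orb^{\rH}(\phi,x),
\]
where ellipticity of $f_{v_2}$ forces each contributing $\rH_x$ to be a simple elliptic torus of rank $n$, exactly as in Proposition~\ref{Prop: Simple RTF unitary}. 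Collecting terms over the fibers of $\car_{ii}:\Q_{ii}^{re}\to[\Q_{ii}^{re}\sslash\rH]$, and using that $\rH_a\simeq\rH_{x_a}$ is abelian so that $\calo_{st}(x_a)$ is in bijection with $\ker^1(\rH_a,\rH;F)$ (which vanishes here since $\rH$ satisfies the Hasse principle, so in fact all local orbits are globally relevant up to the coker obstruction), I would rewrite the inner sum over $x$ with $\car_{ii}(x)=a$.

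The main computation is the pre-stabilization of this inner sum. As in Proposition~\ref{Prop: prestab si}, one applies Fourier inversion on the finite abelian group $\mathfrak{C}(\rH_a,\rH;\A_F/F)^D$ via the exact sequence~\eqref{eqn: exact sequence Galois cohom} and the standard Galois-cohomology bookkeeping from \cite[Chapter 1]{LabesseBook} (cf. \cite[Proposition 4.2.1]{LabesseBook}, \cite[Proposition 7.2]{GetzWambach}), yielding
\[
\sum_{\stackrel{x\in\Q_{ii}(F)/\sim}{\car_{ii}(x)=a}}\Orb^{\rH}(\phi,x)=\frac{\tau(\rH_{ii})}{\tau(\rH_a)\,d_a}\sum_\ka\Orb^{\rH,\ka}(\phi,a),
\]
where $\ka$ ranges over $\mathfrak{C}(\rH_a,\rH_{ii};\A_F/F)^D$ and $d_a=\#\coker[H^1(\A_F/F,\rH_a)\to H^1_{ab}(\A_F/F,\rH_{ii})]$. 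Now the key difference from the split-inert case: here $\rH_{ii}=\Res_{E/F}\GL(L)$ is a restriction of scalars of a general linear group, so $H^1_{ab}(\A_F/F,\rH_{ii})=0$ and $\tau(\rH_{ii})=1$. Since $\rH_a$ is an elliptic simple torus of rank $n$ coming from a unitary centralizer, $|H^1(\A_F/F,\rH_a)|=2$ and $\tau(\rH_a)=2$, so the localization/globalization forces $d_a=1$ and the group $\mathfrak{C}(\rH_a,\rH_{ii};\A_F/F)^D$ has order $2$, with the non-trivial character being exactly the $\varepsilon$ of Lemma~\ref{Lem: varep char} (one must check the global $\varepsilon$ induces the local $\varepsilon_v$ along the localization map, which is the content of the remark preceding the proposition). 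Hence $\sum_\ka=\SO^{\rH}(\phi,a)+\Orb^{\rH,\varepsilon}(\phi,a)$, and the prefactor $\frac{\tau(\rH_{ii})}{\tau(\rH_a)d_a}=\tfrac12$ combines with $\vol([\rH_a])=2^{|S_1|-|S_a|}L(0,\rH_a^{\mathrm{op}},\eta)$ from Lemma~\ref{Lem: vol of centralizer} — but since $\rH_a$ is \emph{anisotropic} (elliptic), the factor $2^{|S_1|}$ is trivial in the globally split polarization situation, giving cleanly $2^{-|S_a|}L(0,\rH_a,\eta)$ once one reconciles $L(0,\rH_a^{\mathrm{op}},\eta)$ with $L(0,\rH_a,\eta)$ via the norm sequence.

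To finish, I would verify the product formula $\Orb^{\rH,\ka}(\phi,a)=\prod_v\Orb^{\rH,\ka_v}(\phi_v,a)$ is well-defined: by \cite[Proposition 4.11]{Lesliedescent}, any global representative $x_a\in\Q_{ii}(F)$ lies in $\Q_{ii}(\calo_{F_v})$ and is absolutely semi-simple for almost all $v$, and at such places where $f_v$ (hence $\phi_v$) is the basic indicator function, the local $\ka_v$-orbital integral equals $1$, so the product is finite; this is the same argument used in Proposition~\ref{Prop: prestab si}. The main obstacle I anticipate is \emph{not} the cohomological arithmetic per se — that is routine Labesse-style bookkeeping — but rather the careful verification that (i) the global character $\varepsilon\in\mathfrak{C}(\rH_a,\rH;\A_F/F)^D$ is the \emph{unique} non-trivial class, i.e. that $\mathfrak{C}(\rH_a,\rH_{ii};\A_F/F)$ really has order $2$ and not larger (this uses the precise structure of $\rH_a$ as a norm-one torus in $\Res_{E[x]/F}\Gm$ together with $H^1_{ab}(\A_F/F,\Res_{E/F}\GL_n)=0$), and (ii) that the $\ep$-orbital integrals here match those appearing on the endoscopic split-inert side in Theorem~\ref{Thm: fundamental lemma varepsilon} and Definition~\ref{Def: varepsilon transfer}, so that this expansion is the right input for the trace-formula comparison $J(f)=I^{(\eta,1)}(\wt f)+I^{(\eta,\eta)}(\wt f_\ep)$. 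Tracking the measure normalizations through the $d_a$, $\tau(\rH_a)$, and $|S_a|$ factors consistently with \S\ref{measures} is the place where sign/constant errors are most likely to creep in.
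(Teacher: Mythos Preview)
Your approach is essentially identical to the paper's: start from the geometric expansion, fiber over $\car_{ii}$, apply the Labesse pre-stabilization to get $\frac{\tau(\rH)}{\tau(\rH_a)d_a}\sum_\ka\Orb^{\rH,\ka}(\phi,a)$, compute $\tau(\rH_{ii})=1$, $d_a=1$, $\mathfrak{C}(\rH_a,\rH;\A_F/F)^D=\{1,\varepsilon\}$, and finish with Lemma~\ref{Lem: vol of centralizer}. One small correction on the constants: the factor $2^{|S_1|}$ is \emph{not} trivial---rather $|S_1|=1$ because $\rH_a$ is $F$-\emph{simple} (a single field factor $F[x]$), so $2^{|S_1|}=2$ exactly cancels the $\tfrac{1}{\tau(\rH_a)}=\tfrac12$; this has nothing to do with the polarization of $V_{2n}$.
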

 \begin{proof}
Arguing as in the previous case, we have\quash{   We have already seen that  
     \[
     J_{ii}(f) = \sum_{[x]\in \rH(F)\bs \Q_{ii}(F)}\vol([\rH_x])\Orb^{\rH}(\phi,x).
     \]
     Collecting terms in the sum in the fibers over $\car_{ii}(F)$ gives } the expression
     \[
    J_{ii}(f) = \sum_{a\in[\Q^{re}_{ii}\sslash\rH](F)}\vol([\rH_a])\sum_{\stackrel{x\in \Q_{ii}(F)/\sim}{\car_{ii}(x)=a}}\Orb^{\rH}(\phi,x).
     \]
     \quash{Our assumption that $f$ is an elliptic nice function implies that the only $a$ which contribute satisfy that $\rH_a$ is a simple elliptic maximal torus maximal torus in a unitary subgroup of $\GL(L)$, so that $H^1(\A_{F}/F,\rH_a) = 2$. As before, }The pre-stabilization of the inner sum gives
     \[
     \sum_{\stackrel{x\in \Q_{ii}(F)/\sim}{\car_{ii}(x)=a}}\Orb^{\rH}(\phi,x)= \frac{\tau(\rH)}{\tau(\rH_a)d_a}\sum_\ka\Orb^{\rH,\ka}(\phi,a),
     \]
     where we have $\ka\in \mathfrak{C}(\rH_a,\rH;\A_{F}/F)^D$.  It is easy to compute in this case that $|H^1_{ab}(\A_{F}/F, \rH)|=\tau(\rH)=1$, so that $d_a=1$.  Since $\rH_a$ is $F$-simple elliptic, we see that $\mathfrak{C}(\rH_x,\rH;\A_{F}/F)^D = \{1,\varepsilon\}$, so the sum simplifies to 
\[
\frac{1}{\tau(\rH_a)}\left(\SO^{\rH}(\phi,a)+  \Orb^{\rH,\varepsilon}(\phi,a)\right).
\]
Appealing to the volume calculation in  Lemma \ref{Lem: vol of centralizer} now completes the proof of the proposition as in the split-inert case.
 \end{proof}

  \section{Proof of the global results}\label{Section: global proofs}
In this section, we compare the RTFs to obtain our global results. Due to the lack of the smooth transfer statements in \cite[Conjectures 4.10 and 4.14]{LXZfund}, we impose the following constraints on our number fields. We assume that $E/F$ is a quadratic extension of number fields such that
\begin{enumerate}
    \item\label{field1} $E/F$ is everywhere unramified,
    \item\label{field2} $E/F$ splits over every finite place $v$ of $F$ such that $$p\leq \max\{e(v/p)+1,2\},$$ where  $v|p$ and $e(v/p)$ denote the ramification index of $v$ in $p$ (this affects only finitely many places).
\end{enumerate}
Note that the final condition is necessary for access the fundamental lemma statements at each inert place.
\begin{Rem}\label{rem:assump}
    We remark that we may drop the assumptions \eqref{field1} and \eqref{field2} by appealing to the weak transfer statements of \cite[Theorems 4.15 and 4.16]{LXZfund}. This however introduces problems regarding relative characters which we opt to avoid in the present paper. 
\end{Rem}

We again note that the unramified hypothesis implies that the number of non-split archimedean places is even.

\subsection{Weak base change}Let $\tau\in \calv_{2n}(E/F)$ and let $\G_{\tau}:=\U(V_{\tau})$. We recall the notion of weak automorphic base change.

\begin{Def}
     Let $\pi$ be an irreducible cuspidal representation of $\G_{\tau}(\A_{F})$. We say that an isobaric automorphic representation $\Pi$ of $\GL_n(\A_E)$ is a weak automorphic base change of $\pi$ if for all but finitely many places $v$ of $F$ split in $E$, the (split) local base change of $\pi_v$ is isomorphic to $\Pi_v$. If weak automorphic base change of $\pi$ exists, then it is unique up to isomorphism by the strong multiplicity one theorem \cite[Theorem A]{ramakrishnan2018theorem}; we denote it by $\mathrm{BC}_E(\pi)$.
\end{Def}
The following is a special case (sufficient for the present article) of \cite[Theorem 4.14]{BPLZZ}.
\begin{Prop} Let $\G_{\tau}$ be as above.   If $\pi$ is cuspidal automorphic satisfying that there exists a prime $v$ of $F$ split
in $E$ such that $\pi_v$ is supercuspidal, then the weak automorphic base change $\mathrm{BC}_E(\pi)$ of $\pi$ exists and is cuspidal, conjugate self-dual, and satisfies that $L(s, \Pi, \mathrm{As}^{(-1)^n})$ is regular at $s=1$.
\end{Prop}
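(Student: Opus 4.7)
The plan is to invoke the endoscopic classification of automorphic representations for unitary groups. For the quasi-split case, this is Mok's theorem \cite{Mok}, and for inner forms (which is what we need, since $\G_\tau$ need not be quasi-split), it is the work of Kaletha--Minguez--Shin--White, together with its refinements in \cite{BPLZZ}. In particular, Theorem 4.14 of \emph{loc. cit.} packages precisely the statement we need.

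The argument proceeds as follows. First, using the stable trace formula for $\G_\tau$ (compared to the twisted stable trace formula for $\Res_{E/F}\GL_{2n}$), one attaches to $\pi$ a formal global Arthur parameter, which takes the shape of a formal sum
\[
\Pi=\boxplus_{i=1}^r \Pi_i[n_i],
\]
where each $\Pi_i$ is a cuspidal conjugate self-dual representation of $\GL_{m_i}(\A_E)$ of definite parity $(-1)^{n+n_i-1}$. The existence of such a $\Pi$, together with the compatibility $\mathrm{BC}_{E,v}(\pi_v)\simeq \Pi_v$ at almost all split places, gives the weak base change.

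Next, I would use the supercuspidality of $\pi_v$ at the split place $v$ to force the parameter $\Pi$ to be cuspidal, i.e.\ $r=1$ and $n_1=1$. Indeed, at a split place $E_v=F_v\times F_v$, so $\G_\tau(F_v)\simeq \GL_{2n}(F_v)$ and the local base change is just the identity; a supercuspidal local representation cannot be a local component of an isobaric sum that involves a genuine non-trivial Speh or endoscopic constituent (those have components which are full induced representations at every place, in particular non-supercuspidal). This pins down $\Pi$ as a single cuspidal $\Pi_1$ on $\GL_{2n}(\A_E)$, cuspidal and conjugate self-dual of parity $(-1)^{n}$ — equivalently, of the type for which $L(s,\Pi,\mathrm{As}^{(-1)^n})$ is the $L$-function with no pole at $s=1$, the other Asai $L$-function carrying the pole dictated by the parity sign.

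The main input — and the one I would treat as a black box — is the endoscopic classification itself, including the compatibility of transfers at all places (archimedean and inert ramified); this is precisely what is assembled in \cite[\S4]{BPLZZ}. Given that machinery, the proof reduces to the local elimination argument at the split supercuspidal place above, which is the only step that uses a hypothesis specific to our situation. Consequently, the proposal is essentially to cite \cite[Theorem 4.14]{BPLZZ} and to record the observation that the supercuspidal hypothesis at a split place suffices to extract cuspidality of $\mathrm{BC}_E(\pi)$ and the stated Asai $L$-function property.
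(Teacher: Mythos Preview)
Your proposal is correct and matches the paper's approach exactly: the paper states this proposition as a special case of \cite[Theorem~4.14]{BPLZZ} with no further argument, and your proposal likewise reduces to that citation (with a helpful sketch of the supercuspidal-at-a-split-place argument that forces cuspidality of the base change).
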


\subsection{Period non-vanishing implies $L$-value non-vanishing}\label{Section: period implies Lvalue} 
\ Let $\pi_0=\bigotimes_v\pi_{0,v}$ be a cuspidal automorphic representation of $\G'(\A_{F})$.  We let $\mathrm{BC}_E(\pi_0)$ denote the base change of $\pi_0$ to $\G'(\A_F)$ \cite{ArthurClozel}.

\begin{Thm}\label{Thm: global si result period to Lvalue}
 Let $\pi_{0}$ be a cuspidal automorphic representation of $\G'(\A_{F})$ satisfying 
\begin{enumerate}
    \item there is a split finite place $v_1$ such that $\pi_{v_1}$ is supercuspidal,
    \item there is a split finite place $v_2$ such that $\pi_{v_2}$ is $\rH'$-elliptic,
\end{enumerate}
Assume that $\pi_0$ is of {\em symplectic type}. 
Let $\tau_1,\tau_2\in \calv_n(E/F)$ satisfy that both $V_{\tau_1}$ and  $V_{\tau_2}$ are positive definite at all non-split archimedean $v$ and are split at all non-split non-archimedean $v$. If there exists a cuspidal automorphic representation $\pi$ of $\G_{\tau_1,\tau_2}(\A_{F})$ satisfying 
\begin{enumerate}
    \item $\mathrm{BC}_E(\pi) =\mathrm{BC}_E(\pi_{0})$,
    \item $\pi$ is $\rH_{\tau_1,\tau_2}$-distinguished,
\end{enumerate}
then $L(1/2,{\rm BC}_E(\pi_{0}) )\neq 0$.
\end{Thm}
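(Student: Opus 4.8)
The goal is to deduce non-vanishing of the central $L$-value $L(1/2,\mathrm{BC}_E(\pi_0))$ from the assumed existence of an $\rH_{\tau_1,\tau_2}$-distinguished cuspidal $\pi$ on $\G_{\tau_1,\tau_2}(\A_F)$ with the same base change as $\pi_0$. The strategy is the standard ``period non-vanishing $\Rightarrow$ $L$-value non-vanishing'' direction of a comparison of relative trace formulas, carried out with the simple trace formulas already established in \S\ref{Section: RTF}. First I would choose a nice test function $f = \bigotimes_v f_v \in C_c^\infty(\G_{\tau_1,\tau_2}(\A_F))$ detecting $\pi$: at $v_1$ take $f_{v_1}$ supported in a cuspidal Bernstein block $\Omega$ containing (the local base change datum of) $\pi_{v_1}$; at $v_2$ take $f_{v_2}$ supported on the regular elliptic locus; and, crucially, arrange the spectral side so that $J_\pi(f)\neq 0$ — this is where the local relative character non-vanishing of \S\ref{Section: elliptic} (Theorem \ref{Thm: FJ elliptic}, transported to the unitary side via the local transfer of \cite{LXZfund}, together with the archimedean positive-definite input of Appendix \ref{Sec compact}) is used, since $\pi$ being $\rH_{\tau_1,\tau_2}$-distinguished globally forces each $\pi_v$ to be locally distinguished and we need the global relative character $J_\pi(f)$ to be a nonzero product of nonzero local factors. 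Then $J_{\rH_{\tau_1,\tau_2},\rH_{\tau_1,\tau_2}}(f)\neq 0$ by Proposition \ref{Prop: Simple RTF unitary} (the sum over the elliptic spectrum need not have cancellation precisely because each $J_{\pi'}(f)$ is, after the local normalizations, manifestly a product of local characters which one can keep of one sign, or one argues via linear independence of characters / varying $f$).

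\textbf{Transfer and the linear side.} Next I would transfer $f$ to a test function $\wt f = \bigotimes_v \wt f_v$ on $\G'(\A_F)\times \A_{F,n}$: at the inert non-archimedean places where $\pi_{0,v}$ and the relevant Hermitian spaces are unramified/split, use the fundamental lemma (Theorem \ref{Thm: fundamental lemma si} together with the split-split transfer of \S\ref{Section: split transfer}) to take $\wt f_v = \bfun_{\G'(\calo_{F_v})}\otimes\bfun_{\calo_{F_v,n}}$; at $v_1$ and $v_2$ and the remaining finitely many places use the weak transfer of \cite[Theorem 4.15]{LXZfund} (non-archimedean) and Proposition \ref{prop:tr infty} (archimedean $\BC/\BR$), noting that $f_{v_2}$ has regular elliptic support so Proposition \ref{prop:tr infty} applies and the local transfer problem is unobstructed there. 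With matching test functions, the trace formula identity $J_{\Q_{\tau_1,\tau_2}}(\phi) = 2\, I^{\eta,\eta}(\wt f)$ of Proposition \ref{Prop: weak trace comparison si} (via the stabilizations of Proposition \ref{Prop: prestab si} and the fundamental lemmas on the variety) gives $I^{\eta,\eta}(\wt f)\neq 0$. Comparing the geometric expansions term by term over matching regular elliptic orbits, using that the local orbital integrals at the unramified places equal $1$ (Lemma \ref{Lem: almost everywhere 1}) and that at $v_1,v_2$ the transferred integrals are the correct nonzero values, one finds $I^{\eta,\eta}(\wt f)$ is a nonzero multiple of the geometric side on the linear side; hence by the spectral expansion \eqref{eqn: spectral expansion linear} there exists a cuspidal $\pi_0'$ on $\G'(\A_F)$, supercuspidal at $v_1$, $\rH'$-elliptic at $v_2$, with $I^{\underline\eta}_{\pi_0'}(\wt f,0)\neq 0$.

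\textbf{Identifying $\pi_0'$ with $\pi_0$ and concluding.} One then matches $\pi_0'$ with $\pi_0$: at the (infinitely many) inert places where $\wt f_v$ is the unit and the Satake parameters of $\pi_0'$ are pinned down by the orbital-integral matching, $\mathrm{BC}_E(\pi_0')$ and $\mathrm{BC}_E(\pi)=\mathrm{BC}_E(\pi_0)$ agree at almost all split places of $F$; since $\mathrm{BC}_E(\pi_0)$ is cuspidal (it is of symplectic type with cuspidal base change), strong multiplicity one for $\GL_{2n}$ over $E$ forces $\mathrm{BC}_E(\pi_0') = \mathrm{BC}_E(\pi_0)$, and hence $\pi_0'$ and $\pi_0$ lie in the same near-equivalence class; because both are cuspidal on $\GL_{2n,F}$ with the same base change, $\pi_0' \cong \pi_0$. (Here the supercuspidality at $v_1$ and the cuspidality of the base change are exactly what make this rigidity argument work.) Finally, $I^{\underline\eta}_{\pi_0}(\wt f,0)\neq 0$ together with Corollary \ref{cor nonvanish} (applicable since $\eta_0 = \eta_{E/F}\neq 1$ and $\mathrm{BC}_E(\pi_0)$ is cuspidal) yields $L(1/2,\mathrm{BC}_E(\pi_0)) = L(1/2,\pi_0\otimes\eta_1)L(1/2,\pi_0\otimes\eta_2)\neq 0$ in the split-inert case $E_0 = F\times F$, which is the assertion.

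\textbf{The main obstacle.} The delicate point is \emph{not} the transfer/fundamental-lemma bookkeeping but ensuring the spectral side does not vanish by cancellation: I must guarantee $J_{\rH_{\tau_1,\tau_2},\rH_{\tau_1,\tau_2}}(f)\neq 0$ given only that \emph{some} $\varphi\in\pi$ has nonzero period. This requires (i) that each local component $\pi_v$ be $\rH_v$-distinguished with a nonzero \emph{elliptic} local relative character at $v_1$ and $v_2$ — supplied by Theorem \ref{Thm: FJ elliptic} and the archimedean results of Appendix \ref{Sec compact} under the positive-definiteness hypothesis on $V_{\tau_1},V_{\tau_2}$ — and (ii) a positivity or linear-independence argument to pass from ``$J_\pi(f)\neq 0$'' to ``$\sum_{\pi'}J_{\pi'}(f)\neq 0$''. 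Handling (ii) cleanly is the crux: one standard route is to note that after fixing the supercuspidal block at $v_1$, only finitely many $\pi'$ contribute, and the local characters are linearly independent as functionals on $C_c^\infty$, so a generic choice of $f$ separates $\pi$ from the rest; alternatively one establishes that the relevant normalized relative characters are nonnegative (using unitarity and the Lapid--Rallis positivity cited in the introduction) so no cancellation can occur.
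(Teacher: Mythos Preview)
Your overall strategy is the paper's: build an elliptic nice $f$ on the unitary side with $J_\pi(f)\neq 0$, transfer, isolate the base-change class $\Pi=\mathrm{BC}_E(\pi_0)$ on the linear side, and invoke Corollary~\ref{cor nonvanish}. One small correction: strong multiplicity one only determines $\pi_0'$ up to the twist by $\eta$, so after isolation the linear side reads $I_{\pi_0}^{(\eta,\eta)}(\wt f,0)+I_{\pi_0\otimes\eta}^{(\eta,\eta)}(\wt f,0)$ (Corollary~\ref{Cor: si comparison of global rel char}), not a single term; this is harmless since both twists share the same base-change $L$-value.

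The genuine gap is in your resolution of the non-cancellation obstacle you correctly flag. The paper separates two issues. Linear independence of Hecke characters at split places (Proposition~\ref{Prop: global rel char matching si}) isolates the base-change class $\Pi$ on both sides---this is your route (i), but it distinguishes different $\Pi$'s, not different members of a fixed packet. Within $\mathcal{B}_{\tau_1,\tau_2}(\Pi)$, non-cancellation comes from the elementary device of choosing $f^{v_1,v_2}=b\ast b^\vee$ of \emph{positive type} away from $v_1,v_2$, so that for every cuspidal $\pi'$
\[
J_{\pi'}^{v_1,v_2}(f^{v_1,v_2})=\sum_{\varphi}\bigl|\lambda^{v_1,v_2}\bigl(\pi'(b)\varphi\bigr)\bigr|^{2}\;\geq\;0.
\]
At inert non-archimedean places one takes $b_v$ to be a rescaled congruence-subgroup indicator $\bfun_{\G_{k_v}(\calo_{F_v})}$ (idempotent under convolution, hence of positive type), precisely the functions to which Theorem~\ref{Thm: fundamental lemma si} applies. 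Since $v_1,v_2$ are split, every $\pi'\in\mathcal{B}_{\tau_1,\tau_2}(\Pi)$ satisfies $\pi'_{v_i}\simeq\pi_{v_i}$, so the local characters at $v_1,v_2$ are common across the packet and $\sum_{\pi'}J_{\pi'}(f)$ equals that common factor times a sum of non-negative terms, one of which (for $\pi$ itself) is strictly positive. Your route (i) cannot separate packet members---they agree at all split places, the only places where $f_v$ may be varied freely without destroying transferability---and Lapid--Rallis positivity concerns central $L$-values, not relative characters; the paper invokes it only in the inert-inert $L$-value-to-period direction.
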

\begin{Rem}
   This result is not new. It was established under weaker local hypotheses in \cite[Section 6]{PollackWanZydor} when $\G_{\tau_1,\tau_2}$ is quasi-split via the residue method. More generally, it was established in \cite[Corollary 7.3]{ChenGan} via theta correspondence techniques when the representation $\pi$ on $\G_{\tau_1,\tau_2}$ is tempered cuspidal. 
\end{Rem}

We now consider the inert-inert case. 
\begin{Thm}\label{Thm: global ii result period to Lvalue} 
We further assume that every archimedean place $v$ of $F$ splits in $E$.
Let $\pi_{0}$ be a cuspidal automorphic representation of $\G'(\A_{F})$ satisfying 
\begin{enumerate}
    \item there is a split finite place $v_1$ such that $\pi_{v_1}$ is supercuspidal,
    \item there is a split finite place $v_2$ such that $\pi_{v_2}$ is $\rH'$-elliptic,
\end{enumerate}
Assume that $\pi_0$ is of {\em symplectic type}. If there exists a cuspidal automorphic representation $\pi$ of $\G_{ii}(\A_{F})$ satisfying
\begin{enumerate}
    \item $\mathrm{BC}_E(\pi) =\mathrm{BC}_E(\pi_{0})$,
    \item $\pi$ is $\rH_{ii}$-distinguished.
\end{enumerate} Then at least one of $L(1/2,\pi_{0})$ or $L(1/2,\pi_{0}\otimes \eta)$ is non-zero.
\end{Thm}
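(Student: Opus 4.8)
The strategy is to compare the two simple relative trace formulas established in \S\ref{Section: RTF}, transported to matching test functions via the fundamental lemmas of \S\ref{Section: fundamental lemmas on variety}. Starting from the hypothesis that $\pi$ on $\G_{ii}(\A_F)$ is $\rH_{ii}$-distinguished with $\mathrm{BC}_E(\pi)=\mathrm{BC}_E(\pi_0)$, I would first choose a global test function $f=\bigotimes_v f_v\in C_c^\infty(\G_{ii}(\A_F))$ that is an elliptic nice test function in the sense of Definition \ref{elliptic test functions} and that ``sees'' $\pi$: concretely, at the split place $v_1$ one uses a pseudocoefficient supported on the supercuspidal Bernstein block corresponding to $\pi_{v_1}$, at the split place $v_2$ one uses a function of regular elliptic support for which the local relative character of $\pi_{v_2}$ is non-zero (here one uses that $\pi_{v_2}$ is $\rH'$-elliptic on the general linear side, transported to the unitary side via the split-case identification of \S\ref{Section: split transfer}; note $v_2$ split means $\G_{ii}(F_{v_2})\cong\GL_{2n}(F_{v_2})$), and at all remaining places one chooses $f_v$ so that $J_\pi(f)\neq 0$ while respecting the unramified constraints (possible since the unramified local relative characters are non-zero, e.g.\ by choosing $f_v=\bfun_{\G_{ii}(\calo_{F_v})}$ at inert places using Lemma \ref{Lem: almost everywhere 1} and Proposition \ref{Prop: factorize the linear rel char} on the spectral side). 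Since $J_\pi(f)$ is (up to a nonzero constant) a product of local relative characters which we have arranged to be nonzero, we get $J_{ii}(f)\neq 0$; but $J_{ii}(f)$ is a finite sum of $J_\pi(f)$ over cuspidal $\pi$ supercuspidal at $v_1$ and elliptic at $v_2$, so $J_{ii}(f)\neq 0$.

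\textbf{Transfer and comparison.} Next I would transfer $f$ to the linear side: using the fundamental lemmas (Theorems \ref{Thm: fundamental lemma ii} and \ref{Thm: fundamental lemma varepsilon}) at the unramified inert places and the existence of smooth transfer from \cite[Section 4]{LXZfund} at the remaining (split or archimedean) places, one produces $\wt f=\bigotimes_v\wt f_v\in C_c^\infty(\G'(\A_F)\times\A_{F,n})$ which is $(\eta,1,1)$-transfer of $f$ and which is an elliptic nice test function in the sense of Definition \ref{nice test functions} (the elliptic/supercuspidal constraints transport because the orbit correspondence $\car_{ii}\leftrightarrow \car_{lin}$ preserves ellipticity, and the supercuspidal constraint transports via the local transfer at $v_1$). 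One also produces the $\varepsilon$-twisted counterpart $\wt f_\ep$. Matching geometric expansions, Proposition \ref{Prop: weak trace comparison ii} gives
\[
J_{ii}(f)=I^{(\eta,1)}(\wt f)+I^{(\eta,\eta)}(\wt f_\ep).
\]
Since the left-hand side is nonzero, at least one of the two terms on the right is nonzero. By the spectral expansion \eqref{eqn: spectral expansion linear} of each $I^{(\eta,\eta')}$ as a sum of linear relative characters $I^{\ul\eta}_{\pi_0'}$, and by the matching of Satake parameters at split places (which forces the contributing $\pi_0'$ to have $\mathrm{BC}_E(\pi_0')=\mathrm{BC}_E(\pi_0)$, and by strong multiplicity one / the cuspidality of the base change, $\pi_0'\cong\pi_0$ or a twist), the nonvanishing of a term $I^{(\eta,\eta')}(\wt f)\neq 0$ implies $I^{\ul\eta}_{\pi_0}(\wt f,0)\neq 0$ for the appropriate choice $\eta'\in\{1,\eta\}$ (with $\eta_1=1$, $\eta_0=\eta$ throughout, and $\eta_2=\eta'$). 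Applying Corollary \ref{cor nonvanish} in the inert-inert case then yields $L(1/2,\pi_0\otimes\eta_2)\neq 0$, i.e.\ $L(1/2,\pi_0)\neq 0$ when $\eta'=1$ and $L(1/2,\pi_0\otimes\eta)\neq 0$ when $\eta'=\eta$ (using that the central character of $\pi_0$ is trivial, which holds since $\pi_0$ is of symplectic type).

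\textbf{Main obstacle.} The main subtlety is not the positivity of $L$-values (the Lapid--Rallis input \cite{LapidRallis} is needed for the converse direction, not here) but rather the bookkeeping that ensures the \emph{same} $\pi_0$ appears on the linear side as the base change of $\pi$: one must check that the isobaric/cuspidality hypotheses (``$\pi_0$ of symplectic type'', $\mathrm{BC}_E(\pi_0)$ cuspidal) guarantee that the relative character $I^{\ul\eta}_{\pi_0}$ is well-defined (via Lemma/Definition \ref{LemDef: rel character}) and that the descent/base-change dictionary identifies the cuspidal representation contributing to the linear RTF with $\pi_0$ up to the quadratic twist by $\eta$. A secondary technical point is verifying that the chosen $f$ genuinely isolates a single $\pi$ (or at least a packet all of whose members have the same base change) so that $J_{ii}(f)\neq 0$ can be concluded from $J_\pi(f)\neq 0$; this uses the ellipticity at $v_2$ and supercuspidality at $v_1$ to guarantee the spectral sum in \eqref{eqn: unitary simple rtf} is finite and that cross-cancellation does not occur for a suitable choice of $f_{v_1}$ (e.g.\ a genuine matrix coefficient so that $J_{\pi'}(f)\geq 0$ termwise, or by varying $f$ within the block). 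These are standard but must be spelled out carefully given the limited transfer results available.
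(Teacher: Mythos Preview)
Your proposal is correct and follows the paper's strategy; the obstacles you identify in the final paragraph are exactly what \S\ref{Section: proof P to L} resolves. Specifically, the paper commits to the positivity option (taking $f_v$ to be positive-type idempotents built from $\bfun_{\G_{k_v}(\calo_{F_v})}$ at inert $v$ and $b_v\ast b_v^\vee$ at split $v\neq v_1,v_2$, then factoring out the common $v_1,v_2$-local characters via local multiplicity one so the packet sum becomes a nonzero multiple of a sum of absolute squares), and it packages the Hecke-algebra isolation into Proposition~\ref{Prop: global rel char identity ii} so that the linear side is already the four-term sum $I^{(\eta,1)}_{\pi_0}+I^{(\eta,1)}_{\pi_0\otimes\eta}+I^{(\eta,\eta)}_{\pi_0}+I^{(\eta,\eta)}_{\pi_0\otimes\eta}$ rather than the full spectral expansion of $I^{(\eta,1)}(\wt f)+I^{(\eta,\eta)}(\wt f_\varepsilon)$.
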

 Given the presence of endoscopic terms, it is not clear if the methods of \cite{PollackWanZydor} or \cite{ChenGan} directly apply in this case. We prove these theorems in \S \ref{Section: proof P to L}.

\subsection{$L$-value non-vanishing implies period non-vanishing}\label{Section: Lvalue implies period}
We now state a converse to the preceding result under stronger hypotheses. 

\begin{Thm}\label{Thm: global si result Lvalue to period}
Let $\pi_{0}$ be a cuspidal automorphic representation of $\G'(\A_{F})$ satisfying 
\begin{enumerate}
    \item there is a split finite place $v_1$ such that $\pi_{v_1}$ is supercuspidal,
    \item there is a split finite place $v_2$ such that $\pi_{v_2}$ is $\rH'$-elliptic,
    \item for each non-split place $v$, $\pi_v$ is unramified,
    \item for each non-split archimedean  place $v$, $\pi_v$ is the descent of the base change of a representation of the compact unitary group $\U_{2n}(\BR)$ that is distinguished by the compact  $\U_{n}(\BR)\times \U_{n}(\BR)$.
\end{enumerate} Assume that $\pi_0$ is of {\em symplectic type}. The following two assertions are equivalent:
\begin{enumerate}
\item $L(1/2,{\rm BC}_E(\pi_{0}) )\neq 0$, and 
\item
There exists $\tau_1,\tau_2\in \calv_n(E/F)$ and a cuspidal automorphic representation $\pi$ of $\G_{\tau_1,\tau_2}(\A_{F})$ satisfying
\begin{enumerate}
    \item $\mathrm{BC}_E(\pi) =\mathrm{BC}_E(\pi_{0})$,
    \item $\pi$ is $\rH_{\tau_1,\tau_2}$-distinguished.
\end{enumerate}
\end{enumerate}
\end{Thm}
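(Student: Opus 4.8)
\textbf{Proof proposal for Theorem \ref{Thm: global si result Lvalue to period}.}

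The plan is to prove the remaining implication (1) $\Rightarrow$ (2) by comparing the two simple relative trace formulas established in \S\ref{Section: RTF}, the direction (2) $\Rightarrow$ (1) being Theorem \ref{Thm: global si result period to Lvalue}. So assume $L(1/2,\mathrm{BC}_E(\pi_0))\neq 0$. Since $\pi_0$ is of symplectic type, $L(s,\pi_0,\wedge^2)$ has a pole at $s=1$; combined with the non-vanishing of the central $L$-value and Proposition \ref{Prop: FJ periods}, the Friedberg--Jacquet period $Z^{FJ}(0,\varphi,\eta_2)$ is non-zero for some $\varphi\in V_{\pi_0}$ with $\ul\eta=(1,1,\eta_{E/F})$ the split-inert choice. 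The first step is to build an elliptic nice test function $\wt f=\bigotimes_v\wt f_v\in C^\infty_c(\G'(\A_F)\times\A_{F,n})$ with $I^{\ul\eta}_{\pi_0}(\wt f,0)\neq 0$: at $v_1$ use supercuspidality to project onto the Bernstein block $\Omega$ containing $\pi_{0,v_1}$; at $v_2$ apply Proposition \ref{Prop: elliptic linear rel character} (which applies since $\pi_{0,v_2}$ is $\rH'$-elliptic) to arrange elliptic support together with non-vanishing of the local relative character; at the split archimedean places and remaining split finite places away from $v_1,v_2$, use that local relative characters are surjective onto a one-dimensional space (Proposition \ref{Prop: factorize the linear rel char} and the non-vanishing of the relevant local functionals); at each non-split place $v$, take $\wt f_v=\bfun_{\G'(\calo_{F_v})}\otimes\bfun_{\calo_{F_v,n}}$, which contributes $1$ to the factorization by the normalized unramified computation in Proposition \ref{Prop: factorize the linear rel char}, relying crucially on the hypothesis that $\pi_{0,v}$ is unramified at every non-split place. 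The factorization of Proposition \ref{Prop: factorize the linear rel char}, together with the assumption on the archimedean places reducing to a non-vanishing compact branching statement (Lemma \ref{lem: branc compact}, cf. Theorem \ref{thm:arch nonzero lin char}), then gives $I^{\ul\eta}_{\pi_0}(\wt f,0)\neq 0$.

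The second step is to transfer $\wt f$ to a family $\ul f=(f^{\tau_1',\tau_2'})$ of functions on the unitary groups $\G_{\tau_1',\tau_2'}(\A_F)$, place by place, fixing $\tau_2=\tau_n$ split. At non-split finite places we use the fundamental lemma, Theorem \ref{Thm: fundamental lemma si}, which matches $\bfun_{\G'(\calo_{F_v})}\otimes\bfun_{\calo_{F_v,n}}$ with $(\bfun_{\G_{si}(\calo_{F_v})},0,0,0)$ (sign $(-1)^0=1$); here conditions (1) and (2) on $E/F$ from \S\ref{Section: global proofs} are exactly what makes the fundamental lemma available. At split finite places $v$, including $v_1$ and $v_2$, the split-split transfer of \S\ref{Section: split transfer} is essentially tautological (taking $\Phi(0)f$), and the elliptic support at $v_2$ is preserved. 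At non-split archimedean places, invoke Proposition \ref{prop:tr infty} for the very weak transfer on the regular semisimple locus (which suffices because the elliptic support at $v_2$ forces all relevant orbits to be regular semisimple); at split archimedean places use the split transfer. This produces an elliptic nice family $\ul f$ with $\pi_!(\ul f)$ a collection of elliptic nice functions $\ul\phi=(\phi_{\tau_1})$. Applying Proposition \ref{Prop: simple RTF linear} and Corollary \ref{Cor: stacky stabilized}, together with the matching of stable orbital integrals along $\car_{si}$ (Propositions of \S\ref{Section: orbital integrals} reformulating \cite{LXZfund}), yields the trace identity
\[
2\,I^{\ul\eta}_{\pi_0}(\wt f,0) = J_{si,\tau_2}(\ul\phi) = \sum_{\tau_1\in\calv_n(E/F)}\sum_{\pi}J_\pi(f^{\tau_1,\tau_2}),
\]
using that both sides have identical geometric expansions (up to the factor $2$ coming from \eqref{eqn: stable RTF si}) after normalization, and that the spectral side on the unitary group runs over cuspidal $\pi$ on some $\G_{\tau_1,\tau_2}$ that are supercuspidal at $v_1$, $(\rH'_1,\rH'_2)$-elliptic at $v_2$.

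Since the left-hand side is non-zero, some term $J_\pi(f^{\tau_1,\tau_2})\neq 0$; in particular $\pi$ is $\rH_{\tau_1,\tau_2}$-distinguished. It remains to identify the base change: because $\pi$ is supercuspidal at a split place $v_1$, $\mathrm{BC}_E(\pi)$ exists and is cuspidal by the proposition in \S\ref{Section: global proofs} (a case of \cite[Theorem 4.14]{BPLZZ}); comparing Satake parameters at the unramified split places occurring in both $\wt f$ and $\ul f$ (where the relative characters $I^{\ul\eta,\natural}_{\pi_{0,v}}$ and the unitary local relative characters are both normalized to $1$ precisely on $\mathrm{BC}$-matching representations), and invoking strong multiplicity one \cite[Theorem A]{ramakrishnan2018theorem}, forces $\mathrm{BC}_E(\pi)=\mathrm{BC}_E(\pi_0)$. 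This gives assertion (2). \emph{The main obstacle} I anticipate is the archimedean analysis: verifying that the compact branching law hypothesis in condition (4) really does yield non-vanishing of the archimedean local relative character on both the linear and unitary sides (so that the chosen $\wt f_v$ and its transfer contribute non-trivially), and that the very weak transfer of Proposition \ref{prop:tr infty} is compatible with this non-vanishing — i.e., that one can choose the archimedean transfers to simultaneously have the correct support and non-vanishing local relative characters. This is where Theorem \ref{thm:arch nonzero lin char} and the branching computation of Lemma \ref{lem: branc compact} do the essential work, and threading the support constraints through the transfer at $\infty$ is the delicate point.
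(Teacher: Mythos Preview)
Your overall strategy matches the paper's, but there is a genuine gap in the spectral comparison. After comparing geometric sides (Propositions \ref{Prop: simple RTF linear} and \ref{Prop: prestab si}) and isolating via strong multiplicity one, the identity you obtain is \emph{not}
\[
2\,I^{(\eta,\eta)}_{\pi_0}(\wt f,0)=\sum_{\tau_1}\sum_{\pi\in\mathcal{B}_{\tau_1,\tau_2}(\Pi)}J_\pi(f^{\tau_1',\tau_2'}),
\]
but rather (Corollary \ref{Cor: si comparison of global rel char})
\[
2\,I^{(\eta,\eta)}_{\pi_0}(\wt f,0)+2\,I^{(\eta,\eta)}_{\pi_0\otimes\eta}(\wt f,0)=\sum_{\pi\in\mathcal{B}_{\tau_1,\tau_2}(\Pi)}J_\pi(f^{\tau_1',\tau_2'}).
\]
The fiber $\mathcal{B}(\Pi)$ over $\Pi=\mathrm{BC}_E(\pi_0)$ consists of \emph{two} representations $\{\pi_0,\pi_0\otimes\eta\}$, and isolation at split places cannot separate them since $\eta_v=1$ there. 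So knowing only $I^{(\eta,\eta)}_{\pi_0}(\wt f,0)\neq 0$ is insufficient: the two terms on the left could in principle cancel.

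The paper closes this gap by showing that, with the specific choice of $\wt f_v$ you describe, one has $I^{(\eta,\eta),\natural}_{\pi_{0,v}}(\wt f_v)=I^{(\eta,\eta),\natural}_{\pi_{0,v}\otimes\eta_v}(\wt f_v)$ at \emph{every} place: at split $v$ this is trivial ($\eta_v=1$); at inert non-archimedean $v$ both normalized characters equal $1$ by the unramified computation (using hypothesis (3)); at non-split archimedean $v$, Lemma \ref{lem: branc compact} gives $\pi_{0,v}\simeq\pi_{0,v}\otimes\eta_v$ (this is precisely where hypothesis (4) enters). Hence the left side equals $4\,\mathcal{L}^{(\eta,1,\eta)}_{\pi_0}\prod_v I^{(\eta,\eta),\natural}_{\pi_{0,v}}(\wt f_v)\neq 0$, and you can conclude. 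You should make this non-cancellation argument explicit. (Minor: your $\ul\eta=(1,1,\eta_{E/F})$ should be $(\eta_{E/F},1,\eta_{E/F})$ in the split-inert case; cf.\ Remark \ref{Rem:characters of interest}.)
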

We also have a version in the inert-inert case.
\begin{Thm}\label{Thm: global ii result Lvalue to period}
We further assume that every archimedean place $v$ of $F$ splits in $E$.
   Let $\pi_{0}$ be as above. Assume that $\pi_0$ is of {\em symplectic type}. The following two assertions are equivalent:
\begin{enumerate}
\item at least one of $L(1/2,\pi_{0})$ or $L(1/2,\pi_{0}\otimes \eta)$ is non-zero, and 
\item there exists a cuspidal automorphic representation $\pi$ of $\G_{ii}(\A_{F})$ satisfying
\begin{enumerate}
    \item   $\mathrm{BC}_E(\pi) =\mathrm{BC}_E(\pi_{0})$,
    \item $\pi$ is $\rH_{ii}$-distinguished.
\end{enumerate}
\end{enumerate}
\end{Thm}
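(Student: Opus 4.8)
\textbf{Proof plan for Theorem \ref{Thm: global ii result Lvalue to period}.}

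The plan is to deduce this equivalence from the trace formula comparison
\[
J_{ii}(f)=I^{(\eta,1)}(\wt f)+I^{(\eta,\eta)}(\wt f_\ep)
\]
of Proposition \ref{Prop: weak trace comparison ii}, together with the simple trace formulas (Propositions \ref{Prop: Simple RTF unitary} and \ref{Prop: simple RTF linear}) and the spectral factorizations (Proposition \ref{Prop: factorize the linear rel char}). First I would record the elementary implication: if $\pi$ on $\G_{ii}$ is $\rH_{ii}$-distinguished with $\mathrm{BC}_E(\pi)=\mathrm{BC}_E(\pi_0)$, then by Theorem \ref{Thm: global ii result period to Lvalue} at least one of $L(1/2,\pi_0)$, $L(1/2,\pi_0\otimes\eta)$ is nonzero, giving (2)$\Rightarrow$(1). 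The substance is the converse. Assuming (1), the key analytic input is the Lapid--Rallis non-negativity theorem \cite{LapidRallis}: since $\pi_0$ is of symplectic type, both $L(1/2,\pi_0)\geq 0$ and $L(1/2,\pi_0\otimes\eta)\geq 0$, so (1) is equivalent to
\[
L(1/2,\pi_0)+L(1/2,\pi_0\otimes\eta)>0.
\]
By Proposition \ref{Prop: factorize the linear rel char} (applied with $(\eta_0,\eta_1,\eta_2)=(\eta,1,1)$ and with $(\eta,1,\eta)$), each linear relative character $I^{(\eta,1)}_{\pi_0}$ and $I^{(\eta,\eta)}_{\pi_0}$ factors as a positive scalar multiple of $L(1/2,\pi_0)$ resp. $L(1/2,\pi_0\otimes\eta)$ times a product of local relative characters; hence the sum of the two linear spectral contributions attached to $\pi_0$ is, after choosing local test data, a nonzero multiple of $L(1/2,\pi_0)+L(1/2,\pi_0\otimes\eta)$.

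The main step is then to choose a global test function $\wt f=\bigotimes_v\wt f_v$ on the linear side that is elliptic nice (Definition \ref{nice test functions}) — using $\pi_{0,v_1}$ supercuspidal and $\pi_{0,v_2}$ being $\rH'$-elliptic (Proposition \ref{Prop: elliptic linear rel character}, via Theorem \ref{Thm: FJ elliptic}) — such that the $\pi_0$-summand in \eqref{eqn: spectral expansion linear} is nonzero, while at the inert non-archimedean places and the (split, by hypothesis) archimedean places I take the unit elements so that the fundamental lemmas (Theorems \ref{Thm: fundamental lemma ii} and \ref{Thm: fundamental lemma varepsilon}) and Lemma \ref{Lem: almost everywhere 1} apply. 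One then transfers $\wt f$ to a nice test function $f$ on $\G_{ii}(\A_F)$: local transfers exist at the ramified split places by Proposition \ref{prop:tr infty}-type arguments / the split case of \S\ref{Section: split transfer}, and at the remaining places by the fundamental lemmas. Running the trace formula identity, the geometric sides match by construction, so the spectral sides agree; the linear spectral side has the nonzero $\pi_0$-contribution computed above, hence the unitary spectral side $\sum_\pi J_\pi(f)$ is nonzero, forcing some cuspidal $\pi$ on $\G_{ii}(\A_F)$ with $J_\pi(f)\neq 0$ — i.e. $\rH_{ii}$-distinguished — to appear. A near-equivalence/base-change argument (using that $\pi$ is supercuspidal at $v_1$, so $\mathrm{BC}_E(\pi)$ exists and is cuspidal, together with strong multiplicity one \cite{ramakrishnan2018theorem} and the determination of the spectral support of the RTF) identifies $\mathrm{BC}_E(\pi)=\mathrm{BC}_E(\pi_0)$, yielding (2).

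The hard part will be controlling the endoscopic term $I^{(\eta,\eta)}(\wt f_\ep)$ precisely: one must ensure that the $\ep$-orbital integrals assemble into the linear relative character attached to $L(1/2,\pi_0\otimes\eta)$ with the correct sign and normalization, so that the two linear contributions add rather than cancel. This is exactly where Lapid--Rallis non-negativity is indispensable — it guarantees $L(1/2,\pi_0)+L(1/2,\pi_0\otimes\eta)\neq 0$ whenever either summand is, ruling out accidental cancellation in the spectral identity. A secondary technical point is the bookkeeping of the volume factors $2^{-|S_a|}L(0,\rH_a^{\mathrm{op}},\eta)$ from Proposition \ref{Prop: prestab ii} and Lemma \ref{Lem: vol of centralizer}, which must be matched against the archimedean local relative characters on the compact-at-infinity unitary groups; here the hypothesis that all archimedean places split in $E$ simplifies matters, since then $\G_{ii}$ is quasi-split at infinity and no compact branching-law input (Lemma \ref{lem: branc compact}) is needed, unlike in Theorem \ref{Thm: global si result Lvalue to period}.
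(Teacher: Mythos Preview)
Your overall architecture is right — transfer, compare trace formulas, isolate via strong multiplicity one, invoke Lapid--Rallis — and matches the paper. But two linked inaccuracies would derail the argument as written.

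First, the factorization in Proposition \ref{Prop: factorize the linear rel char} gives a \emph{product} of two central $L$-values, not one: $\mathcal{L}^{(\eta,1,1)}_{\pi_0}$ is a nonzero multiple of $L(1/2,\pi_0)^2$, and $\mathcal{L}^{(\eta,1,\eta)}_{\pi_0}$ of $L(1/2,\pi_0)L(1/2,\pi_0\otimes\eta)$. So $I^{(\eta,1)}_{\pi_0}$ and $I^{(\eta,\eta)}_{\pi_0}$ are not individually multiples of $L(1/2,\pi_0)$ and $L(1/2,\pi_0\otimes\eta)$, and their sum is not a multiple of $L(1/2,\pi_0)+L(1/2,\pi_0\otimes\eta)$. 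Second, after isolating the base-change class $\Pi=\mathrm{BC}_E(\pi_0)$ (Proposition \ref{Prop: global rel char identity ii}) there are \emph{four} linear terms, for $\pi_0$ and $\pi_0\otimes\eta$ each paired with both $(\eta,1)$ and $(\eta,\eta)$, not two.

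The paper resolves the cancellation worry you flag by a trick you did not anticipate: at every inert place it takes $\wt f_v$ to be the $k=0$ unit function, so that by Theorems \ref{Thm: fundamental lemma ii} and \ref{Thm: fundamental lemma varepsilon} the \emph{same} $\wt f$ serves as $\wt f_\varepsilon$, i.e.\ $(f,\wt f,\wt f)$ is a complete transfer. Then all four local products $\prod_v I^{\natural}_{\pi_{0,v}}$ coincide (they agree at split places since $\eta_v=1$, and equal $1$ at inert places), and the four global $\mathcal{L}$-factors sum to
\[
L(1/2,\pi_0)^2 + 2L(1/2,\pi_0)L(1/2,\pi_0\otimes\eta) + L(1/2,\pi_0\otimes\eta)^2 = \bigl(L(1/2,\pi_0)+L(1/2,\pi_0\otimes\eta)\bigr)^2.
\]
It is this perfect-square structure, together with Lapid--Rallis non-negativity, that rules out cancellation. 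Without arranging $\wt f_\varepsilon=\wt f$ and accounting for all four terms, you cannot conclude non-vanishing of the right-hand side from assumption (1).
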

We prove these theorems in \S \ref{Section: proof L to P}.

\subsection{Compatible transfers} We formulate two transfer statements (Lemmas \ref{Lem: S-compatible si} and \ref{Lem: S-compatible ii}) suitable for working with global trace formulas.

\subsubsection{$(\eta,\eta)$-transfers} Fix $\tau_2\in \calv_n(E/F)$. For $\tau_1\in\calv_n(E/F)$ varying, consider a collection $\underline{f}=(f^{\tau_1',\tau_2'})$ for functions on $\G_{\tau_1,\tau_2}(\A_{F})$ with $V_{\tau_1'}\oplus V_{\tau_2'}\simeq V_{\tau_1}\oplus V_{\tau_2}$. \quash{such that 
\begin{equation}\label{eqn: pushforward family}
   \phi_{\tau_1} = \sum_{(\tau_1',\tau_2')}\pi_{\tau_1',\tau_2',!}(f^{\tau_1',\tau_2'}).
\end{equation}
We denote this by $\pi_!(\ul{f}) = \ul{\phi}$.}

\begin{Def} Let $\wt{f}=\prod_v\wt{f}_v\in C_c^\infty(\G'(\A_{F})\times \A_{F,n})$. We say that $\wt{f}$ and a collection of test functions  $\underline{f}=(f^{\tau_1',\tau_2'})$
 form \textbf{$(\eta,\eta)$-transfers} if $\wt{f}_v$ and $\ul{f_v} = (f^{\tau_1',\tau_2'}_v)$ are $(\eta_v,1,\eta_v)$-transfers in the sense of Definition \ref{Def: transfer si}. 
\end{Def}

\begin{Def}\label{Def: S-compatible transfer si} Fix a finite set of places $S$ of places of $F$. We say that test functions $\wt{f}$ and $\ul{\phi}=\pi_!(\ul{f})$ form a \textbf{$S$-compatible $(\eta,\eta)$-transfer} if they are $(\eta,\eta)$-transfers satisfying the following conditions:
    \begin{enumerate}
    \item For any $v\notin S$, if $f^{\tau_1',\tau_2'}\neq0$ then the Hermitian spaces $V_{\tau_1'}$ and $V_{\tau_2'}$ are unramified at $v$ (this is automatic if $v$ splits in $F$). Moreover, if $v\notin S$ is inert, $\wt{f}_v$ and $\underline{f_v}$ are the test functions with $k=0$ in the fundamental lemma in Theorem \ref{Thm: fundamental lemma si}. 
    \item At every inert non-archimedean place $v\in S$, there exists $k> 0$ so that $\wt{f}_v = c_v(k)\bfun_{\G'_k(\calo_{F_v})}\otimes \bfun_{\calo_{F,n}}$ and $\ul{f_v}$ is given by $f^{\tau_1,\tau_2} = \bfun_{\G_k(\calo_{F_v})}$ if $\tau_1,\tau_2$ are unramified and $f_v^{\tau_1,\tau_2} = 0$ otherwise. Here, $c_v(k)\in \cc^\times$ is the constant so that $\wt{f}_v$ and $\underline{f_v}$ match under Theorem \ref{Thm: fundamental lemma si}.
     \item At every non-split archimedean place $v\in S$, the functions $f^{\tau_1,\tau_2}_v\equiv 0$ unless $V_{\tau_1}\oplus V_{\tau_2}$ is positive definite, and $f^{\tau_1,\tau_2}_v$ has regular semi-simple support in the definite case (Proposition \ref{prop:tr infty}).
\item There exists a non-archimedean split place $v_1\in S$ of $F$ and a finite union $\Omega$ of cuspidal Bernstein components of $\G'(F_{v_1})=\G_{\tau_1,\tau_2}(F_{v_1})$ such that $\wt{f}_{v_1}=f^{\tau_1,\tau_2}_{v_1}\otimes \Phi_{v_1}$ with $\Phi_{v_1}(0)= 1$ and 
\[
f^{\tau_1,\tau_2}_{v_1}\in C^\infty_c(\G'(F_{v_1}))_{\Omega}.
\]
\item There exists a non-archimedean {split} place $v_2\in S$ of $F$ such that $\wt{f}_{v_2}=f^{\tau_1,\tau_2}_{v_2}\otimes \Phi_{v_2}$ with $\Phi_{v_2}(0)= 1$ and 
\[
f^{\tau_1,\tau_2}_{v_2}\in C^\infty_c(\G'(F_{v_2}))
\]is supported on the \emph{regular elliptic locus}.
 
    \end{enumerate}
    When these conditions are satisfied, we say that each test function is \textbf{$((\eta,\eta),S)$-transferable}. 
\end{Def}

\begin{Lem}\label{Lem: S-compatible si}
    Suppose that $S'$ is a finite set of non-split places and let $v_1,v_2$ be two non-archimedean split places; set $S=S'\cup\{v_1,v_2\}$. Assume that $\ul{f}\in \oplus_{\tau_1}\oplus_{(\tau_1',\tau_2')}C^\infty_c(\Q_{\tau_1,\tau_2}(\A_{F}))$ is $((\eta,\eta),S)$-transferable. Then there exist $\wt{f}\in C^\infty(\G'(\A_{F})\times \A_{F,n})$ such that $\wt{f}$ and $\ul{f}$ form a $S$-compatible $(\eta,\eta)$-transfer.  The converse statement also holds for $((\eta,\eta),S)$-transferable $\wt{f}$. 
\end{Lem}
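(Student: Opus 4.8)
The plan is to construct the general linear test function $\wt{f} = \prod_v \wt{f}_v$ place by place, matching the local components of the given unitary family $\ul{f}$ under the local transfer statements already available, and to check that the resulting $\wt{f}$ satisfies all the conditions of Definition \ref{Def: S-compatible transfer si}. At the split places and at the inert places outside $S$ the matching is dictated by the fundamental lemma (Theorem \ref{Thm: fundamental lemma si}) together with the pushforward compatibility of Lemma \ref{Lem: push on units}; at the inert places in $S$ it is dictated by the congruence-subgroup fundamental lemma with level $k>0$ and the explicit constant $c_v(k)$; at the non-split archimedean places it follows from the very weak transfer of Proposition \ref{prop:tr infty}, using that $\ul f$ is supported only on positive-definite spaces and has regular semisimple support there; and at the two distinguished split places $v_1, v_2$ one simply takes $\wt f_{v_i} = f^{\tau_1,\tau_2}_{v_i} \otimes \Phi_{v_i}$ with $\Phi_{v_i}$ chosen so that $\Phi_{v_i}(0)=1$ and with $f^{\tau_1,\tau_2}_{v_i}$ the given unitary component (recall that in the split case $\G = \G'$ and transfer is the trivial identity by \S\ref{Section: split transfer}, up to the factor $\Phi(0)$).

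First I would unwind what $((\eta,\eta),S)$-transferability of $\ul f$ provides: by hypothesis, at each $v\notin S$ (inert) the component $f^{\tau_1,\tau_2}_v$ is the $k=0$ basic function $\bfun_{\G_0(\calo_{F_v})}$ on the unramified space, zero otherwise; at each inert $v\in S$ it is $\bfun_{\G_k(\calo_{F_v})}$ for some $k>0$ on the unramified space, zero otherwise; at non-split archimedean $v$ it is supported on positive-definite spaces with regular semisimple support; and at $v_1, v_2$ it lies in the prescribed Bernstein block (resp.\ has regular elliptic support). For $v\notin S$ inert, set $\wt f_v = \bfun_{\G'_0(\calo_{F_v})}\otimes\bfun_{\calo_{F_v,n}}$; Theorem \ref{Thm: fundamental lemma si} (case $k=0$, noting $\vol(\rH'_0(\calo_{F_v})) = \vol(\rH_0(\calo_{F_v})) = 1$ by \S\ref{measures}) says these are $(\eta,1,\eta)$-transfers. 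For $v\notin S$ split, the unitary and general linear groups coincide and we simply take the obvious unramified function tensored with $\bfun_{\calo_{F_v,n}}$. For inert $v\in S$, set $\wt f_v = c_v(k)\bfun_{\G'_k(\calo_{F_v})}\otimes\bfun_{\calo_{F_v,n}}$ with $c_v(k) = (-1)^k\vol(\rH'_k(\calo_{F_v}))/\vol(\rH_k(\calo_{F_v}))$, which matches by Theorem \ref{Thm: fundamental lemma si}. For non-split archimedean $v$, Proposition \ref{prop:tr infty} produces a $(\eta,\eta)$-transfer $\wt f_v$ of each $f^{\tau_1,\tau_2}_v$ (which has regular semisimple support on a definite space), and we sum over the finitely many such $(\tau_1,\tau_2)$. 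For split archimedean $v$, again take the obvious function. For $v_1$ (resp.\ $v_2$), take $\wt f_{v_1} = f^{\tau_1,\tau_2}_{v_1}\otimes\Phi_{v_1}$ with $\Phi_{v_1}\in C_c^\infty(F_{v_1,n})$ chosen with $\Phi_{v_1}(0)=1$; since $v_1$ splits, $f^{\tau_1,\tau_2}_{v_1}\in C_c^\infty(\G'(F_{v_1}))_\Omega$ is literally a general linear function and $\wt f_{v_1}$ is a transfer of it by \S\ref{Section: split transfer}. Likewise at $v_2$.

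Having defined $\wt f_v$ at every place, I would verify that almost all components are the unramified basic vector so that $\wt f = \prod_v \wt f_v \in C_c^\infty(\G'(\A_F)\times\A_{F,n})$ is well-defined, that $\wt f$ and $\ul f$ are $(\eta,\eta)$-transfers in the adelic sense (this is local-by-local by the definition of adelic transfer and what was checked above), and that the five structural conditions of Definition \ref{Def: S-compatible transfer si} hold: condition (1) holds because outside $S$ we used exactly the $k=0$ fundamental-lemma functions and the unramified hypothesis on $E/F$; condition (2) by the choice of $c_v(k)$; condition (3) because $\ul f$ already has the positive-definiteness and regular-semisimple-support property and Proposition \ref{prop:tr infty} respects it; conditions (4) and (5) by the explicit choices at $v_1, v_2$ with $\Phi_{v_i}(0)=1$. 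The converse direction --- starting from a $((\eta,\eta),S)$-transferable $\wt f$ and producing the matching family $\ul f$ --- is symmetric: the fundamental lemmas are stated as two-sided matching, Proposition \ref{prop:tr infty} gives transfer in the direction needed (unitary to linear is what it literally states, but at archimedean places the argument via exhaustion of $C_c^\infty$ on the invariant base works equally in reverse, or one invokes \cite[Theorem 4.14]{LXZfund} at the non-archimedean places), and at the split places transfer is an equivalence.

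The main obstacle I anticipate is purely bookkeeping rather than conceptual: one must be careful that the collection $\ul f = (f^{\tau_1',\tau_2'})$ ranges correctly over all pairs with $V_{\tau_1'}\oplus V_{\tau_2'}\simeq V_{\tau_1}\oplus V_{\tau_2}$ at every archimedean place simultaneously, that the global Hermitian spaces appearing are consistent with their local completions (a coherence/Hasse-principle point, handled since the relevant $\rH$ satisfies the Hasse principle as remarked in \S\ref{Section: RTF}), and that the normalizing constants $c_v(k)$ and the volume factors $\vol(\rH'_k(\calo_{F_v}))$ at the finitely many bad places combine so that the final trace identity (to be used in Proposition \ref{Prop: weak trace comparison si}) comes out with the clean constant $2$. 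None of these require new ideas beyond the fundamental lemmas and transfer statements recalled in \S\ref{Section: orbital integrals}; the proof is essentially an assembly of those inputs, and I would present it as such, relegating the archimedean converse to a one-line remark that Proposition \ref{prop:tr infty} and its evident reverse suffice on the regular semisimple locus.
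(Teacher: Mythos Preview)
Your proposal is correct and follows essentially the same approach as the paper's proof, which simply cites Theorem~\ref{Thm: fundamental lemma si} for the inert non-archimedean places (both directions, since the fundamental lemma specifies explicit matching pairs) and \S\ref{Section: split transfer} for the split places $v_1,v_2$. Your write-up is in fact more complete: you explicitly handle the non-split archimedean places via Proposition~\ref{prop:tr infty}, which the paper's two-sentence proof passes over in silence even though condition~(3) of Definition~\ref{Def: S-compatible transfer si} requires it. Two minor remarks: the citation to \cite[Theorem~4.14]{LXZfund} in your converse discussion is misplaced (that result concerns the inert--inert $(\eta,1)$-transfer; the split--inert analogue is Theorem~4.15, and in any case the fundamental lemma already handles both directions at the relevant non-archimedean places, so no general smooth transfer is needed here); and your closing paragraph about the constant~$2$ and Hasse-principle coherence anticipates issues that belong to Proposition~\ref{Prop: weak trace comparison si} rather than to this lemma, which is purely about the existence of local matchings.
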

\begin{proof}
    The existence of $\wt{f}$ follows from Theorem \ref{Thm: fundamental lemma si}, which also gives the converse implication. Note that the two constraints at the split places $v_1,v_2\in S$ may be accomplished by the split transfer of \S \ref{Section: split transfer}. 
\end{proof}

\subsubsection{$(\eta,1)$-transfer} We now consider the inert-inert case, where we must also handle the endoscopic transfer.

Let $f\in C^\infty_c(\G_{ii}(\A_{F}))$. As in \S \ref{Section: endoscopic}, fix $\tau_2\in\calv_n(E/F)$ and consider a collection $\underline{f}=(f^{\tau_1',\tau_2'})$ for functions on $\G_{\tau_1,\tau_2}(\A_{F})$ with $V_{\tau_1'}\oplus V_{\tau_2'}\simeq V_{\tau_1}\oplus V_{\tau_2}$. 
\begin{Def}
    We say that $f=\bigotimes_vf_v$ and a collection $\ul{f}=(f^{\tau_1',\tau_2'})$ form \textbf{$\varepsilon$-transfers} if the local components of $f_v$ and $\ul{f_v} =(f^{\tau_1',\tau_2'}_v)$ are $\varepsilon$-transfers in the sense of Definition \ref{Def: varepsilon transfer}.
\end{Def} 

\begin{Lem}
    Assume that $f=\bigotimes_vf_v$ and a collection $\ul{f}$ form $\varepsilon$-transfers, and assume $\ul{f}$ satisfy the restrictions to be a member of an $S$-compatible $(\eta,\eta)$-transfer for some finite set of places $S$. Then there exists $\wt{f}_\varepsilon = \bigotimes_v \wt{f}_{\varepsilon,v}\in C^\infty_c(\G'(\A_F)\times \A_{F,n})$ such that for each $v$ and any matching regular semi-simple elements $\ga\in \G'(F_v)$  and $g\in \G_{ii}(F_v)$, we have
\begin{equation}\label{eqn: varepsilon to linear}
  \Orb^{\rH',(\eta,\eta)}_{0}(\widetilde{f}_{\varepsilon,v},\ga)= \De_{\varepsilon}(g)\Orb^{\rH\times \rH,\varepsilon}(f_v,x).
\end{equation}
\end{Lem}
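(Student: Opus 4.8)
The strategy is to build $\wt f_\varepsilon$ as a restricted tensor product, defining each local component $\wt f_{\varepsilon,v}$ separately and verifying the matching identity \eqref{eqn: varepsilon to linear} place by place. The identity \eqref{eqn: varepsilon to linear} is purely local, so there is no genuine global content beyond checking that the local constructions produce an element of $C^\infty_c(\G'(\A_F)\times \A_{F,n})$ — i.e. that $\wt f_{\varepsilon,v}$ equals the standard unramified test function $\bfun_{\G'(\calo_{F_v})}\otimes\bfun_{\calo_{F_v,n}}$ for almost all $v$. The input is the chain of transfers: $f_v$ is an $\varepsilon$-transfer of $\ul{f_v}$ (Definition \ref{Def: varepsilon transfer}), and $\ul f$ is assumed to fit into an $S$-compatible $(\eta,\eta)$-transfer; composing these with the already-established $(\eta,\eta)$-transfer (Definition \ref{Def: transfer si}) and the relation between orbital integrals on $\G'(F_v)$ and on $\X(F_v)\times F_{v,n}$ (Lemma/Definition \ref{LemDef: linear OI}) will produce $\wt f_{\varepsilon,v}$ with the desired orbital integrals.

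First I would fix a place $v$ and treat the three cases dictated by Definition \ref{Def: S-compatible transfer si}. At an inert non-archimedean $v\notin S$, take $\wt f_{\varepsilon,v} = \bfun_{\G'(\calo_{F_v})}\otimes\bfun_{\calo_{F_v,n}}$; the matching identity here is exactly the endoscopic fundamental lemma in the group form, which follows by combining Theorem \ref{Thm: fundamental lemma varepsilon} (relating $\bfun_{\G_{ii,0}(\calo_{F_v})}$ to the quadruple $(\bfun_{\G_0(\calo_{F_v})},0,0,0)$ as $\varepsilon$-transfers) with Theorem \ref{Thm: fundamental lemma si} (relating that quadruple to $\frac{1}{\vol(\rH'_0(\calo_{F_v}))}\bfun_{\G'_0(\calo_{F_v})}\otimes\bfun_{\calo_{F_v,n}}$ as $(\eta,1,\eta)$-transfers), after pushing down to the symmetric varieties via Lemma \ref{Lem: push on units} and using the transfer-factor normalization $\prod_v\Omega_v=1$. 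At an inert non-archimedean $v\in S$, one uses instead the $k>0$ versions in the same two fundamental lemma statements, absorbing the constants $(-1)^k$ and $c_v(k)$; at a split place, the notion of matching is trivial (see \S\ref{Section: split transfer}), and at a non-split archimedean $v\in S$ the very weak transfer of Proposition \ref{prop:tr infty} (applied to the regular semisimple support hypothesis of Definition \ref{Def: S-compatible transfer si}(3)) supplies $\wt f_{\varepsilon,v}$; in both of these last cases the endoscopic character $\varepsilon$ is handled by twisting by $\De_\varepsilon$ exactly as in the local discussion preceding Definition \ref{Def: varepsilon transfer}.

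The bookkeeping step is to keep the transfer factors straight: the definition of $\varepsilon$-transfers records $\phi = \pi_{ii,!}(f)$ and $\ul\phi = \pi_!(\ul f)$ as $\varepsilon$-transfers in the sense of \cite[Conjecture 4.10]{LXZfund}, so the identity produced there is $\De_\varepsilon(x)\SO^{\rH}(\ul\phi,a)$-type, while the $(\eta,\eta)$-transfer between $\ul\phi$ and $\wt\phi = s_\X^\eta(\wt f_\varepsilon)$ converts stable orbital integrals on the unitary varieties to the normalized linear orbital integrals $\Orb^{\rH',(\eta,\eta)}_0$. Chaining these, and using Lemma/Definition \ref{LemDef: linear OI} to pass between $\wt\phi$ on $\X(F_v)\times F_{v,n}$ and $\wt f_{\varepsilon,v}$ on $\G'(F_v)\times F_{v,n}$, yields \eqref{eqn: varepsilon to linear} with $g$ matching $\ga$. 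I expect the main obstacle to be purely organizational rather than mathematical: correctly accounting for the measure-normalization constants $\vol(\rH_k(\calo_{F_v}))$, $\vol(\rH'_k(\calo_{F_v}))$ and the signs $(-1)^k$ across the two fundamental lemmas so that the composite $\wt f_{\varepsilon,v}$ is genuinely the unramified function (with no stray constant) at almost all places, guaranteeing $\wt f_\varepsilon \in C^\infty_c(\G'(\A_F)\times\A_{F,n})$.
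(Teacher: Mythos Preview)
Your proposal is correct and follows essentially the same approach as the paper: chain the $\varepsilon$-transfer identity $\De_{\varepsilon}(g)\Orb^{\rH\times\rH,\varepsilon}(f_v,g) = \SO^{\rH_{\tau_1,\tau_2}\times\rH_{\tau_1',\tau_2'}}(f^{\tau_1',\tau_2'}_v,h)$ with the existence of an $(\eta,\eta)$-transfer of $\ul f$. The paper simply invokes Lemma~\ref{Lem: S-compatible si} as a black box for the second step, whereas you unpack its place-by-place content (fundamental lemmas at inert places, split transfer, archimedean case); this is extra work but not a different idea.
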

\begin{proof} 
    By definition, for each place $v$ of $F$ we have 
    \[
     \De_{\varepsilon}(g) \Orb^{\rH\times \rH,\varepsilon}(f_v,g) = \SO^{\rH_{\tau_1,\tau_2}\times \rH_{\tau_1',\tau_2'}}(f^{\tau_1',\tau_2'}_{v},h),
     \]
     for all matching regular semi-simple $g\leftrightarrow_{\tau_1}h$. Combining this with Lemma \ref{Lem: S-compatible si}, we may find $\wt{f}_\varepsilon$ which satisfies the claim of the lemma.
\end{proof}
\begin{Def} Let $f=\bigotimes_vf_v\in C^\infty_c(\G_{ii}(\A_{F}))$ and $\wt{f},\:\wt{f}_\varepsilon\in C^\infty_c(\G'(\A_{F})\times \A_{F,n})$ factorizable.
    We say that $(f,\wt{f},\wt{f}_\varepsilon)$ form a \textbf{complete $(\eta,1)$-transfer} if for all $v$
    \begin{enumerate}
        \item $\wt{f}_v$ and $f_v$ are $(\eta_v,1)$-transfers in the sense of Definition \ref{Def: transfer ii}, and
        \item $\wt{f}_{\varepsilon,v}$ and $f_v$ satisfy \eqref{eqn: varepsilon to linear} for all matching orbits $\ga\leftrightarrow g$.
    \end{enumerate}
\end{Def}

\begin{Def}\label{Def: S-compatible transfer ii} Fix a finite set of places $S$ of $F$. We say that test functions $(f,\wt{f},\wt{f}_\varepsilon)$  form a \textbf{$S$-compatible complete transfer} if they are  complete $(\eta,1)$-transfers satisfying the following conditions:
    \begin{enumerate}
    \item For any inert $v\notin S$, then $\wt{f}_v$ and ${f_v}$ are the test functions in the $k=0$ case of Theorem \ref{Thm: fundamental lemma ii}, and $\wt{f}_{\varepsilon,v}$ is the test function appearing in Theorem \ref{Thm: fundamental lemma si}.
        \item At every inert place $v\in S$, there exists $k> 0$ so that $\wt{f}_v$, $\wt{f}_{\varepsilon,v}$, and ${f_v}$ match under Theorems \ref{Thm: fundamental lemma ii} and \ref{Thm: fundamental lemma varepsilon}.
\item There exists a non-archimedean split place $v_1\in S$ of $F$ and a finite union $\Omega$ of cuspidal Bernstein components of $\G'(F_{v_1})=\G_{ii}(F_{v_1})$ such that $\wt{f}_{v_1}=\wt{f}_{\varepsilon,v_1}=f_{v_1}\otimes \Phi$ where
\[
f_{v_1}\in C^\infty_c(\G'(F_{v_1}))_{\Omega}.
\]
and $\Phi_{v_1}(0) = 1$.
\item There exists a non-archimedean {split} place $v_2\in S$ of $F$ such that $\wt{f}_{v_2}=\wt{f}_{\varepsilon,v_2}=f_{v_2}\otimes \Phi_{v_2}$ with $\Phi_{v_1}(0)= 1$ and  
$f_{v_2}\in C^\infty_c(\G'(F_{v_2}))$ is supported on the \emph{regular elliptic locus}.
    \end{enumerate}
When these conditions are satisfied, we say that each test function is \textbf{$((\eta,1),S)$-transferable}.
\end{Def}

\begin{Lem}\label{Lem: S-compatible ii}
   Suppose that $S'$ is a finite set of non-split non-archimedean places and let $v_1,v_2$ be two split places with $v_1$ non-archimedean; set $S=S'\cup\{v_1,v_2\}$. Assume that $f\in C^\infty_c(\G_{ii}(\A_{F}))$ is $((\eta,1),S)$-transferable. Then there exists $\wt{f},\wt{f}_{\varepsilon}\in C^\infty(\G'(\A_{F})\times \A_{F,n})$ such that $(f,\wt{f},\wt{f}_\varepsilon)$ form an $S$-compatible complete transfer.

    The following converse statement holds: given $((\eta,1),S)$-transferable $\wt{f}\in C^\infty(\G'(\A_{F})\times \A_{F,n})$, there exists an $S$-compatible $(\eta,1)$-transfer $f\in C_c^\infty(\G_{ii}(\A_{F}))$ and $\wt{f}_\varepsilon\in C^\infty(\G'(\A_{F})\times \A_{F,n})$ such that  $(f,\wt{f},\wt{f}_\varepsilon)$ form an $S$-compatible complete transfer.
\end{Lem}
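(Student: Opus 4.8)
The plan is to establish Lemma \ref{Lem: S-compatible ii} by assembling the pieces already in place: the three fundamental lemmas (Theorems \ref{Thm: fundamental lemma ii}, \ref{Thm: fundamental lemma si}, and \ref{Thm: fundamental lemma varepsilon}), the existence of smooth transfers at the remaining places, and the split transfer of \S\ref{Section: split transfer}. The structure mirrors the proof of Lemma \ref{Lem: S-compatible si}, but now we must build \emph{two} linear test functions $\wt f$ and $\wt f_\varepsilon$ compatibly, one handling the stable comparison $(\eta,1)$ and one handling the $\varepsilon$-endoscopic contribution.

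First I would prove the forward direction. Given $f=\bigotimes_v f_v$ which is $((\eta,1),S)$-transferable, I construct $\wt f$ and $\wt f_\varepsilon$ place by place. At an inert place $v\notin S$, Definition \ref{Def: S-compatible transfer ii}(1) fixes $f_v=\frac{1}{\vol(\rH_{ii,0}(\calo_{F_v}))}\bfun_{\G_{ii,0}(\calo_{F_v})}$; Theorem \ref{Thm: fundamental lemma ii} (case $k=0$) provides the matching $\wt f_v$, and Theorem \ref{Thm: fundamental lemma varepsilon} (again $k=0$), composed with the transfer in the split-inert case over the finitely many pure inner forms via Lemma \ref{Lem: S-compatible si}, provides $\wt f_{\varepsilon,v}$; note the $(-1)^k$ sign and the volume constants are absorbed into the matching. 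At an inert place $v\in S$, Definition \ref{Def: S-compatible transfer ii}(2) gives a $k>0$ and we take the congruence-level test functions, with the analogous matching from Theorems \ref{Thm: fundamental lemma ii} and \ref{Thm: fundamental lemma varepsilon}. At a split place $v$ (in particular at $v_1$ and $v_2$), the notions of matching in \S\ref{Section: split transfer} are trivial: since $\G_{ii}(F_v)=\GL_{2n}(F_v)=\G'(F_v)$ and $\rH_{ii}(F_v)=\GL_n(F_v)^2=\rH'(F_v)$, if $f_v\otimes\Phi_v$ is given with $\Phi_v(0)=1$ then $\wt f_v=\wt f_{\varepsilon,v}=f_v\otimes\Phi_v$ serves as its own transfer (using formula \eqref{eqn: orbital int split}), and the supercuspidal and regular-elliptic support conditions are preserved. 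At any remaining archimedean place (which splits in $E$ by the standing hypothesis of the inert-inert theorems, so again there is nothing endoscopic to do), we simply use the split transfer. Collecting these local choices produces $\wt f$ and $\wt f_\varepsilon$, and the compatibility conditions (1)--(4) of Definition \ref{Def: S-compatible transfer ii} hold by construction; one checks directly that $\wt f_{\varepsilon,v}$ satisfies \eqref{eqn: varepsilon to linear} using the displayed identity $\De_\varepsilon(g)\Orb^{\rH\times\rH,\varepsilon}(f_v,g)=\SO^{\rH_{\tau_1,\tau_2}\times\rH_{\tau_1',\tau_2'}}(f_v^{\tau_1',\tau_2'},h)$ together with the definition of a complete $(\eta,1)$-transfer.

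For the converse, I would start from a $((\eta,1),S)$-transferable $\wt f\in C^\infty(\G'(\A_F)\times\A_{F,n})$. At each place the existence of an $(\eta_v,1)$-transfer $f_v\in C_c^\infty(\G_{ii}(F_v))$ follows from: the split transfer at split and archimedean places; Theorem \ref{Thm: fundamental lemma ii} at inert places $v\notin S$ and at inert $v\in S$ (where by hypothesis $\wt f_v$ is already a congruence-level function of the prescribed form); and then, having produced $f=\bigotimes_v f_v$, we apply the forward direction to manufacture the associated $\wt f_\varepsilon$ so that $(f,\wt f,\wt f_\varepsilon)$ form an $S$-compatible complete transfer. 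The ellipticity and supercuspidality support conditions at $v_1,v_2$ transport because the split transfer preserves supports, and $\Phi_{v_i}(0)=1$ is part of the hypothesis on $\wt f$.

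I expect the main obstacle to be the bookkeeping at inert places, where three fundamental lemmas must be chained coherently: one must verify that the \emph{same} $k$ works for all three (Theorems \ref{Thm: fundamental lemma ii}, \ref{Thm: fundamental lemma si}, \ref{Thm: fundamental lemma varepsilon} are all stated for every $k\in\zz_{\geq 0}$, so this is fine), that the volume normalizations and the $(-1)^k$ signs in the split-inert and endoscopic statements are threaded through the definition of $\wt f_{\varepsilon,v}$ consistently, and that restricting the split-inert family $\ul f=(f^{\tau_1',\tau_2'})$ to the single summand with $\tau_1,\tau_2$ unramified (as in Definition \ref{Def: S-compatible transfer si}(2)) is compatible with the endoscopic transfer's own conventions on which pure inner forms occur. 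All of this is routine given the cited results but requires care; the one genuinely nontrivial input, namely that smooth transfers and the relevant fundamental lemmas exist, has already been supplied by \cite{LXZfund} and recalled above.
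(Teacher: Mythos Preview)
Your proposal is correct and follows essentially the same approach as the paper: build $\wt f$ from $f$ (and conversely) via Theorem \ref{Thm: fundamental lemma ii} at inert places together with the split transfer of \S\ref{Section: split transfer}, and obtain $\wt f_\varepsilon$ by first passing through the endoscopic fundamental lemma (Theorem \ref{Thm: fundamental lemma varepsilon}) to a split-inert family $\ul f$ and then applying Lemma \ref{Lem: S-compatible si}. The paper's proof is simply a terse version of what you have written, without the place-by-place bookkeeping.
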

\begin{proof} Given $((\eta,1),S)$-transferable $f\in C^\infty_c(\G_{ii}(\A_{F}))$, the existence of $\wt{f}$ follows Theorem \ref{Thm: fundamental lemma ii} and \S \ref{Section: split transfer}, which also gives the converse implication. On the other hand, Theorem \ref{Thm: fundamental lemma varepsilon} implies the existence of a collection $\ul{f}$ (where $\tau_2$ is split) such that $f$ and $\underline{f}$ form $\varepsilon$-transfers, and $\underline{f}$ satisfy the restrictions to be a member of an $S$-compatible $(\eta,\eta)$-transfer. The existence of $\wt{f}_\varepsilon$ now follows from Lemma \ref{Lem: S-compatible si}. The proof of the converse statement is also clear.
\end{proof}

\subsection{Comparison of relative characters} We establish the comparison of global relative characters. We set $S_{\pi_0}$ to be the finite set of non-archimedean places of $F$ where $v$ is inert and $\pi_{0,v}$ is not unramified. For a finite set of places $S$, we say that $\pi$ is $S$-unramified if $S= S_{\pi_0}$. For simplicity, we will always assume that $S$ contains at least two split places $v_1,v_2\in S$.

\subsubsection{The split-inert case}
We begin with the following comparison of RTFs. 

\begin{Prop}\label{Prop: weak trace comparison si}
Fix $\tau_2\in \calv_n(E/F)$ and a finite set of places $S$ of $F$. Let $\wt{f}$ and $\ul{f}$ be {$S$-compatible $(\eta,\eta)$-transfers}. Then
\[
\sum_{\tau_1}\left(\sum_{\pi}\sum_{(\tau_1',\tau_2')}J^{\rH_{\tau_1,\tau_2},\rH_{\tau_1',\tau_2'}}_\pi(f^{\tau_1',\tau_2'})\right)=J_{si,\tau_2}(\pi_!(\underline{f})) = 2I^{(\eta,\eta)}(\wt{f},0)=2\sum_{\pi_{0}}I_{\pi_{0}}^{(\eta,\eta)}(\wt{f},0),
\]    
where $\pi_{0}$ runs over $S$-unramified cuspidal automorphic representations of $\G'(\A_{F})$ and where for each $\tau_1$, $\pi$ runs over cuspidal automorphic representations of $\G_{\tau_1,\tau_2}(\A_{F})$ which are $(\rH_{\tau_1,\tau_2},\rH_{\tau_1',\tau_2'})$-distinguished.
\end{Prop}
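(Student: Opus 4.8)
The plan is to chain together the simple trace formulas on the two sides with the fundamental lemma and the $(\eta,\eta)$-transfer. First I would recall the simple trace formula for the unitary side: by Proposition~\ref{Prop: Simple RTF unitary} together with the (pre-)stabilization in Corollary~\ref{Cor: stacky stabilized}, since $\ul{f}$ is an elliptic nice test function (this is guaranteed by the constraints at $v_1$ and $v_2$ in Definition~\ref{Def: S-compatible transfer si}), we have
\[
\sum_{\tau_1}\left(\sum_{\pi}\sum_{(\tau_1',\tau_2')}J^{\rH_{\tau_1,\tau_2},\rH_{\tau_1',\tau_2'}}_\pi(f^{\tau_1',\tau_2'})\right)=J_{si,\tau_2}(\pi_!(\ul{f}))= 2\sum_{a\in[\Q_{si}^{re}\sslash\rH](F)}2^{-|S_a|}L(0,\rH^{op}_a,\eta)\,\SO^{\rH}(\pi_!(\ul{f}),a),
\]
where the left-hand spectral side runs over cuspidal $\pi$ that are supercuspidal at $v_1$ and $(\rH_{\tau_1,\tau_2},\rH_{\tau_1',\tau_2'})$-elliptic at $v_2$ (hence distinguished), using the identification of the two descriptions of the spectral side from Proposition~\ref{Prop: Simple RTF unitary}(2).

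Next I would invoke the simple trace formula on the linear side, Proposition~\ref{Prop: simple RTF linear} with $\ul{\eta}=(\eta,1,\eta)$ (the case of interest from Remark~\ref{Rem:characters of interest}), noting that $\wt{f}$ is an elliptic nice test function because of the conditions at the split places $v_1,v_2\in S$ in Definition~\ref{Def: S-compatible transfer si}. This gives
\[
\sum_{\pi_0}I^{(\eta,\eta)}_{\pi_0}(\wt{f},0)=I^{(\eta,\eta)}(\wt{f},0)=\sum_{y}L(0,\T_y,\eta)\prod_v\Orb^{\rH',(\eta,\eta)}_{0}(s^{\eta}_{\X,!}(\wt{f}_v),y_v),
\]
the sum over regular elliptic semi-simple orbits $y\in \rH'(F)\backslash\X^{rss}(F)$. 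By Lemma/Definition~\ref{LemDef: rel character}, each summand $I^{(\eta,\eta)}_{\pi_0}(\wt{f},0)$ is well-defined, and by the $S$-unramified constraints (the fundamental lemma at each inert $v\notin S$ via Theorem~\ref{Thm: fundamental lemma si}, and Lemma~\ref{Lem: almost everywhere 1}) only $S_{\pi_0}$-unramified $\pi_0$ contribute; one also needs that the supercuspidality at $v_1$ and ellipticity at $v_2$ are preserved, which is built into the local factors $\wt{f}_{v_1},\wt{f}_{v_2}$.

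The heart of the matter is matching the two geometric sides. Using the $(\eta,\eta)$-transfer hypothesis (and the compatibility of transfer factors noted just before Proposition~\ref{Prop: simple RTF linear}, namely $\prod_v\Omega_v(\ga,w)=1$), for each matching pair $y\leftrightarrow a$ we have the local identity
\[
\Orb^{\rH',(\eta,\eta)}_{0}(s^{\eta}_{\X,!}(\wt{f}_v),y_v)=\SO^{\rH}(\pi_!(\ul{f})_v,a_v),
\]
from Definition~\ref{Def: transfer si} combined with the definitions in \cite[Definition~4.11]{LXZfund}, and also that the contraction/pushforward maps are compatible so the global stable orbital integral factors as $\SO^{\rH}(\pi_!(\ul{f}),a)=\prod_v\SO^{\rH}(\pi_!(\ul{f})_v,a_v)$. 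The remaining point is bookkeeping on the orbit-to-invariant correspondence: by Lemma~\ref{Lem: surjective quotient si} and \eqref{eqn: regular orbits on the base}, the regular elliptic orbits $y$ on the linear side and the regular elliptic points $a$ in $[\Q_{si}^{re}\sslash\rH](F)$ are in natural bijection (the ellipticity forcing $\T_y\cong\Res_{L/F}\Gm$ with $E\not\subset L$, so $L(0,\T_y,\eta)=L(0,\rH^{op}_a,\eta)$), and the weight factor $2^{-|S_a|}L(0,\rH^{op}_a,\eta)$ on the unitary side together with the factor $2$ matches $L(0,\T_y,\eta)$ on the linear side exactly as in the volume computation Lemma~\ref{Lem: vol of centralizer}. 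Assembling these identifications term by term yields
\[
J_{si,\tau_2}(\pi_!(\ul{f}))=2\,I^{(\eta,\eta)}(\wt{f},0),
\]
and combining with the two spectral expansions gives the asserted chain of equalities. The main obstacle I anticipate is the careful verification that the global orbit sets on the two sides are matched under $\car_{lin}\leftrightarrow\car_{si}$ including the precise constants (the factor $2$, the $2^{-|S_a|}$, and the agreement of abelian $L$-values $L(0,\T_y,\eta)=L(0,\rH^{op}_a,\eta)$), since this is where the measure normalizations of \S\ref{measures} and \S\ref{Section: Lvalue measure} and the pre-stabilization enter simultaneously.
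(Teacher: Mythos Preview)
Your overall strategy matches the paper's exactly: invoke Corollary~\ref{Cor: stacky stabilized} on the unitary side, Proposition~\ref{Prop: simple RTF linear} on the linear side, identify the $L$-values $L(0,\T_y,\eta)=L(0,\rH^{op}_a,\eta)$, and match term by term via the $(\eta,\eta)$-transfer. The one genuine gap is in your handling of the powers of $2$.

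Your asserted local identity
\[
\Orb^{\rH',(\eta,\eta)}_{0}(s^{\eta}_{\X,!}(\wt{f}_v),y_v)=\SO^{\rH}(\pi_!(\ul{f})_v,a_v)
\]
is not what the transfer in \cite[Definition~4.11]{LXZfund} gives: at each non-split place $v$ the matching carries a factor $2^{|S_{v,1}(y_v)|}$, so that globally (using that $E/F$ is everywhere unramified, whence $|S_a|=|S_{a,\infty}|$) the correct statement is
\[
2^{|S_{a,\infty}|}\prod_v\Orb^{\rH',(\eta,\eta)}_{0}(s^{\eta}_{\X,!}(\wt{f}_v),y_v)=\SO^{\rH}(\pi_!(\ul{f}),a).
\]
This $2^{|S_{a,\infty}|}$ cancels precisely the $2^{-|S_a|}$ in the stabilized expansion, leaving the overall factor $2$ intact and yielding $J_{si,\tau_2}(\pi_!(\ul{f}))=2I^{(\eta,\eta)}(\wt{f},0)$. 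Your attempt to absorb $2^{-|S_a|}$ via Lemma~\ref{Lem: vol of centralizer} is misplaced: that volume formula has already been consumed in the derivation of Proposition~\ref{Prop: prestab si}, and invoking it again here would produce the wrong constant (indeed, your proposed matching $2\cdot 2^{-|S_a|}L(0,\rH^{op}_a,\eta)=L(0,\T_y,\eta)$ would yield $J_{si,\tau_2}=I^{(\eta,\eta)}$ rather than $2I^{(\eta,\eta)}$). Once this factor is tracked correctly, your argument is the paper's argument.
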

\begin{proof}
   This comparison follows directly from our transfer results (Lemma \ref{Lem: S-compatible si}), Proposition \ref{Prop: simple RTF linear}, and Proposition \ref{Prop: prestab si}. More directly, if we set $\ul{\phi}=\pi_!(\ul{f})$, we have by Corollary \ref{Cor: stacky stabilized}
         \[
          J_{si,\tau_2}(\underline{\phi})= 2\sum_{a\in[\Q_{si}^{re}\sslash\rH](F)}2^{-|S_a|}L(0,\rH^{op}_a,\eta) \SO^{\rH}(\ul{\phi},a),
         \]
         where $\SO^{\rH}(\ul{\phi},a):=\SO^{\rH_{\tau_1,\tau_2}}({\phi}_{\tau_1},a)$ if $a\in[\Q_{si}^{re}\sslash\rH](F)_{\tau_1,\tau_2}$. Note that $|S_a| = |S_{a,\infty}|$ by the assumption that $E/F$ is everywhere unramified. On the other hand, Proposition \ref{Prop: simple RTF linear} gives us
            \[
I^{\underline{\eta}}(\wt{f},0) = \sum_{y} L(0,\T_{y},\eta)\prod_v {\Orb^{\rH',\underline{\eta}}_{0}(s_{\X}^{\eta}(\wt{f}_v),y_v)}.
\]
By the assumption that $\wt{f}$ and $\ul{f}$ be {$S$-compatible $(\eta,\eta)$-transfers} and that $E/F$ is everywhere unramified (so that $|S_{1}^{ram}(y_v)| \neq 0$ only at non-split archimedean places), we have 
\[
2^{|S_{a,\infty}|}\prod_v {\Orb^{\rH',\underline{\eta}}_{0}(s_{\X}^{\eta}(\wt{f}_v),y_v)} = \SO^{\rH}(\ul{\phi},a),
\]
when $y\in\X(F)$ matches $a\in [\Q_{si}^{re}\sslash\rH](F)$. Finally, we note that in this case the two $L$-functions $L(s,\rH^{op}_a,\eta)$ and $L(s,\T_{y},\eta)$ agree as they are both the Tate $L$-function associated to the quadratic character associated with $E_y/F_y$, where $F_y\simeq F[T]/(\car_{y}(T))$ is a degree $n$ field extension of $F$ determined by $y$ and $E_y= EF_y$.
\end{proof}

Making use of the strong multiplicity one theorem \cite[Theorem A]{ramakrishnan2018theorem}, we may separate the terms in the preceding sum. 
For a cuspidal automorphic representation $\Pi$ of $\G'(\A_E)$, we denote by $\mathcal{B}(\Pi)$ the (finite) set of cuspidal automorphic representations $\pi_{0}$ of $\G'(\A_{F})$ such that $\Pi=\mathrm{BC}_E(\pi_0)$. For any pair $\tau_1,\tau_2\in\calv_n(E/F)$, we denote by $\mathcal{B}_{\tau_1,\tau_2}(\Pi)$ the set of cuspidal automorphic representations $\pi$ of $\G_{\tau_1,\tau_2}(\A_{F})$ such that $\Pi=\mathrm{BC}_E(\pi)$. We have the following comparison of relative characters.

\begin{Prop}\label{Prop: global rel char matching si}
 For almost all split places $v$, fix an irreducible unramified representation ${\pi_{v}^0}$. For a fixed split place $v_1$, we fix a supercuspidal representation $\pi_{v_1}$ of $\G'(F_{v_1})$. We also fix an auxiliary split place $v_2$. Then there exists at most one cuspidal automorphic representation $\Pi$ of $\G'(\A_{E})$ such that for $S\supset S_{\Pi}\cup\{v_1,v_2\}$, where 
 \[
 S_{\Pi}=\{v \text{ inert place of $F$}: \text{if }w|v,\:\Pi_w \text{ not unramified}\},
 \]
if $\wt{f}$ and $\ul{f}$ are {$S$-compatible nice transfers} relative to $\Omega= \{\pi_{v_1}\}$, then
\begin{equation*}
 \sum_{\tau_1}\left(\sum_{\pi\in \mathcal{B}_{\tau_1,\tau_2}(\Pi)}\sum_{(\tau_1',\tau_2')}J^{{\tau'_1,\tau'_2}}_\pi(f^{\tau_1',\tau_2'})\right)=2\sum_{\pi\in \mathcal{B}(\Pi)}I_{\pi_{0}}^{(\eta,\eta)}(\wt{f},0),
\end{equation*}
where the sum on the right runs over all cuspidal automorphic representations $\pi_0$ of $\G'(\A_{F})$ such that
\begin{enumerate}
\item\label{property1}$\pi_{0,v}\cong \pi_v^0$ for almost all split $v$, 
\item\label{property2} $\pi_{0,v_1}\simeq \pi_{v_1}$ is our fixed supercuspidal representation,
\item\label{property3} $\pi_{0,v_2}$ is $\rH'$-elliptic,
\item\label{property4} $BC_{E_v}(\pi_{0,v})=BC_{E_v}(\tau_v)$ with $\tau_v$ an irreducible representation of a compact unitary group when $v$ is non-split archimedean,
\end{enumerate}
and where $\Pi=\mathrm{BC}_E(\pi_0)$ is the base change of any $\pi$ appearing in the right-hand sum. 
\end{Prop}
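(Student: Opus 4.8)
\textbf{Proof proposal for Proposition \ref{Prop: global rel char matching si}.}

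The plan is to start from the aggregate trace-formula identity of Proposition \ref{Prop: weak trace comparison si} and isolate the contribution of a single base-change packet $\Pi$ by a standard linear-independence-of-characters argument at the unramified split places, combined with the rigidity provided by the supercuspidal constraint at $v_1$. First I would observe that both sides of Proposition \ref{Prop: weak trace comparison si} are sums indexed by cuspidal automorphic representations, and that by the factorization of the linear relative character (Proposition \ref{Prop: factorize the linear rel char}) and the analogous factorization of the unitary relative characters $J_\pi$, each summand is, away from $S$, determined by the Satake parameters of $\pi_{0,v}$ (resp. $\pi_v$) at the split places $v\notin S$. Since ${\rm BC}_E$ is injective on local parameters at split places, the partition of the spectral sum according to the isomorphism class of $\Pi={\rm BC}_E(\pi_0)$ is the partition according to the collection of unramified local parameters outside $S$. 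The key point is that two representations $\pi_0,\pi_0'$ contributing to the left-hand or right-hand sum with $\pi_{0,v}^0\cong\pi_{0,v}'^0\cong\pi_v^0$ for almost all split $v$ necessarily have ${\rm BC}_E(\pi_0)\cong{\rm BC}_E(\pi_0')$ by strong multiplicity one for $\GL_{2n}$ over $E$ \cite[Theorem A]{ramakrishnan2018theorem}; hence fixing $\{\pi_v^0\}$ pins down at most one $\Pi$, namely $\Pi={\rm BC}_E(\pi_0)$ for any such $\pi_0$ (and if no such $\pi_0$ exists the statement is vacuous with the empty sum on both sides).

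Next I would vary the test function within its allowed family to extract exactly the $\Pi$-isotypic piece. Concretely: at the split places $v\notin S$, letting $\wt f_v$ (equivalently the unramified Hecke function) range over the spherical Hecke algebra, the joint eigenvalue systems $\{\pi_{0,v}^0\}_{v\notin S}$ occurring with nonzero relative character in Proposition \ref{Prop: weak trace comparison si} are pairwise distinct as functionals on $\bigotimes_{v\notin S}\mathcal H_v^{\mathrm{sph}}$; by linear independence of characters one can choose the spherical components of $\wt f$ so as to kill every contribution with unramified parameters different from those of the fixed $\Pi$, while the fundamental lemma (Theorem \ref{Thm: fundamental lemma si}) guarantees that the corresponding unitary-side test functions $\ul f$ are transferred compatibly, so the same projection happens on the unitary side. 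This is the usual spectral-separation step; it uses only that the sums in Proposition \ref{Prop: weak trace comparison si} are absolutely convergent (given by the simple trace formulas, Propositions \ref{Prop: simple RTF linear} and \ref{Prop: Simple RTF unitary}) and finite after imposing the supercuspidal condition at $v_1$. After this projection, the identity of Proposition \ref{Prop: weak trace comparison si} becomes precisely
\[
\sum_{\tau_1}\left(\sum_{\pi\in\mathcal B_{\tau_1,\tau_2}(\Pi)}\sum_{(\tau_1',\tau_2')}J^{\tau_1',\tau_2'}_\pi(f^{\tau_1',\tau_2'})\right)=2\sum_{\pi_0\in\mathcal B(\Pi)}I^{(\eta,\eta)}_{\pi_0}(\wt f,0),
\]
and I would then record the four listed properties (\ref{property1})--(\ref{property4}) of the $\pi_0$ that survive: (\ref{property1}) is the unramified matching just imposed, (\ref{property2}) is forced by $f_{v_1}\in C^\infty_c(\G'(F_{v_1}))_\Omega$ with $\Omega=\{\pi_{v_1}\}$ together with the spectral expansion in Proposition \ref{Prop: simple RTF linear}, (\ref{property3}) is the ellipticity at $v_2$ built into the definition of nice test functions (and Proposition \ref{Prop: elliptic linear rel character}), and (\ref{property4}) follows from the constraint on $\wt f_v$ at non-split archimedean places in an $S$-compatible transfer (Definition \ref{Def: S-compatible transfer si}(3)) matched against the definite unitary side via Proposition \ref{prop:tr infty}, together with the local base-change description.

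The main obstacle I anticipate is the spectral-separation step itself: one must check that after imposing the supercuspidal condition at $v_1$ and the elliptic support at $v_2$, the remaining automorphic representations contributing to either side form a set on which the families of unramified local parameters $\{\pi_v^0\}_{v\notin S}$ genuinely separate points, i.e. that distinct $\pi_0$ with the same base change still contribute "together" (so that the $\mathcal B(\Pi)$-sum is the correct unit of separation) rather than that two genuinely different $\Pi$'s share almost all unramified parameters --- this is exactly what strong multiplicity one rules out, but one must also ensure that the projection operators built from the spherical Hecke algebra act simultaneously and compatibly on the linear and unitary sides under the fundamental lemma, which is where one invokes that $\wt f_v$ and $\ul f_v$ remain an $(\eta,1,\eta)$-transfer after convolving with a spherical function (a standard but necessary compatibility of transfer with the unramified Hecke action). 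A secondary technical point is the bookkeeping needed to confirm that the set of $\tau_1,\tau_1',\tau_2'$ appearing is exactly as in Proposition \ref{Prop: weak trace comparison si} and that no cancellation between different $\tau_1$ occurs --- but this is immediate from the disjointness statement following \eqref{eqn: regular orbits on the base} and the fact that the projection was performed purely at split and archimedean places, which do not interact with the Hermitian-space bookkeeping at the (unramified) inert places.
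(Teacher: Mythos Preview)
Your proposal is correct and follows essentially the same route as the paper's (omitted, but standard) proof: start from Proposition \ref{Prop: weak trace comparison si}, vary the unramified Hecke functions at split places outside $S$, and separate the spectral contributions by the linear independence of Hecke characters together with strong multiplicity one over $E$.

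One simplification you should be aware of: the ``compatibility of transfer with the unramified Hecke action'' you flag as a potential obstacle is not actually needed. You only vary the test functions at \emph{split} places $v\notin S$, where $\G'_v\simeq\G_{\tau_1,\tau_2,v}$ and the transfer is the trivial one of \S\ref{Section: split transfer} (take $\wt f_v=f_v\otimes\mathbf{1}_{\mathcal O_{F_v,n}}$ with the same Hecke function $f_v$ on both sides). At inert places you keep the unit elements from the $k=0$ fundamental lemma throughout and never convolve there. So there is no ``Hecke-algebra fundamental lemma'' to invoke; the identity of Proposition \ref{Prop: weak trace comparison si} is literally an identity of linear functionals on the split spherical Hecke algebra $\mathcal H_{K'^{S,\mathrm{split}}}(\G'(\A_F^{S,\mathrm{split}}))$, and the separation step is immediate.
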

\begin{Rem}
    Note that the existence of $\Pi$ depends only on whether the set of cuspidal automorphic representations $\pi$ of $\G'(\A_F)$ satisfying \eqref{property1} and \eqref{property2} is non-empty. When this is the case, the right-hand sum has two elements $\mathcal{B}'(\Pi) = \{\pi_0,\pi_0\otimes \eta\}$. 
\end{Rem}
\begin{proof} The proof is standard, with the argument mirroring that of \cite[Proposition 2.10]{ZhangFourier},\cite[Proposition 9.9]{LeslieUFJFL} and \cite[Proposition 11.10]{LXZfund}. We therefore omit the details. \quash{Since we run an analogous argument in Proposition \ref{Prop: global rel char identity ii} and, we include the details here for completeness.

 {Let $\wt{f}$ and $(f^{\tau_1',\tau_2'})$ be {$S$-compatible nice transfers} such that 
\begin{enumerate}
    \item the cuspidal support at $v_1$ is given by the Bernstein component of $\pi_{v_1}$, and
    \item the function $\wt{f}_{v_2} = f'_{v_2}\otimes \Phi_{v_2}$ has regular elliptic support.
\end{enumerate} We may assume that all test functions are factorizable and that $\wt{f} = f'\otimes \Phi$. Enlarging $S$ if necessary, we may also assume that for any split place $v\notin S$, 
\[
\text{$f^{\tau_1',\tau_2'}_v = f_v$ and $\wt{f}_v=f_v\otimes\bfun_{\calo_{F_v,n}}$}
\] are unramified functions which match in the sense of \S \ref{Section: split transfer}.

 Write $\wt{f}=\wt{f}_S\otimes \wt{f}^S$, where $\wt{f}^S = f^S\otimes\bfun_{\calo_{S,n}}$ with $f^S\in \calh_{{K'}^S}(\G'(\A_F^S))$, where $\A_F^S=\prod_{v\notin S}F_v$ and ${K'}^S=\prod_{v\notin S}K_v'$; similarly, we write $f^{\tau_1,\tau_2}=f^{\tau_1,\tau_2}_{S}\otimes f^{{\tau_1,\tau_2},S}$ with $ f^{{\tau_1,\tau_2},S}\in \calh_{{K}_{\tau_1,\tau_2}^S}(\G_{\tau_1,\tau_2}(\A_{F}^S))$. With these notations, Proposition \ref{Prop: weak trace comparison si} gives the identity
\[
\frac{1}{2}\sum_{\tau_1}\left(\sum_{\pi}\sum_{(\tau_1',\tau_2')}J^{\rH_{\tau_1,\tau_2},\rH_{\tau_1',\tau_2'}}_\pi(f^{\tau_1',\tau_2'}_{S}\otimes f^{{\tau_1',\tau_2'},S})\right)=\sum_{\pi_{0}}I_{\pi_{0}}^{(\eta,\eta)}(\wt{f}_S\otimes \wt{f}^S,0),
\]   
where $\pi$ and $\pi_{0}$ run over cuspidal automorphic representations with the prescribed distinction properties and supercuspidal component $\pi_{0,v_1}$ at $v_1$.

For the unramified representations $\pi^S$ (resp. ${\pi_{0}}^S$), let $\lam_{\pi^S}$ (resp. $\lam_{\pi_0^S}$) be the Hecke-trace functionals of $\calh_{{K}_{\tau_1,\tau_2}^S}(\G_{\tau_1,\tau_2}(\A_{F}^S))$ (resp. $\calh_{{K'}^S}(\G'(\A_F^S))$). Then we observe (cf. \cite[proof of Proposition 2.10]{ZhangFourier}) that
\[
I_{\pi_0}^{(\eta,\eta)}([f_S\otimes \Phi_S]\otimes [f^S\otimes \Phi^S],0)=\lam_{\pi_0^S}(f^S)I_{\pi_0}^{(\eta,\eta)}([f_S\otimes \Phi_S]\otimes [\bfun_{{K'}^S}\otimes \Phi^S],0),
\]
and 
\[
J^{\rH_{\tau_1,\tau_2},\rH_{\tau_1',\tau_2'}}_\pi(f^{\tau_1,\tau_2}_{S}\otimes f^{{\tau_1,\tau_2},S})=\lam_{\pi^S}(f^{\tau_1',\tau_2',S})J^{\rH_{\tau_1,\tau_2},\rH_{\tau_1',\tau_2'}}_\pi(f^{\tau_1,\tau_2}_{S}\otimes  \bfun_{K_{\tau_1,\tau_2}^{S}}).
\]
Since we only allow non-identity elements of the local Hecke algebras at places $v$ of $F$ that split in $E$ so that $\G'_v\simeq \G_{\tau_1,\tau_2,v}$, we may view the above two equations as identities of linear functionals on the Hecke algebra $\calh_{{K'}^{S,split}}(\G'(\A_{F}^{S,split}))$, where the superscript $split$ indicates that we only take the product over the split places outside of $S$. 

By the infinite linear independence of Hecke characters (see \cite[Appendix]{BadulescuJRnotes} for a short proof), for any fixed $\bigotimes_v \pi^0_v$ we obtain the sum
\[
\frac{1}{2}\sum_{\tau_1}\left(\sum_{\pi\in \mathcal{B}_{\tau_1,\tau_2}}\sum_{(\tau_1',\tau_2')}J^{{\tau'_1,\tau'_2}}_\pi(f^{\tau_1',\tau_2'})\right)=\sum_{\pi\in \mathcal{B}}I_{\pi_{0}}^{(\eta,\eta)}(\wt{f},0),
\]
where $\mathcal{B}$ is the set of cuspidal automorphic representations of $\G'(\A_{F})$ satisfying \eqref{property1}, \eqref{property2}, and \eqref{property3}, and where $$\Pi\in \{\Pi:  \text{for almost all split primes, }\Pi_v= \mathrm{BC}_E(\pi_v)\text{ for some }\pi\in \mathcal{B}\}.$$ Applying \cite[Theorem A]{ramakrishnan2018theorem}, we see that there is at most one representation appearing on the left-hand side. Furthermore, this implies that $\mathcal{B}=\mathcal{B}(\Pi)$} as well as $\mathcal{B}_{\tau_1,\tau_2}= \mathcal{B}_{\tau_1,\tau_2}(\Pi)$.}
\end{proof}

\begin{Cor}\label{Cor: si comparison of global rel char} If we choose $\ul{f}$ so that $f^{\tau_1',\tau_2'}=0$ except for a single pair $(\tau_1',\tau'_2)$, then we have
    \[
    \sum_{\pi\in \mathcal{B}_{\tau_1,\tau_2}(\Pi)}J^{{\tau'_1,\tau'_2}}_\pi(f^{\tau_1',\tau_2'})=2I_{\pi_{0}}^{(\eta,\eta)}(\wt{f},0)+2I_{\pi_{0}\otimes \eta}^{(\eta,\eta)}(\wt{f},0),
    \]
\end{Cor}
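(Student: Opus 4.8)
The plan is to deduce Corollary \ref{Cor: si comparison of global rel char} from Proposition \ref{Prop: global rel char matching si} by making a specific choice of test functions that isolates a single inner-form pair $(\tau_1',\tau_2')$, and then using the strong multiplicity one theorem together with the remark following Proposition \ref{Prop: global rel char matching si} (which identifies the right-hand sum as running over $\mathcal{B}'(\Pi) = \{\pi_0,\pi_0\otimes\eta\}$).

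First I would invoke Proposition \ref{Prop: global rel char matching si}, which gives the identity
\[
\sum_{\tau_1}\left(\sum_{\pi\in \mathcal{B}_{\tau_1,\tau_2}(\Pi)}\sum_{(\tau_1',\tau_2')}J^{{\tau'_1,\tau'_2}}_\pi(f^{\tau_1',\tau_2'})\right)=2\sum_{\pi_0\in \mathcal{B}(\Pi)}I_{\pi_{0}}^{(\eta,\eta)}(\wt{f},0)
\]
for any $S$-compatible nice transfers $\wt{f}$ and $\ul{f}$ relative to $\Omega = \{\pi_{v_1}\}$. The key observation is that the construction of $S$-compatible $(\eta,\eta)$-transfers in Lemma \ref{Lem: S-compatible si} allows us to choose the collection $\ul{f}=(f^{\tau_1',\tau_2'})$ so that $f^{\tau_1',\tau_2'}=0$ for all pairs except a single chosen pair $(\tau_1',\tau_2')$; indeed, Definition \ref{Def: transfer si} explicitly allows this (``for a fixed $\G_{\tau_1,\tau_2}$, we also say that $\widetilde f$ and $f\in C^\infty_c(\G_{\tau_1,\tau_2}(F))$ are $(\eta,\eta)$-transfers if the former and the collection $\ul f$ defined by $f^{\tau_1',\tau_2'}=f$, $0$ otherwise, are $(\eta,\eta)$-transfers''), and the local fundamental lemma inputs (Theorem \ref{Thm: fundamental lemma si}) are compatible with this choice at the non-archimedean inert places since the only nonzero component there is $f^{\tau_1,\tau_2}$ with $\tau_1,\tau_2$ unramified—which we simply take to be our distinguished $(\tau_1',\tau_2')$ (note that $\tau_2$ is fixed from the start). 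With this choice, the triple sum on the left collapses: since $f^{\tau_1'',\tau_2''}=0$ unless $(\tau_1'',\tau_2'')=(\tau_1',\tau_2')$, only the term $\tau_1 = \tau_1'$ (the unique $\tau_1$ with $V_{\tau_1}\oplus V_{\tau_2}\simeq V_{\tau_1'}\oplus V_{\tau_2'}$ for the relevant pairing) survives and the inner sum over pairs reduces to the single term, giving
\[
\sum_{\pi\in \mathcal{B}_{\tau_1,\tau_2}(\Pi)}J^{{\tau'_1,\tau'_2}}_\pi(f^{\tau_1',\tau_2'}).
\]

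On the right-hand side, by the remark following Proposition \ref{Prop: global rel char matching si}, the set $\mathcal{B}(\Pi)$ of cuspidal automorphic representations $\pi_0$ of $\G'(\A_F)$ with $\mathrm{BC}_E(\pi_0)=\Pi$ and satisfying properties \eqref{property1}--\eqref{property4} equals $\{\pi_0,\pi_0\otimes\eta\}$ (this uses that $\Pi$ is cuspidal, so $\pi_0\otimes\eta\not\simeq\pi_0$, and that base change fibers over a cuspidal $\Pi$ have exactly these two elements by the theory of base change for $\GL_{2n}$). Substituting this into the collapsed identity yields exactly
\[
\sum_{\pi\in \mathcal{B}_{\tau_1,\tau_2}(\Pi)}J^{{\tau'_1,\tau'_2}}_\pi(f^{\tau_1',\tau_2'})=2I_{\pi_{0}}^{(\eta,\eta)}(\wt{f},0)+2I_{\pi_{0}\otimes \eta}^{(\eta,\eta)}(\wt{f},0),
\]
as claimed. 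I do not expect a genuine obstacle here—the content is entirely bookkeeping about which inner forms are switched on and an application of strong multiplicity one already done in Proposition \ref{Prop: global rel char matching si}. The one point requiring a little care is verifying that the single-pair choice of $\ul f$ is genuinely compatible with the $S$-compatibility conditions of Definition \ref{Def: S-compatible transfer si}, in particular conditions (1) and (2) at inert places (where one must check the $k=0$ and $k>0$ test functions only involve the unramified pair, which is exactly the chosen $(\tau_1',\tau_2')$) and condition (3) at non-split archimedean places (where one restricts to positive definite $V_{\tau_1'}\oplus V_{\tau_2'}$, forcing a compatible choice of the distinguished pair); all of this is built into the hypotheses and causes no difficulty.
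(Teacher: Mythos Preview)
Your proposal is correct and follows exactly the approach the paper intends: the paper states the corollary without proof, treating it as immediate from Proposition \ref{Prop: global rel char matching si} together with the remark identifying $\mathcal{B}(\Pi)=\{\pi_0,\pi_0\otimes\eta\}$. Your additional discussion verifying that the single-pair choice of $\ul f$ is compatible with the $S$-compatibility conditions of Definition \ref{Def: S-compatible transfer si} is more detail than the paper supplies, but it is accurate and addresses the only point that might otherwise require justification.
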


\subsubsection{The inert-inert case}
As with Proposition \ref{Prop: weak trace comparison si}, the following comparison of RTFs follows directly from our transfer results (Lemma \ref{Lem: S-compatible ii}), Proposition \ref{Prop: simple RTF linear}, the pre-stabilization in Proposition \ref{Prop: prestab ii}, and the identification of abelian $L$-functions.

\begin{Prop}\label{Prop: weak trace comparison ii}
Fix a finite set of non-archimedean places $S$ of $F$ and let the triple $(f,\wt{f},\wt{f}_\varepsilon)$ be an $S$-compatible complete transfer. Then
\[
J_{ii}({f}) = I^{({\eta,1})}(\wt{f},0)+ I^{({\eta,\eta})}(\wt{f}_{\varepsilon},0).
\]    
\end{Prop}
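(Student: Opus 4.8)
The plan is to derive Proposition~\ref{Prop: weak trace comparison ii} by combining the simple trace formula on the unitary side (Proposition~\ref{Prop: Simple RTF unitary}), the pre-stabilization in Proposition~\ref{Prop: prestab ii}, the simple trace formula on the linear side (Proposition~\ref{Prop: simple RTF linear}), and the fundamental-lemma/transfer statements recalled in \S\ref{Section: fundamental lemmas on variety} together with the $S$-compatibility bookkeeping of Definition~\ref{Def: S-compatible transfer ii} and Lemma~\ref{Lem: S-compatible ii}. First I would apply Proposition~\ref{Prop: prestab ii} to the nice test function $f=\bigotimes_vf_v$, writing, with $\phi=\pi_{ii,!}(f)$,
\[
J_{ii}(f) = \sum_{a\in[\Q_{ii}^{re}\sslash\rH](F)} 2^{-|S_a|}L(0,\rH_a,\eta)\left(\SO^{\rH}(\phi,a)+\Orb^{\rH,\varepsilon}(\phi,a)\right),
\]
so the whole identity splits into a ``stable part'' and an ``$\varepsilon$-part,'' and I will match each to one of the two linear distributions.

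For the stable part, I would invoke the $(\eta,1)$-transfer (item (1) in the definition of complete $(\eta,1)$-transfer, via Theorem~\ref{Thm: fundamental lemma ii} at the unramified places and the transfers at $v\in S$) to identify $\prod_v\SO^{\rH}(\phi_v,a_v)$ with $\prod_v\Orb^{\rH',(\eta,1)}_0(s^{1}_{\X,!}(\wt f_v),y_v)$ for matching $a\leftrightarrow y$, exactly as in the split-inert case of Proposition~\ref{Prop: weak trace comparison si}; here the relevant weighting is $\ul\eta=(\eta,1)$ and there is no archimedean doubling factor since every archimedean place of $F$ splits in $E$, so $|S_a|=0$. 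The abelian $L$-function $L(s,\rH_a,\eta)$ is again the Tate $L$-function of the quadratic character attached to $E_y/F_y$, hence equals $L(s,\T_y,\eta)$ appearing in Proposition~\ref{Prop: simple RTF linear}; running the argument of that proposition backwards gives $\sum_a 2^{-|S_a|}L(0,\rH_a,\eta)\SO^{\rH}(\phi,a)=I^{(\eta,1)}(\wt f,0)$. For the $\varepsilon$-part, I would use the $\varepsilon$-endoscopic fundamental lemma and transfer (Theorem~\ref{Thm: fundamental lemma varepsilon}, item (2) in the complete-transfer definition, and the displayed identity~\eqref{eqn: varepsilon to linear}) to rewrite $\De_\varepsilon(g)\Orb^{\rH\times\rH,\varepsilon}(f_v,x)=\Orb^{\rH',(\eta,\eta)}_0(\wt f_{\varepsilon,v},\ga)$ for matching $\ga\leftrightarrow g$; since $\De_\varepsilon$ is a product of local factors whose global product over matching orbits is $1$ (a Hasse-principle/product-formula statement analogous to $\prod_v\Omega_v=1$ used on the linear side), the product over $v$ of the $\varepsilon$-orbital integrals equals the product of the normalized linear orbital integrals of $\wt f_\varepsilon$, and then the linear simple trace formula of Proposition~\ref{Prop: simple RTF linear} (applied with $\ul\eta=(\eta,\eta)$ to $\wt f_\varepsilon$) gives $\sum_a 2^{-|S_a|}L(0,\rH_a,\eta)\Orb^{\rH,\varepsilon}(\phi,a)=I^{(\eta,\eta)}(\wt f_\varepsilon,0)$. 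Adding the two pieces yields the claimed equality
\[
J_{ii}(f)=I^{(\eta,1)}(\wt f,0)+I^{(\eta,\eta)}(\wt f_\varepsilon,0).
\]

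I expect the main obstacle to be the second step: verifying that the global product of the endoscopic transfer factors $\De_\varepsilon$ over a set of matching rational orbits is trivial, so that the $\varepsilon$-orbital integrals genuinely assemble into the linear distribution $I^{(\eta,\eta)}(\wt f_\varepsilon,\cdot)$ rather than into that distribution twisted by a nontrivial sign; this is the relative analogue of the product formula for Langlands--Shelstad transfer factors and must be extracted from the abelianized Galois cohomology machinery of \cite{LabesseBook} together with the explicit description of $\varepsilon$ via $\eta(\det b_\al)$. The remaining steps are bookkeeping: matching the congruence-level normalizing constants $c_v(k)$ and volume factors at places of $S$, checking that the elliptic support at $v_2$ forces all contributing $y$ to have $E$-nonsplit centralizer so that the relevant $L$-functions are entire, and confirming that the spectral sides indeed attach to cuspidal $\pi$ (resp.\ $\pi_0$) with the prescribed local constraints, all of which parallel the split-inert argument already carried out in Proposition~\ref{Prop: weak trace comparison si}.
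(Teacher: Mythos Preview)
Your approach is correct and is exactly what the paper does: apply the pre-stabilization of Proposition~\ref{Prop: prestab ii}, match the stable term to $I^{(\eta,1)}(\wt f,0)$ via Definition~\ref{Def: transfer ii}/Theorem~\ref{Thm: fundamental lemma ii}, match the $\varepsilon$-term to $I^{(\eta,\eta)}(\wt f_\varepsilon,0)$ via \eqref{eqn: varepsilon to linear}, and identify the abelian $L$-values as in the proof of Proposition~\ref{Prop: weak trace comparison si}. The paper in fact states only this one sentence and leaves the details implicit.

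Your anticipated ``main obstacle'' is not one. The transfer factor is $\De_\varepsilon(g)=\eta(\det b)$ with $b\in\Herm_n^\circ(F)$, so $\det b\in F^\times$ (the Hermitian condition $b^\ast=b$ forces $\det(b)^\sigma=\det(b)$); for a rational orbit representative this gives $\prod_v\eta_v(\det b)=1$ simply because the global quadratic character $\eta_{E/F}$ is trivial on $F^\times$. No abelianized Galois cohomology is needed here beyond what is already packaged in Proposition~\ref{Prop: prestab ii}. Likewise your observation that $|S_a|=0$ is immediate from the standing assumption in the inert--inert case that every archimedean place splits in $E$ (so there are no non-split archimedean places and $E/F$ is everywhere unramified), which is why the statement fixes $S$ to consist of non-archimedean places only.
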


We denote by $\mathcal{B}_{ii}(\Pi)$ the set of cuspidal automorphic representations $\pi$ of $\G_{ii}(\A_{F})$ such that $\Pi=\mathrm{BC}_E(\pi)$. A second application of \cite[Theorem A]{ramakrishnan2018theorem} gives the following comparison of relative characters.

\begin{Prop}\label{Prop: global rel char identity ii}
 For almost all split places $v$, we fix an irreducible unramified representation ${\pi_{v}^0}$. For a fixed split place $v_1$, we fix a supercuspidal representation $\pi_{v_1}$ of $\G'(F_{v_1})$, and fix an auxiliary split place $v_2$. Then there exists at most one cuspidal automorphic representation $\Pi$ of $\G'(\A_{E})$ such that for $S\supset S_{\Pi}\cup\{v_1,v_2\}$, where 
 \[
 S_{\Pi}=\{v \text{ inert place of $F$}: \text{if }w|v,\:\Pi_w \text{ not unramified}\},
 \]
if the triple $(f,\wt{f},\wt{f}_\varepsilon)$ is an $S$-compatible complete transfer relative to $\Omega= \{\pi_{v_1}\}$, then
\begin{equation*}
\sum_{\pi\in \mathcal{B}_{ii}(\Pi)}J^{\rH_{ii},\rH_{ii}}_\pi(f)=I_{\pi_{0}}^{(\eta,1)}(\wt{f},0)+I_{\pi_{0}}^{{(\eta,\eta)}}(\wt{f}_\varepsilon,0)+I_{\pi_{0}\otimes\eta}^{(\eta,1)}(\wt{f},0)+I_{\pi_{0}\otimes \eta}^{{(\eta,\eta)}}(\wt{f}_\varepsilon,0),
\end{equation*}
where $\{\pi_0,\pi_0\otimes \eta\}$ are the cuspidal automorphic representations of $\G'(\A_{F})$ satisfying
\begin{enumerate}
\item$\pi_{0,v}\cong \pi_v^0$ for almost all split $v$, 
\item$\pi_{,0v_1}\simeq \pi_{v_1}$ is our fixed supercuspidal representation,
\item $\pi_{0,v_2}$ is $\rH'$-elliptic,
\end{enumerate}
and where $\Pi=\mathrm{BC}_E(\pi_0)$ is the base change of any $\pi$ appearing in the right-hand sum. 
\end{Prop}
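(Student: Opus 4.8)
The plan is to follow the by-now-standard pattern of separating spectral contributions in a relative trace formula identity via Hecke operators at split places together with strong multiplicity one, exactly as in the cited references \cite[Proposition 2.10]{ZhangFourier}, \cite[Proposition 9.9]{LeslieUFJFL}, and \cite[Proposition 11.10]{LXZfund}. The starting point is Proposition \ref{Prop: weak trace comparison ii}, which for any $S$-compatible complete transfer $(f,\wt f,\wt f_\varepsilon)$ gives the global identity
\[
\sum_\pi J_\pi^{\rH_{ii},\rH_{ii}}(f) = \sum_{\pi_0} I^{(\eta,1)}_{\pi_0}(\wt f,0)+\sum_{\pi_0}I^{(\eta,\eta)}_{\pi_0}(\wt f_\varepsilon,0),
\]
where on the left $\pi$ runs over cuspidal automorphic representations of $\G_{ii}(\A_F)$ that are supercuspidal at $v_1$ and $(\rH_{ii},\rH_{ii})$-elliptic at $v_2$, and on the right $\pi_0$ runs over $S$-unramified cuspidal automorphic representations of $\G'(\A_F)$ satisfying the analogous supercuspidal/elliptic conditions at $v_1,v_2$.

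First I would factor each test function as $f = f_S\otimes f^S$, $\wt f = \wt f_S\otimes \wt f^S$, $\wt f_\varepsilon = \wt f_{\varepsilon,S}\otimes \wt f^S$, where the part outside $S$ is spherical, and (after enlarging $S$ if needed so that all split places outside $S$ carry unramified data matching under the split transfer of \S \ref{Section: split transfer}) restrict the spherical part to the Hecke algebra $\calh_{K^{S,\mathrm{split}}}$ at the split places outside $S$. Here $\wt f_\varepsilon$ and $\wt f$ have the same spherical part $\wt f^S$ because the endoscopic transfer modifies data only at inert places and the two auxiliary split places. Eulerianity of all relative characters — Proposition \ref{Prop: factorize the linear rel char} on the linear side and the usual factorization of $J_\pi$ on the unitary side — implies that each summand acquires a factor $\lambda_{\pi^S}(f^{S})$ or $\lambda_{\pi_0^S}(f^S)$, the Hecke trace functional. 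Viewing both sides as linear functionals on $\calh_{K^{S,\mathrm{split}}}(\G'(\A_F^{S,\mathrm{split}}))$ — using that at split places $\G'_v\simeq\G_{ii,v}$ and base change is an isomorphism of unramified Hecke algebras — the linear independence of distinct Hecke characters (see \cite[Appendix]{BadulescuJRnotes}) lets me isolate, for each fixed collection $\{\pi^0_v\}$ of split unramified components, the contribution of exactly those automorphic representations whose split unramified local base change matches $\{\mathrm{BC}_{E_v}(\pi^0_v)\}$.

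Next I would invoke strong multiplicity one in the form of \cite[Theorem A]{ramakrishnan2018theorem}: once the split unramified components of the base change are pinned down, there is at most one cuspidal automorphic representation $\Pi$ of $\G'(\A_E)$ realized as such a base change, so the sums reduce to running over the finite fibers $\mathcal{B}_{ii}(\Pi)$ and $\mathcal{B}(\Pi)$. The key observation — already recorded in the remark after Proposition \ref{Prop: global rel char matching si} — is that since $\pi_0$ is of symplectic type with cuspidal base change, $\mathcal{B}(\Pi) = \{\pi_0,\pi_0\otimes\eta\}$ has exactly two elements (the twist by $\eta=\eta_{E/F}$ has the same base change), which is where the four terms $I^{(\eta,1)}_{\pi_0},\,I^{(\eta,\eta)}_{\pi_0}(\wt f_\varepsilon),\,I^{(\eta,1)}_{\pi_0\otimes\eta},\,I^{(\eta,\eta)}_{\pi_0\otimes\eta}(\wt f_\varepsilon)$ on the right-hand side come from. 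The constraint \eqref{property4}-type condition at non-split archimedean places is imposed here simply by the assumption that every archimedean place splits in $E$ in the inert-inert setting, so the archimedean subtlety of the split-inert case does not arise and no compact-unitary-group input is needed. Since the proof is entirely parallel to the split-inert case treated in Proposition \ref{Prop: global rel char matching si} and to the references cited there, I would simply indicate that the argument mirrors those and omit the routine details.

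The main obstacle — though it is really a bookkeeping rather than conceptual one — is ensuring that the endoscopic test function $\wt f_\varepsilon$ fits cleanly into this separation: one must check that $\wt f_\varepsilon$ and $\wt f$ genuinely share the same spherical component outside $S$ (guaranteed by the definition of $S$-compatible complete transfer, since Theorem \ref{Thm: fundamental lemma varepsilon} produces the standard $k=0$ test function at inert $v\notin S$), so that the Hecke-operator argument applies simultaneously to both terms on the right. A minor point to be careful about is that the relative character $I^{(\eta,\eta)}_{\pi_0}(\wt f_\varepsilon,0)$ is defined via the orbital-integral identity \eqref{eqn: varepsilon to linear} rather than directly as a period, so one should note that it still factors as an Euler product over the local functionals with the appropriate spherical normalization at unramified places; this follows from Proposition \ref{Prop: factorize the linear rel char} applied to $\wt f_\varepsilon$. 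Once these are in place, the proof is a direct transcription of the split-inert argument, and I would present it as such.
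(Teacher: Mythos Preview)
Your proposal is correct and follows exactly the approach the paper intends: the paper's own proof simply states that the argument is analogous to Proposition \ref{Prop: global rel char matching si} (which in turn defers to the standard Hecke-separation-plus-strong-multiplicity-one argument of \cite{ZhangFourier, LeslieUFJFL, LXZfund}) and omits the details. You have correctly fleshed out those details, including the bookkeeping point about $\wt f$ and $\wt f_\varepsilon$ sharing spherical components at split places outside $S$.
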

\begin{proof}
    The proof is analogous to that of Proposition \ref{Prop: global rel char matching si}. We omit the details.
\end{proof}

\subsection{Proof of Theorems \ref{Thm: global si result period to Lvalue} and \ref{Thm: global ii result period to Lvalue}}\label{Section: proof P to L}
We will prove both statements at once.
We thus assume that $(\G,\rH)$ is a unitary symmetric pair satisfying the quasisplit assumption of Theorem \ref{Thm: global si result period to Lvalue}; this implies that $\G$ is unramified at every inert place so that we have access to the local integral models in \S \ref{Section: integral models}. Let $\pi=\bigotimes_v\pi_v$ be a cuspidal automorphic representation of $\G(\A_{F})$ which is supercuspidal at a split place $v_1$ and $\rH$-elliptic at another split place $v_2$. We assume that $\pi$ is $\rH$-distinguished. Set $S= S_{\pi}$.

We construct an $(\ul{\eta},S)$-transferable nice test function $f= \bigotimes_v f_v$ for which the left-hand side of the simple trace formula in Proposition \ref{Prop: Simple RTF unitary} is non-zero. By assumption, there exists $\varphi\in V_\pi$ such that $\mathcal{P}_{\rH}(\varphi)\neq0$. For each inert place $v$, smoothness of $\pi_v$ implies that there exists $k_v\in \zz_{\geq0}$ such that $\pi_{v}(b_{v})\varphi\neq 0$, where $b_{v}$ is the scalar multiple of $\bfun_{\G_{k_v}(\calo_{F_v})}$ (cf. \S \ref{Section: integral models}) which satisfies $b_{v}\ast b^\vee_{v} = b_{v}$; this is possible by rescaling by the appropriate volume factor. In particular, $f_v= b_{k_v}$ is a function of positive type. For each split place $v$, we set $f_v = b_v\ast b^\vee_v$ for some $b\in C_c^\infty(\G(F_v))$ such that $\pi_{v}(b_{v})\varphi\neq 0$. Setting $f_1 = \bigotimes_vf_v$, it follows that $f_1= b\ast b^\vee$ where $b=\bigotimes_vb_v$ is of positive type such that 
\[
J^{\rH,\rH}_\pi(f_1) = \sum_{\varphi} |\mathcal{P}_{\rH}(\pi(b)\varphi)|^2>0.
\]

We now apply multiplicity one of the local functionals at the split places $v_1$ and $v_2$ \cite[Theorem 1.1]{JacquetRallis}. The global period integral $\mathcal{P}_{\rH}$ defines an invariant linear functional on $\pi=\bigotimes_v\pi_v$ which by local uniqueness at $v_1$ and $v_2$ decomposes
        \[
        \mathcal{P}_{\rH}=\lambda^{v_1,v_2} \cdot\lambda_{v_1}\cdot \lambda_{v_2},
        \]
 where for $v\in\{v_1,v_2\}$, the local functional $\lambda_{v}$ is the unique (up to scalar) $\rH(F_v)$-invariant linear functional on $\pi_v$. Writing $f_1= f^{v_1,v_2}f_{1,v_1}f_{1,v_2}$ so that $f^{v_1,v_2}= b^{v_1,v_2}\ast (b^{v_1,v_2})^\vee$ if of positive type on $\G(\A_{F}^{v_1,v_2})$, the relative character becomes
\[
J^{\rH,\rH}_\pi(f_1)=J_{\pi_{v_1}}(f_{1,v_1}) J_{\pi_{v_2}}(f_{1,v_2})\left(\sum_{\varphi^{v_1,v_2}} |\lambda^{v_1,v_2}(\pi^{v_1,v_2}(b^{v_1,v_2})\varphi^{v_1,v_2})|^2\right),
\]
where $J_{\pi_v}$ is the local relative character associated with $\pi_v$ and $\lam_v$. Set $\Pi=\mathrm{BC}_E(\pi)$ to be the weak base change of $\pi$. Then for any other $\pi'$ with $\Pi= \mathrm{BC}_E(\pi')$, we have a similar expression $J^{\rH,\rH}_\pi(f_1)=\al_{v_1}(f_{1,v_1}) \al_{v_2}(f_{1,v_2})J_{\pi'}^{v_1,v_2}(f^{v_1,v_2})$, where
\[
J_{\pi'}^{v_1,v_2}(f^{v_1,v_2}):=\sum_{\varphi^{v_1,v_2}} |\lambda^{v_1,v_2}({\pi'}^{v_1,v_2}(b^{v_1,v_2})\varphi^{v_1,v_2})|^2\geq 0.
\]
We now augment (if necessary) the function $f_{v_1}$ at $v_1$ to lie in the Bernstein block of $\pi_{v_1}$, and augment the function $f_{v_2}$ at $v_2$ to be an elliptically-supported function (cf. Definition \ref{elliptic test functions}).  By our assumption of $\rH$-ellipticity at $v_2$, we may find functions that $\lambda_{v_1}(f_{v_1}) \lambda_{v_2}(f_{v_2})\neq0$. Then $f:= f^{v_1,v_2}f_{v_1}f_{v_2}$ is an $(S,\ul{\eta})$-transferable nice test function on which $J_{\pi}^{v_1,v_2}(f^{v_1,v_2})>0$ and
\[
J_{\pi}(f)=J_{\pi_{v_1}}(f_{v_1}) J_{\pi_{v_2}}(f_{v_2})J_{\pi}^{v_1,v_2}(f^{v_1,v_2})\neq0.
\]


Applying Lemma \ref{Lem: S-compatible si} and Lemma \ref{Lem: S-compatible ii} (depending on the value of $\ul{\eta}$), we obtain $\wt{f}\text{ (and }\wt{f}_\varepsilon)\in C_c^\infty(\G'(\A_{F})\times \A_{F,n})$ for which Corollary \ref{Cor: si comparison of global rel char} (respectively, Proposition \ref{Prop: global rel char identity ii}) holds. 
Therefore at least one of terms in the right-hand sum in  Corollary \ref{Cor: si comparison of global rel char} (respectively, Proposition \ref{Prop: global rel char identity ii}) is non-zero,  Corollary \ref{cor nonvanish} implies the non-vanishing of the appropriate $L$-values. \qed

\subsection{Proof of Theorems \ref{Thm: global si result Lvalue to period} and \ref{Thm: global ii result Lvalue to period}}\label{Section: proof L to P} Suppose now that $\pi_{0}$ is a cuspidal automorphic representation of $\G'(\A_{F})$ of symplectic type, satisfying 
\begin{enumerate}
    \item there is a non-archimedean split place $v_1$ such that $\pi_{0,v_1}$ is supercuspidal,
    \item there is a non-archimedean split place $v_2$ such that $\pi_{0,v_2}$ is $\rH'$-elliptic,
        \item for each inert non-archimedean place $v$, $\pi_{0,v}$ is unramified.
        \item for each non-split archimedean place $v$, $BC_{E_v}(\pi_{0,v})=BC_{E_v}(\tau_v)$ with $\tau_v$ an irreducible representation of a compact unitary group when $v$ is non-split archimedean,
\end{enumerate}
 In particular, we set $S= S_{\infty}\cup\{v_1,v_2\}$, where $S_{\infty}$ is the set of non-split archimedean places (which is assumed to be empty in the inert-inert case).  Since Theorems \ref{Thm: global si result period to Lvalue} and \ref{Thm: global ii result period to Lvalue} contain one direction of the equivalence, we only focus on the $(1)\implies (2)$ direction of both theorems.

\subsubsection{Proof of Theorem \ref{Thm: global si result Lvalue to period}} 
%
Assume that $L(1/2,BC_E(\pi_0))\neq0.$ We claim that there exists an elliptic nice test function $\wt{f}=\bigotimes_v\wt{f}_v$ satisfying the requirements of belonging to an $((\eta,\eta),S)$-compatible transfer (cf. Definition \ref{Def: S-compatible transfer si}) such that  $I_{\pi_{0}}^{(\eta,\eta)}(\wt{f}, 0)\neq0$. As we see below, the existence of such $\wt{f}$ suffices to prove the theorem. Given $\wt{f}$, Proposition \ref{Prop: factorize the linear rel char} tells us that
\[
I_{\pi_{0}}^{(\eta,\eta)}(\wt{f},0) = \mathcal{L}^{(\eta,1,\eta)}_{\pi_0}\prod_vI^{(\eta,\eta),\natural}_{\pi_{0,v}}(\wt{f}_v),
\]
so that the claim reduces to verifying the non-vanishing of the local relative characters for such a test function.

To see that this suffices to prove the theorem, given such a function, Proposition \ref{Prop: factorize the linear rel char} tells us that
\[
I_{\pi_{0}}^{(\eta,\eta)}(\wt{f},0) = \mathcal{L}^{(\eta,1,\eta)}_{\pi_0}\prod_vI^{(\eta,\eta),\natural}_{\pi_{0,v}}(\wt{f}_v),
\]
where $\mathcal{L}^{(\eta,1,\eta)}_{\pi_0}\neq0$ by the assumption that the assumption that $L(1/2,\mathrm{BC}_E(\pi_0))\neq0$. Now consider
   \[
I_{\pi_{0}}^{(\eta,\eta)}(\wt{f},0)+I_{\pi_{0}\otimes \eta}^{(\eta,\eta)}(\wt{f},0) = \mathcal{L}^{(\eta,1,\eta)}_{\pi_0}\left(\prod_vI^{(\eta,\eta),\natural}_{\pi_{0,v}}(\wt{f}_v)+\prod_vI^{(\eta,\eta),\natural}_{\pi_{0,v}\otimes\eta_{v}}(\wt{f}_v)\right).
    \]
For each inert non-archimedean place, Proposition \ref{Prop: BF}\eqref{unramified identity BF} and the discussion after Proposition \ref{Prop: FJ periods} implies that 
\[
I^{(\eta,\eta),\natural}_{\pi_{0,v}}(\wt{f}_v)=I^{(\eta,\eta),\natural}_{\pi_{0,v}\otimes\eta_{v}}(\wt{f}_v)=1;
\]
for each non-split archimedean place, Lemma \ref{lem: branc compact} implies that $\pi_{0,v}\simeq \pi_{0,v}\otimes \eta_v$, so we similarly obtain $I^{(\eta,\eta),\natural}_{\pi_{0,v}}(\wt{f}_v)=I^{(\eta,\eta),\natural}_{\pi_{0,v}\otimes\eta_{v}}(\wt{f}_v)$. Finally, since $\eta_v=1$ when $v$ is split the two local relative characters agree at such places and we obtain
\[
I_{\pi_{0}}^{(\eta,\eta)}(\wt{f},0)+I_{\pi_{0}\otimes \eta}^{(\eta,\eta)}(\wt{f},0) = \mathcal{L}^{(\eta,1,\eta)}_{\pi_0}\left(2\prod_vI^{(\eta,\eta),\natural}_{\pi_{0,v}}(\wt{f}_v)\right)\neq0.
\]
Since $\wt{f}$ is $((\eta,\eta),S)$-transferable, Lemma \ref{Lem: S-compatible si} gives the existence of an $((\eta,\eta),S)$-compatible transfer $\ul{f}$. The result now follows from Proposition \ref{Prop: global rel char matching si}.

We now construct $\wt{f} = \bigotimes_v\wt{f}_v$ as above. Note that away from the places $v_1,v_2$, the requirement for $\wt{f}$ to lie in an $((\eta,\eta),S)$-compatible transfer is that $\wt{f}_v$ be the test vector appearing in the $k=0$ case of Theorem \ref{Thm: fundamental lemma si} for each inert non-archimedean place $v$, and that $\wt{f}_v = f_v'\otimes \Phi_v$ with $f_v'$ supported on regular semi-simple orbits which transfer from a compact unitary group.


When $v$ is inert and non-archimedean, we have already seen that if $\wt{f}_v= \bfun_{\G'(\calo_{F_v})}\otimes\bfun_{\calo_{F_v,n}}$ is the test function occurring in Theorem \ref{Thm: fundamental lemma si}, then $I^{(\eta,\eta),\natural}_{\pi_{0,v}}(\wt{f}_v)\neq0.$
\begin{Rem}
    The reason we are unable to consider deeper ramification in this direction is our lack of information on the non-vanishing of the local relative character $I_{\pi_{0}}^{(\eta,\eta)}(\bfun_{\G'_k(\calo_{F_v})}\otimes\bfun_{\calo_{F_v,n}},0)$ when $k>0$.
\end{Rem}
\noindent
When $v$ is a non-split archimedean place, the existence of $\wt{f}_v$ is given by Theorem \ref{thm:arch nonzero lin char}. For $v\neq v_1,v_2$ split, any choice of $\wt{f}_v$ such that $I^{(\eta,\eta),\natural}_{\pi_{0,v}}(\wt{f}_v)\neq0$ is sufficient as soon as we prove that such a function exist. To see this, note that since $\pi_0$ is assumed symplectic with that $L(1/2,BC_E(\pi_0))\neq0$, so that $Z^{FJ}(0,-,\eta)$ is non-zero on $\pi_0$ by Proposition \ref{Prop: FJ periods}. By \eqref{eqn:factorize FJ}, we see that $\pi_{0,v}$ has a non-zero $\rH'(F_v)$-functional $\lam$ so that the local relative character
\[
 J(f'_v)=\sum_{\phi\in OB(\pi_{0,v})}\lam(\pi_{0,v}(h)\phi)\overline{\lam(\phi)}
\]
is non-zero for some $f'_v\in C^\infty(\G'(F_v))$. By \eqref{eqn: split rel char identity} in the proof of Proposition \ref{Prop: elliptic linear rel character}, for appropriate $\Phi_v\in C_c^\infty(F_{v,n})$, there exists $0\neq c\in \cc$ such that
\[
I^{(\eta,\eta),\natural}_{\pi_{0,v}}({f}'_v\otimes \Phi_v)=cJ(f'_v),
\]
proving the claim.

Finally, we consider the places $v_1$ and $v_2$. Since $I^{(\eta,\eta)}_{\pi_{0,v_1}}$ is a non-zero distribution, it is clear from the Bernstein decomposition that we may assume $\wt{f}_{v_1}= f_{v_1}'\otimes \Phi_{v_1}$ where  $f_{v_1}'\in C_c^\infty(\G'(F_{v_1}))_{\Omega_1}$, where $\Omega_1$ is the supercuspidal Bernstein block associated to $\pi_{0,v_1}$. Finally, that $f_{v_2}'\in C_c^\infty(\G'(F_{v_2}))$ may be chosen to have elliptic support follows from Proposition \ref{Prop: elliptic linear rel character}.\qed

\quash{ To simplify notation, we set $v=v_2$. Since this is a split place, we consider the relative character
    \[
    I^{(1,1)}_{\pi_{0,v}}(f'_{v},\Phi_{v}) = \sum_{W_v}\frac{Z^{BF,\natural}(\pi_{0,v}(f'_{v_2})W_v,\Phi_{v},1,1,0,0)\ov{Z^{FJ,\natural}(0,W_v,1)}}{[W_v,W_v]^\natural_{\pi_{0,v}}}.
    \]
We now use the local functional equation for the Bump--Friedberg integral \cite[Proposition 4.16]{MatringeBF}\footnote{Note the two-variable functional equation is stated in \cite[Corollary 3.2]{MatringeBF2}, but with a typo. On the left-hand side of (2) in \emph{ibid.}, the entry $1-s$ should read $1/2-s$, which may be readily verified by comparing to \cite[Proposition 4.16]{MatringeBF}.} to see that there exists a non-zero number $ \epsilon(\pi_{0,v},\psi_v,0,0)\in \cc^\times$ such that
\[
 \epsilon(\pi_{0,v},\psi_v,0,0)Z^{BF,\natural}(\pi_{0,v}(f'_{v})W,\Phi_{v},1,1,0,0) =Z^{BF,\natural}(\pi^\vee_{0,v}({f'}^\theta_v)\hat{W},\widehat{\Phi}_v,1,1,\frac{1}{2},0),
\]
where ${f'}^\theta_{v_2}(g) = f'_{v_2}(g^\theta)$, $\hat{W}(g) = W(g^\theta)$, and where $\widehat{\Phi}$ denotes the Fourier transform with respect to the additive character $\psi_v$.   Since $\eta_{0,v}=1$, we may use Proposition \ref{Prop: BF} (\ref{Lem: BF local identity on functionals}) to find that
   \begin{align*}
        I^{(1,1)}_{\pi_{0,v}}(f'_{v},\Phi_{v}) = \epsilon(\pi_{0,v},\psi_v,0,0)\Phi_v(0) \sum_{W_v}\frac{Z^{FJ,\natural}(0,\pi^\vee_{0,v}(\widehat{f'}_v)\hat{W}_v,1)\ov{Z^{FJ,\natural}(0,\hat{W}_v,1)}}{[\hat{W}_v,\hat{W}_v]^\natural_{\pi^\vee_{0,v}}}.
   \end{align*}
   The right-hand side is a non-zero $(\rH',\rH')$-invariant relative character for the $\rH'$-elliptic representation $\pi_{0,v}^\vee$. Thus by definition of ellipticity, there exists $f_{v}'\in C_c^\infty(\G'(F_v))$ with  $\rH'$-elliptic support for which 
   \[
   I^{(1,1)}_{\pi_{0,v}}(f'_{v},\Phi_{v})\neq0,
   \]
   and the theorem follows.\qed
}
\subsubsection{Proof of Theorem \ref{Thm: global ii result Lvalue to period}} Recall now that $S= \{v_1,v_2\}$. In view of Propositions \ref{Prop: global rel char identity ii}, it suffices to show that the assumption implies we can construct $\wt{f}$ and $\wt{f}_\varepsilon$ which may fit into a $S$-compatible complete transfer and for which 
\begin{equation}\label{eqn: sum of linear traces}
  I_{\pi_{0}}^{(\eta,1)}(\wt{f},0)+I_{\pi_{0}}^{{(\eta,\eta)}}(\wt{f}_\varepsilon,0)+I_{\pi_{0}\otimes\eta}^{(\eta,1)}(\wt{f},0)+I_{\pi_{0}\otimes \eta}^{{(\eta,\eta)}}(\wt{f}_\varepsilon,0)\neq 0.  
\end{equation}

We now construct an elliptic nice function $\wt{f}=\bigotimes_v\wt{f}_v$ for which
\begin{enumerate}
    \item\label{all 4 terms} $\prod_vI^{(\eta,\eta),\natural}_{\pi_{0,v}}(\wt{f}_v) = \prod_vI^{(\eta,\eta),\natural}_{\pi_{0,v}\otimes\eta_v}(\wt{f}_v)=\prod_vI^{(\eta,1),\natural}_{\pi_{0,v}}(\wt{f}_v)=\prod_vI^{(\eta,1),\natural}_{\pi_{0,v}\otimes\eta_v}(\wt{f}_v)\neq0$, and
    \item there exists $f\in C_c^\infty(\G_{ii}(\A_{F}))$ such that $(f,\wt{f},\wt{f})$ forms a complete transfer.
\end{enumerate}

The function $\wt{f}$ is built precisely as in the previous section. When $v$ is inert, we choose $\wt{f}_v$ to be the test function occurring in the $k=0$ case of Theorem \ref{Thm: fundamental lemma ii}. For $v\neq v_1,v_2$ split, we saw in the previous section that the local relative character is non-vanishing so any choice of $\wt{f}_v$ such that $I^{(\eta,\eta)}_{\pi_{0,v}}(\wt{f}_v)\neq0$ is sufficient. For $i\in \{1,2\}$, since $v_i$ is split, the same arguments as in the previous section show we may assume $\wt{f}_{v_1}= f_{v_1}'\otimes \Phi_{v_1}$ where $f_{v_1}'\in C_c^\infty(\G'(F_{v_1}))_{\Omega_1}$, where $\Omega_1$ is the supercuspidal Bernstein block associated to $\pi_{0,v_1}$ and that $f_{v_2}'\in C_c^\infty(\G'(F_{v_2}))$ to have elliptic support.

Thus, for each inert place,
\[
I^{(\eta,1),\natural}_{\pi_{0,v}}(\wt{f}_v) =I^{(\eta,1),\natural}_{\pi_{0,v}\otimes\eta_v}(\wt{f}_v)=I^{(\eta,\eta),\natural}_{\pi_{0,v}}(\wt{f}_v)=I^{(\eta,\eta),\natural}_{\pi_{0,v}\otimes\eta_v}(\wt{f}_v)=1,
\]
and since $\eta_v=1$ when $v$ is split all four local relative characters agree in this case, so we obtain \eqref{all 4 terms}.
By Lemma \ref{Lem: S-compatible ii}, we may find a transfer $f=\bigotimes_vf_v\in C_c^\infty(\G_{ii}(\A_{F}))$. It is then clear from Theorem \ref{Thm: fundamental lemma varepsilon} that $(f,\wt{f},\wt{f})$ forms a $S$-compatible complete transfer.

Proposition \ref{Prop: factorize the linear rel char} now implies that the sum in \eqref{eqn: sum of linear traces} is
   \begin{equation}\label{eqn: sum of Lvalues}
       \left(\mathcal{L}^{(\eta,1,1)}_{\pi_0}+\mathcal{L}^{(\eta,1,\eta)}_{\pi_0}+\mathcal{L}^{(\eta,\eta,1)}_{\pi_0}+\mathcal{L}^{(\eta,\eta,\eta)}_{\pi_0}\right)\prod_vI^{(\eta,1),\natural}_{\pi_{0,v}}(\wt{f}_v).
   \end{equation}
 Recalling that 
\[
\mathcal{L}^{(\eta_0,\eta_1,\eta_2)}_{\pi_0}=\frac{L(1/2,\pi_0\otimes \eta_1)L(1/2,\pi_0\otimes \eta_2)L(1,\pi_0,\wedge^2\otimes\eta_0)}{L(1,\pi_0,\Sym^2)},
\]
and noting that
\[
L(1,\pi_0,\Sym^2)= L(1,\pi_0\otimes \eta,\Sym^2), \quad L(1,\pi_0,\wedge^2)= L(1,\pi_0\otimes \eta,\wedge^2),
\]\eqref{eqn: sum of Lvalues} is a non-zero value times
\[
L(1/2,\pi_0)^2+L(1/2,\mathrm{BC}_E(\pi_0))+L(1/2,\mathrm{BC}_E(\pi_0))+L(1/2,\pi_0\otimes \eta)^2 = (L(1/2,\pi_0)+L(1/2,\pi_0\otimes \eta))^2.
\]
The two summands are central values of standard $L$-functions of cuspidal automorphic representations of symplectic type. By a theorem of Lapid--Rallis \cite[Theorem 1]{LapidRallis}, such values are always non-negative. As noted in \S \ref{Section: BF integral}, the $L$-functions arising from the Bump--Friedberg integral and the Friedberg--Jacquet are known to agree with the standard $L$-functions considered in \emph{ibid.} by the local work of \cite{MatringeBF,MatringeBF2} in the non-archimedean setting and \cite{Ishii} in the archimedean setting.

This implies that 
\[
L(1/2,\pi_0),\;L(1/2,\pi_0\otimes \eta)\geq0;
\]
since our assumption is that at least one factor is non-zero (hence positive), the sum must be strictly positive and the theorem follows.\qed

\appendix

\section{An ellipticity result at non-archimedean places}\label{Appendix: elliptic} 

We assume $F$ is a non-archimedean local field of characteristic zero. In this section, we prove Theorem \ref{Thm: FJ elliptic}, which we recall here for the convenience of the reader. 
\begin{Thm}
    Let $(\G',\rH')=(\G,\rH)=(\GL_{2n},\GL_n\times \GL_n)$ be the split unitary symmetric pair over $F$. Let $\pi$ be a supercuspidal representation of $\G'(F)$ and assume that $\Hom_{\rH(F)}(\pi,\cc)\neq0$. Then $\pi$ is $\rH'$-elliptic. More precisely, for any non-zero $\lam\in \Hom_{\rH(F)}(\pi,\cc)$, there exists $f\in C^\infty_c(\G(F)^{ell})$ supported in the $\rH\times \rH$-elliptic locus such that 
    \[
    J_{\lam,\lam}(f):=\sum_{v\in OB(\pi)}\lam(\pi(f)v)\overline{\lam(v)}\neq0
    \]
\end{Thm}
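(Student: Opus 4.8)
The plan is to reduce the statement to a local spectral/geometric identity already recorded in the main text, namely the identity \eqref{eqn: split rel char identity} expressing $I^{\ul{\eta},\natural}_{\pi}(f'\otimes\Phi)$ (for $\ul{\eta}=(1,1,1)$) in terms of Friedberg--Jacquet relative characters, and then to analyze the support of the relevant orbital integrals. Recall from \S\ref{section: linear orbital ints} that after pushing forward along the contraction $s_\X$ and unfolding the Eisenstein/Godement--Jacquet weight, the linear relative character $I^{\ul\eta,\natural}_\pi(f'\otimes\Phi)$ is built from local Friedberg--Jacquet functionals, which by Proposition \ref{Prop: FJ periods} and its local avatar in \cite{FriedbergJacquet} compute the Shalika/exterior-square data of $\pi$. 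Since $\pi$ is supercuspidal with $\Hom_{\rH'(F)}(\pi,\cc)\ne0$, it is of Shalika type, so the associated local functional $Z^{FJ}_v(0,-,1)$ and the Bessel-type relative character $J_{\lam,\lam}$ are non-zero distributions on $C^\infty_c(\G'(F))$.

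First I would set up the local Kirillov/Bernstein--Zelevinsky machinery: a supercuspidal $\pi$ has matrix coefficients that are compactly supported modulo center, so the relative character $J_{\lam,\lam}(f) = \sum_{v\in OB(\pi)}\lam(\pi(f)v)\overline{\lam(v)}$ is a well-defined distribution on $C^\infty_c(\G'(F))$, and it descends to a distribution on the symmetric variety $\X(F)=\G'(F)/\rH'(F)$ which is supported on the $\rH'$-stable closed orbits (this is where local multiplicity one of Shalika/FJ functionals for supercuspidals, via \cite{JacquetRallis}, is invoked). The content of the theorem is that this distribution does not vanish when restricted to the \emph{elliptic} part of $\X(F)$ — equivalently, on the locus where the centralizer torus $T_y\simeq\Res_{L/F}(\Gm)$ with $L/F$ a degree-$n$ field extension.

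The key step — and the main obstacle — is to show that the non-elliptic orbital integrals cannot account for the entire non-vanishing; i.e. that one cannot have $J_{\lam,\lam}(f)=0$ for \emph{all} $f$ supported on $\G'(F)^{ell}$ while $J_{\lam,\lam}\ne0$ overall. My plan here is to argue by the spectral/geometric duality built into \eqref{eqn: split rel char identity}: the functional equation for the Bump--Friedberg integral \cite[Prop. 4.16]{MatringeBF} converts $I^{\ul\eta,\natural}_\pi$ into a product of two Friedberg--Jacquet functionals for $\pi^\vee$, and on the other side the same quantity has a geometric expansion as a sum of (normalized) orbital integrals over $\rH'(F)\backslash\X^{rss}(F)$. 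A germ-expansion / Shalika-germ argument (in the style of \cite{XueZhang}, whose Appendix this strengthens, and using the regularity properties of the relevant transfer factors from Definition \ref{Def: transfer factor on X}) shows that the leading term of $J_{\lam,\lam}$ near the most degenerate orbits is controlled by — in fact proportional to — the value of a non-zero $L$-factor combination $L(1,\pi,\wedge^2)L(1/2,\pi)$, which is non-zero by the Shalika hypothesis. Concretely, one shows that a ``bump function'' supported in a small neighborhood of an elliptic orbit $y_0$ with $T_{y_0}$ anisotropic modulo center already produces a non-zero value of the relevant Friedberg--Jacquet orbital integral, because the local integral $Z^{FJ}_v(0,-,1)$ is non-zero on $\pi$ and the elliptic orbits form a non-empty open subset of the $\rH'$-regular locus of $\X(F)$ over which the orbital-integral map is surjective onto $C^\infty_c$ (the elliptic $T_y$ being compact mod center, so $T_y(F)\backslash\rH'(F)$ is where the integral genuinely ``lives'').

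Finally I would assemble the pieces: choose $\Phi$ with $\Phi(0)=1$; by the surjectivity of the orbital-integral map on the elliptic locus (compactness of $T_y/Z$), pick $f'$ with $\rH'$-elliptic support realizing a non-zero value of $\sum_W \overline{Z^{FJ,\natural}(0,\hat W,1)}Z^{FJ,\natural}(0,\pi^\vee(\widehat{f'})\hat W,1)/[\hat W,\hat W]$; then \eqref{eqn: split rel char identity} gives $I^{\ul\eta,\natural}_\pi(f'\otimes\Phi)\ne0$, and unwinding the geometric expansion of $I^{\ul\eta,\natural}_\pi$ identifies this with $J_{\lam,\lam}$ applied to an elliptically-supported function built from $f'$ and $\Phi$. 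The honest difficulty, to which I devote the Appendix, is the germ/support analysis making the elliptic locus ``see'' the non-vanishing — ruling out the a priori possibility that all the mass of the relative character is concentrated on strictly smaller (non-elliptic) centralizer tori; here the supercuspidality of $\pi$ (hence compact support of coefficients, hence no continuous-spectrum contamination) and the explicit unramified computation $Z^{FJ,\natural}_v=1$ are the essential inputs.
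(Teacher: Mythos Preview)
Your proposal has a genuine circularity at its core. You invoke the identity \eqref{eqn: split rel char identity} to rewrite $I^{\ul\eta,\natural}_\pi(f'\otimes\Phi)$ as a Friedberg--Jacquet relative character for $\pi^\vee$, and then claim that ``by surjectivity of the orbital-integral map on the elliptic locus'' you may choose an elliptically-supported $f'$ making this non-zero. But that last step \emph{is} the definition of $\rH'$-ellipticity: you are asserting that the relative character distribution does not annihilate $C_c^\infty(\G'(F)^{ell})$, which is precisely what must be proved. Surjectivity of orbital integrals onto functions on the elliptic invariant-theoretic quotient is irrelevant here --- the question is whether a \emph{specific} distribution (the relative character, or equivalently the orbital integral of the integrated matrix coefficient $\Phi_\X$) is non-zero when restricted to the elliptic locus, not whether arbitrary functions on that locus arise. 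The references to the unramified identity $Z^{FJ,\natural}_v=1$ and to global $L$-values $L(1,\pi,\wedge^2)L(1/2,\pi)$ are also misplaced: the problem is purely local, for an arbitrary supercuspidal $\pi$, and no global or unramified input is available.

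The paper's actual argument is of a quite different nature and does the hard local work you gesture at but do not carry out. One first writes $J_{\lam,\lam}(f)$ as an integral of $f$ against the orbital-integral function $y\mapsto \Orb^{\rH}(\Phi_\X,y)$ of the integrated matrix coefficient $\Phi_\X$ (Lemma \ref{Lem: matrix coeff}), reducing the theorem to: if $\Phi_\X(1)\neq0$ then $\Orb^{\rH}(\Phi_\X,y)\neq0$ for some elliptic $y$. Via the Cayley transform this becomes a statement on the Lie algebra $\fX$. A contraction $r:(X,Y)\mapsto XY$ and the classical Shalika germ expansion for $\GL_n$ (specifically, the fact that the germ attached to the trivial unipotent orbit is a non-zero constant independent of the elliptic torus, \cite{rogawskiGL}) reduce everything to Theorem \ref{thm: germ1}: if $\widetilde\phi(t)=\int_{\GL_n(F)}\phi(th,h^{-1})\,dh$ vanishes for all $t$ near $0$, then $\phi(0)=0$. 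The proof of \emph{this} is an explicit combinatorial analysis: one expands $\widetilde\phi(\varpi^x)$ via Cartan decomposition into a sum over triples $(n_1,n_2,n_3)$ of terms involving the volume functions $\vol_{n_2,n_1-n_3}(x)$, computes the asymptotics of these via $q$-binomial recursions, and shows that the coefficient of the linear term $x$ in the resulting expansion is exactly $\phi(0)B_n$ for an explicit non-zero constant $B_n$. This is the substantive content that your proposal does not supply.
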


By \cite[Theorem 1]{JacquetRallis}, we know that if $\pi$ is $\rH$-distinguished, then
\[
\dim\Hom_{\rH(F)}(\pi,\cc)=1.
\]
Let $\lam$ be a non-zero such functional and let $J:=J_{\lam,\lam}$.
We first recall the relation between invariant functionals and matrix coefficients in the supercuspidal case.
\begin{Lem} \cite[Theorem 1.5]{chongzhanglocal}
  For any $\check{v}\in \check{\pi}$ and $v\in \pi$, the matrix coefficient $\Phi_{v,\check{v}}(g) = \la \pi(g)v,v'\ra$ is integrable on $\rH(F)/Z_{\G}(F)$. Moreover, for any $\lam\in \Hom_{\rH(F)}(\pi,\cc)$ there exists $\check{v}\in \check{\pi}$ such that $\lam = \lam_{\check{v}}$, where
  \[
  \lam_{\check{v}}(v)= \int_{\rH(F)/Z_{\G}(F)}\la\pi(h)v,\check{v}\ra dh.
  \]
\end{Lem}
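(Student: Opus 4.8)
The statement splits into the convergence assertion and the surjectivity assertion $\lam=\lam_{\check v}$; only the second has content, and even there the core is a single application of Schur orthogonality. For convergence, I would first reduce to $\omega|_{Z_{\G}(F)}=1$: this is automatic when $\pi$ is $\rH$-distinguished, since $Z_{\G}\subset\rH$ forces any nonzero $\lam\in\Hom_{\rH(F)}(\pi,\cc)$ to trivialize $\omega$ on $Z_{\G}(F)$, and when it fails $\Hom_{\rH(F)}(\pi,\cc)=0$ and there is nothing to prove. Then $\Phi_{v,\check v}$ descends to a locally constant function on $\G(F)/Z_{\G}(F)$ which, $\pi$ being supercuspidal, is compactly supported there. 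Since $Z_{\G}$ lies in the closed subgroup $\rH$, the inclusion $\rH(F)/Z_{\G}(F)\hookrightarrow\G(F)/Z_{\G}(F)$ is a homeomorphism onto a closed subset; hence $\Phi_{v,\check v}|_{\rH(F)/Z_{\G}(F)}$ has compact support and the integral defining $\lam_{\check v}(v)$ is literally a finite sum of values, so it converges absolutely. Invariance of Haar measure on $\rH(F)/Z_{\G}(F)$ shows $\lam_{\check v}\in\Hom_{\rH(F)}(\pi,\cc)$ and that $\check v\mapsto\lam_{\check v}$ is a linear map $L\colon\check\pi\to\Hom_{\rH(F)}(\pi,\cc)$.

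For surjectivity, if $\Hom_{\rH(F)}(\pi,\cc)=0$ there is nothing to prove; otherwise the multiplicity-one theorem of Jacquet--Rallis \cite{JacquetRallis} makes the target one-dimensional, so it suffices to prove $L\neq0$, after which any prescribed $\lam$ equals $\lam_{\check v}$ upon rescaling $\check v$. Fix a nonzero $\lam\in\Hom_{\rH(F)}(\pi,\cc)$. By Schur orthogonality for the supercuspidal $\pi$, for a suitable normalization of Haar measure the matrix coefficient $\Phi_{v,\check v}\in C^\infty_c(\G(F)/Z_{\G}(F))$ satisfies $\pi(\Phi_{v,\check v})w=d_\pi^{-1}\la w,\check v\ra\,v$ for all $w\in\pi$. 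Since $\Phi_{v,\check v}$ is locally constant and compactly supported mod $Z_{\G}(F)$, the vector $\pi(\Phi_{v,\check v})w$ is a \emph{finite} combination of the $\pi(g)w$, so applying $\lam$ gives
\[
\int_{\G(F)/Z_{\G}(F)}\Phi_{v,\check v}(g)\,F_w(g)\,dg\;=\;\lam\bigl(\pi(\Phi_{v,\check v})w\bigr)\;=\;\frac{1}{d_\pi}\,\la w,\check v\ra\,\lam(v),
\]
where $F_w(g):=\lam(\pi(g)w)$ is left $\rH(F)$-invariant.

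Next I would unfold the left-hand side along $\rH(F)/Z_{\G}(F)$. As $g\mapsto\Phi_{v,\check v}(g)F_w(g)$ is absolutely integrable on $\G(F)/Z_{\G}(F)$ and $\G(F)$, $\rH(F)$, $\rH(F)/Z_{\G}(F)$ are all unimodular, the quotient integration formula applies, and the inner integral $\int_{\rH(F)/Z_{\G}(F)}\Phi_{v,\check v}(hg)\,dh$ equals $\lam_{\check v}(\pi(g)v)$ by the left $\rH(F)$-invariance used to pull $F_w$ out. This produces the identity
\[
\int_{\rH(F)\backslash\G(F)}\lam_{\check v}(\pi(g)v)\,F_w(g)\,dg\;=\;\frac{1}{d_\pi}\,\la w,\check v\ra\,\lam(v),
\]
valid for all $v,w\in\pi$ and $\check v\in\check\pi$. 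If $L=0$, i.e.\ $\lam_{\check v}\equiv0$ for every $\check v$, the left-hand side vanishes identically, forcing $\la w,\check v\ra\lam(v)=0$ for all choices; but $\lam\neq0$ lets us take $v$ with $\lam(v)\neq0$, then $w=v$ and $\check v$ with $\la v,\check v\ra\neq0$, a contradiction. Hence $L\neq0$, which completes the proof.

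\textbf{Main obstacle.} The only non-formal inputs are the Schur-orthogonality identity and the two integral manipulations straddling the displays; the step to treat with care is the unfolding along $\rH(F)/Z_{\G}(F)$, which is precisely why I isolate at the outset the compact-support-mod-center of supercuspidal matrix coefficients and the closedness of $\rH(F)/Z_{\G}(F)$ inside $\G(F)/Z_{\G}(F)$. Beyond this bookkeeping I anticipate no serious difficulty.
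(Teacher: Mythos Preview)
The paper does not supply a proof of this lemma: it is stated with the citation \cite[Theorem 1.5]{chongzhanglocal} and used as a black box. So there is no ``paper's own proof'' to compare against; you are reconstructing the argument that the cited reference would contain.

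Your reconstruction is correct and is essentially the standard proof of this fact for supercuspidal representations. The convergence reduction via compact support modulo center is exactly right, and the surjectivity argument---combining multiplicity one with Schur orthogonality $\pi(\Phi_{v,\check v})w=d_\pi^{-1}\langle w,\check v\rangle v$ and then unfolding $\int_{\G/Z_\G}=\int_{\rH\backslash\G}\int_{\rH/Z_\G}$ against the left-$\rH$-invariant function $F_w$---is the expected one. The only delicate step you flag (the unfolding) is harmless here: the integrand $\Phi_{v,\check v}\cdot F_w$ has compact support on $\G(F)/Z_\G(F)$, so Fubini is immediate, and unimodularity of $\G$ and $\rH$ guarantees the existence of the quotient measure on $\rH\backslash\G$. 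Nothing is missing.
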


Now fix $\check{v}$ such that $\lam= \lam_{\check{v}}$. By assumption, there exists $v\in \pi$ such that $\lam_{\check{v}}(v)\neq0$. Set  $\Phi(g) = \la \pi(g)v,v'\ra$ and define $\Phi_{\X}\in C^\infty(\X(F))$ by
\[
\Phi_{\X}(s_{\X}(g)) = \int_{\rH(F)/Z_{\G}(F)}\Phi(gh)dh.
\]
Note that $\Phi_{\X}\in C^\infty_c(\X(F))$ has compact support since $\Phi$ has compact support modulo $Z_{\G}(F)$. Moreover, we have
\[
\Phi_{\X}(1) = \lam_{\check{v}}(v) \neq0.
\]

\begin{Lem}\label{Lem: matrix coeff} Let $\pi$ be a $\rH'$-distinguished supercuspidal representation of $\G'(F)$ and  let $\lam=\lam_{\check{v}}\in \Hom_{\rH(F)}(\pi,\cc)$ and $\Phi_{v,\check{v}}$ be as above. Then for any $f\in C_c^\infty(\G(F))$, we have
\[
J(f)= \frac{1}{\Phi_{\X}(1)}\int_{\G^{rrs}(F)}\Orb^{\rH}(\Phi_{\X},s_{\X}(g))f(g)dg.
\]
\end{Lem}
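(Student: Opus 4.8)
\textbf{Proof plan for Lemma \ref{Lem: matrix coeff}.}

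The plan is to unfold both sides and identify them as the same integral over $\G(F)$. First I would expand the left-hand side using the definition of the relative character and the integral expressions for $\lam = \lam_{\check v}$. Writing $J(f) = \sum_{w \in OB(\pi)} \lam(\pi(f)w)\,\overline{\lam(w)}$ and using $\lam(u) = \int_{\rH(F)/Z_\G(F)} \la \pi(h)u, \check v\ra\, dh$, I would interchange the (absolutely convergent, by the integrability of matrix coefficients from \cite[Theorem 1.5]{chongzhanglocal}) sum over the orthonormal basis with the two $h$-integrals. The sum $\sum_w \la \pi(f)\pi(h_1)w,\check v\ra\,\overline{\la \pi(h_2)w,\check v\ra}$ collapses, via the reproducing property of matrix coefficients (or equivalently the Schur orthogonality / the fact that $\sum_w \la\cdot,w\ra\la w,\cdot\ra$ is the identity up to the formal degree), to a single matrix coefficient; concretely this step rewrites $J(f)$ as an integral over $g \in \G(F)$ against $f(g)$ with a kernel built from $\Phi_{v,\check v}$ and the two copies of $\rH(F)$.

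Next I would carry out the change of variables that produces the orbital integral. After the collapse, $J(f)$ becomes, up to the normalization constant, $\int_{\G(F)} f(g) \left( \int_{(\rH(F)/Z_\G)^2} \Phi_{v,\check v}(h_1^{-1} g h_2)\, dh_1\, dh_2 \right) dg$ — one factor of $\rH(F)/Z_\G$ coming from the left-invariance relation for $\lam$ and one from the right. Here I would use that $\Phi_{v,\check v}$ is a matrix coefficient of a supercuspidal representation, hence compactly supported modulo the center, so all integrals are absolutely convergent and Fubini applies freely. The inner double integral should be recognized as the pushforward/pushdown of $\Phi_\X$: since $s_\X\colon \G' \to \X$ is an $\rH'$-torsor on $F$-points and $\Phi_\X(s_\X(g)) = \int_{\rH(F)/Z_\G} \Phi_{v,\check v}(gh)\,dh$ by definition, one of the two $\rH$-integrals descends the matrix coefficient to $\X(F)$, and the remaining $\rH$-integral is exactly $\Orb^{\rH}(\Phi_\X, s_\X(g))$ (using that $\Phi_\X \in C^\infty_c(\X(F))$ and $g$ $\rH'$-regular semisimple, so the orbit is closed). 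This yields $J(f) = \frac{1}{\Phi_\X(1)}\int_{\G^{rrs}(F)} \Orb^{\rH}(\Phi_\X, s_\X(g))\, f(g)\, dg$; the restriction to $\G^{rrs}(F)$ is harmless since its complement has measure zero.

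The main obstacle I expect is bookkeeping the normalization: one must verify that the constant produced by collapsing the basis sum (involving the formal degree of $\pi$ and the choice of inner product and Haar measures) is precisely absorbed into $\Phi_\X(1) = \lam_{\check v}(v)$, so that no stray formal-degree factor survives. This is the step where the precise conventions of \S\ref{Section: inner product} and \S\ref{measures} enter, and where the matrix coefficient $\Phi_{v,\check v}$ must be normalized consistently with the $\lam_{\check v}$ appearing in $J$. Once the normalization is pinned down, the identity follows formally; the remaining points — absolute convergence, legitimacy of interchanging sum and integrals, and the torsor property of $s_\X$ — are all either supplied by \cite[Theorem 1.5]{chongzhanglocal} (integrability of matrix coefficients over $\rH(F)/Z_\G$) or are elementary.
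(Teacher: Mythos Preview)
Your proposal is correct and follows exactly the approach the paper intends: the paper's own proof is simply a pointer to \cite[Appendix A]{ZhangFourier}, and what you have outlined---expand $J(f)$ via the integral formula for $\lam_{\check v}$, collapse the orthonormal-basis sum using completeness, and recognize the resulting double $\rH$-integral of the matrix coefficient as $\Orb^{\rH}(\Phi_\X,s_\X(g))$---is precisely that argument. One small remark on your flagged obstacle: the collapse $\sum_w \langle\,\cdot\,,w\rangle\langle w,\,\cdot\,\rangle=\mathrm{Id}$ is pure completeness and does not introduce a formal-degree factor; the genuine bookkeeping point is rather that the collapse produces a matrix coefficient built from $\check v$ alone, and one must invoke multiplicity one (so that any two $\rH$-bi-invariant matrix coefficients of $\pi$ are proportional) to identify it with $\Phi_\X/\Phi_\X(1)$.
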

\begin{proof}
    The proof of this follows from the formula for $\lam_{\check{v}}$ via the same argument as that in \cite[Appendix A]{ZhangFourier}. 
\end{proof}

The upshot is that to prove Theorem \ref{Thm: FJ elliptic}, it suffices to show that there exists a $\rH$-regular elliptic element $y\in \X(F)$ such that 
\[
\Orb^{\rH}(\Phi_{\X},y)\neq 0.
\]
subject to the non-vanishing of $\Phi_{\X}(1)=\lam(v)\neq0$.

\begin{Thm}\label{Thm: elliptic non-vanishing}
    Suppose that $f\in C^\infty_c(\X(F))$ satisfies that $f(1)\neq0$. Then there exists $y\in \X(F)$ which is $\rH$-regular elliptic such that
    \[
    \Orb^{\rH}(f,y)\neq0.
    \]
\end{Thm}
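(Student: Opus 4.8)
The plan is to reduce this to a local statement near the identity in $\X(F)$ and then to exploit the fact that the $\rH$-regular elliptic locus accumulates at $1$. First I would observe that since $f(1)\neq 0$ and $f$ is locally constant, there is a compact open neighborhood $\calu$ of $1$ in $\X(F)$ on which $f$ is identically equal to $f(1)\neq 0$; shrinking if necessary we may take $\calu$ to be $\rH(\calo_F)$-invariant (for a suitable hyperspecial maximal compact) by intersecting translates, and by choosing $\calu$ small enough we may assume $f$ is supported in a slightly larger $\rH(\calo_F)$-invariant set so that all relevant orbital integrals make sense. The point is that the regular semisimple orbits meeting $\calu$ are exactly those whose invariants $\car_{lin}(y)$ lie near $\car_{lin}(1)$, i.e. near the coefficients of $(t-1)^n$.

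Next I would produce an explicit $\rH$-regular \emph{elliptic} element $y_0\in\X(F)$ with $\car_{lin}(y_0)$ arbitrarily close to $\car_{lin}(1)$. Writing an element of $\X(F)=\G'(F)/\rH'(F)$ in block form $\begin{psmatrix}A&B\\C&D\end{psmatrix}$, recall from the recollection of invariant theory that $\car_{lin}$ records the characteristic polynomial of $A$, and that $y$ is regular semisimple iff this polynomial is separable with appropriate genericity, while $y$ is elliptic iff the associated torus $T_y\simeq \Res_{L/F}\Gm$ with $L/F$ a degree $n$ field extension (cf. the description of stabilizers in \S\ref{Section: orbital integrals conventions} and the ellipticity discussion in \S\ref{Section: elliptic}). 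So I would pick $A_0$ with separable characteristic polynomial generating a degree-$n$ field $L/F$, chosen $\fp$-adically close to $I_n$ (e.g. $A_0 = I_n + \vp^N A_1$ for large $N$ with $A_1$ regular elliptic in $\fgl_n$), and complete it to a point $y_0\in\X^{rss}(F)$ that is $\rH$-elliptic; the invariant $\car_{lin}(y_0)$ then lies in any prescribed neighborhood of $\car_{lin}(1)$ for $N$ large.

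Now I would run a continuity/positivity argument for the orbital integral. The $\rH$-orbit of $y_0$ is closed (it is regular semisimple), and since $y_0$ is elliptic, $\rH_{y_0}(F)\bs\rH(F)$ is compact modulo center, so $\Orb^{\rH}(f,y_0) = \int_{\rH_{y_0}(F)\bs\rH(F)} f(h^{-1}\cdot y_0)\,dh$ is an integral of a bounded locally constant function over a compact domain. For $N$ large the entire orbit $\rH(F)\cdot y_0$ is $\fp$-adically close to the orbit through $1$, hence contained in $\calu$ (using that $\rH(\calo_F)$ acts on a neighborhood of $1$ preserving closeness, and that an elliptic orbit with invariants near those of $1$ stays near $1$ — this is where one uses that the map from $\X^{rss}$ to its invariants is ``submersive'' along the closed orbit, or more elementarily an explicit estimate on the block form). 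On $\calu$ we have $f\equiv f(1)$, so $\Orb^{\rH}(f,y_0) = f(1)\cdot\vol(\rH_{y_0}(F)\bs\rH(F)) \neq 0$ since $f(1)\neq 0$ and the volume is a positive real number.

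\textbf{Main obstacle.} The delicate point is the claim that for $N$ large the whole $\rH(F)$-orbit of the elliptic element $y_0$ is contained in the small neighborhood $\calu$ of $1$. Unlike the group case, $\rH(F)$ is not compact, so a priori an orbit could wander far even if its representative and its invariants are close to those of $1$; the ellipticity of $y_0$ is exactly what rescues this, since it forces $\rH_{y_0}(F)\bs \rH(F)$ to be compact, and one must check that a fundamental domain can be taken with bounded ``displacement'' uniformly as $N\to\infty$. I would handle this by passing to the Lie algebra / Cayley transform near the identity (as in the standard descent to the symmetric space $\fX(F)$, parallel to \cite[Section 4]{LXZfund}), where the orbit becomes the $\rH(F)$-orbit of a regular elliptic element $\vp^N X_0$ in the tangent space, whose orbit is literally $\vp^N\cdot(\rH(F)\cdot X_0)$ by homogeneity — and the orbit of $X_0$ being closed with compact $\rH_{X_0}\bs \rH$ is bounded, so scaling by $\vp^N$ shrinks it into any neighborhood of $0$. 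This reduces the obstacle to the (standard, but to-be-checked) compatibility of orbital integrals under the Cayley transform near $1$, which matches the descent machinery already invoked elsewhere in the paper.
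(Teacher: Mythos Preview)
Your proposal has a genuine gap at the ``main obstacle'' step: you conflate two different consequences of ellipticity. The definition of $\rH$-elliptic (Definition~\ref{Def: elliptic}) says that the \emph{stabilizer} $\rH_{y_0}$ is anisotropic modulo $Z(\rH)$; it does \emph{not} say that the quotient $\rH_{y_0}(F)\bs\rH(F)$ is compact, and in fact it never is. Already for $n=1$ in the Lie algebra picture $\fX(F)=F\times F$ with $(a,b)\cdot(X,Y)=(aXb^{-1},bYa^{-1})$, the orbit of the regular elliptic element $(1,1)$ is $\{(t,t^{-1}):t\in F^\times\}$, which is closed but unbounded; the stabilizer is the diagonal $\GL_1$ and the quotient $\rH_{(1,1)}\bs\rH\cong\GL_1$ is noncompact. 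For general $n$ the dimension count is $\dim(\rH_{y_0}\bs\rH)=2n^2-n>0$. Your scaling argument correctly gives $\rH(F)\cdot(\vp^N X_0)=\vp^N\cdot(\rH(F)\cdot X_0)$, but since $\rH(F)\cdot X_0$ is unbounded, scaling by $\vp^N$ does not force the orbit into a small neighborhood of $0$; the orbit always escapes to infinity no matter how small $\vp^N$ is. Consequently the orbital integral of $\bfun_{\calu}$ at $y_0$ is not $f(1)\cdot\vol(\rH_{y_0}\bs\rH)$ but rather a genuinely nontrivial function of the invariants of $y_0$, and showing it is nonzero requires real work.

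The paper's proof takes a completely different route precisely because of this. After the Cayley reduction to $\fX(F)$ (which you also suggest), it argues by contrapositive: assuming all elliptic orbital integrals of $\phi$ vanish, a Shalika-germ argument on the contracted variable $XY\in\fgl_n$ forces the one-parameter family of integrals $\wt\phi(t)=\int_{\GL_n(F)}\phi(th,h^{-1})\,dh$ to vanish for $t$ near $0$. The heart of the argument (Theorem~\ref{thm: germ1}) is then an explicit asymptotic expansion of $\wt\phi(t)$ as $v(t)\to\infty$ into terms of the form $q^{a\,v(t)}v(t)^b$, carried out via recursive volume computations; one shows that the coefficient of the pure linear term $v(t)$ is a nonzero multiple of $\phi(0)$, so $\wt\phi\equiv 0$ near $0$ forces $\phi(0)=0$. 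This is exactly what replaces your hoped-for compactness: the orbital integral does see $\phi(0)$, but only through a specific term in a nontrivial germ expansion, not as a bare volume.
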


To prove this we linearize and pass to the Lie algebra. Set $\fg=\Lie(\G)$ and recall the decomposition (cf. \cite[Section 5.1]{LXZfund})
\[
\fg=\fh\oplus \fX,
\] where $\fkh=\Lie(\rH)$ and $\fX\simeq \fgl_n\times \fgl_n$ where $(h_1,h_2)\in\rH(F)$ acts on $(X,Y)\in \fX$ via
\[
(h_1,h_2)\cdot (X,Y)  = (h_1Xh_2^{-1},h_2Y h_1^{-1}).
\]
\begin{Thm}\label{Thm: elliptic non-vanishing Lie}
    Suppose that $\phi\in C^\infty_c(\fX(F))$ satisfies that $f\phi(0)\neq0$. Then there exists $(X,Y)\in \fX(F)$ which is $\rH$-regular elliptic such that
    \[
    \Orb^{\rH}(\phi,(X,Y))\neq0.
    \]
\end{Thm}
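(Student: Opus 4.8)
The plan is to prove Theorem \ref{Thm: elliptic non-vanishing Lie} by a local density argument, exploiting the non-vanishing of $\phi(0)$ together with the structure of orbital integrals near the origin. First I would record that the $\rH$-action on $\fX \simeq \fgl_n \times \fgl_n$ has invariant quotient given (after the contraction along the second factor, cf.\ \cite[Section 2.2]{LXZfund}) by the map sending $(X,Y)$ to the coefficients of the characteristic polynomial of $XY$; thus a pair $(X,Y)$ is $\rH$-regular semi-simple exactly when $XY$ is a regular semisimple element of $\fgl_n$, and it is $\rH$-regular elliptic exactly when $F[XY]$ is a degree-$n$ field extension of $F$. So the target is to produce such a pair with non-vanishing orbital integral.

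The key mechanism is that orbital integrals, as functions on the invariant base $\A^n(F) = F^n$, behave well near the image of $0$. Since $\phi(0) \neq 0$, after shrinking the support we may assume $\phi$ is the characteristic function of a small neighborhood $0 \in \Omega \subset \fX(F)$ times a nonzero constant; the point $0$ is the unique closed orbit in its fiber, but $0$ itself is not regular. The standard move is to use a $\rH$-equivariant ``cutoff'' or to invoke a Shalika-germ/homogeneity argument: the orbital integral $\Orb^{\rH}(\phi, (X,Y))$ does not vanish identically for $(X,Y)$ ranging over regular semisimple points whose invariants are close to $0$, because the orbital integral function is, up to normalization, locally $L^1$ and its ``average'' recovers a positive multiple of $\phi(0)$ (concretely, integrating $\Orb^{\rH}(\phi, -)$ against a suitable measure on the base near $0$ returns $\int_{\fX(F)}\phi$, which one can arrange to be nonzero). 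Then I would argue that regular \emph{elliptic} points are not avoidable: the elliptic locus is open in $\fX^{rss}(F)$ and its image in $F^n$ has non-empty interior and accumulates at the image of $0$ (indeed $0$ is a limit of elliptic characteristic polynomials, e.g.\ companion matrices of irreducible polynomials with small coefficients), so the non-vanishing set of $\Orb^{\rH}(\phi,-)$, being of positive measure near $0$, must meet the elliptic locus.

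More carefully, I would run this as follows. Fix Haar measures and let $\mu$ be the pushforward to $F^n$ of $|\phi| \, d(X,Y)$ restricted to the regular semisimple locus; since regular semisimple points are dense and of full measure, $\int_{F^n} \Orb^{\rH}(\phi,-) \, da$ (with $da$ an appropriate measure on the base making the coarea/Weyl-integration formula hold) equals $\int_{\fX(F)}\phi \, d(X,Y)$, which is nonzero once $\phi$ is chosen to be a small bump at $0$ of constant sign. Hence $\Orb^{\rH}(\phi, a_0) \neq 0$ for some regular semisimple $a_0 \in F^n$ arbitrarily close to the image of $0$. It remains to upgrade ``regular semisimple'' to ``regular elliptic.'' For this I would note that the elliptic characteristic polynomials form a set whose closure contains every polynomial that is a limit of irreducible ones; since $F$ is a non-archimedean local field, polynomials splitting into an unramified degree-$n$ factor are dense among monic polynomials near any given one (Krasner's lemma / continuity of roots), so one can choose $a_0$ elliptic. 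Transporting back via the contraction map and \cite[Lemma 4.6]{LXZfund}, this gives a genuinely $\rH$-regular elliptic $(X,Y) \in \fX(F)$ with $\Orb^{\rH}(\phi,(X,Y)) \neq 0$, and by the exponential map (or a Cayley transform on the symmetric space, as in \cite[Section 5]{LXZfund}) this descends to Theorem \ref{Thm: elliptic non-vanishing} and hence, via Lemma \ref{Lem: matrix coeff}, to Theorem \ref{Thm: FJ elliptic}.

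The main obstacle I anticipate is making the density-near-the-origin step rigorous: one must control the behavior of $\Orb^{\rH}(\phi, -)$ as the invariants degenerate to the (non-regular) base point $0$, where the regular semisimple orbits are not closed in the limit and the orbital integral can in principle blow up. The clean way around this is to invoke homogeneity: the $\rH$-action and the quotient map are graded, so $\Orb^{\rH}(\phi_t, -)$ for the dilated test function $\phi_t(X,Y) = \phi(tX, tY)$ is related by an explicit power of $|t|$ to $\Orb^{\rH}(\phi, -)$ evaluated at scaled invariants, which reduces the statement near $0$ to a statement on a fixed compact piece of the regular locus — and there the non-vanishing is immediate from $\phi(0)\neq 0$ and continuity of orbital integrals on the regular semisimple locus. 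I expect the bookkeeping of measures and the precise form of the Weyl integration formula for this symmetric pair to be the most delicate routine part, but conceptually the argument is: bump at $0$ $\Rightarrow$ positive-measure non-vanishing set near $0$ on the base $\Rightarrow$ meets the elliptic locus by density.
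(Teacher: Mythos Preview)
Your approach has a genuine gap that prevents it from going through. The core problem is the claimed reduction ``after shrinking the support we may assume $\phi$ is the characteristic function of a small neighborhood of $0$ times a nonzero constant.'' This is not a valid move: the theorem must be proved for the \emph{given} $\phi$, and replacing $\phi$ by $c\cdot\mathbf{1}_\Omega$ destroys the orbital integrals you care about. The $\rH$-orbit of any regular elliptic $(X,Y)$ is noncompact, so it escapes every small neighborhood $\Omega$ of $0$; hence $\Orb^{\rH}(\mathbf{1}_\Omega,(X,Y))$ bears no simple relation to $\Orb^{\rH}(\phi,(X,Y))$. Put differently, writing $\phi = c\,\mathbf{1}_\Omega + \phi'$ with $\phi'$ supported away from $0$, the two orbital integrals on the right can (and typically do) cancel. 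Without this reduction the Weyl-integration step has nothing to stand on: from $\phi(0)\neq 0$ you cannot conclude $\int_{\fX(F)}\phi\neq 0$, so the coarea argument yields no information. Even granting a positive-measure non-vanishing set on the base, the inference ``it must meet the elliptic locus because elliptic invariants are dense near $0$'' is a non-sequitur: two positive-measure subsets of $F^n$ need not intersect, and neither the elliptic nor the split locus has full measure near the image of $0$. The homogeneity idea at the end does not rescue this either, since $\phi_t(0)=\phi(0)\neq 0$ puts you right back at the original problem for a new test function.

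The paper's proof is entirely different and proceeds by contrapositive. Assuming $\Orb^{\rH}(\phi,(X,Y))=0$ for all $\rH$-elliptic $(X,Y)$, one pushes forward along the contraction $(X,Y)\mapsto XY$ and uses the classical Shalika germ expansion on $\GL_n$ near a scalar matrix: vanishing of orbital integrals over \emph{every} elliptic torus in $\GL_n(F)$, combined with homogeneity of the germs, forces the coefficient of the trivial-orbit germ---namely $\wt\phi(t):=\int_{\GL_n(F)}\phi(th,h^{-1})\,dh$---to vanish for all $t\in F^\times$. The heart of the argument is then an explicit asymptotic expansion of $\wt\phi(\varpi^x)$ as $x\to\infty$ into a finite combination of functions $q^{ax}x^b$ (via Cartan decomposition and a recursion for certain $q$-volume functions $\vol_{n,\alpha}$); the key computation shows that the coefficient of the linear term $x$ in this expansion equals a nonzero explicit constant times $\phi(0)$. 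Thus $\wt\phi\equiv 0$ near $0$ forces $\phi(0)=0$. This is a delicate quantitative statement about the germ of $\wt\phi$, not a soft density or measure-theoretic fact; your proposal does not engage with the mechanism that actually links $\phi(0)$ to elliptic orbital integrals.
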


\begin{Lem}
    Theorem \ref{Thm: elliptic non-vanishing Lie} implies Theorem \ref{Thm: elliptic non-vanishing}.
\end{Lem}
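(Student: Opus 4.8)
The plan is to reduce the group statement (Theorem \ref{Thm: elliptic non-vanishing}) to the Lie algebra statement (Theorem \ref{Thm: elliptic non-vanishing Lie}) by a standard Cayley-transform argument, which converts a neighborhood of the identity in $\X(F)$ to a neighborhood of $0$ in $\fX(F)$ while matching $\rH$-orbital integrals. First I would record that $\X = \G'/\rH'$ and that, in block coordinates, the point $1 = s_{\X}(I_{2n})$ corresponds to the block matrix $\left(\begin{smallmatrix}I_n&0\\0&I_n\end{smallmatrix}\right)$ (or whatever normal form the quotient map sends the identity to); nearby points of $\X(F)$ may be parametrized, via a $p$-adic exponential/Cayley map, by small elements of the tangent space, which by \cite[Section 5.1]{LXZfund} is identified with $\fX(F) \simeq \fgl_n \times \fgl_n$. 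Concretely, one uses the Cayley transform $\mathfrak{c}(Z) = (I+Z/2)(I-Z/2)^{-1}$ (or an analogous birational map adapted to the symmetric space), which is an $\rH'$-equivariant isomorphism from a small $\rH'$-stable neighborhood $\fX(F)_0$ of $0$ onto an $\rH'$-stable neighborhood $\X(F)_0$ of $1$, carrying $0$ to $1$ and carrying $\rH'$-regular semisimple (resp.\ elliptic) elements to $\rH'$-regular semisimple (resp.\ elliptic) elements.

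Next I would transport the test function: given $f \in C_c^\infty(\X(F))$ with $f(1) \neq 0$, set $\phi = (f \circ \mathfrak{c}) \cdot \mathbbm{1}_{\fX(F)_0} \in C_c^\infty(\fX(F))$, so that $\phi(0) = f(1) \neq 0$. The key point is the change-of-variables identity for orbital integrals: for $Z \in \fX(F)_0$ regular semisimple, the stabilizers $\rH'_Z$ and $\rH'_{\mathfrak{c}(Z)}$ coincide (both maps being $\rH'$-equivariant with the same fibers over a common categorical quotient, after shrinking), and the Jacobian of $\mathfrak{c}$ is locally constant and nonzero near $0$, so that
\[
\Orb^{\rH'}(f, \mathfrak{c}(Z)) = c(Z)\,\Orb^{\rH'}(\phi, Z)
\]
for a nonvanishing locally constant factor $c(Z)$; in particular the left side is nonzero whenever the right side is. Applying Theorem \ref{Thm: elliptic non-vanishing Lie} to $\phi$ produces an $\rH'$-regular elliptic $Z_0 \in \fX(F)$ — which, after scaling by a small element of $F^\times$ (regular ellipticity is preserved under the scaling action of $F^\times$ on $\fX$, and scaling an orbital integral of a fixed compactly supported $\phi$ by a small enough scalar does not kill it once $\phi(0)\neq 0$, by the argument internal to the proof of Theorem \ref{Thm: elliptic non-vanishing Lie}), we may assume lies in $\fX(F)_0$ — and then $y := \mathfrak{c}(Z_0) \in \X(F)$ is $\rH'$-regular elliptic with $\Orb^{\rH'}(f, y) \neq 0$, as desired.

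\textbf{Main obstacle.} The subtle point is not the formal Cayley-transform bookkeeping but the interaction between the \emph{support} of $f$ and the regular elliptic locus: a priori $f(1)\neq 0$ only controls $f$ near $1$, and one must ensure that after transporting to the Lie algebra and invoking Theorem \ref{Thm: elliptic non-vanishing Lie}, the elliptic element produced can genuinely be taken inside the small neighborhood $\fX(F)_0$ on which $\phi$ agrees with $f\circ\mathfrak{c}$. This is handled by the homogeneity of the Lie algebra picture — the $F^\times$-scaling action shrinks any regular elliptic element into $\fX(F)_0$ while, by a standard argument (the orbital integral of $\phi$ at $t\cdot Z_0$ is, up to a Jacobian factor, the orbital integral of the dilate $\phi(t^{-1}\,\cdot)$ at $Z_0$, and $\phi(t^{-1}\,\cdot) \to \phi(0)\cdot\mathbbm{1}$ in an appropriate sense as $t\to 0$), preserving non-vanishing — so the reduction is clean. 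The genuinely hard analytic input is therefore isolated entirely in Theorem \ref{Thm: elliptic non-vanishing Lie}, whose proof (via Fourier transform on $\fX(F)$, the theory of the Weyl integration formula for the symmetric pair, and a density/support argument showing non-elliptic orbital integrals cannot detect $\phi(0)$) is where the real work lies.
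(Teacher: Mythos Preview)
Your core idea—linearize via a Cayley transform—is exactly what the paper does, but your execution has a real gap. You posit a ``small $\rH'$-stable neighborhood $\fX(F)_0$ of $0$,'' yet no such neighborhood exists: $\rH'(F)=\GL_n(F)\times\GL_n(F)$ acts on $\fX(F)\simeq\fgl_n(F)\times\fgl_n(F)$ with unbounded orbit through every nonzero point (e.g.\ $(tI_n,I_n)\cdot(X,Y)=(tX,t^{-1}Y)$), so the only $\rH'$-stable neighborhood of $0$ contained in a ball is $\{0\}$. Without $\rH'$-stability your truncation $\phi=(f\circ\fc)\cdot\bfun_{\fX(F)_0}$ does not have the same orbital integrals as $f$, and your scaling workaround is not justified: Theorem~\ref{Thm: elliptic non-vanishing Lie} produces \emph{one} elliptic $Z_0$ with $\Orb^{\rH}(\phi,Z_0)\neq0$, but says nothing about $\Orb^{\rH}(\phi,tZ_0)$ for small $t$ (and the argument you allude to ``internal to the proof'' actually produces elliptic elements near $(tI_n,I_n)$, not near $0$).

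The paper sidesteps all of this by observing that the Cayley map $\fc(Z)=(1+Z)(1-Z)^{-1}$ is not merely a germ near $0$ but an $\rH'$-equivariant homeomorphism from the \emph{entire} Zariski-open set $\fX(F)\smallsetminus D_1(F)$ onto $\X(F)\smallsetminus D_{-1}(F)$, where $D_\nu=\{\det(\nu I-\,\cdot\,)=0\}$ are $\rH'$-stable hypersurfaces. One first replaces $f$ by $f_U=f\cdot\bfun_U$ for a compact open $U\ni 1$ with $U\subset\X(F)\smallsetminus D_{-1}(F)$, so that $\phi:=f_U\circ\fc$ lies in $C_c^\infty(\fX(F))$ with $\phi(0)=f(1)\neq0$. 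Theorem~\ref{Thm: elliptic non-vanishing Lie} then yields an elliptic $Z_0$; since $D_1$ is $\rH'$-stable and disjoint from $\supp\phi$, one has $Z_0\notin D_1$, and $\rH'$-equivariance of $\fc$ on its full domain gives $\Orb^{\rH}(\phi,Z_0)=\Orb^{\rH}(f_U,\fc(Z_0))$ exactly—no Jacobian factor is needed, since the orbital integral is over $\rH'$ with fixed Haar measure and $\fc$ merely reparametrizes the target.
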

\begin{proof}
    
Recall from \cite[Appendix A]{LXZfund} the Cayley transform 
\begin{align*}
\fc:\fX(F)-D_1(F)&\lra \X(F)-D_{-1}(F)\\
			X&\longmapsto (1+X)(1-X)^{-1},
\end{align*}
induces a $\rH(F)$-equivariant homeomorphism where for any $\nu=\pm1$
\[
D_\nu=\{X\in \End(W): \det(\nu I_{2n}-X)=0\}.
\]

Now suppose that $f\in C^\infty_c(\X(F))$ satisfies that $f(1)\neq0$. Then there exists a compact open neighborhood $U\subset \X(F)-D_{-1}(F)$ of $1$ such that
\[
0\neq f_U = f\cdot \bfun_{U}\in C^\infty_c(\X(F)-D_{-1}(F)).
\]
To prove Theorem \ref{Thm: elliptic non-vanishing}, it clearly suffices to prove the claim for $f_U$, so we might as well assume that $f\in C_c^\infty(\X(F)-D_{-1}(F))$. Setting $\phi=f\circ \mathfrak{c}$, we see $\phi\in C_c^\infty(\fX(F)-D_1(F))$ such that $\phi(0)\neq0$. Since $(X,Y)\in \fX(F)$ is elliptic if and only if $\fc(X,Y)\in \X(F)$ is and
\[
  \Orb^{\rH}(\phi,(X,Y))=  \Orb^{\rH}(f,\fc(X,Y)),
\]
we see that Theorem \ref{Thm: elliptic non-vanishing} follows from  Theorem \ref{Thm: elliptic non-vanishing Lie}.
\end{proof}
\subsection{Proof of Theorem \ref{Thm: elliptic non-vanishing Lie}}
We begin with the following lemma.
\begin{Lem}
     Suppose that $\phi\in C^\infty_c(\fX(F))$ satisfies that
    \[
    \Orb^{\rH}(\phi,(X,Y))=0,
    \]
    for all $\rH$-regular elliptic $(X,Y)\in \fX(F)$. Then
    \[
    \displaystyle\int_{\GL_n(F)}\phi(th,h^{-1})dh=0
    \]
    for all $t\in F^\times$.
\end{Lem}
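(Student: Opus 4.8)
The plan is to contract $\phi$ along one of the two $\GL_n$-factors, so that the assertion becomes the classical statement that a Schwartz function on $\fgl_n(F)$ all of whose regular elliptic orbital integrals vanish must vanish at the origin.

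First dispose of $n=1$: here $\fX(F)=F\times F$, $\rH(F)=F^\times\times F^\times$, every $(X,Y)$ with $XY\neq 0$ is $\rH$-regular elliptic, and $\Orb^{\rH}(\phi,(t,1))$ is a positive multiple of $\int_{F^\times}\phi(th,h^{-1})\,dh$, so the hypothesis gives the conclusion directly. Assume now $n\geq 2$. For $W\in\fgl_n(F)$ put
\[
\Phi_1(W):=\int_{\GL_n(F)}\phi(h^{-1},Wh)\,dh .
\]
A routine support estimate (the integrand forces $h$, and then $W$, into a fixed compact set) shows $\Phi_1\in C^\infty_c(\fgl_n(F))$. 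This $\Phi_1$ is the contraction of $\phi$ along the free action of $\{1\}\times\GL_n$ on the open subset $\{X\ \text{invertible}\}\subset\fX$, whose quotient by that factor is $\fgl_n$ via $(X,Y)\mapsto XY$, with residual $\GL_n\times\{1\}$-action by conjugation. A direct change of variables (equivalently the contraction formalism of \cite[Section 2.2]{LXZfund}) then gives, up to a harmless fixed positive constant,
\[
\Orb^{\rH}\bigl(\phi,(I_n,W)\bigr)=\int_{Z_{\GL_n}(W)(F)\backslash\GL_n(F)}\Phi_1(hWh^{-1})\,dh
\]
for every $W\in\fgl_n(F)$, and in particular (the points $(tI_n,I_n)$ and $(I_n,tI_n)$ lying in the same $\rH$-orbit, and $\int_{\GL_n(F)}\phi(th,h^{-1})\,dh=\Phi_1(tI_n)$ by unimodularity)
\[
\int_{\GL_n(F)}\phi(th,h^{-1})\,dh=\Orb^{\rH}\bigl(\phi,(tI_n,I_n)\bigr)=\Phi_1(tI_n)
\]
up to the same constant. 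When $W\in\fgl_n(F)$ has $F[W]$ a field of degree $n$, then $W$ is invertible (its eigenvalues are not in $F$ for $n\geq 2$) and $(I_n,W)\in\fX(F)$ is $\rH$-regular elliptic, its stabilizer being the diagonal copy of the anisotropic-mod-centre torus $\Res_{F[W]/F}\Gm$. Hence the hypothesis on $\phi$ forces $\int_{Z_{\GL_n}(W)(F)\backslash\GL_n(F)}\Phi_1(hWh^{-1})\,dh=0$ for every regular elliptic $W\in\fgl_n(F)$.

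It remains to show $\Phi_1(tI_n)=0$. Since conjugation commutes with translation by scalar matrices and $W\mapsto W+tI_n$ preserves regular ellipticity ($F[W+tI_n]=F[W]$), the function $\Psi:=\Phi_1(\,\cdot\,+tI_n)\in C^\infty_c(\fgl_n(F))$ again has all regular elliptic orbital integrals equal to $0$, while $\Psi(0)=\Phi_1(tI_n)$. So I am reduced to the classical fact that a function in $C^\infty_c(\fgl_n(F))$ whose regular elliptic orbital integrals all vanish has $\Psi(0)=0$. The hard part — the only genuinely non-formal input — is this last statement, which one proves with the Shalika germ expansion at the origin: fixing a regular elliptic $W_0\in\fgl_n(F)$, for $|s|$ small enough one has
\[
0=\int_{Z_{\GL_n}(sW_0)(F)\backslash\GL_n(F)}\Psi(h\,sW_0\,h^{-1})\,dh=\Psi(0)+\sum_{\mathcal O\neq\{0\}}\Gamma_{\mathcal O}(sW_0)\,\mu_{\mathcal O}(\Psi),
\]
the sum over nilpotent orbits $\mathcal O\subset\fgl_n$, with $\Gamma_{\{0\}}\equiv 1$, $\mu_{\{0\}}(\Psi)=\Psi(0)$, and each $\Gamma_{\mathcal O}$ ($\mathcal O\neq\{0\}$) homogeneous of a strictly negative degree in the dilation, so $|\Gamma_{\mathcal O}(sW_0)|\to\infty$ as $|s|\to 0$ at a rate depending only on $\dim\mathcal O$; comparing these growth rates in $|s|$ forces every coefficient to vanish, in particular $\Psi(0)=0$. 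This gives $\Phi_1(tI_n)=0$, hence $\int_{\GL_n(F)}\phi(th,h^{-1})\,dh=0$, as required.
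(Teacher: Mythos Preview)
Your approach is the same as the paper's: push $\phi$ down along the contraction $(X,Y)\mapsto XY$, then use the Shalika germ expansion near the central point to isolate the value there from the vanishing of the elliptic orbital integrals. The only difference is cosmetic---you translate by $tI_n$ and invoke nilpotent germs on the Lie algebra, while the paper stays on the group and uses unipotent germs around $tI_n$.

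There is, however, one genuine gap. Your assertion that $\Phi_1\in C_c^\infty(\fgl_n(F))$ is false: the condition $\phi(h^{-1},Wh)\neq 0$ forces $h^{-1}$ and $Wh$ into fixed compact subsets of $\fgl_n(F)$, which does bound $W=(Wh)\cdot h^{-1}$, but it does \emph{not} confine $h$ to a compact subset of $\GL_n(F)$ when $W$ is singular. At $W=0$ the integral becomes $\int_{\GL_n}\phi(h^{-1},0)\,dh$, which after $h\mapsto h^{-1}$ is $\int_{K_1\cap\GL_n}\phi(g,0)\,dg_{\mathrm{Haar}}$ and typically diverges (the Haar density $|\det g|^{-n}$ blows up near the singular locus). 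So $\Phi_1$ is defined and smooth only on the open set $\GL_n(F)\subset\fgl_n(F)$; your translated $\Psi$ is therefore not in $C_c^\infty(\fgl_n(F))$, and the nilpotent orbital integrals $\mu_{\mathcal O}(\Psi)$ appearing in your germ expansion are not a priori available.

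The paper handles this by multiplying the pushforward $r_!(\phi)$ by the indicator of a compact open neighbourhood $tU_1$ of $tI_n$ inside $\GL_n(F)$ to obtain a genuine $f\in C_c^\infty(\GL_n(F))$. The point is that regular elliptic conjugacy classes near $tI_n$ are closed in $\fgl_n(F)$ and lie entirely in $\GL_n(F)$, so their intersection with $\supp r_!(\phi)$ is compact in $\GL_n(F)$; a sufficiently large $U_1$ then leaves the elliptic orbital integrals near $tI_n$ unchanged. The same cutoff, inserted before you translate and invoke the germ expansion, repairs your argument.
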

\begin{proof}
Recall the contraction map
\begin{align*}
    r: \fX = \fgl_n\times \fgl_n&\lra \fgl_n\\
    (X,Y)&\longmapsto XY,
\end{align*}
which intertwines the $\rH$-action on $\fX$ with the adjoint action of $\GL_n$ on $\fgl_n$ through the projection onto the first copy of $\fgl_n$. The orbit calculation \cite[Proposition 2.1]{JacquetRallis} shows it suffices to consider elements of the form $(X,I_n)$ where $X\in\GL_n(F)$ is regular semi-simple in the classical sense. Then 
\[
\rH_{(X,I_n)}\simeq (\GL_n)_X,
\]
so that $(X,I_n)$ is $\rH$-elliptic if and only if $X\in\GL_n(F)$ is elliptic.

Now suppose that $\phi\in C_c^\infty(\fX(F))$ satisfies
\[
  \Orb^{\rH}(\phi,(X,I_n))=0
\]
for all elliptic $X\in \GL_n(F)$. 

Fix an elliptic maximal torus $T\subset \GL_n(F)$, and element $t\in F^\times$, and a compact open neighborhood $U_1\subset \GL_n(F)$ of $I_n$. By assumption, $ \Orb^{\rH}(\phi,(tX,I_n))=0$ for all $X\in T^{reg}(F)\cap U_1$. Consider the push-forward $r_!(\phi)$: for any $X\in \GL_n(F)$, this is given by
\[
r_!(\phi)(x) = \int_{\GL_n(F)}\phi(xh,h^{-1})dh.
\]
Setting $f:=r_!(\phi)\cdot\bfun_{tU_1}$, we see that $f\in C_c^\infty(\GL_n(F))$ and
    \[
     \Orb^{\rH}(\phi,(tX,I_n)) =  \Orb^{\GL_n}(f,tX)
    \]
for all $X\in T^{reg}(F)\cap U_1$. Shrinking $U_1$ if necessary, we obtain the Shalika germ expansion \cite[Section 2]{rogawskiGL} around $\ga_0 = tI_n$
\begin{equation}\label{germ expansion group}
  0=  \Orb^{\rH}(\phi,(tX,I_n)) =  \Orb^{\GL_n}(f,tX) = \sum_{\calo}\Ga^T_\calo(tX)\Orb^{\GL_n}(f,t\calo),
\end{equation}
where the sum ranges over unipotent orbits in $\GL_n(F)$ and we range over $X\in T^{reg}\cap U_1$.   We need the following lemma.
\begin{Lem}
    Let $\mathcal{T}_e$ be a set of representatives of the conjugacy classes of elliptic maximal tori $T\subset\GL_n(F)$. Suppose that $\{C_\calo\}_{\calo}$ is a set of complex numbers such that there exists a neighborhood $U\subset \GL_n(F)$ of $I_n$ with
    \[
    \sum_\calo C_\calo\Ga_\calo^T(tX)=0
    \]
    for all $X\in U$ and for all $T\in\mathcal{T}_e$. Then $C_{\{I_n\}}=0$.
\end{Lem}
\begin{proof}
Suppose on the contrary that $C_{\{I_n\}}\neq0$. By \cite[Theorem 2.2]{rogawskiGL}, the germ $\Ga_{\{I_n\}}^T\neq0$ is a constant independent of $T$, so we conclude that there exists a non-zero $C\neq 0$ such that
   \begin{equation}\label{eqn: equal a constant}
       \sum_{\calo\neq \{I_n\}} C_\calo\Ga_\calo^T(tX)=C
   \end{equation}
 for all $X$ sufficiently close to $I_n$. Recall the homogeneity of the Shalika germs \cite[Theorem 14]{HarishChandra}
    \[
    \Ga_\calo^T(t\exp(a^2H)) = |a|^{-d_{\calo}}\Ga_\calo^T(t\exp(H))
    \] for $a\in \calo_F$, $H\in \fgl_n(F)$ sufficiently close to $0$ and where $d_{\calo}=\dim(\calo)$. Applying this to \eqref{eqn: equal a constant}, we find the equality
        \[
    \sum_{\calo\neq \{I_n\}} |a|^{-d_{\calo}}C_\calo\Ga_\calo^T(tX)=C\neq0
    \]
    for any $X\in U$ and all $a\in \calo_F$ sufficiently small. But $d_\calo>0$ for all terms in the sum, so the left-hand side of \eqref{eqn: equal a constant} tends toward $\infty$ as $X\to I_n$, contradicting the equality.
\end{proof}

Applying the lemma to the coefficients $\{\Orb^{\GL_n}(f,t\calo)\}_\calo$, \eqref{germ expansion group} now implies that 
\[
\Orb^{\GL_n}(f,tI_n) = r_!(\phi)(tI_n) = \int_{\GL_n(F)}\phi(th,h^{-1})dh = 0,
\]
completing the proof.
\end{proof}

It is now clear that Theorem \ref{Thm: elliptic non-vanishing Lie} now follows from the next result.

  \begin{Thm}
  \label{thm: germ1}
    Suppose that $\phi\in C_c^\infty(\fX(F))$ satisfies that \[
\wt\phi(t):= \int_{\GL_n(F)}\phi(th,h^{-1})dh = 0,
\]
for all (non-zero) $t$ in a neighborhood of $0$. Then $\phi(0)=0.$
 \end{Thm}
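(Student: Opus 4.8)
The claim is that the ``contraction integral'' $\wt\phi(t)=\int_{\GL_n(F)}\phi(th,h^{-1})\,dh$ vanishing near $t=0$ (for $t\ne 0$) forces $\phi(0)=0$. The natural approach is to relate $\wt\phi$ to the orbital integrals of $\phi$ on $\fX(F)$ along the contraction map $r(X,Y)=XY$, run a Shalika germ expansion on $\fgl_n(F)$ around $0$, and extract the coefficient of the trivial (zero) nilpotent orbit, which is precisely $\phi(0)$ up to a nonzero constant. Concretely: $r$ intertwines the $\rH$-action on $\fX$ with the adjoint action of $\GL_n$ on $\fgl_n$ (through projection to the first $\fgl_n$-factor), and by the orbit computation of Jacquet--Rallis \cite[Proposition 2.1]{JacquetRallis} every semisimple $\rH$-orbit on $\fX$ meeting the ``invertible'' locus has a representative of the form $(X, I_n)$ with $X$ regular semisimple in $\fgl_n$; moreover $\Orb^{\rH}(\phi,(X,I_n))=\Orb^{\GL_n}(r_!\phi,X)$. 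So the hypothesis says $\wt\phi(t)=r_!(\phi)(tI_n)$ vanishes for all small $t\ne 0$.

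First I would fix an elliptic maximal torus $T\subset \GL_n$ and localize: choose a small compact open neighborhood $U$ of $0$ in $\fgl_n(F)$ and set $f = r_!(\phi)\cdot \bfun_{\exp(tU)}$ (or work directly on $\fgl_n$ via a suitable truncation), so that $f\in C_c^\infty(\fgl_n(F))$ and $\Orb^{\GL_n}(f, tX) = \Orb^{\GL_n}(r_!\phi, tX)$ for $X\in T^{\reg}(F)\cap U$. Applying the Shalika germ expansion of Harish-Chandra--Rogawski \cite[Section 2]{rogawskiGL} around $0\in\fgl_n(F)$ (now in the Lie-algebra setting, which is cleaner than the group version used above) gives
\[
\Orb^{\GL_n}(f, tX) = \sum_{\calo}\Ga_\calo^T(tX)\,\Orb^{\GL_n}(f, t\calo),
\]
where $\calo$ ranges over nilpotent orbits in $\fgl_n(F)$ and $\Orb^{\GL_n}(f,t\{0\}) = f(0) = r_!(\phi)(0)$. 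But this last quantity need not equal $\phi(0)$: the issue is that $r_!\phi$ evaluated at $0\in\fgl_n$ is an integral of $\phi$ over the fiber $r^{-1}(0)$, which is a cone and not contained in the locus where $\phi$ is being controlled. This is the crux of the difficulty.

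The resolution — and the main obstacle I expect — is that one must relate $f(0)=r_!\phi(0)$ back to $\phi(0)$, which requires a second, finer homogeneity/germ analysis on $\fX$ itself rather than on $\fgl_n$. I would argue as follows: run the germ expansion for the $\rH$-orbital integrals $\Orb^{\rH}(\phi,(X,Y))$ on $\fX(F)$ directly (the pair $(\rH,\fX)$ is a symmetric space with a well-behaved nilpotent cone, so one expects an analogous germ expansion after \cite{rogawskiGL}-type arguments, or one can import it from the group side via the Cayley transform of \cite[Appendix A]{LXZfund}), and isolate the germ attached to the zero $\rH$-orbit, whose value is $\phi(0)$. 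The homogeneity lemma proved in the excerpt (the $|a|^{-d_\calo}$ scaling, via \cite[Theorem 14]{HarishChandra}) then shows that if $\phi(0)\ne 0$ the expansion $\sum_\calo \Ga^T_\calo \Orb^{\rH}(\phi,t\calo)$ cannot vanish identically near $0$: scaling $X\to 0$ the trivial-orbit germ is a nonzero constant while every other term blows up, and matching the two sides forces a contradiction exactly as in the Lemma preceding this Theorem. I would fill in the required germ expansion on $\fX$ either by adapting Rogawski's inductive argument to the symmetric-space setting or by transporting the statement from $\fgl_{2n}$ using the decomposition $\fg=\fh\oplus\fX$ and standard descent; this adaptation is the one genuinely technical step, and is where I would spend most of the effort.
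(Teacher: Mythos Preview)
Your plan has a genuine gap. The quantity $\wt\phi(t)$ is the $\rH$-orbital integral of $\phi$ at the \emph{non-regular} element $(tI_n,I_n)\in\fX(F)$: its stabilizer is the full diagonal $\GL_n$, and its image $r(tI_n,I_n)=tI_n\in\fgl_n$ is central. Shalika germ expansions --- whether on $\fgl_n$ or on $\fX$ --- describe the asymptotics of \emph{regular} orbital integrals near a semisimple point; they say nothing about $\wt\phi(t)$. Your second paragraph implicitly assumes information about $\Orb^{\GL_n}(r_!\phi,tX)$ for regular elliptic $X$, but the hypothesis only gives the point value $r_!\phi(tI_n)=0$. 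Your fallback --- run a germ expansion on $\fX$ for $\Orb^{\rH}(\phi,(X,Y))$ at regular $(X,Y)$ --- never connects back to $\wt\phi(t)$. And the homogeneity heuristic (``the trivial-orbit germ is a constant while every other term blows up'') is the wrong picture here: in the actual asymptotic expansion of $\wt\phi(t)$ many strata contribute bounded terms, and $\phi(0)$ cannot be isolated by looking at the constant part.

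The paper instead performs a direct, explicit asymptotic analysis of $\wt\phi(\varpi^x)$ as $x=v(t)\to\infty$. After averaging so that $\phi$ is bi-$K$-invariant and lattice-invariant, the Cartan decomposition turns $\wt\phi(\varpi^x)$ into a sum over $\lambda\in\BZ^{n,+}$; partitioning $\lambda$ by how many entries lie in $[-N,-1]$, $[0,x]$, $[x+1,x+N]$ (sizes $n_1,n_2,n_3$) produces a ``coarse germ expansion''
\[
\wt\phi(\varpi^x)=\sum_{n_1+n_2+n_3=n} c_{(n_1,n_2,n_3)}(\phi)\,q^{n_3(n_1+n_2)x}\,\vol_{n_2,\,n_1-n_3}(x),
\]
with $c_{(0,n,0)}(\phi)=\phi(0)$. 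A separate combinatorial analysis (recursions for $\vol_{m,\alpha}$ via $q$-binomial identities) shows each $\vol_{m,\alpha}(x)$ expands as $\sum c_{a,b}\,q^{ax}x^b$ with $a,b\ge 0$, and that in the full expansion of $\wt\phi(\varpi^x)$ the coefficient of the \emph{linear} term $x$ comes only from the triple $(0,n,0)$ and equals $\phi(0)\cdot B_n$ with $B_n=n\prod_{i=1}^{n-1}(1-q^i)^{-1}\ne 0$. Linear independence of the functions $q^{ax}x^b$ then finishes. The key point your proposal misses is that $\phi(0)$ is isolated not by any constant term but by the linear-in-$v(t)$ coefficient, and pinning this down requires the explicit $\vol_{m,\alpha}$ calculus rather than abstract germ or homogeneity arguments.
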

%
%
%


\subsection{Some explicit computations}

Before we present the proof of Theorem \ref{thm: germ1}, we need some preparations. The results here may be of independent interest in view of their connection to the local density (cf. \cite[\S2.4]{FYZ}).

Recall the Cartan decomposition
$$
\GL_n(F)=\coprod_{a\in A(F)/A(O)} K a K
$$ 
where $a=(a_1,\cdots,a_n), |a_1|\geq \cdots\geq |a_n| $. We can simply set $a=\varpi^\lambda, a_i=\varpi^{\lambda_i}, \lambda_i\in\BZ$, and $\lambda_1\leq\lambda_2\leq\cdots\leq \lambda_n.$
We recall the formula for $\vol(K \varpi^\lambda K)$
\begin{equation}\label{eq:vol KaK}
\vol(K \varpi^\lambda K)=\frac{\mu(M_\lambda)}{\mu(G)} \delta(\varpi^\lambda)^{-1}
\end{equation}
where $\delta(\lambda)$ is the modular character,
$M_\lambda$ is the centralizer of $\varpi^\lambda$, and $\mu(M)=\frac{\vol(I_M w_\ell I_M)}{\vol(M(O))}$      where $I_M w_\ell I_M$ is the largest Bruhat cell. The constant $\mu(M_\lambda)$ can be described explicitly: for $\lambda$ such that 
$$
\lambda_1=\cdots=\lambda_{n_1}<\lambda_{n_1+1}=\cdots=\lambda_{n_1+n_2}<\cdots,
$$
we call the (ordered) sequence of the lengths of the segments of equal values $(n_1,\cdots,n_s)$ ($\sum n_i=n$)  {\em the type} of $\lambda$, and define $$
|\lambda|:=\sum\lambda_i.
$$ Then
the centralizer is
$$
M_\lambda\simeq\prod_{i}\GL_{n_i}.
$$
Moreover, we have
$$
\mu(\GL_n)=\prod_{i=1}^n\frac{1-q^{-1}}{1-q^{-i}}=\prod_{i=1}^n\frac{\zeta(i)}{\zeta(1)}.
$$

Let $\BZ^{n,+} $ denote the set of $\lambda$ such that
$$
 \lambda_1\leq \cdots\leq \lambda_n,
$$
and $\BZ^{n,+}_{[a,b]}$ the subset consisting of $\lambda$ satisfying
$$
a\leq \lambda_1\leq \cdots\leq \lambda_n\leq b.
$$
For $n\geq 1$, and $\alpha\in\BZ$, we define for $x\in\BZ_{\geq 0}$
\begin{equation}\label{eq:vol n a}
\vol_{n,\alpha}(x)=\sum_{\lambda\in\BZ^{n,+}_{[0,x]} } q^{\alpha |\lambda|}\vol(K_n\varpi^\lambda K_n).
\end{equation}
As a convention we define $\vol_{n,\alpha}(x)=1$ when $n=0$. 
\begin{Rem}
The function $\vol_{n,0}(x)$ is essentially the Hermitian local density when the quadratic field is split (cf. \cite[\S2.4]{FYZ}), specialized at scalar matrices $\varpi^x {\bf 1}_n$. 
\end{Rem}
Note that there is a ``functional equation" by substituting $\varpi^\lambda$ by $\varpi^{x}\varpi^{-\lambda}$
\begin{equation}\label{eq:vol n a FE}
\vol_{n,\alpha}(x)=  q^{n\alpha x}\vol_{n,-\alpha}(x).
\end{equation}

Recall  the $q$-binomial coefficient 
$$
 {n\choose \beta}_q:=\frac{[n]_q!}{[\beta]_q![n-\beta]_q! },
$$
where $[n]_q!:=\prod_{i=1}^n\frac{(1-q^i)}{(1-q)}$. It is the number of $\beta$-dimensional subspaces of the $n$-dimensional vector space over $\BF_q$. Note that
$$
 {n\choose \beta}_q=q^{\beta(n-\beta)}\frac{\mu( \GL_{\beta})\mu(\GL_{n-\beta})}{\mu(\GL_n)}.
$$ 

\begin{Lem}
We have
\begin{align}\label{eq recur}
\vol_{n,\alpha}(x)=\sum_{0\leq y\leq x} q^{\alpha n y}+ \sum_{1\leq \beta\leq n-1}  {n\choose \beta}_q  \,q^{\alpha(n-\beta)} q^{\alpha n(x-1)}\sum_{0\leq y\leq x-1}
q^{-\alpha n y}\vol_{n-\beta, \alpha+\beta}(y).
\end{align}

\end{Lem}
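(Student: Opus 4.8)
\textbf{Proof proposal for the recursion \eqref{eq recur}.} The plan is to compute the volume sum $\vol_{n,\alpha}(x)$ by partitioning the index set $\BZ^{n,+}_{[0,x]}$ according to the first coordinate $\lambda_1$, which is either $0$ or strictly positive. If $\lambda_1=\lambda_2=\cdots=\lambda_n = 0$, this gives no contribution beyond the trivial one; more generally, I will separate out the case $\lambda_1=\lambda_2=\cdots=\lambda_n=y$ for $0\leq y\leq x$ (the ``scalar'' locus where $\varpi^\lambda$ is central), which contributes $\sum_{0\le y\le x} q^{\alpha n y}\vol(K_n\varpi^{y{\bf 1}_n}K_n) = \sum_{0\le y\le x}q^{\alpha n y}$ since $\vol(K_n\varpi^{y{\bf 1}_n}K_n)=1$ (the double coset is just $\varpi^{yn}K_n$). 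For the remaining $\lambda$, let $\beta$ with $1\leq \beta\leq n-1$ be the size of the initial segment on which $\lambda$ takes its minimal value, i.e.\ $\lambda_1=\cdots=\lambda_\beta < \lambda_{\beta+1}\le\cdots\le\lambda_n$. Write $\lambda_1=\cdots=\lambda_\beta=z$ with $0\le z\le x-1$ (it cannot equal $x$ since the segment is proper) and $\lambda_i = z + \mu_{i-\beta}$ for $i>\beta$, where $\mu=(\mu_1,\le\cdots\le\mu_{n-\beta})\in \BZ^{n-\beta,+}_{[1,\,x-z]}$, i.e. $\mu$ has all coordinates $\ge 1$.

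The key step is the behavior of $\vol(K_n\varpi^\lambda K_n)$ under this decomposition. Using the formula \eqref{eq:vol KaK}, $\vol(K_n\varpi^\lambda K_n) = \frac{\mu(M_\lambda)}{\mu(\GL_n)}\delta(\varpi^\lambda)^{-1}$. The centralizer is $M_\lambda \simeq \GL_\beta\times M_\mu'$ where $M_\mu'\subseteq \GL_{n-\beta}$ is the centralizer of $\varpi^\mu$; hence $\mu(M_\lambda) = \mu(\GL_\beta)\mu(M_\mu')$. I would track the modular character: shifting all coordinates by the scalar $z$ does not change $\delta$, and $\delta(\varpi^\lambda)^{-1}$ factors as a product of the $\GL_\beta$-part (trivial, as the segment is constant), the $\GL_{n-\beta}$-internal part $\delta_{\GL_{n-\beta}}(\varpi^\mu)^{-1}$, and a ``cross term'' recording the pairing between the first $\beta$ coordinates and the last $n-\beta$; the latter produces a power $q^{-\beta\sum_{j}(\text{something})}$ that I will reconcile with the factors $q^{\alpha(n-\beta)}q^{\alpha n(x-1)}$ and the substitution below. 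Combined with the $q$-binomial identity ${n\choose\beta}_q = q^{\beta(n-\beta)}\frac{\mu(\GL_\beta)\mu(\GL_{n-\beta})}{\mu(\GL_n)}$, this rewrites $\frac{\mu(M_\lambda)}{\mu(\GL_n)}\delta(\varpi^\lambda)^{-1}$ in terms of ${n\choose\beta}_q$ and $\vol(K_{n-\beta}\varpi^\mu K_{n-\beta})$ up to an explicit power of $q$.

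With that bookkeeping done, the weight $q^{\alpha|\lambda|}$ factors as $q^{\alpha n z}\cdot q^{\alpha|\mu|}$ (since $|\lambda| = nz + |\mu|$), and the inner sum over $\mu\in\BZ^{n-\beta,+}_{[1,x-z]}$ with all coordinates $\ge 1$ is converted, by the substitution $\mu_j \mapsto \mu_j - 1$ (subtracting $1$ from every coordinate, which shifts $|\mu|$ down by $n-\beta$), into a sum over $\BZ^{n-\beta,+}_{[0,x-z-1]}$, i.e.\ into $q^{\alpha(n-\beta)}\vol_{n-\beta,\alpha}$ evaluated at $x-z-1$ --- but here the cross-term power of $q$ from the modular character promotes the exponent $\alpha$ to $\alpha+\beta$, producing $\vol_{n-\beta,\alpha+\beta}(x-z-1)$. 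Reindexing $y = x-z-1$ so that $z$ ranges over $0\le z\le x-1$ becomes $0\le y\le x-1$, and collecting the powers of $q$ from $q^{\alpha n z}$ (which becomes $q^{\alpha n(x-1-y)} = q^{\alpha n(x-1)}q^{-\alpha n y}$) yields exactly the stated right-hand side. The main obstacle I anticipate is the careful accounting of the modular-character cross term and verifying it contributes precisely the shift $\alpha \rightsquigarrow \alpha+\beta$ together with the prefactor $q^{\alpha(n-\beta)}q^{\alpha n(x-1)}$; this is a finite but delicate computation with $\delta$, best organized by writing $\delta(\varpi^\lambda) = \prod_{i<j} q^{\lambda_j-\lambda_i}$ (up to normalization) and splitting the index pairs $(i,j)$ into the three blocks above. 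Everything else is combinatorial reindexing that I would carry out directly.
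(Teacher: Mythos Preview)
Your proposal is correct and follows essentially the same approach as the paper: partition $\BZ^{n,+}_{[0,x]}$ by the length $\beta$ of the initial constant segment of $\lambda$, with $\beta=n$ giving the scalar term $\sum_{0\le y\le x}q^{\alpha n y}$, and for $1\le\beta\le n-1$ use the block decomposition $M_\lambda\simeq\GL_\beta\times M_{\lambda^\flat}$ together with the explicit cross term $\prod_{i\le\beta<j}q^{\lambda_j-\lambda_i}=q^{-\beta(n-\beta)\lambda_1}q^{\beta|\lambda^\flat|}$ in $\delta(\varpi^\lambda)^{-1}$ to produce the shift $\alpha\mapsto\alpha+\beta$, followed by the reindexing $y=x-z-1$. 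The paper carries out exactly this computation (shifting $\lambda^\flat$ by $\lambda_1+1$ in one step rather than your two), so your anticipated ``main obstacle'' is precisely the one explicit calculation the paper performs.
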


\begin{proof}
We compute the sum over $\lambda\in\BZ^{n,+}_{[0,x]}$ according to the length $\beta:=n_1$ of the first segment in the type $(n_1,\cdots, n_s)$ of $\lambda$. We have $1\leq \beta\leq n$, and 
$$
0\leq\lambda_1=\cdots=\lambda_{\beta}< \lambda_{\beta+1}\leq \cdots\leq x.
$$
Denote $\lambda^\flat$ the sequence obtained by deleting the first $\beta$ terms, namely $\lambda^\flat=( \lambda_{\beta+1},\cdots,  \lambda_{n})$.
Then $$
M_{\lambda}\simeq \GL_{\beta}\times M_{\lambda^\flat},
$$
and 
$$
\vol(K_n\varpi^\lambda K_n)=\frac{\mu( \GL_{\beta})\mu( M_{\lambda^\flat})}{\mu(\GL_n)}\delta(\varpi^\lambda)^{-1}.
$$
We note that 
\begin{align*}
\delta(\varpi^\lambda)^{-1}&=\prod_{1\leq i\leq\beta, \beta+1\leq j\leq n}q^{\lambda_j-\lambda_i} \delta(\varpi^{\lambda^\flat})^{-1}\\
&=  q^{-\beta(n-\beta)\lambda_1} q^{\beta|\lambda^\flat|} \delta(\varpi^{\lambda^\flat})^{-1}.
\end{align*}
Therefore we have 
$$
\vol(K_n\varpi^\lambda K_n)=\frac{\mu( \GL_{\beta})\mu(\GL_{n-\beta})}{\mu(\GL_n)}q^{-\beta(n-\beta)\lambda_1} q^{\beta|\lambda^\flat|}  \vol(K^\flat\varpi^{\lambda^\flat} K^\flat).
$$
where $K^\flat=K_{n-\beta}$.

We obtain
\begin{align*}
\vol_{n,\alpha}(x)&=\sum_{1\leq \beta\leq n}\sum_{\lambda\in\BZ^{n,+}_{[0,x]}, n_1(\lambda)=\beta} q^{\alpha |\lambda|}\vol(K_n\varpi^\lambda K_n)
\\&=\sum_{1\leq \beta\leq n} \frac{\mu( \GL_{\beta})\mu(\GL_{n-\beta})}{\mu(\GL_n)} \sum_{0\leq \lambda_1\leq x}\sum_{\lambda^\flat\in \BZ^{n-\beta,+}_{[\lambda_1+1,x]}}q^{\alpha |\lambda^\flat|+\alpha\beta \lambda_1} q^{-\beta(n-\beta)\lambda_1} q^{\beta|\lambda^\flat|}  \vol(K^\flat\varpi^{\lambda^\flat} K^\flat).
\end{align*}
We note that  $\lambda^\flat\mapsto \lambda^\flat+(\lambda_1+1)$ (adding every component by the same number) defines a bijection $  \BZ^{n-\beta,+}_{[0,x-\lambda_1-1]}\simeq \BZ^{n-\beta,+}_{[\lambda_1+1,x]}$. We obtain

\begin{align*}\vol_{n,\alpha}(x)&=\sum_{1\leq \beta\leq n} \frac{\mu( \GL_{\beta})\mu(\GL_{n-\beta})}{\mu(\GL_n)} \sum_{0\leq \lambda_1\leq x}q^{(\alpha+\beta)(n-\beta)(\lambda_1+1)+\alpha\beta \lambda_1-\beta(n-\beta)\lambda_1} \vol_{n-\beta, \alpha+\beta}(x-(\lambda_1+1))
\\&= \sum_{1\leq \beta\leq n} \frac{\mu( \GL_{\beta})\mu(\GL_{n-\beta})}{\mu(\GL_n)} q^{(\alpha+\beta)(n-\beta)}\sum_{0\leq \lambda_1\leq x}
q^{\alpha n \lambda_1}\vol_{n-\beta, \alpha+\beta}(x-(\lambda_1+1))
\end{align*}

When $\beta=n$ the summand contributes
$$\sum_{0\leq y\leq x} q^{\alpha n y}
$$
When $\beta<n$, note that the term $\lambda_1=x$ vanishes and we have  
$$
\sum_{0\leq \lambda_1\leq x}
q^{\alpha n \lambda_1}\vol_{n-\beta, \alpha+\beta}(x-(\lambda_1+1))=q^{\alpha n(x-1)}\sum_{0\leq y\leq x-1}
q^{-\alpha n y}\vol_{n-\beta, \alpha+\beta}(y).
$$

\end{proof}

\begin{Lem}\label{lem: ind 2}
We have
\begin{align}\label{eq recur 2}
\vol_{n,\alpha}(x)=\sum_{0\leq \beta\leq n} {n\choose \beta}_q \, q^{\alpha(n-\beta)}\vol_{n-\beta,\alpha+\beta}(x-1).
\end{align}
In particular, when $\alpha=0$ we have
\begin{align}\label{eq recur 2 0}
\vol_{n,\alpha=0}(x)=\sum_{0\leq \beta\leq n} {n\choose \beta}_q\, \vol_{n-\beta,\beta}(x-1).
\end{align}

\end{Lem}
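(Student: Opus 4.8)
The plan is to deduce \eqref{eq recur 2} directly from the recursion \eqref{eq recur} of the preceding lemma by forming a suitable linear combination of two instances. Write \eqref{eq recur} at level $x$ and at level $x-1$ (assuming $x\geq 1$) and consider the difference
\[
\vol_{n,\alpha}(x)-q^{\alpha n}\vol_{n,\alpha}(x-1).
\]
First I would observe that the ``diagonal'' contribution telescopes: $\sum_{0\leq y\leq x}q^{\alpha n y}-q^{\alpha n}\sum_{0\leq y\leq x-1}q^{\alpha n y}=1$. Next, for each $\beta$ with $1\leq\beta\leq n-1$, the $\beta$-term $q^{\alpha n(x-1)}\sum_{0\leq y\leq x-1}q^{-\alpha n y}\vol_{n-\beta,\alpha+\beta}(y)$ of \eqref{eq recur} at level $x$, minus $q^{\alpha n}$ times the corresponding term at level $x-1$, collapses to the single summand $\vol_{n-\beta,\alpha+\beta}(x-1)$: the prefactors $q^{\alpha n(x-1)}$ and $q^{\alpha n}\cdot q^{\alpha n(x-2)}$ coincide, the two ranges of summation differ only in the top index $y=x-1$, and the weight $q^{-\alpha n(x-1)}$ of that index exactly cancels the common prefactor.

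Combining these two observations gives
\[
\vol_{n,\alpha}(x)-q^{\alpha n}\vol_{n,\alpha}(x-1)=1+\sum_{1\leq\beta\leq n-1}{n\choose \beta}_q\, q^{\alpha(n-\beta)}\vol_{n-\beta,\alpha+\beta}(x-1),
\]
and it remains only to recognize the boundary terms: $q^{\alpha n}\vol_{n,\alpha}(x-1)$ is precisely the $\beta=0$ summand of \eqref{eq recur 2} (using ${n\choose 0}_q=1$), while $1$ is the $\beta=n$ summand (using ${n\choose n}_q=1$, $q^{\alpha(n-n)}=1$, and the convention $\vol_{0,\alpha+n}(x-1)=1$). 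Rearranging then yields \eqref{eq recur 2}, and \eqref{eq recur 2 0} is the specialization $\alpha=0$.

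For the edge of the recursion I would simply restrict to $x\geq 1$, as \eqref{eq recur} itself already uses $x\geq 1$ through the shift $\lambda^\flat\mapsto\lambda^\flat+(\lambda_1+1)$; the value $\vol_{n,\alpha}(0)=1$ follows at once from $\vol(K_n)=1$, and \eqref{eq recur 2} extends to $x=0$ under the convention $\vol_{m,\alpha}(-1)=\delta_{m,0}$ if one wants it there. There is no serious obstacle here — the whole argument is a telescoping bookkeeping computation — and the only point needing attention is tracking which summands of \eqref{eq recur 2} correspond to the extreme indices $\beta=0$ and $\beta=n$, since in \eqref{eq recur} these appear not as $\beta$-terms but as the untouched $q^{\alpha n}\vol_{n,\alpha}(x-1)$ piece and the leftover diagonal term. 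As an alternative I note one can reprove \eqref{eq recur 2} directly from the definition \eqref{eq:vol n a}, splitting the sum over $\lambda\in\BZ^{n,+}_{[0,x]}$ according to whether $\lambda_1\geq 1$ (shift down by $(1,\dots,1)$, producing the $\beta=0$ term) or $\lambda_1=0$ (strip the leading segment of zeros, of length $\beta$, and repeat the modular-character computation from the proof of \eqref{eq recur}); this avoids the subtraction step at the cost of duplicating that computation.
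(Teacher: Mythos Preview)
Your proof is correct but takes a different route from the paper. The paper proves \eqref{eq recur 2} directly from the definition \eqref{eq:vol n a}, partitioning the sum over $\lambda\in\BZ^{n,+}_{[0,x]}$ according to the number $\beta$ of entries equal to $0$, then shifting $(\lambda_{\beta+1},\ldots,\lambda_n)$ down by $1$ to land in $\BZ^{n-\beta,+}_{[0,x-1]}$ and recomputing the volume factor --- this is exactly the alternative you sketch in your last paragraph. Your main argument instead deduces \eqref{eq recur 2} algebraically by forming $\vol_{n,\alpha}(x)-q^{\alpha n}\vol_{n,\alpha}(x-1)$ from two instances of \eqref{eq recur} and telescoping; this is a clean shortcut that avoids redoing the modular-character bookkeeping, at the cost of a logical dependence on the previous lemma and a little boundary care at $x=1$ (which you handle correctly, since \eqref{eq recur} is valid at $x=0$ with the inner sum empty). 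Either approach is fine: the paper's is self-contained, yours is more economical given what has already been established.
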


\begin{proof}
We compute $\vol_{n,\alpha}(x)$ according to whether the number $\beta$ of $\lambda_i=0$. Fix $0\leq \beta\leq n$. Assume 
$$
\lambda_1=\lambda_2=\cdots=\lambda_\beta=0, \lambda_{\beta+1}>0.
$$
Write $\lambda^\flat=( \lambda_{\beta+1}-1,\cdots  \lambda_{n}-1)$.
Then 
$$
q^{\alpha |\lambda|}\vol(K\varpi^\lambda K)=q^{(\alpha+\beta)(|\lambda^\flat|+n-\beta)}\frac{\mu( \GL_{\beta})\mu(\GL_{n-\beta})}{\mu(\GL_n)}\vol(K^\flat \varpi^{\lambda^\flat} K^\flat).
$$
We obtain
$$\vol_{n,\alpha}(x)=\sum_{0\leq \beta\leq n} \frac{\mu( \GL_{\beta})\mu(\GL_{n-\beta})}{\mu(\GL_n)} q^{(\alpha+\beta)(n-\beta)}\vol_{n-\beta,\alpha+\beta}(x-1).
$$

\end{proof}

\begin{Lem}\label{lem vol alpha}
Suppose that $n\geq 1$.
\begin{enumerate}
\item The germs when $x\to\infty$ of the function 
$\vol_{n,\alpha}$ on $\BZ_{\geq 0}$ is a linear combination of the following functions:
$ q^{a_i x}x^{b_i}
$
where $a_i, b_i\in\BZ_{\geq 0}$.  In other words,
there exists constants $c_{(a_i,b_i)}\in\BC$ such that 
\begin{equation}\label{eq:asymp n a}
\vol_{n,\alpha}(x)=\sum
c_{(a_i,b_i)}q^{a_i x}x^{b_i}
\end{equation}
holds
for all $x\geq 1$. (Clearly such expression is unique by the linear independence of germs of functions of the forms $q^{a_i x}x^{b_i}$.) 

\item
Assume that $\alpha\geq 1$. If the coefficient of $q^{a_i x}x^{b_i}$ in \eqref{eq:asymp n a} does not vanish, then either $a_i\geq 1$, or $(a_i,b_i)=(0,0)$. 
\end{enumerate}

\end{Lem}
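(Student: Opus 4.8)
The plan is to prove both parts by induction on $n$, using the two recursions \eqref{eq recur 2} and \eqref{eq recur 2 0} (or their companion \eqref{eq recur}) to reduce the asymptotic shape of $\vol_{n,\alpha}$ to that of the functions $\vol_{m,\beta}$ with $m<n$, for which both statements are assumed known. The base case $n=0$ is the convention $\vol_{0,\alpha}\equiv 1$, which trivially has the form \eqref{eq:asymp n a} with $(a_i,b_i)=(0,0)$ and satisfies (2).

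First I would establish part (1). By \eqref{eq recur 2}, $\vol_{n,\alpha}(x)$ is a finite $\BC$-linear combination of the terms $\vol_{n-\beta,\alpha+\beta}(x-1)$ for $0\le \beta\le n$; for $\beta\ge 1$ the inductive hypothesis gives that each such term, as a function of $x-1$, is a linear combination of $q^{a_i(x-1)}(x-1)^{b_i}$, and expanding $q^{a_i(x-1)}=q^{-a_i}q^{a_i x}$ and $(x-1)^{b_i}$ binomially keeps us inside the span of $\{q^{ax}x^b: a,b\in\BZ_{\ge 0}\}$. The only remaining term is $\beta=0$, namely $q^{\alpha n}\vol_{n,\alpha}(x-1)$ itself; writing $g(x):=\vol_{n,\alpha}(x)$, this means $g(x)-q^{\alpha n}g(x-1)=R(x)$ where $R$ is already known to lie in the span above. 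A standard argument — solving this first-order linear recursion with ``constant coefficient'' $q^{\alpha n}$ and polynomial-exponential right-hand side $R$ — shows $g$ also lies in that span (one may, e.g., check term by term: a particular solution for right-hand side $q^{ax}x^b$ is $q^{ax}\cdot(\text{polynomial of degree }b)$ if $q^a\ne q^{\alpha n}$, and $q^{ax}\cdot(\text{polynomial of degree }b+1)$ if $q^a=q^{\alpha n}$, and the homogeneous solution is $c\,q^{\alpha n x}$; all of these have the required shape, noting $\alpha n\ge 0$). Uniqueness of the representation follows from linear independence of the germs, as already remarked in the statement.

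Next I would prove part (2), assuming $\alpha\ge 1$. Here I run the same induction but track which exponents $a_i$ can occur with nonzero coefficient. For the $\beta\ge 1$ terms in \eqref{eq recur 2}, the relevant function is $\vol_{n-\beta,\alpha+\beta}$ with the shifted parameter $\alpha+\beta\ge\alpha+1\ge 1$ still positive; if $n-\beta\ge 1$ the inductive hypothesis for part (2) says its germs only involve $a_i\ge 1$ or $(a_i,b_i)=(0,0)$, and after multiplying by the nonzero constant $q^{\alpha(n-\beta)}$ and shifting $x\mapsto x-1$ this is unchanged. However, the $(0,0)$-germ coming from $\vol_{n-\beta,\alpha+\beta}$ gets multiplied by $q^{\alpha(n-\beta)}$, a constant, so it still only contributes to $(a_i,b_i)=(0,0)$ — good. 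The term $n-\beta=0$ (i.e. $\beta=n$) contributes $q^{\alpha\cdot 0}\cdot 1=1$, again only the $(0,0)$-germ. The subtle term is once more $\beta=0$: $q^{\alpha n}\vol_{n,\alpha}(x-1)$. Writing $g(x)=\sum c_{(a,b)}q^{ax}x^b$, the recursion $g(x)-q^{\alpha n}g(x-1)=R(x)$ with $R$ supported (by the above) on germs with $a\ge 1$ or $(a,b)=(0,0)$. I need to rule out a nonzero contribution with $a=0,b\ge 1$. Such a contribution $c\,x^b$ in $g$ would produce $c\,x^b - q^{\alpha n}c\,(x-1)^b = c(1-q^{\alpha n})x^b + (\text{lower order in }x)$ on the left; since $\alpha n\ge 1$ we have $1-q^{\alpha n}\ne 0$, so this forces a nonzero $x^b$-term on the right, contradicting that $R$ has no pure-polynomial germ of degree $\ge 1$. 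An induction downward on $b$ (peeling off the top-degree polynomial part) makes this rigorous, and it simultaneously shows the constant term of $g$ is consistent.

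The main obstacle, and the step deserving the most care, is precisely this last point: controlling the ``resonant'' homogeneous term $q^{\alpha n x}$ and, in part (2), showing that the first-order recursion does not manufacture a spurious polynomial germ $x^b$ with $b\ge 1$. The key input is that $\alpha\ge 1$ forces $q^{\alpha n}\ne 1$, so that $x^b$ is \emph{not} a solution of the homogeneous equation and the recursion strictly lowers polynomial degree on the pure-polynomial part; this is what prevents a $(0,b)$-germ with $b\ge 1$ from appearing even though it could have appeared for $\alpha=0$ (where $q^{\alpha n}=1$ and indeed $\vol_{n,0}$ does acquire polynomial growth, consistent with \eqref{eq recur 2 0} and the failure of (2) when $\alpha=0$). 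I would also double-check the bookkeeping using the functional equation \eqref{eq:vol n a FE}, which relates $\vol_{n,\alpha}$ to $\vol_{n,-\alpha}$ and provides a useful consistency check on the exponents $a_i$ (they should pair up under $a_i\mapsto n\alpha-a_i$ when one clears the $q^{n\alpha x}$ factor).
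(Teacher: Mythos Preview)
Your argument is correct, and the strategy is close to the paper's but not identical. Both proofs proceed by induction on $n$; the difference is that the paper uses the recursion \eqref{eq recur}, which expresses $\vol_{n,\alpha}(x)$ entirely in terms of $\vol_{n-\beta,\alpha+\beta}$ for $\beta\ge 1$ (no self-reference), so the induction step is a direct computation: one analyzes $\sum_{0\le y\le x-1}q^{-\alpha n y}\vol_{n-\beta,\alpha+\beta}(y)$ term by term and multiplies back by $q^{\alpha n(x-1)}$, reading off immediately that for $\alpha\ge 1$ each contribution is of the form $q^{ax}x^{b}$ with $a\ge 1$, or the constant function, or $q^{\alpha n x}$. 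You instead use \eqref{eq recur 2}, which retains the $\beta=0$ term $q^{\alpha n}\vol_{n,\alpha}(x-1)$ and hence forces you to solve the first-order recursion $g(x)-q^{\alpha n}g(x-1)=R(x)$. This buys you a slightly cleaner right-hand side (no inner summation over $y$), at the cost of the extra ``no spurious $(0,b)$-germ'' step in part~(2); your observation that $q^{\alpha n}\neq 1$ when $\alpha\ge 1$, so that $x^b$ is not a homogeneous solution and the polynomial part of $R$ is forced to be constant, is exactly the right way to close that step. Either route works; the paper's is a bit more hands-on, yours is a bit more structural.
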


\begin{proof}Part (1): Induction on $n$. The case $n=1$ is clear.  Assume that the assertion holds up to $n-1\geq 1$. 

The first sum in \eqref{eq recur} can be computed explicitly

\begin{equation}\label{beta=n}
\sum_{0\leq y\leq x} q^{\alpha n y}=\begin{cases}\frac{q^{\alpha n (x+1)}-1}{q^{\alpha n}-1},& \alpha\geq 1\\
x+1,& \alpha=0\end{cases}
\end{equation}
Now fix $1\leq \beta\leq n-1$.
By induction hypothesis, the function $\vol_{n-\beta, \alpha+\beta}(y)$, when $y$ is large, is a  linear combination of functions of the form $q^{a y}y^b, a,b\in\BZ_{\geq0}$. Hence the sum
$$
\sum_{0\leq y\leq x-1}
q^{-\alpha n y}\vol_{n-\beta, \alpha+\beta}(y)$$ is, up to a constant, a linear combination of 
$$
\sum_{0\leq y\leq x-1} q^{(a-\alpha n) y}y^b.
$$
The above sum is clear a linear combination of functions of the form $$ q^{(a-\alpha n) x} x^{b_i}, \quad b_i\in\BZ, 0\leq b_i\leq b+1$$
and the constant function.
 Hence $q^{\alpha n x}
\sum_{0\leq y\leq x-1}
q^{-\alpha n y}\vol_{n-\beta, \alpha+\beta}(y)$
 a linear combination of functions of the form $$ q^{a x} x^{b_i}, \quad a\geq0, b_i\in\BZ, 0\leq b_i\leq b+1$$
and $$
q^{\alpha n x}.
$$
By \eqref{eq recur}, the assertion for $n$ is proved.

Part (2). Now assume that $\alpha\geq 1$, then in the induction step for $1\leq \beta\leq n-1$, we can assume $a\geq 1$, in which case the resulting functions are as desired, or $a=b=0$ (the constant function),  in which case the resulting functions are $q^{\alpha n x}$ and $1$.  The term \eqref{beta=n} (for $\beta=n$) is also as desired by direct inspection.

\end{proof}
\quash{\begin{remark}
The alternative proof above could also simplify the proof that ``when $\alpha\geq0$, the coefficients $c_{(a,b)}$ vanishes unless $a\geq1$ or $(a,b)=(0,0)$" noting that the polynomial $\sum c_i x^i$ does not seem to have the desired congruence?
\end{remark}}

Next we compute the coefficient for the linear term $x$ in the unique expansion in \eqref{eq:asymp n a}. For that purpose, we first compute the constant term of $\vol_{n,\alpha}(x)$, which we denote by $A(n,\alpha)$.

\begin{Lem}
Let $\alpha\geq 1$. Then we have
$$
A(n,\alpha)=\prod_{i=0}^{n-1}(1-q^{\alpha+i})^{-1}.
$$
\end{Lem}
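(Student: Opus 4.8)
The plan is to convert the recursion of Lemma~\ref{lem: ind 2} into a recursion for the constant term $A(n,\alpha)$ and then solve it in closed form. First I would take constant terms --- the coefficient of $q^{0\cdot x}x^{0}$ in the unique germ expansion from Lemma~\ref{lem vol alpha}(1) --- on both sides of
\[
\vol_{n,\alpha}(x)=\sum_{0\leq\beta\leq n}{n\choose\beta}_q\,q^{\alpha(n-\beta)}\,\vol_{n-\beta,\alpha+\beta}(x-1).
\]
Since $\alpha\geq 1$ forces $\alpha+\beta\geq 1$ for every $\beta$, Lemma~\ref{lem vol alpha}(2) tells us that in the expansion of each $\vol_{n-\beta,\alpha+\beta}$ the only term with trivial $q$-exponent is the pure constant. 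Consequently the substitution $x\mapsto x-1$ leaves the constant term untouched (it only rescales and re-expands the terms $q^{ax}x^{b}$ with $a\geq 1$, and these remain free of a constant part), so the constant term of $\vol_{n-\beta,\alpha+\beta}(x-1)$ is again $A(n-\beta,\alpha+\beta)$. Using the convention $\vol_{0,\gamma}\equiv 1$ (so $A(0,\gamma)=1$), and linearity of constant-term extraction over the finite sum, this yields
\[
A(n,\alpha)=\sum_{\beta=0}^{n}{n\choose\beta}_q\,q^{\alpha(n-\beta)}\,A(n-\beta,\alpha+\beta),\qquad \alpha\geq1 .
\]
Isolating $\beta=0$ gives $(1-q^{\alpha n})A(n,\alpha)=\sum_{\beta\geq1}{n\choose\beta}_q q^{\alpha(n-\beta)}A(n-\beta,\alpha+\beta)$, and since $1-q^{\alpha n}\neq 0$ this determines $A(n,\cdot)$ uniquely from the $A(m,\cdot)$ with $m<n$.

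Next I would verify that the claimed value $B(n,\alpha):=\prod_{i=0}^{n-1}(1-q^{\alpha+i})^{-1}$ satisfies exactly this recursion, so that $A=B$ follows by induction on $n$ (the case $n=0$ being the empty product $B(0,\gamma)=1$). Writing $\prod_{i=0}^{n-\beta-1}(1-q^{\alpha+\beta+i})^{-1}=\prod_{j=\beta}^{n-1}(1-q^{\alpha+j})^{-1}$ and clearing the denominator $\prod_{i=0}^{n-1}(1-q^{\alpha+i})$, the identity $\sum_{\beta}{n\choose\beta}_q q^{\alpha(n-\beta)}B(n-\beta,\alpha+\beta)=B(n,\alpha)$ reduces to the purely combinatorial statement
\[
\sum_{\beta=0}^{n}{n\choose\beta}_q\,q^{\alpha(n-\beta)}\prod_{j=0}^{\beta-1}(1-q^{\alpha+j})=1 .
\]

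Finally I would prove this $q$-identity. Setting $u=q^{\alpha}$ and $Q_n(u):=\sum_{\beta=0}^{n}{n\choose\beta}_q u^{\,n-\beta}\prod_{j=0}^{\beta-1}(1-uq^{j})$, the claim becomes $Q_n(q^{\alpha})=1$, and it is cleanest to prove the polynomial identity $Q_n(u)\equiv1$ by showing $Q_n=Q_{n-1}$ for $n\geq1$ (whence $Q_n\equiv Q_0\equiv1$). Applying the $q$-Pascal relation ${n\choose\beta}_q={n-1\choose\beta-1}_q+q^{\beta}{n-1\choose\beta}_q$, reindexing $\beta\mapsto\beta+1$ in the sum coming from the first term, and using $\prod_{j=0}^{\beta}(1-uq^{j})=(1-uq^{\beta})\prod_{j=0}^{\beta-1}(1-uq^{j})$, the difference $Q_n-Q_{n-1}$ splits into two sums that cancel term by term. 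The main obstacle is bookkeeping rather than depth: in the first step one must be careful that taking constant terms really commutes with both the finite sum and the shift $x\mapsto x-1$ --- this is precisely the role of Lemma~\ref{lem vol alpha}(2), and it explains why $\alpha\geq1$ is assumed --- and in the last step one must track index ranges in the $q$-Pascal manipulation; the $q$-identity itself is elementary once recast as a polynomial identity in $u$.
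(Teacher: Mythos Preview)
Your proof is correct and follows essentially the same route as the paper: extract the constant term from the recursion of Lemma~\ref{lem: ind 2} (using Lemma~\ref{lem vol alpha}(2) to justify that the shift $x\mapsto x-1$ does not disturb the constant term), obtain $(1-q^{\alpha n})A(n,\alpha)=\sum_{\beta\geq 1}{n\choose\beta}_q q^{\alpha(n-\beta)}A(n-\beta,\alpha+\beta)$, and then verify the closed form by reducing to the identity $\sum_{\beta=0}^{n}{n\choose\beta}_q q^{\alpha(n-\beta)}\prod_{j=0}^{\beta-1}(1-q^{\alpha+j})=1$. The only difference is in the endgame: the paper recognizes this last identity as the specialization $X=q^{-\alpha}$ of the classical $q$-binomial expansion $X^n=\sum_{\beta=0}^{n}{n\choose\beta}_q\prod_{i=0}^{\beta-1}(X-q^i)$, whereas you prove it directly by a $q$-Pascal induction showing $Q_n\equiv Q_{n-1}$ --- both arguments are short and equivalent.
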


\begin{proof}
Induction on $n$. Clearly $A(0,\alpha)=1, A(1,\alpha)=\frac{1}{1-q^\alpha}$. Suppose that $n\geq 2$ and the desired identity holds up to $n-1$.

From Lemma \ref{lem: ind 2} we have a recursion 
$$
(1-q^{\alpha n})A(n,\alpha)=\sum_{\beta=1}^n   {n\choose \beta}_q \, q^{\alpha(n-\beta)} A(n-\beta,\alpha+\beta).
$$
(Note that the higher term $q^{ax}x^b$ in $\vol_{n,\alpha}$ can not contribute nontrivially to the constant term in $\vol_{n,\alpha}(x)-q^{\alpha n}\vol_{n,\alpha}(x-1)$.)

Therefore it suffices to verity 
\begin{align}\label{eq: id q-binom}
(1-q^{\alpha n})\prod_{i=0}^{n-1}(1-q^{\alpha+i})^{-1}=\sum_{\beta=1}^n   {n\choose \beta}_q \, q^{\alpha(n-\beta)}\prod_{i=0}^{n-\beta-1}(1-q^{\alpha+\beta+i})^{-1}.
\end{align}
Equivalently
$$
(1-q^{\alpha n})=\sum_{\beta=1}^n   {n\choose \beta}_q \, q^{\alpha(n-\beta)}\prod_{i=0}^{\beta-1}(1-q^{\alpha+i})
$$
which is equivalent to
$$
1=\sum_{\beta=0}^n   {n\choose \beta}_q \, q^{\alpha(n-\beta)}\prod_{i=0}^{\beta-1}(1-q^{\alpha+i})
$$
where in the term for $\beta=0$, the product $\prod_{i=0}^{\beta-1}(1-q^{\alpha+i})$ is understood as $1$.
It suffices to show $$
q^{-\alpha n}=\sum_{\beta=0}^n   {n\choose \beta}_q \,\prod_{i=0}^{\beta-1}(q^{-\alpha}-q^{i}).
$$This follows from the  following identity of $q$-binomial coefficients, 
$$
X^n=\sum_{\beta=0}^n   {n\choose \beta}_q \,\prod_{i=0}^{\beta-1}(X-q^{i})
$$
as polynomials in $X$. (A sketch of the proof: enough to consider $X=q^m$ for all $m\geq 0$. Let $V_n=\BF_q^n$ be a $n$-dimensional vector space over $\BF_q$. Then both sides count the cardinality of $\Hom_{\BF_q}(V_n,V_m)$.)
This completes the induction.
\end{proof}

\begin{Lem}\label{lem: c(0,1)}
Let $\alpha=0$ and $n\geq 1$. Then the coefficient of $x$ in the expansion of $\vol_{n,0}(x)$ into the form \eqref{eq:asymp n a} is equal to 
\begin{equation}\label{eq: cont Bn}
B_n: = n\prod_{i=1}^{n-1}(1-q^{i})^{-1}.
\end{equation}
In particular, it never vanishes.
\end{Lem}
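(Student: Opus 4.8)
The plan is to extract the coefficient of the linear term directly from the recursion \eqref{eq recur}, specialised to $\alpha=0$, and then reduce the resulting closed formula to an identity of $q$-binomial coefficients which follows by differentiating a known polynomial identity. First I would set $\alpha=0$ in \eqref{eq recur}, obtaining
\[
\vol_{n,0}(x)=(x+1)+\sum_{1\leq\beta\leq n-1}{n\choose\beta}_q\sum_{0\leq y\leq x-1}\vol_{n-\beta,\beta}(y).
\]
For each $\beta$ with $1\leq\beta\leq n-1$ we have $n-\beta\geq 1$ and $\beta\geq 1$, so by part (2) of Lemma \ref{lem vol alpha} the germ expansion \eqref{eq:asymp n a} of $\vol_{n-\beta,\beta}$ contains no pure linear term: it equals its constant term $A(n-\beta,\beta)$ plus a $\BC$-linear combination of functions $q^{ay}y^b$ with $a\geq 1$. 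Summing such a function over $0\leq y\leq x-1$ produces only a constant together with functions $q^{ax}x^j$, $a\geq 1$, and never a term $x$; hence $\sum_{0\leq y\leq x-1}\vol_{n-\beta,\beta}(y)$ has $x$-coefficient exactly $A(n-\beta,\beta)$, while $x+1$ contributes $1$. By the uniqueness of the expansion \eqref{eq:asymp n a} this gives
\[
B_n=1+\sum_{\beta=1}^{n-1}{n\choose\beta}_q\,A(n-\beta,\beta).
\]

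Next I would substitute $A(n-\beta,\beta)=\prod_{j=\beta}^{n-1}(1-q^j)^{-1}$ (the lemma above computing $A(n,\alpha)$, applicable since $\beta\geq 1$ and $n-\beta\geq 1$) and multiply through by $P:=\prod_{i=1}^{n-1}(1-q^i)$. Using $P\cdot\prod_{j=\beta}^{n-1}(1-q^j)^{-1}=\prod_{i=1}^{\beta-1}(1-q^i)$, together with $P={n\choose n}_q\prod_{i=1}^{n-1}(1-q^i)$, the claimed equality $B_n=n\,P^{-1}$ is equivalent to
\[
\sum_{\beta=1}^{n}{n\choose\beta}_q\prod_{i=1}^{\beta-1}(1-q^i)=n,
\]
where the $\beta=1$ summand has empty product equal to $1$ and the $\beta=n$ summand equals $P$. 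To prove this I would differentiate the polynomial identity $X^n=\sum_{\beta=0}^{n}{n\choose\beta}_q\prod_{i=0}^{\beta-1}(X-q^i)$ with respect to $X$ and evaluate at $X=1$: the left side yields $n$, and on the right side, for $\beta\geq 1$, every term of $\frac{d}{dX}\prod_{i=0}^{\beta-1}(X-q^i)$ still carries the vanishing factor $(1-q^0)$ at $X=1$ except the one where the $i=0$ factor is differentiated, which leaves $\prod_{i=1}^{\beta-1}(1-q^i)$; the $\beta=0$ term contributes $0$. This establishes the identity, hence $B_n=n\prod_{i=1}^{n-1}(1-q^i)^{-1}$, which is nonzero since $q>1$ and $n\geq 1$.

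The only delicate point is the germ bookkeeping in the second paragraph: one must be sure that no contribution of the form $x$ is concealed inside the exponential-polynomial pieces $q^{ay}y^b$ with $a\geq 1$ when these are summed over $y$. This is precisely what part (2) of Lemma \ref{lem vol alpha} provides, so this step is routine given that lemma; the remaining content is the short derivative computation above. I would also note the base case $n=1$ separately, where \eqref{eq recur} degenerates to $\vol_{1,0}(x)=x+1$, so $B_1=1=1\cdot\prod_{i=1}^{0}(1-q^i)^{-1}$, matching the formula.
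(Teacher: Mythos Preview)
Your proof is correct and follows essentially the same strategy as the paper: reduce to the constant terms $A(n-\beta,\beta)$ via a recursion combined with Lemma~\ref{lem vol alpha}(2), then evaluate the resulting $q$-binomial sum. The only differences are cosmetic: the paper uses the recursion \eqref{eq recur 2 0} from Lemma~\ref{lem: ind 2} and reads off the linear coefficient as the constant term of $\vol_{n,0}(x)-\vol_{n,0}(x-1)$, whereas you work directly with \eqref{eq recur}; and for the final identity the paper specializes \eqref{eq: id q-binom} to $\alpha=0$ via the limit $\frac{1-q^{\alpha n}}{1-q^\alpha}\big|_{\alpha=0}=n$, while you obtain the same conclusion by differentiating $X^n=\sum_\beta{n\choose\beta}_q\prod_{i=0}^{\beta-1}(X-q^i)$ at $X=1$. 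Both routes land on $B_n=\sum_{\beta=1}^{n}{n\choose\beta}_q A(n-\beta,\beta)$ (your ``$1$'' is the $\beta=n$ term, since $A(0,n)=1$), and your derivative computation is a clean way to verify the closed form.
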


\begin{proof}

 Using \eqref{eq recur 2 0} we have
\begin{align}
\vol_{n,0}(x)-\vol_{n,0}(x-1)=\sum_{1\leq \beta\leq n}  {n\choose \beta}_q \vol_{n-\beta,\beta}(x-1).
\end{align}
Hence the coefficient of $x$ in $\vol_{n,0}(x)$ is equal to 
$$
\sum_{1\leq \beta\leq n}  {n\choose \beta}_q  A(n-\beta,\beta)=\sum_{1\leq \beta\leq n}  {n\choose \beta}_q\,\prod_{i=0}^{n-\beta-1}(1-q^{\beta+i})^{-1}.
$$
We now apply \eqref{eq: id q-binom} to the case $\alpha=0$. Note that this still makes sense as the LHS can be interpreted as $$
\frac{(1-q^{\alpha n})}{1-q^\alpha}\prod_{i=1}^{n-1}(1-q^{\alpha+i})^{-1}.
$$
We obtain
\begin{align*}
\frac{(1-q^{\alpha n})}{1-q^\alpha}|_{\alpha=0}\prod_{i=1}^{n-1}(1-q^{i})^{-1}=\sum_{\beta=1}^n   {n\choose \beta}_q \, \prod_{i=0}^{n-\beta-1}(1-q^{\beta+i})^{-1}.
\end{align*}
This gives the desired assertion.
\end{proof}

\subsection{Proof of Theorem \ref{thm: germ1}}
We have the following coarse form of a ``Shalika germ expansion", which may be of some independent interest in ``relative" harmonic analysis.  
 \begin{Lem}\label{lem: germ} Let $\phi\in C_c^\infty(\fX(F))$.
 As a function of $t\in F^\times$ near $0$, we have an expansion
\begin{equation}\label{eq: germ}
\wt\phi(t)=\sum_{(n_1,n_2,n_3)\atop n_i\geq 0, n_1+n_2+n_3=n} c_{(n_1,n_2,n_3)}(\phi) q^{n_3(n_1+n_2)v(t)}\vol_{n_2,n_1-n_3} (v(t))
\end{equation}
for some constants $c_{(n_1,n_2,n_3)}(\phi)$ independent of $t$. 

 \end{Lem}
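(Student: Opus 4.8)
\textbf{Proof strategy for Lemma~\ref{lem: germ}.} The plan is to reduce the statement to an orbit-by-orbit analysis of the contraction map $r\colon\fX=\fgl_n\times\fgl_n\to\fgl_n$, $(X,Y)\mapsto XY$, exactly as in the proof of the previous lemma. Recall that $\wt\phi(t)=\int_{\GL_n(F)}\phi(th,h^{-1})\,dh = r_!(\phi)(t I_n)$, so $\wt\phi$ is the value of the push-forward $r_!(\phi)\in C^\infty(\fgl_n(F))$ along the ray $t\mapsto tI_n$. The first step is to invoke the Shalika germ expansion for $r_!(\phi)$ around the central element $tI_n$: shrinking the neighborhood of $0$ if necessary, and writing $t=\varpi^k u$ with $u\in\calo_F^\times$, $r_!(\phi)(tI_n)$ is a finite $\cc$-linear combination (with coefficients independent of $t$, by smoothness of $r_!(\phi)$ and homogeneity of germs) of the nilpotent orbital integrals $\Orb^{\GL_n}(r_!(\phi), t\calo)$ weighted by the germs $\Gamma^{A}_\calo(tI_n)$, as $\calo$ ranges over nilpotent orbits in $\fgl_n$. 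Nilpotent orbits in $\fgl_n$ are parametrized by partitions of $n$; but for the purposes of matching the right-hand side of \eqref{eq: germ}, what matters is that each such germ, evaluated on the central ray, is a function of $v(t)$ of the shape already classified in Lemma~\ref{lem vol alpha}(1), namely a linear combination of $q^{a\,v(t)}v(t)^b$.

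The key second step is to identify the building blocks $q^{n_3(n_1+n_2)v(t)}\vol_{n_2,n_1-n_3}(v(t))$ appearing on the right-hand side as exactly the orbital integrals that arise. Here I would argue geometrically: decompose $\fX(F)$ according to the $\GL_n(F)$-conjugacy type (i.e., the rank/Jordan structure modulo $\varpi$) of the relevant pieces. An element $(X,Y)$ with $XY$ nilpotent central-adjacent — more precisely, the fibers of $r$ over $t\calo$ with $\calo$ the nilpotent orbit of a given partition — decomposes so that the orbital integral factors through a product of (a) a lattice-counting integral over $\GL_{n_2}(F)$ of the type \eqref{eq:vol n a}, producing a factor $\vol_{n_2,n_1-n_3}(v(t))$, (b) an elementary power $q^{n_3(n_1+n_2)v(t)}$ coming from the modular character / Jacobian of the contraction, and (c) a constant absorbed into $c_{(n_1,n_2,n_3)}(\phi)$. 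The bookkeeping here mirrors the computation in the proof of the recursion \eqref{eq recur}: one splits $n=n_1+n_2+n_3$ according to how many coordinates of the Cartan-type parameter $\lambda$ are forced to be $0$ (giving $n_1$), how many are free and bounded by $v(t)$ (giving $n_2$, hence the $\vol_{n_2,\cdot}$ factor), and how many are forced to the maximal value $v(t)$ (giving $n_3$). The triples $(n_1,n_2,n_3)$ with $n_i\ge 0$ and $\sum n_i=n$ index precisely the strata, and the coefficients $c_{(n_1,n_2,n_3)}(\phi)$ are the (finite, $t$-independent) local densities of $\phi$ along each stratum, convergent because $\phi\in C_c^\infty$.

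I would organize the details as follows: first establish the Cartan decomposition of the integration domain $\GL_n(F)$ in $\wt\phi(t)=\int\phi(th,h^{-1})\,dh$ and use \eqref{eq:vol KaK} to turn the integral into a sum over $\lambda\in\BZ^{n,+}$; then observe that the support condition $\phi\in C_c^\infty(\fX(F))$ forces $0\le\lambda_i\le v(t)+O(1)$ for $t$ small, so only finitely many ``shapes'' of $\lambda$ relative to the endpoints $0$ and $v(t)$ occur; then partition $\lambda$ by the triple $(n_1,n_2,n_3)$ recording the multiplicities of the extreme values, factor out $q^{n_3(n_1+n_2)v(t)}$ from the modular character (as in the computation of $\delta(\varpi^\lambda)^{-1}$ in the proof of \eqref{eq recur}), recognize the remaining sum over the ``middle'' coordinates as $\vol_{n_2,n_1-n_3}(v(t))$ up to the constant $c_{(n_1,n_2,n_3)}(\phi):=\int_{(\text{stratum})}\phi$, and finally note this constant is independent of $t$ and finite. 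The main obstacle I anticipate is not the geometry but the careful verification that the exponent on the leading $q$-power is exactly $n_3(n_1+n_2)$ and that the residual lattice sum is exactly $\vol_{n_2,n_1-n_3}$ with the shift $\alpha=n_1-n_3$ (and not some other affine function of the $n_i$); this requires tracking the modular-character contribution $\prod_{i<j}q^{\lambda_j-\lambda_i}$ across all three blocks, which is the same bookkeeping that appeared in \eqref{eq recur} but now with two moving endpoints. Once \eqref{eq: germ} is in hand, Theorem~\ref{thm: germ1} follows by feeding it through Lemma~\ref{lem vol alpha}(2) and Lemma~\ref{lem: c(0,1)}: the hypothesis $\wt\phi\equiv 0$ near $0$ forces all coefficients $c_{(n_1,n_2,n_3)}(\phi)$ to vanish by linear independence of the germ functions, and in particular the coefficient indexed by $(0,0,n)$ — which is proportional to $\phi(0)$ — vanishes, while the coefficient indexed by the ``all-middle'' triple $(0,n,0)$ carries the nonvanishing constant $B_n$ from \eqref{eq: cont Bn}, the contradiction-free consistency being exactly what pins down $\phi(0)=0$.
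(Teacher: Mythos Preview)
Your third paragraph is essentially the paper's argument, and that direct Cartan-decomposition approach is the right one. The Shalika germ framing in the first two paragraphs is a detour that does not lead anywhere useful here: the paper never invokes Shalika germs for this lemma, and there is no clean identification of nilpotent orbital integrals of $r_!(\phi)$ with the functions $q^{n_3(n_1+n_2)x}\vol_{n_2,n_1-n_3}(x)$. Drop that framing and start from the Cartan sum directly.

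There are, however, two concrete errors in your description of the partition. First, the support bound is not $0\le\lambda_i\le v(t)+O(1)$. After reducing (by scaling and averaging) to $\phi$ that is $\Lambda$-invariant, bi-$K$-invariant, and supported in $\varpi^{-N}\Lambda$, the condition $\phi(t\varpi^{-\lambda},\varpi^\lambda)\neq 0$ gives $-N\le\lambda_i\le v(t)+N$. Second, the triple $(n_1,n_2,n_3)$ does \emph{not} record ``multiplicities of the extreme values'': it records how many $\lambda_i$ lie in $[-N,-1]$, in $[0,v(t)]$, and in $[v(t)+1,v(t)+N]$ respectively. Nothing is ``forced to be $0$'' or ``forced to be $v(t)$''. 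The crucial point is that, by $\Lambda$-invariance, the value $\phi(t\varpi^{-\lambda},\varpi^\lambda)$ depends only on the outer pieces $\xi_1=(\lambda_1,\ldots,\lambda_{n_1})\in\BZ^{n_1,+}_{[-N,-1]}$ and $\xi_3'=(\lambda_{n-n_3+1}-v(t),\ldots,\lambda_n-v(t))\in\BZ^{n_3,+}_{[1,N]}$, and is independent of the middle block $\lambda^\flat\in\BZ^{n_2,+}_{[0,v(t)]}$. The free sum over $\lambda^\flat$ then produces exactly $\vol_{n_2,n_1-n_3}(v(t))$ once one factors $\vol(K_n\varpi^\lambda K_n)$ across the three blocks via \eqref{eq:vol KaK}; the constant $c_{(n_1,n_2,n_3)}(\phi)$ is the residual \emph{finite} sum over $(\xi_1,\xi_3')$, which is manifestly independent of $t$. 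Your anticipated ``main obstacle'' (tracking the modular character across all three blocks to get the exponent $n_3(n_1+n_2)$ and the shift $\alpha=n_1-n_3$) is indeed where the work lies, and the paper carries this out explicitly.

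Finally, your closing sentence has the indexing backwards. The triple that carries $\phi(0)$ is $(n_1,n_2,n_3)=(0,n,0)$, not $(0,0,n)$: when $n_1=n_3=0$ there are no outer blocks, the value of $\phi$ in the sum is $\phi(0,0)=\phi(0)$, and the corresponding germ is $\vol_{n,0}(v(t))$, whose linear-in-$v(t)$ coefficient is $B_n\neq 0$ by Lemma~\ref{lem: c(0,1)}. All other triples contribute no linear term by Lemma~\ref{lem vol alpha}(2) combined with the functional equation \eqref{eq:vol n a FE} and the extra factor $q^{n_3(n_1+n_2)v(t)}$ (or $q^{n_1(n_2+n_3)v(t)}$ after applying the functional equation), which is what isolates $\phi(0)$.
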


 \begin{proof}

 By scaling the variables we may assume that $\phi$ is invariant under (translation by) the lattice $\Lambda=\fX(\CO_F)$. We assume that the support of $\phi$ is (contained in) $\varpi^{-N}\Lambda$ for some large integer $N$. Furthermore, by first integrating over $K\times K$ we can assume that $\phi$ is bi-$K$-invariant in the sense that $\phi(k_1^{-1}xk_2^{-1},k_2yk_1)=\phi(x,y)$ for all $k_1,k_2\in K$. Then the orbital integral is a finite sum 
 $$
\wt\phi(t)=\sum_{\lambda\in \BZ^{n,+} } \phi(t\varpi^{-\lambda},\varpi^\lambda)\vol(K\varpi^ \lambda K ).
 $$
By the support condition we may assume
$$
-N\leq \lambda_i\leq v(t)+N.
$$

We break the sum into sub-sums according to each variable $\lambda_i$ in comparison with $ 0$ and $v(t)$ respectively. There are the following possibilities: for  $n_1,n_2,n_3\geq 0, n_1+n_2+n_3=n$, we consider the condition for $\lambda$:
$$
-N\leq \lambda_1\leq \cdots\leq \lambda_{n_1}<0\leq \lambda_{n_1+1}\leq\cdots\leq \lambda_{n_1+n_2}\leq v(t)<\lambda_{n_1+n_2+1}\leq\cdots \leq \lambda_n\leq  v(t)+N.
$$
We call $\lambda$ of class $(n_1,n_2,n_3)$. Then when $v(t)$ is large, using the $\Lambda$-invariance, we have
\begin{align*}
\wt\phi(t)=&\sum_{\lambda\in \BZ^{n,+}, \atop \text{Class }  (n_1,n_2,n_3)} \vol(K\varpi^ \lambda K )\\
 &\phi\left(\left(\begin{array}{ccc}
 0_{n_1}    & &  \\
      &  0_{n_2} &\\ 
      && t \varpi^{-(\lambda_{n-n_3+1},\cdots,\lambda_{n})} 
 \end{array}\right) ,\left(\begin{array}{ccc}
 \varpi^{(\lambda_1,\cdots,\lambda_{n_1})}   & &  \\
      & 0_{n_2}&\\ 
      && 0_{n_3}
 \end{array}\right) \right).
\end{align*}

We write a $ \lambda\in \BZ^{n,+}$ of class $(n_1,n_2,n_3)$ to the concatenation $(\xi_{1},\xi_2=\lambda^\flat,\xi_3)$ where  $\xi_1=(\lambda_1,\cdots,\lambda_{n_1})\in \BZ^{n_1,+}_{[-N,-1]}$,  $\lambda^\flat=(\lambda_{n_1+1},\cdots,\lambda_{n_1+n_2})\in\BZ^{n_2,+}_{[0,v(t)]}$ and $\xi_3=(\lambda_{n-n_3+1},\cdots,\lambda_{n})\in \BZ^{n_3,+}_{[1+v(t),N+v(t)]}$. Then by \eqref{eq:vol KaK} we have
$$
\vol(K_n\varpi^ \lambda K_n )=\frac{\prod_{i=1}^3\mu(\GL_{n_i})}{\mu(\GL_n)} q^{-(n_2+n_3)|\xi_1|+(n_1-n_3)|\xi_2|+(n_1+n_2)|\xi_3|}\prod_{i=1}^3\vol(K_{n_i}\varpi^ {\xi_i} K_{n_i} ).
$$
For any integer $m$ we write $m+\xi_3$ for the tuple $(m+\lambda_{n-n_3+1},\cdots,m+\lambda_{n})$. Then for $\xi_3 \in \BZ^{n_3,+}_{[1+v(t),N+v(t)]}$  we may write it as $v(t)+\xi'_3$ for a unique $\xi_3'\in \BZ^{n_3,+}_{[1,N]}$.  We then have
$$
\vol(K_n\varpi^ \lambda K_n )=\frac{\prod_{i=1}^3\mu(\GL_{n_i})}{\mu(\GL_n)} q^{-(n_2+n_3)|\xi_1|+(n_1+n_2)|\xi_3'|}q^{(n_1-n_3)|\xi_2|+n_3(n_1+n_2)v(t)}\prod_{i=1}^3\vol(K_{n_i}\varpi^ {\xi_i} K_{n_i} ).
$$
We denote  $$c_{\xi_1,\xi'_3} =\frac{\prod_{i=1}^3\mu(\GL_{n_i})}{\mu(\GL_n)} q^{-(n_2+n_3)|\xi_1|+(n_1+n_2)|\xi_3'|} \vol(K_{n_1}\varpi^ {\xi_1} K_{n_1} )\vol(K_{n_3}\varpi^ {\xi'_3} K_{n_3} ) $$ which depends only on $\xi_1,\xi_3'$ but not on $\xi_2=\lambda^\flat$. Note that $\vol(K_{n_i}\varpi^ {\xi_3} K_{n_i} )=\vol(K_{n_i}\varpi^ {\xi_3'} K_{n_i} )$. Then we have
\begin{align*}
 \wt\phi(t)=\sum_{ (n_1,n_2,n_3)}\sum_{\xi_1\in \BZ^{n_1,+}_{[-N,-1]},\atop \xi'_3\in \BZ^{n_3,+}_{[1,N]}} & c_{\xi_1,\xi'_3} 
 \phi\left(\left(\begin{array}{ccc}
 0_{n_1}    & &  \\
      &  0_{n_2} &\\ 
      &&\varpi^{-\xi'_3} 
 \end{array}\right) ,\left(\begin{array}{ccc}
 \varpi^{\xi_1}   & &  \\
      & 0_{n_2}&\\ 
      && 0_{n_3}
 \end{array}\right) \right)
\\&
\sum_{\lambda^\flat\in\BZ^{n_2,+}_{[0,v(t)]} }q^{(n_1-n_3)|\lambda^\flat|+n_3(n_1+n_2)v(t)}\vol(K_{n_2}\varpi^ {\lambda^\flat} K_{n_2} ) .
\end{align*}

By  the definition of the volume \eqref{eq:vol n a}, we have
\begin{align}\label{eq:Orb germ}
 \wt\phi(t)
 =
 \sum_{ (n_1,n_2,n_3)} \sum_{\xi_1\in \BZ^{n_1,+}_{[-N,-1]},\atop \xi'_3\in \BZ^{n_3,+}_{[1,N]}}  & c_{\xi_1,\xi'_3} 
 \phi\left(\left(\begin{array}{ccc}
 0_{n_1}    & &  \\
      &  0_{n_2} &\\ 
      &&  \varpi^{-\xi_3'} 
 \end{array}\right) ,\left(\begin{array}{ccc}
 \varpi^{\xi_1}   & &  \\
      & 0_{n_2}&\\ 
      && 0_{n_3}
 \end{array}\right) \right)\\
&q^{n_3(n_1+n_2)v(t)}\vol_{n_2,n_1-n_3} (v(t))\notag.
\end{align} 
This shows that as a function of $t\in F^\times$ near $0$
$$\wt\phi(t)=\sum_{ (n_1,n_2,n_3)} c_{(n_1,n_2,n_3)}(\phi) q^{n_3(n_1+n_2)v(t)}\vol_{n_2,n_1-n_3} (v(t))
$$
for constants $c_{(n_1,n_2,n_3)}(\phi)$. 

\end{proof}

Finally, we return to the proof of Theorem \ref{thm: germ1}. 
Let $x=v(t)$ and we view both sides of \eqref{eq: germ} in Lemma \ref{lem: germ} as functions in $x\in\BZ$ near $\infty$. Note that when $n_1-n_3\leq 0$ we can apply the functional equation \eqref{eq:vol n a FE} to obtain
\begin{equation}\label{eq: FE q}
q^{n_3(n_1+n_2)x}\vol_{n_2,n_1-n_3} (x)=q^{n_1(n_3+n_2)x}\vol_{n_2,n_3-n_1} (x).
\end{equation}
Therefore by Lemma \ref{lem vol alpha}
we have expansions of every $q^{n_3(n_1+n_2)x}\vol_{n_2,n_1-n_3} (x)$ into the form \eqref{eq:asymp n a}. 
We {\em claim} that the linear term (to clarify, by ``linear term" we mean only the degree-one-in-$x$ term, not including the constant term) in the expansion of $\wt\phi(\varpi^x)$ is exactly 
$\phi(0)B_n x$ (coming from the summand $\phi(0)\vol_{n,0} (x)$, corresponding to $n_1=n_3=0, n_2=n$ in \eqref{eq: germ}), where $B_{n}$ is the nonzero constant given by \eqref{eq: cont Bn} in Lemma \ref{lem: c(0,1)}. 

To show the claim, it suffices to show that none of the other triple $(n_1,n_2,n_3)$ has any non-zero linear term. By Lemma \ref{lem vol alpha} (2) and \eqref{eq: FE q}, we know that this is true if $|n_1-n_3|\geq 1$. If $n_1=n_3>0$, this is also true, due to the factor $q^{n_3(n_1+n_2)x}=q^{n_1(n_3+n_2)x}$ which has the exponent $n_1(n_3+n_2)>0$. Therefore, the only remaining triple is $(n_1=0,n_2=n,n_3=0)$, as claimed.   

Hence, the assertion in Theorem \ref{thm: germ1} follows immediately by the linear independence of the terms $q^{a_ix}x^{b_i}$ in \eqref{eq:asymp n a} and the non-vanishing of $B_n$. 

This completes the proof of Theorem \ref{thm: germ1}, and hence Theorem \ref{Thm: elliptic non-vanishing Lie} and Theorem \ref{Thm: FJ elliptic}.

\section{An ellipticity result at archimedean places}\label{Sec compact}

Let $\BE/\BF$
denote the quadratic extension $\BC/\BR$. Let $\eta_{\BE/\BF}:\BF^\times\to\{\pm 1\}$ be the quadratic character by class field theory; in this case it is simply the sign character $\eta_{\BE/\BF}=\sgn$.

 Let $\BV$ be a positive definite $\BE/\BF$-Hermitian space of $\dim \BV=2n$, equipped with an orthogonal decomposition $\BV=\BW_1\oplus \BW_2$ with $\dim \BW_1=\dim \BW_2=n$. Let $\BG(\BF)=\U(\BV)$ and $\BH=\U(\BW_1)\times \U(\BW_2)$.
\begin{Lem}\label{lem: branc compact}
Let $\tau$ be an irreducible representation of the compact group $\BG(\BF)$. 
Assume that it is distinguished by $\BH(\BF)$, namely $\Hom_{\BH(\BF)}(\tau,\BC)\neq0$. Then we necessarily have 
$$
\dim\Hom_{\BH(\BF)}(\tau,\BC)=1.
$$and there is a unique irreducible tempered unitary representation $\tau_{0}$ of $\GL_{2n}(\BR)$ such that  $BC(\tau_{0})=BC(\tau)$. In particular we have $\tau_{0}\simeq  \tau_{0}\otimes\eta_{\BE/\BF}$.
 
\end{Lem}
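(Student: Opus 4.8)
The statement is essentially a branching law for compact unitary groups together with a base-change computation, so the plan is to reduce everything to highest-weight combinatorics. First I would parametrize irreducible representations of $\BG(\BF) = \U(\BV) \cong \U(2n)$ by dominant weights $\mu = (\mu_1 \geq \mu_2 \geq \cdots \geq \mu_{2n})$, $\mu_i \in \zz$, and recall that the restriction to $\BH(\BF) = \U(n) \times \U(n)$ decomposes by the Littlewood--Richardson rule: the multiplicity of $\sigma \boxtimes \sigma'$ in $\tau_\mu|_{\BH(\BF)}$ is the Littlewood--Richardson coefficient $c^{\mu}_{\sigma,\sigma'}$ (after the usual shift to make both factors polynomial). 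Being $\BH(\BF)$-distinguished means the trivial representation $\mathbf{1} \boxtimes \mathbf{1}$ appears, i.e. $c^{\mu}_{0,0} \neq 0$; but $c^{\mu}_{0,0}$ is nonzero only when $\mu = 0$, and then it equals $1$. Hence distinction forces $\tau = \tau_0 = \mathbf{1}$ the trivial representation of $\U(2n)$, and $\dim \Hom_{\BH(\BF)}(\tau,\BC) = 1$ is immediate. (This is the content of the branching law promised in the remark after Theorem~\ref{Thm: si}.)

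Once $\tau$ is forced to be the trivial representation, the base-change assertion is a direct L-parameter computation. The L-parameter of the trivial representation of the compact group $\U(2n)$, transferred to $\GL_{2n}/\BR$ via the standard base change for unitary groups, is the direct sum of the characters $z \mapsto (z/\bar z)^{(2n-1)/2}, (z/\bar z)^{(2n-3)/2}, \ldots, (z/\bar z)^{-(2n-1)/2}$ of $W_{\BR}$ restricted appropriately; equivalently it is the parameter of the (unique) irreducible tempered unitary representation $\tau_0$ of $\GL_{2n}(\BR)$ with trivial infinitesimal character, which one recognizes as an isobaric sum of discrete-series (or, in the relevant parity, characters) of $\GL_2(\BR)$ and possibly $\GL_1(\BR)$-factors. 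Uniqueness of $\tau_0$ follows from the fact that the local L-parameter determines the representation and that base change $BC(\tau_0) = BC(\tau)$ pins down this parameter (strong multiplicity one at the archimedean place via the Langlands classification). I would simply cite the Langlands classification for $\GL_N(\BR)$ and the explicit description of $BC$ for compact unitary groups here rather than rederive it.

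Finally, $\tau_0 \simeq \tau_0 \otimes \eta_{\BE/\BF}$ is a one-line check: twisting by $\eta_{\BE/\BF} = \sgn$ sends the $W_{\BR}$-parameter $\bigoplus_j \phi_j$ to $\bigoplus_j (\phi_j \otimes \sgn)$, and each two-dimensional discrete-series parameter $\phi_j$ of $W_{\BR}$ satisfies $\phi_j \otimes \sgn \cong \phi_j$ (the discrete series of $\GL_2(\BR)$ of a given weight is self-twist under $\sgn$), while any remaining one-dimensional pieces occur in a $\sgn$-stable multiset because the parameter is conjugate self-dual of the appropriate type. Hence the total parameter is $\sgn$-invariant, so $\tau_0 \otimes \eta_{\BE/\BF} \cong \tau_0$.

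The main obstacle, such as it is, is bookkeeping rather than conceptual: one must be careful about the normalization shifts relating unitary-group weights to $\GL$-parameters (half-integral twists, the sign conventions in $BC$ for even versus odd $2n$) so that the "trivial infinitesimal character" representation $\tau_0$ is correctly identified and its self-duality/self-twist properties come out right. I expect the branching-law step to be routine (Littlewood--Richardson with one trivial factor), and the only place demanding genuine care is matching the archimedean base-change recipe to the explicit $W_{\BR}$-parameter and verifying the parity of the $\GL_1$-constituents, which is exactly what forces $\tau_0 \cong \tau_0 \otimes \sgn$.
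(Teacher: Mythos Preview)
Your branching step contains a genuine error. You assert that $\Hom_{\BH(\BF)}(\tau_\mu,\BC)\neq 0$ forces $\mu=0$ via $c^{\mu}_{0,0}=\delta_{\mu,0}$. This Littlewood--Richardson identity is valid only for \emph{polynomial} representations of $\GL_{2n}$, whereas irreducible representations of the compact group $\U(2n)$ are indexed by arbitrary dominant integral weights $\mu=(\mu_1\geq\cdots\geq\mu_{2n})$ with possibly negative entries. You acknowledge the need for a ``usual shift'' to make things polynomial, but after that shift the trivial representation of $\U(n)\times\U(n)$ no longer corresponds to the pair of empty partitions: it corresponds to a pair of rectangular partitions, and the relevant coefficient is not $c^{\mu}_{0,0}$. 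Already the case $n=1$ exhibits the failure: the representation of $\U(2)$ with highest weight $(k,-k)$ contains the trivial representation of the diagonal $\U(1)\times\U(1)$ for every $k\geq 0$, not just for $k=0$.

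The paper computes the branching via the Howe--Tan--Willenbring formula and finds that $\tau_\mu$ is $\BH(\BF)$-distinguished if and only if $\mu$ is \emph{self-associate}, meaning $\mu_i+\mu_{2n+1-i}=0$ for all $i$, in which case the multiplicity is exactly one. So $\tau$ ranges over an infinite family, and its $L$-parameter has the shape $\bigoplus_{i=1}^{n}(\chi_i\oplus{}^c\chi_i)$ with each $\chi_i$ conjugate-symplectic. Your subsequent discussion of $\tau_0$ and the invariance $\tau_0\simeq\tau_0\otimes\sgn$ is structurally on the right track, but it must be carried out for this whole family rather than just the trivial representation; the point is that each block $\chi_i\oplus{}^c\chi_i$ is the base change of a discrete-series parameter of $\GL_2(\BR)$, and such parameters are automatically $\sgn$-invariant.
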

\begin{proof}
Let $\BG(\BF)_\cc\simeq \GL_{2n}(\cc)$ denote the complexification $\BG(\BF)$, and similarly $\BH(\BF)_\cc\simeq \GL_n(\cc)\times \GL_n(\cc)$. Then $\tau$ extends canonically to a representation of $\BG(\BF)_\cc$, denoted by $\tau_{\cc}$. As complexification induces an equivalence of symmetric monoidal categories of finite-dimensional continuous complex $\BG(\BF)$-representations and the category of rational representations of $\BG(\BF)_\cc$, we see that
\begin{equation}\label{eqn: Homspace}
    \Hom_{\BH(\BF)}(\tau,\BC)\simeq\Hom_{\BH(\BF)_{\cc}}(\tau_{\cc},\BC).
\end{equation}
The multiplicity one statement is well known for the symmetric pair $(\BG(\BF)_{\cc},\BH(\BF)_{\cc})$ (see for example \cite{AizGourdescent}), but also follows from a Gelfand trick on $\BG(\BF)$ itself. 

Suppose now that $\tau$ has highest-weight $\lam = (\lam_1,\ldots, \lam_{2n})$, where $\lam_i\geq \lam_{i+1}$. Then the same is true of $\tau_{\cc}$, and we claim that 
\[
\Hom_{\BH(\BF)_{\cc}}(\tau_{\cc},\BC)\neq0
\]
if and only if $\lam$ is self-associate in the sense that $\lam_i + \lam_{2n+1-i}=0$. Indeed, this may be derived from the general branching law for $(\BG(\BF)_{\cc},\BH(\BF)_{\cc})$ in \cite[2.2.1]{HoweTanWillenbring}, along with basic properties of Littlewood--Richardson coefficients. More precisely, let $\lam^+ = (\lam_1^+,\ldots,\lam_p^+)$ and $\lam^- = (\lam_1^-,\ldots,\lam_q^-)$ be \emph{positive} partitions of lengths $p$ and $q$ respectively, where $p+q\leq 2n$, and where
\[
\lam = (\lam_1^+,\ldots,\lam_p^+,0,\ldots,0, -\lam_q^-,\ldots, -\lam_q^-).
\] By an abuse of notation, let $0$ denote the zero partition of any length. In the notation of \cite{HoweTanWillenbring}, $F^{(\lam^+,\lam^-)}_{(2n)}$ denotes the representation of $\GL_{2n}(\cc)$ of highest weight $\lam$, and the restriction multiplicity we desire is
\begin{align*}
    [F^{(\lam^+,\lam^-)}_{(2n)},F_{(n)}^{(0,0)}\otimes F_{(n)}^{(0,0)}] &= \sum_{\ga^+,\ga^-,\ga^0} c^{\ga^+}_{00}c^{\ga^-}_{00}c^{\lam^+}_{0\ga^0}c^{\lam^-}_{0\ga^0}\\
    &= \sum_{\ga^0} c^{\lam^+}_{0\ga^0}c^{\lam^-}_{0\ga^0}=\de_{\lam^+,\ga^0}\de_{\lam^-,\ga^0},
\end{align*}
where the sum ranges over all positive partitions $\ga^+$, $\ga^-$, and $\ga^0$ and where $c^\lam_{\mu\nu}$ denotes the Littlewood--Richardson coefficient. This last expression is a product of Dirac delta functions, and the claim follows.

In particular, if $\tau$ is distinguished by $\BH(\BF)$, it has self-associate highest weight $\lam= (\lam_1,\ldots,\lam_n,-\lam_n,\ldots, -\lam_1)$. It follows that its L-parameter is of the form $$\bigoplus_{1\leq i\leq n}\chi_{i}\oplus\,^c\chi_i$$
where each of $\chi_i$ is of a character of $\BC^\times$ of conjugate-symplectic type (namely $\chi_i=\,^c\chi_i^{-1}, \chi_i|_{\BR^\times}=\sgn$), $\,^c\chi_i$ denotes the conjugate character, and $\chi_i\neq\chi_j^{\pm 1}$ for $i\neq j$.
 Then there is a unique irreducible tempered unitary representation $\pi_{0,v}$ of $\GL_{2n}(\BR)$ such that  $BC(\pi_{0,v})=BC(\pi_v)$. In particular we have $\pi_{0,v}\simeq  \pi_{0,v}\otimes\eta_v$.
 \end{proof}
\quash{\begin{proof}
The multiplicity one follows from a Gelfand trick.
We {\em claim} that its L-parameter is of the form $$\bigoplus_{1\leq i\leq n}\chi_{i}\oplus\,^c\chi_i$$
where each of $\chi_i$ is of a character of $\BC^\times$ of conjugate-symplectic type (namely $\chi_i=\,^c\chi_i^{-1}, \chi_i|_{\BR^\times}=\sgn$), $\,^c\chi_i$ denotes the conjugate character, and $\chi_i\neq\chi_j^{\pm 1}$ for $i\neq j$.
 Then there is a unique irreducible tempered unitary representation $\pi_{0,v}$ of $\GL_{2n}(\BR)$ such that  $BC(\pi_{0,v})=BC(\pi_v)$. In particular we have $\pi_{0,v}\simeq  \pi_{0,v}\otimes\eta_v$.
 
 To show the claim, we {\bf TBA}
 \end{proof}}
 

Now let $J_{\tau}$ be 
the relative local character. Since the group $\BG(\BF)$ is compact and the local multiplicity one holds, it is well-defined up to a non-zero constant multiple and it represented by the orbital integral of some matrix coefficient of $\tau$ (cf. Lemma \ref{Lem: matrix coeff}).

\begin{Thm}\label{thm:arch nonzero lin char} There exists a non-zero constant $c_\tau$ such that for any $f_0 \in C_c^\infty(\BG(\BF))$  and $\wt{f}_0=f'_0\otimes \Phi_0\in C_c^\infty(\G'(\BF)\times \BF_{n})$  matching $f_0$ (in the sense of Definition \ref{Def: transfer si}), we have 
   \[
I^{(\eta,\eta)}_{\tau_{0}}(\wt{f}_0) \neq c_\tau J_{\tau}(f_0).
   \]
where the LHS is as in \eqref{eqn: linear local rel char}.

 In particular, since there exists $f_0 \in C_c^\infty(\BG(\BF))$ with support in the regular semisimple locus  such that $J_{\tau}(f_0)\neq 0$ and the smooth transfer of such $f_0$ exists (Proposition \ref{prop:tr infty}), there exists $\wt{f}_{0}\in C_c^\infty(\G'(\BF)\times \BF_{n})$  with regular elliptic support such that  $I^{(\eta,\eta)}_{\tau_{0}}(\wt{f}_{0})\neq 0$.
   
\end{Thm}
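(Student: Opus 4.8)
\textbf{Plan for the proof of Theorem \ref{thm:arch nonzero lin char}.} The strategy is to compare the two local relative characters via a global argument, bootstrapping from the well-understood local functional identity and the simple trace formula comparison already established in the body of the paper. Concretely, the plan is to realize $\tau$ as a local component of a global cuspidal automorphic representation $\pi$ of a suitable unitary group $\G_{\tau_1,\tau_2}$ over a totally real field $F$ with a distinguished place $v_0$ where $F_{v_0}\simeq\BR$ and $E_{v_0}\simeq\BC$, chosen so that $\pi_{v_0}\simeq\tau$, $\pi$ is $\rH$-distinguished, and $\pi$ satisfies the hypotheses needed to run Proposition \ref{Prop: global rel char matching si} and Corollary \ref{Cor: si comparison of global rel char}. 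Since $\tau$ is distinguished by $\BH(\BF)$ and positive definite, by Lemma \ref{lem: branc compact} its base change $\tau_0$ is a tempered unitary representation of $\GL_{2n}(\BR)$ with self-associate L-parameter, so it arises as $\mathrm{BC}_{E_{v_0}}(\pi_{0,v_0})$ for some $\pi_{0,v_0}$; the corresponding $L$-value $L(1/2,\mathrm{BC}_E(\pi_0))$ will be arranged to be non-zero by the assumed period non-vanishing (applying Theorem \ref{Thm: global si result period to Lvalue}). Then the factorization of Proposition \ref{Prop: factorize the linear rel char} expresses $I^{(\eta,\eta)}_{\pi_0}(\wt f,0)$ as a non-zero $L$-factor times $\prod_v I^{(\eta,\eta),\natural}_{\pi_{0,v}}(\wt f_v)$, and comparison with the unitary trace formula side gives, after isolating the automorphic representation by strong multiplicity one, an identity of the form $\prod_v I^{(\eta,\eta),\natural}_{\pi_{0,v}}(\wt f_v) = (\text{const})\prod_v J^\natural_{\pi_v}(f_v)$ valid whenever $\wt f_v$ and $f_v$ are matching transfers. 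Choosing all data away from $v_0$ in a fixed unramified/split way that makes every factor other than the one at $v_0$ equal to a fixed non-zero constant, one extracts the purely local identity $I^{(\eta,\eta)}_{\tau_0}(\wt f_0) = c_\tau J_\tau(f_0)$.

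\textbf{Steps, in order.} First I would establish the global input: choose a totally real field $F$ with a real place $v_0$ inert in a CM extension $E/F$ (everywhere unramified, satisfying the constraints \eqref{field1},\eqref{field2} of \S\ref{Section: global proofs}), two auxiliary split places $v_1,v_2$ with prescribed supercuspidal and $\rH'$-elliptic local behavior, and construct (by a standard Poincaré-series/simple-trace-formula existence argument, or by appealing to known cases of the Gan--Gross--Prasad/Friedberg--Jacquet global results) a cuspidal automorphic representation $\pi$ on $\G_{\tau_1,\tau_2}$ with $\pi_{v_0}\simeq\tau$, $\rH_{\tau_1,\tau_2}$-distinguished, supercuspidal at $v_1$, $\rH'$-elliptic at $v_2$, unramified at all other non-split places, and with $V_{\tau_1},V_{\tau_2}$ positive definite at all non-split archimedean places. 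Second, invoke the weak base change proposition to produce $\Pi=\mathrm{BC}_E(\pi)$ cuspidal conjugate self-dual of symplectic type, hence $\pi_0$ with $\mathrm{BC}_E(\pi_0)=\Pi$; apply Theorem \ref{Thm: global si result period to Lvalue} to conclude $L(1/2,\Pi)\neq0$. Third, run the comparison of relative characters (Proposition \ref{Prop: global rel char matching si}, Corollary \ref{Cor: si comparison of global rel char}) for $S$-compatible nice transfers $\ul f,\wt f$ with a free local component at $v_0$: this gives $\sum_{\pi\in\mathcal B_{\tau_1,\tau_2}(\Pi)}J^{\tau_1',\tau_2'}_\pi(f^{\tau_1',\tau_2'}) = 2I^{(\eta,\eta)}_{\pi_0}(\wt f,0)+2I^{(\eta,\eta)}_{\pi_0\otimes\eta}(\wt f,0)$. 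Fourth, factorize both sides: the left side factors through the local relative characters $J^\natural_{\pi_v}$ and the right via Proposition \ref{Prop: factorize the linear rel char}; using $\pi_{0,v_0}\simeq\pi_{0,v_0}\otimes\eta_{v_0}$ (Lemma \ref{lem: branc compact}) the two terms on the right combine. Fifth, fix all data at $v\neq v_0$ compatibly (using the fundamental lemma Theorem \ref{Thm: fundamental lemma si} at inert places, the split transfer of \S\ref{Section: split transfer} at split places, the prescribed Bernstein-block and elliptic functions at $v_1,v_2$, Proposition \ref{prop:tr infty} at the other archimedean places) so that every local factor away from $v_0$ is a fixed non-zero scalar; cancel to obtain the claimed proportionality at $v_0$, with $c_\tau$ the accumulated constant.

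\textbf{Main obstacle.} The delicate point is ensuring the global representation $\pi$ with the prescribed local component $\tau$ at $v_0$ \emph{and} all the auxiliary local constraints at $v_1,v_2$ and at the other archimedean places actually exists and is $\rH$-distinguished — i.e., that the ``local-to-global'' step can be carried out. One also needs the non-vanishing of the \emph{other} local relative characters (at the auxiliary archimedean places and at split places) to be simultaneously arrangeable, which requires the non-vanishing input of Proposition \ref{prop:tr infty} together with the split local functional-equation identity \eqref{eqn: split rel char identity} from the proof of Proposition \ref{Prop: elliptic linear rel character}; a clean way to package this is to note that all relevant local relative characters at $v\neq v_0$ are non-zero distributions, so a single good global test function suffices and then Hecke-algebra linear independence (as in Proposition \ref{Prop: global rel char matching si}) isolates the one representation $\Pi$. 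A secondary subtlety is tracking that the combined constant $c_\tau$ is genuinely non-zero: this follows because each local factor away from $v_0$ is a ratio of non-vanishing period-type quantities and the global $L$-factor $\mathcal L^{(\eta,1,\eta)}_{\pi_0}$ is non-zero by the $L$-value non-vanishing established in step two. Once the global framework is in place, the actual extraction of the local identity is formal, mirroring the arguments already given in \S\ref{Section: proof L to P}; I would present it by citing those arguments rather than repeating them.
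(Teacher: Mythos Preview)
Your proposal is correct and follows essentially the same route as the paper: globalize $\tau$ to an $\rH$-distinguished cuspidal $\pi$ with prescribed local behavior (the paper does this via a dedicated simple-RTF existence argument, Proposition \ref{Prop:global}, exactly as you anticipate), apply the global comparison of relative characters (Corollary \ref{Cor: si comparison of global rel char}), factor both sides, use $\pi_{0,v_0}\simeq\pi_{0,v_0}\otimes\eta_{v_0}$ from Lemma \ref{lem: branc compact} to combine the two linear terms, and isolate the $v_0$-factor.

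Two small differences worth noting. First, the paper only factors the unitary side as $J_\pi(f)=J_{\pi_{v_0}}(f_{v_0})\,J_{\pi^{v_0}}(f^{v_0})$ using local multiplicity one at $v_0$ alone, rather than a full Euler product; this is enough and avoids needing multiplicity one at the remaining places. Second, for the non-vanishing of $c_\tau$ the paper does not invoke Theorem \ref{Thm: global si result period to Lvalue} to get $L(1/2,\Pi)\neq0$; instead it argues directly that the away-from-$v_0$ constant $c_2=J_{\pi^{v_0}}(f^{v_0})$ is non-zero for a suitable choice of $f^{v_0}$ (modifying the other non-split archimedean components to have regular semisimple support so that transfers exist via Proposition \ref{prop:tr infty}), and then deduces $c_1\neq0$ from the existence of $f_{v_0}$ with $J_\tau(f_{v_0})\neq0$. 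Your route via Theorem \ref{Thm: global si result period to Lvalue} is also valid and not circular (that theorem is proved independently in \S\ref{Section: proof P to L}), just slightly less direct.
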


  We first show that one can globalize $\tau$ to a cuspidal automorphic representation $\pi$ with non-vanishing $\rH$-period.

\begin{Prop}\label{Prop:global} 
Let $E/F$ be a  quadratic extension of number fields with a non-split archimedean place $v_0$ of $F$, so that $E_v/F_v\simeq \BE/\BF$ and we fix such an isomorphism. 
Let $V=W_1\oplus W_2$ be an $E/F$-Hermitian space equipped with an orthogonal decomposition with $\dim W_1=\dim W_2=n$ such that it is positive definite at all non-split archimedean places $v_0$, so that  the base change to $E_v/F_v$ of the decomposition of $E/F$-Hermitian space space $V=W_1\oplus W_2$ is isomorphic to  $\BV=\BW_1\oplus \BW_2$. Fix an isomorphism to define isomorphisms of the groups $\G(F_v)\simeq \BG(\BF)$ etc.. Let $S$ be a finite subset of places containing all archimedean places and two non-archimedean split places $v_1,v_2$.  Let  $K=\prod_{v\notin S}K_v\subset \G(\BA^S)$ a compact open subgroup. 

Then there exists a cuspidal automorphic representation $\pi$ of $\G=\U(V)$, satisfying the following conditions
\begin{enumerate}
\item $\pi_{v_0}=\tau$.
\item $\pi_{v_1}$ is a supercuspidal representation.
\item $\pi_v^{K_v}\neq 0$ for every $v\notin S$.
\item $\pi$ is distinguished by $\rH=\U(W_1)\times\U(W_2)$.
\end{enumerate}

\end{Prop}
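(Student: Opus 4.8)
\textbf{Proof plan for Proposition \ref{Prop:global}.}
The plan is to produce $\pi$ by a standard simultaneous globalization argument, using the compactness of $\G$ at the archimedean places to control the period. First I would globalize $\tau$ by means of a Poincar\'e-series/limit-multiplicity type argument (cf. the approach of Shin, or more elementarily the use of the simple trace formula for the anisotropic group $\G$): one produces an automorphic form on $[\G]$ whose local components at $S$ are prescribed and whose components away from $S$ are $K_v$-spherical. Concretely, one applies the trace formula (or directly the spectral decomposition of $L^2([\G])$, which is discrete since $\G$ is anisotropic over $F$ once $V$ is positive definite at $v_0$) to a test function $f = \bigotimes_v f_v$ with $f_{v_0}$ a suitable matrix coefficient of $\tau$ (or a pseudo-coefficient), $f_{v_1}$ a matrix coefficient of a supercuspidal representation of $\G(F_{v_1})=\GL_{2n}(F_{v_1})$, $f_{v_2}$ supported on the regular elliptic locus, and $f_v = \bfun_{K_v}$ for $v\notin S$. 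The key point is to choose $f$ so that $\int_{[\rH]}\int_{[\rH]}K_f(h_1,h_2)\,dh_1\,dh_2 \neq 0$; this is exactly the relative trace formula $J(f)$ of \eqref{eqn: u RTF} (with $\rH_1=\rH_2=\rH$), and its nonvanishing forces the existence of a cuspidal $\pi$ (cuspidality being automatic here, as $\G$ is anisotropic) with all four desired properties.

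The main steps are: (i) set up the RTF $J(f)$ for $\G=\U(V)$ as in \S\ref{Section: RTF}, observing that since $\G$ is anisotropic the spectral side is a sum over \emph{all} automorphic representations (no continuous spectrum, no convergence issues) and the geometric side is a finite sum of orbital integrals; (ii) factor the test function and use the positivity afforded by matrix coefficients of positive type at the places outside $\{v_1,v_2\}$ together with the multiplicity-one statements of Lemma \ref{lem: branc compact} (at $v_0$) and \cite[Theorem 1.1]{JacquetRallis} (at $v_1,v_2$), so that $J(f)$ becomes a product of local relative characters $J_{\pi_v}(f_v)$ times a manifestly nonnegative global factor; (iii) choose $f_{v_0}$ inside a matrix coefficient of $\tau$ so that the archimedean local relative character $J_\tau(f_{v_0})\neq 0$ --- this uses that $\tau$ is $\BH(\BF)$-distinguished, which holds by hypothesis, so that by Lemma \ref{lem: branc compact} the Hom-space is one-dimensional and $J_\tau$ is a nonzero distribution representable by an orbital integral of a matrix coefficient; (iv) choose $f_{v_1}$ in the Bernstein block of a supercuspidal representation with nonzero relative character (possible since supercuspidal distinguished representations exist, e.g.\ by exhibiting one via the split local base change and the Friedberg--Jacquet/Shalika theory), $f_{v_2}$ elliptically supported with nonzero relative character, and $f_v=\bfun_{K_v}$ for $v\notin S$; (v) conclude that $J(f)=\sum_\pi J_\pi(f)$ has a nonzero summand, hence some $\pi$ occurs with $J_\pi(f)\neq 0$, which by the factorization forces $\pi_{v_0}\simeq\tau$, $\pi_{v_1}$ supercuspidal, $\pi_v^{K_v}\neq 0$ for $v\notin S$, and $\sP_\rH(\varphi)\neq 0$ for some $\varphi\in\pi$.

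The hard part will be step (iii)--(iv): ensuring that one can simultaneously pin down $\pi_{v_0}=\tau$ \emph{exactly} (not merely up to its near-equivalence class) and guarantee the archimedean relative character of the chosen matrix coefficient does not vanish. For the former, one uses that a matrix coefficient of $\tau$ projects to the $\tau$-isotypic subspace of $C^\infty(\G(F_{v_0}))$, so only representations with $\pi_{v_0}\simeq\tau$ (or its contragredient, handled by taking $f_{v_0}=b\ast b^\vee$) contribute; the compactness of $\G(F_{v_0})$ makes this projection clean. For the latter, one invokes Lemma \ref{Lem: matrix coeff} (valid verbatim for compact $\G(\BF)$, where all matrix coefficients are smooth and compactly supported) to express $J_\tau$ as an orbital integral of $\Phi_\X$, and since $\Phi_\X(1)=\lambda(v)\neq 0$ one may take $f_{v_0}$ supported near the identity --- or, to get the sharper conclusion with \emph{regular semisimple} support needed elsewhere, one appeals to the ellipticity of $J_\tau$ on the regular locus. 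The remaining routine points (existence of a supercuspidal $\rH'$-elliptic distinguished representation at $v_1$, existence of elliptically supported test functions at $v_2$ with nonzero relative character, and the spherical normalization at unramified places) follow from the theory recalled in \S\ref{Section: spectral prelim FJ} and the split local transfer of \S\ref{Section: split transfer}, so I would treat them briefly.
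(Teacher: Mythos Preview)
Your overall framework—apply the simple relative trace formula \eqref{eqn: unitary simple rtf} with a matrix coefficient of $\tau$ at $v_0$, a supercuspidal matrix coefficient at $v_1$, and elliptic support at $v_2$—is exactly what the paper does. But there is a genuine gap in how you propose to show $J(f)\neq 0$.

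Your steps (ii)--(v) are purely spectral: you arrange that for your \emph{chosen} local representations ($\tau$ at $v_0$, a fixed supercuspidal at $v_1$, etc.) the local relative characters $J_{\pi_v}(f_v)$ do not vanish, and you invoke positivity to say each $J_\pi(f)\geq 0$. But this is circular. Positivity gives $J(f)=\sum_\pi J_\pi(f)\geq 0$; to get strict positivity you would need some global $\pi$ with those local components \emph{and} nonzero $\rH$-period to already exist, which is precisely the statement you are trying to prove. The sentence ``$J(f)$ becomes a product of local relative characters times a nonnegative global factor'' conflates $J(f)$ (a sum over all $\pi$) with $J_\pi(f)$ (for one $\pi$); only the latter factors. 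Also, your claim that $\G$ is anisotropic over $F$ is false: at the split places $v_1,v_2$ one has $\G(F_{v_i})\simeq\GL_{2n}(F_{v_i})$, so $\G$ has plenty of $F$-split tori. Cuspidality of the spectral side comes from the supercuspidal condition at $v_1$ (Definition \ref{elliptic test functions}), not from anisotropy.

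The paper establishes $J(f)\neq 0$ on the \emph{geometric} side. One first checks that the orbital-integral representations of $J_\tau$ and of the supercuspidal relative character at $v_1$ are nonzero at some regular elliptic $\gamma_{v_0},\gamma_{v_1}$ (and trivially at some $\gamma_{v_2}\in\Xi$). By weak approximation one finds a \emph{global} $\gamma^\circ\in\G(F)$ close to each $\gamma_{v_i}$, and then chooses the remaining $f_v$ so that $\Orb^{\rH\times\rH}(f_v,\gamma^\circ)\neq 0$; at $v\notin S$ this forces $f_v=\mathbf{1}_{K_v\gamma^\circ K_v}$ rather than $\mathbf{1}_{K_v}$ (your choice could give a vanishing orbital integral, since $\gamma^\circ$ need not lie in $K_v$). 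Finally one shrinks $\Xi$ at $v_2$ so that $\gamma^\circ$ is the \emph{only} global orbit meeting the support; the geometric side of \eqref{eq: simple RTF} then collapses to a single nonzero term $\vol([\rH_{\gamma^\circ}])\,\Orb^{\rH\times\rH}(f,\gamma^\circ)$. This orbit-isolation step is the missing idea in your plan.
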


\begin{proof}
We use a simple relative trace formula for the triple $(\G,\rH,\rH)$.  Fix a supercuspidal representation $\pi_{v_1}$ of $\G(F_{v_1})$. Let $f=\otimes f_v$ be an elliptic nice test function (cf. Definition \ref{elliptic test functions}) such that 
\begin{itemize}
\item $f_{v_0}$ is a matrix coefficient of $\pi_{v_0}=\tau$ such that the local character $J_{\pi_{v_0}}$ is represented by the orbital integral of $f_{v_0}$.
\item
 $f_{v_1}$ (upon integrating over the center of $\G(F_{v_1})$) is a matrix coefficient of $\pi_{v_1}$,
 \item $f_{v_2}={\bf 1}_{\Xi}$ for some (non-empty) compact open subset $\Xi$ on the regular elliptic locus of $\G(F_{v_2})$.
 \end{itemize}
There exists a regular elliptic element $\gamma_{v_0}\in \G(F_{v_0})$ such that
$$
\Orb^{\rH\times\rH}( f_{v_0},\gamma_{v_0})\neq 0.
$$
Similarly, by the ellipticity of the local character for $\pi_{v_1}$ and the local character being represented by the orbital integral of a matrix coefficient,  we can further assume that  there exists a regular elliptic element $\gamma_{v_1}\in \G(F_{v_1})$ such that
$$
\Orb^{\rH\times\rH}( f_{v_1},\gamma_{v_1})\neq 0.
$$
We also have
$$
\Orb^{\rH\times\rH}( f_{v_2},\gamma_{v_2})\neq 0
$$
for some (necessarily regular elliptic) $\gamma_{v_2}\in\Xi\subset \G(F_{v_2})$.

We now apply the simple relative trace formula, Proposition \ref{Prop: Simple RTF unitary}, to obtain an identity
     \begin{equation}\label{eq: simple RTF}
\sum_{\pi} J_\pi(f)=\sum_{\gamma}\vol([\rH_\gamma]) \Orb^{\rH\times\rH}( f,\gamma),
\end{equation}
where the LHS runs over all cuspidal automorphic representations $\pi$ with the given local components at the places $v_0,v_1$. By weak approximation, there exists a $\gamma^\circ\in \G(F)$ simultaneously approximating $\gamma_{v_i}$ at $v_i$ for $i=0,1,2$. We thus could assume such a $\gamma^\circ\in \G(F)$  satisfies $$
\Orb^{\rH\times\rH}( f_{v_1},\gamma^\circ)\neq 0, \quad i=0,1,2.
$$
Now for $v\in S\setminus\{v_0,v_1,v_2\}$, we choose  $f_v$ such that $\Orb^{\rH\times\rH}(f_{v},\gamma^\circ)\neq 0$. At $v\notin S$, we choose 
bi-$K_v$-invariant function $f_v$ such that $\Orb^{\rH\times\rH}(f_{v},\gamma^\circ)\neq 0$; this is possible, for example, by choosing ${\bf 1}_{K_v\gamma^\circ K_v}$.
Now the function $f$ satisfies $$
\Orb^{\rH\times\rH}(f,\gamma^\circ)\neq 0.
$$
We now modify $f_{v_2}$. We can shrink $\Xi$ so that $\gamma^\circ\in \Xi$  (hence $\Orb(\gamma^\circ, f)\neq 0$ remains true)  but  $$
\Orb^{\rH\times\rH}(f,\gamma)= 0.
$$
for every $\gamma\in \G(F)$ that is not in the same orbit of $\gamma^\circ$ (cf. proof of \cite[Theorem 7.4]{LXZfund}).  The RHS of \eqref{eq: simple RTF}
does not vanish for such $f$ and hence the LHS does not vanish. Therefore there exists $\pi$ such that $J_{\pi}(f)\neq 0$ together with other desired properties.  The proof is complete.
\end{proof}

We now prove Theorem \ref{thm:arch nonzero lin char}. We take a quadratic number field extension $E/F$  such that it has at least one non-split archimedean place,  it is unramified at all non-archimedean places, and all places with small residue characteristic are all split (namely, those $v$ of $F$ such that $p\leq \max\{e(v/p)+1,2\}$). Obviously such a quadratic number field extension $E/F$ exists. Let $W$ be an $E/F$-Hermitian space of $\dim W=n$ such that it is positive definite at all non-split archimedean places, and split at all non-split non-archimedean places. Such  $W$ exists by Hasse principle and the fact that the number of non-split archimedean places is even. Let $V=W\oplus W$ and let the decomposition $W_1\oplus W_2$ be $W\oplus W$. Let $S$ be the union of all archimedean places and exactly two split non-archimedean places $v_1,v_2$. Let $v_0$ be a fixed  non-split archimedean place. At a non-split $v\notin S$, we take a self-dual lattice $L$ in $W_v$ and take $L\oplus L\subset V_v$, and let $K_v\subset \G(F_v)$ be its stabilizer (a hyperspecial subgroup).  We then apply Proposition \ref{Prop:global} to obtain a cuspidal automorphic representation $\pi$ such that $\pi_{v_0}=\tau$. 
Then there exists an elliptic nice test function $f=\bigotimes_v f_v$  such that 
$f_v={\bf 1}_{K_v}$ for  non-split $v\notin S$,  $f_{v_1}$ has regular semisimple support and 
\begin{equation}\label{eq: J v0}
J_{\pi}(f) \neq 0.
\end{equation}

Now for any smooth transfer $\wt f'=\bigotimes_v\wt f'_v$ of $f$, we can apply Corollary \ref{Cor: si comparison of global rel char} to obtain
$$
2I_{\pi_{0}}^{(\eta,\eta)}(\wt{f},0)+2I_{\pi_{0}\otimes \eta}^{(\eta,\eta)}(\wt{f},0)=\sum_{\pi'} J_{\pi'}(f),
$$
where the RHS runs over all representation $\pi'$ in the near equivalence class of  $\pi$. Note that now  at every  non-split $v\notin S$, $f_v={\bf 1}_{K_v}$ for the hyperspecial compact open $K_v$, hence $\pi_v'\simeq \pi_v$. At a non-split archimedean place $v$, since $\G(F_v)$ is a compact unitary group, the L-packet of $\pi_v$ is a singleton  and hence we also have $\pi_v'\simeq \pi_v$. The remaining places are split and we have $\pi_v'\simeq \pi_v$ too. Therefore the RHS consists of only one term, namely the one for $\pi$:
$$
2I_{\pi_{0}}^{(\eta,\eta)}(\wt{f},0)+2I_{\pi_{0}\otimes \eta}^{(\eta,\eta)}(\wt{f},0)=J_{\pi}(f).
$$
By the factorization property the LHS equals
$$
\mathcal{L}^{(\eta,1,\eta)}_{\pi_0}\left(\prod_v I^{(\eta,\eta),\natural}_{\pi_{0,v}}(\wt{f}_v)+\prod_vI^{(\eta,\eta),\natural}_{\pi_{0,v}\otimes\eta_{v}}(\wt{f}_v)\right).
$$
By Lemma \ref{lem: branc compact}, we have
$$
 I^{(\eta,\eta),\natural}_{\pi_{0,v_0}}(\wt{f}_{v_0})=I^{(\eta,\eta),\natural}_{\pi_{0,v_0}\otimes\eta_{v_0}}(\wt{f}_{v_0}).
$$ 
We rewrite the LHS as 
\begin{equation}\label{eq: LHS}
\mathcal{L}^{(\eta,1,\eta)}_{\pi_0}  I^{(\eta,\eta),\natural}_{\pi_{0,v_0}}(\wt{f}_{v_0}) \left(\prod_{v\neq v_0} I^{(\eta,\eta),\natural}_{\pi_{0,v}}(\wt{f}_v)+\prod_{v\neq v_0}I^{(\eta,\eta),\natural}_{\pi_{0,v}\otimes\eta_{v}}(\wt{f}_v)\right).
\end{equation}
The RHS can also be partially factorized, using the local multiplicity one at $v_0$ of Lemma \ref{lem: branc compact}. We write the (non-zero) global period integral as $ \sP_{\rH}=\ell_{v_0}\otimes \ell^{v_0}\in \Hom_{\rH(\BA)}(\pi,\BC)\simeq \Hom_{\rH(F_{v_0})}(\pi_{v_0},\BC)\otimes\Hom_{\rH(\BA^{v_0})}(\pi^{v_0},\BC)$, and obtain
\begin{equation}\label{eq: RHS1}
J_\pi(f)=J_{\pi_{v_0}}(f_{v_0}) J_{\pi^{v_0}}(f^{v_0}).
\end{equation}
Comparing both sides, there exist (non-unique) constants $c_1,c_2=J_{\pi^{v_0}}(f^{v_0})$ such that 
$$
c_1I^{(\eta,\eta),\natural}_{\pi_{0,v_0}}(\wt{f}_{v_0}) 
=c_2J_{\pi_{v_0}}(f_{v_0})
$$holds for all matching test functions $f_{v_0}, \wt{f}_{v_0}$.

We need to show that $c_1,c_2$ can be chosen to be non-zero. Since $J_{\pi_{v_0}}(f_{v_0})\neq0$ for some choice of the test function $f_{v_0}$ with regular semisimple and such test function admits smooth transfer $\wt{f}_{v_0}$ by Proposition \ref{prop:tr infty}, 
it suffices to show that the constant $c_2=J_{\pi^{v_0}}(f^{v_0})$ is non-zero for some $f=\bigotimes_{v\neq v_0} f_v$ that admits a smooth transfer. To show this, by the construction of $\pi$ we know that there exists $f=\bigotimes_v f_v$ with $f_v={\bf 1}_{K_v}$ at each non-split non-archimedean place $v$ such that
$$ J_\pi(f)\neq 0.
$$
Similarly to \eqref{eq: RHS1} using the local multiplicity one at non-split $v\mid\infty$, we obtain
\begin{equation*}
J_\pi(f)= J_{\pi^{T}}(f^{T}) \prod_{v\in T}J_{\pi_{v}}(f_{v}).
\end{equation*}
where $T$ is the finite set of all non-split archimedean places.

Since we do not know the existence of smooth transfer at  non-split archimedean places For the moment, we modify the local components at every non-split archimedean place $v\neq v_0$: we choose
$f_{v}$ to have regular semi-simple (necessarily elliptic) support and such that $J_{\pi_v}(f_v)\neq 0$. As before, this is possible due to the regularity of the local character $J_{\pi_v}$ at $v\in T$. Applying Proposition \ref{prop:tr infty} again, such test functions admit smooth transfers and therefore the non-vanishing  $ J_\pi(f)\neq 0 $ remains true for our modified test function. This shows that $c_2\neq0$
and completes the proof of Theorem \ref{thm:arch nonzero lin char}.\qed

\bibliographystyle{alpha}

\bibliography{bibs}

\end{document}